\newtheorem{theorem}{Theorem}[section]
\newtheorem{lemma}[theorem]{Lemma}
\newtheorem{prop}[theorem]{Proposition}
\newtheorem{corollary}[theorem]{Corollary}
\newtheorem{conjecture}[theorem]{Conjecture}
\theoremstyle{definition}
\newtheorem{definition}[theorem]{Definition}
\newenvironment{assumptionp}[1]{
  
  \assumptionalt
}{\endassumptionalt}
\theoremstyle{remark}
\newtheorem{remark}[theorem]{Remark}
\definecolor{shadecolor}{named}{GreenYellow}
\newcommand{\pushright}[1]{\ifmeasuring@#1\else\omit\hfill$\displaystyle#1$\fi\ignorespaces}
\newcommand{\pushleft}[1]{\ifmeasuring@#1\else\omit$\displaystyle#1$\hfill\fi\ignorespaces}
\newcommand{\pla}{\mathbb P_\lambda}
\newcommand{\Fcal}{\mathcal{F}}
\newcommand{\Acal}{\mathcal{A}}
\newcommand{\Gcal}{\mathcal{G}}
\newcommand{\Hcal}{\mathcal{H}}
\newcommand{\E}{\mathbb E}
\newcommand{\R}{\mathbb R}
\newcommand{\Rd}{\mathbb R^d}
\newcommand{\Z}{\mathbb Z}
\newcommand{\N}{\mathbb N}
\newcommand{\dd}{\mathrm{d}} 
\newcommand{\C}{\mathscr {C}}
\newcommand{\thinn}[1]{\langle #1 \rangle}
\newcommand{\piv}[1]{\textsf {Piv}(#1)}
\DeclareMathOperator*{\esssup}{ess\,sup}
\newcommand{\LandauBigO}[1]{\mathcal{O}\left(#1\right)}
\newcommand{\psibar}{\overline{\psi}}
\newcommand{\conn}[3]{#1 \longleftrightarrow #2\textrm { in } #3}
\newcommand{\adja}[3]{#1 \sim #2\textrm { in } #3}
\newcommand{\notadja}[3]{#1 \not\sim #2\textrm { in } #3}
\newcommand{\dconn}[3]{#1 \Longleftrightarrow #2\textrm { in } #3}
\newcommand{\xconn}[4]{#1 \xleftrightarrow{\,\,#4\,\,} #2\textrm { in } #3}
\newcommand{\sqconn}[3]{#1 \leftrightsquigarrow #2\textrm { in } #3 }
\newcommand{\sqarrow}{\leftrightsquigarrow}
\newcommand{\tlam}{\tau_\lambda}
\newcommand{\tlamC}{\tau_{\lambda_c}}
\newcommand{\tlamo}{\tau_\lambda^\circ}
\newcommand{\ftlam}{\widehat\tau_\lambda}
\newcommand{\Lacelam}{\Pi_\lambda}
\newcommand{\fLacelam}{\widehat\Pi_\lambda}
\newcommand{\LacelamC}{\Pi_{\lambda_c}}
\newcommand{\fLacelamC}{\widehat\Pi_{\lambda_c}}
\newcommand{\fgmu}{\widehat G_{\mulam}}
\newcommand{\Id}{\mathds 1}
\DeclarePairedDelimiter\abs{\lvert}{\rvert}
\DeclarePairedDelimiter\norm{\lVert}{\rVert}
\DeclarePairedDelimiter\ceil{\lceil}{\rceil}
\DeclarePairedDelimiterX{\inner}[2]{\langle}{\rangle}{#1, #2}
\newcommand{\phiint}{q_\connf}
\newcommand{\mulam}{{\mu_\lambda}}
\newcommand{\orig}{\mathbf{0}}
\newcommand{\e}{\text{e}}
\newcommand{\connf}{\varphi}
\newcommand{\Exconnf}[1]{\connf^{[#1]}}
\newcommand{\fconnf}{\widehat\connf}
\definecolor{darkorange}{RGB}{255,165,0}
\definecolor{altviolet}{RGB}{139,0,139}
\definecolor{turquoise}{RGB}{64,224,208}
\definecolor{lblue}{RGB}{173,216,230}
\definecolor{violet}{RGB}{238,130,238}
\definecolor{darkgreen}{RGB}{0,100,0}
\definecolor{lgreen}{RGB}{144,238,144}
\tikzset{cross/.style={cross out, draw=black, minimum size=2*(#1-\pgflinewidth), inner sep=0pt, outer sep=0pt},
cross/.default={1pt}}
\newcommand{\IntegralDot}{\raisebox{1pt}{\tikz{\filldraw (0,0) circle (2pt)}}}
\newcommand{\OriginDot}{\raisebox{1pt}{\tikz{\draw (0,0) circle (2pt)}}}
\newcommand{\connfline}{\raisebox{2pt}{\tikz{\draw (0,0) -- (1,0);}}}
\newcommand{\tlamline}{\raisebox{2pt}{\tikz{\draw[green] (0,0) -- (1,0);}}}
\newcommand{\Notconnfline}{\raisebox{2pt}{\tikz{\draw[red] (0,0) -- (1,0);}}}
\newcommand{\loopthree}{\raisebox{-8pt}{
    \begin{tikzpicture}[scale=0.5]
        \filldraw (1,0.6) circle (3pt);
        \filldraw (1,-0.6) circle (3pt);
        \draw (0,0) -- (1,0.6) -- (1,-0.6) -- (0,0);
        \filldraw[fill=white] (0,0) circle (3pt);
    \end{tikzpicture}
}}
\newcommand{\loopfour}{\raisebox{-8pt}{
    \begin{tikzpicture}[scale=0.5]
        \filldraw (1,0.6) circle (3pt);
        \filldraw (1,-0.6) circle (3pt);
        \filldraw (2,0) circle (3pt);
        \draw (0,0) -- (1,0.6) -- (2,0) -- (1,-0.6) -- (0,0);
        \filldraw[fill=white] (0,0) circle (3pt);
    \end{tikzpicture}
}}
\newcommand{\loopfive}{\raisebox{-8pt}{
    \begin{tikzpicture}[scale=0.5]
        \filldraw (1,0.6) circle (3pt);
        \filldraw (1,-0.6) circle (3pt);
        \filldraw (2,0.6) circle (3pt);
        \filldraw (2,-0.6) circle (3pt);
        \draw (0,0) -- (1,0.6) -- (2,0.6) -- (2,-0.6) -- (1,-0.6) -- (0,0);
        \filldraw[fill=white] (0,0) circle (3pt);
    \end{tikzpicture}
}}
\newcommand{\loopsix}{\raisebox{-8pt}{
    \begin{tikzpicture}[scale=0.5]
        \filldraw (1,0.6) circle (3pt);
        \filldraw (1,-0.6) circle (3pt);
        \filldraw (2,0.6) circle (3pt);
        \filldraw (2,-0.6) circle (3pt);
        \filldraw (3,0) circle (3pt);
        \draw (0,0) -- (1,0.6) -- (2,0.6) -- (3,0) -- (2,-0.6) -- (1,-0.6) -- (0,0);
        \filldraw[fill=white] (0,0) circle (3pt);
    \end{tikzpicture}
}}
\newcommand{\loopthreeempty}{\raisebox{-8pt}{
    \begin{tikzpicture}[scale=0.5]
        \draw (0,0) -- (1,0.6) -- (1,-0.6) -- (0,0);
        \filldraw[fill=black] (1,-0.6) circle (3pt);
        \filldraw[fill=black] (1,0.6) circle (3pt);
        \filldraw[fill=white] (0,0) circle (3pt);
    \end{tikzpicture}
}}
\newcommand{\loopfourempty}{\raisebox{-8pt}{
    \begin{tikzpicture}[scale=0.5]
        \draw (0,0) -- (1,0.6) -- (2,0) -- (1,-0.6) -- (0,0);
        \filldraw[fill=black] (1,0.6) circle (3pt);
        \filldraw[fill=black] (1,-0.6) circle (3pt);
        \filldraw[fill=black] (2,0) circle (3pt);
        \filldraw[fill=white] (0,0) circle (3pt);
    \end{tikzpicture}
}}
\newcommand{\loopfiveempty}{\raisebox{-8pt}{
    \begin{tikzpicture}[scale=0.5]
        \draw (0,0) -- (1,0.6) -- (2,0.6) -- (2,-0.6) -- (1,-0.6) -- (0,0);
        \filldraw[fill=black] (1,0.6) circle (3pt);
        \filldraw[fill=black] (1,-0.6) circle (3pt);
        \filldraw[fill=black] (2,0.6) circle (3pt);
        \filldraw[fill=black] (2,-0.6) circle (3pt);
        \filldraw[fill=white] (0,0) circle (3pt);
    \end{tikzpicture}
}}
\newcommand{\loopsixempty}{\raisebox{-8pt}{
    \begin{tikzpicture}[scale=0.5]
        \draw (0,0) -- (1,0.6) -- (2,0.6) -- (3,0) -- (2,-0.6) -- (1,-0.6) -- (0,0);
        \filldraw[fill=black] (1,0.6) circle (3pt);
        \filldraw[fill=black] (1,-0.6) circle (3pt);
        \filldraw[fill=black] (2,0.6) circle (3pt);
        \filldraw[fill=black] (2,-0.6) circle (3pt);
        \filldraw[fill=black] (3,0) circle (3pt);
        \filldraw[fill=white] (0,0) circle (3pt);
    \end{tikzpicture}
}}
\newcommand{\fourcrossone}{\raisebox{-8pt}{
    \begin{tikzpicture}[scale=0.5]
        \filldraw (1,0.6) circle (3pt);
        \filldraw (1,-0.6) circle (3pt);
        \filldraw (2,0) circle (3pt);
        \draw (0,0) -- (1,0.6) -- (1,-0.6) -- (0,0);
        \draw (1,0.6) -- (2,0) -- (1,-0.6);
        \filldraw[fill=white] (0,0) circle (3pt);
    \end{tikzpicture}
}}
\newcommand{\fourcrossoneempty}{\raisebox{-8pt}{
    \begin{tikzpicture}[scale=0.5]
        \draw (0,0) -- (1,0.6) -- (1,-0.6) -- (0,0);
        \draw (1,0.6) -- (2,0) -- (1,-0.6);
        \filldraw[fill=black] (1,0.6) circle (3pt);
        \filldraw[fill=black] (1,-0.6) circle (3pt);
        \filldraw[fill=black] (2,0) circle (3pt);
        \filldraw[fill=white] (0,0) circle (3pt);
    \end{tikzpicture}
}}
\newcommand{\fourcrosstwo}{\raisebox{-8pt}{
    \begin{tikzpicture}[scale=0.5]
        \filldraw (1,0.6) circle (3pt);
        \filldraw (1,-0.6) circle (3pt);
        \filldraw (2,0) circle (3pt);
        \draw (0,0) -- (1,0.6) -- (2,0) -- (1,-0.6) -- (0,0);
        \draw (0,0) -- (2,0);
        \filldraw[fill=white] (0,0) circle (3pt);
    \end{tikzpicture}
}}
\newcommand{\phiThreeTwoOne}{\raisebox{-8pt}{
    \begin{tikzpicture}[scale=0.5]
        \filldraw (1,0.6) circle (3pt);
        \filldraw (1,-0.6) circle (3pt);
        \filldraw (2,0.6) circle (3pt);
        \filldraw (2,-0.6) circle (3pt);
        \draw (0,0) -- (1,0.6) -- (1,-0.6) -- (0,0);
        \draw (1,0.6) -- (2,0.6) -- (2,-0.6) -- (1,-0.6);
        \filldraw[fill=white] (0,0) circle (3pt);
    \end{tikzpicture}
}}
\newcommand{\phiTwoTwoTwo}{\raisebox{-8pt}{
    \begin{tikzpicture}[scale=0.5]
        \filldraw (1,0.6) circle (3pt);
        \filldraw (1,-0.6) circle (3pt);
        \filldraw (2,0) circle (3pt);
        \filldraw (1,0) circle (3pt);
        \draw (0,0) -- (1,0.6) -- (1,-0.6) -- (0,0);
        \draw (1,0.6) -- (2,0) -- (1,-0.6);
        \filldraw[fill=white] (0,0) circle (3pt);
    \end{tikzpicture}
}}
\newcommand{\phiThreeTwoOneempty}{\raisebox{-8pt}{
    \begin{tikzpicture}[scale=0.5]
        \draw (0,0) -- (1,0.6) -- (1,-0.6) -- (0,0);
        \draw (1,0.6) -- (2,0.6) -- (2,-0.6) -- (1,-0.6);
        \filldraw[fill=black] (1,0.6) circle (3pt);
        \filldraw[fill=black] (1,-0.6) circle (3pt);
        \filldraw[fill=black] (2,0.6) circle (3pt);
        \filldraw[fill=black] (2,-0.6) circle (3pt);
        \filldraw[fill=white] (0,0) circle (3pt);
    \end{tikzpicture}
}}
\newcommand{\phiTwoTwoTwoempty}{\raisebox{-8pt}{
    \begin{tikzpicture}[scale=0.5]
        \draw (0,0) -- (1,0.6) -- (1,-0.6) -- (0,0);
        \draw (1,0.6) -- (2,0) -- (1,-0.6);
        \filldraw[fill=black] (1,0.6) circle (3pt);
        \filldraw[fill=black] (1,-0.6) circle (3pt);
        \filldraw[fill=black] (2,0) circle (3pt);
        \filldraw[fill=black] (1,0) circle (3pt);
        \filldraw[fill=white] (0,0) circle (3pt);
    \end{tikzpicture}
}}
\newcommand{\FtwoPtOne}{\raisebox{-20pt}{
    \begin{tikzpicture}[scale=1]
        \filldraw (1,-0.7) circle (0pt);
        \draw (0.5,0) -- (1,0.6) -- (1.5,0) -- (1,-0.6) -- (0.5,0);
        \draw[red] (1,0.6) -- (1,-0.6);
        \filldraw[fill=white] (1,-0.6) circle (2pt);
        \filldraw (0.5,0) circle (2pt);
        \filldraw (1.5,0) circle (2pt);
        \filldraw (1,0.6) circle (2pt);
    \end{tikzpicture}
}}
\newcommand{\FtwoPtTwo}{\raisebox{-20pt}{
    \begin{tikzpicture}[scale=1]
        \filldraw (1,-0.7) circle (0pt);
        \draw (0.5,0) -- (1,0.6) -- (1.5,0) -- (1,-0.6) -- (0.5,0);
        \draw (1,0.6) -- (2,0) -- (1,-0.6);
        \draw[red] (1,0.6) -- (1,-0.6);
        \filldraw[fill=white] (1,-0.6) circle (2pt);
        \filldraw (0.5,0) circle (2pt);
        \filldraw (1.5,0) circle (2pt);
        \filldraw (2,0) circle (2pt);
        \filldraw (1,0.6) circle (2pt);
    \end{tikzpicture}
}}
\newcommand{\FthreePtOne}[2]{\raisebox{-20pt}{
    \begin{tikzpicture}[scale=1]
        \filldraw (1,-0.7) circle (0pt);
        \draw (1,0.6) -- (1.5,0) -- (1,-0.6);
        \draw[red] (1,0.6) -- (1,-0.6);
        \filldraw[fill=white] (1,-0.6) circle (2pt) node[left]{$#1$};
        \filldraw (1.5,0) circle (2pt);
        \filldraw[fill=white] (1,0.6) circle (2pt) node[left]{$#2$};
    \end{tikzpicture}
}}
\newcommand{\FthreePtTwo}[2]{\raisebox{-20pt}{
    \begin{tikzpicture}[scale=1]
        \filldraw (1,-0.7) circle (0pt);
        \draw (0.5,0) -- (1,0.6) -- (1.5,0) -- (1,-0.6) -- (0.5,0);
        \draw[red] (1,0.6) -- (1,-0.6);
        \filldraw[fill=white] (1,-0.6) circle (2pt) node[left]{$#1$};
        \filldraw (0.5,0) circle (2pt);
        \filldraw (1.5,0) circle (2pt);
        \filldraw[fill=white] (1,0.6) circle (2pt) node[left]{$#2$};
    \end{tikzpicture}
}}
\newcommand{\FthreePtThree}[3]{\raisebox{-20pt}{
    \begin{tikzpicture}[scale=1]
        \filldraw (1,-0.7) circle (0pt);
        \draw (1,0.6) -- (1.5,0) -- (1,0);
        \draw[red] (1.5,0) -- (1,-0.6);
        \filldraw[fill=white] (1,-0.6) circle (2pt) node[left]{$#1$};
        \filldraw[fill=white] (1,0) circle (2pt) node[left]{$#3$};
        \filldraw (1.5,0) circle (2pt);
        \filldraw[fill=white] (1,0.6) circle (2pt) node[left]{$#2$};
    \end{tikzpicture}
}}
\newcommand{\FthreePtFour}[3]{\raisebox{-20pt}{
    \begin{tikzpicture}[scale=1]
        \filldraw (1,-0.7) circle (0pt);
        \draw (1,0.6) -- (1.5,0) -- (1,0) -- (0.5,0) -- (1,0.6);
        \draw[red] (1.5,0) -- (1,-0.6) -- (0.5,0);
        \filldraw[fill=white] (1,-0.6) circle (2pt) node[left]{$#1$};
        \filldraw[fill=white] (1,0) circle (2pt) node[above]{$#3$};
        \filldraw (1.5,0) circle (2pt);
        \filldraw (0.5,0) circle (2pt);
        \filldraw[fill=white] (1,0.6) circle (2pt) node[left]{$#2$};
    \end{tikzpicture}
}}
\newcommand{\FthreePtFive}[2]{\raisebox{-20pt}{
    \begin{tikzpicture}[scale=1]
        \filldraw (1,-0.7) circle (0pt);
        \draw (1,0.6) -- (1.5,0) -- (1,0) -- (1,-0.6);
        \draw[red] (1.5,0) -- (1,-0.6);
        \draw[red] (1,0) -- (1,0.6);
        \filldraw[fill=white] (1,-0.6) circle (2pt) node[left]{$#1$};
        \filldraw (1,0) circle (2pt);
        \filldraw (1.5,0) circle (2pt);
        \filldraw[fill=white] (1,0.6) circle (2pt) node[left]{$#2$};
    \end{tikzpicture}
}}
\newcommand{\FthreePtSix}[2]{\raisebox{-20pt}{
    \begin{tikzpicture}[scale=1]
        \filldraw (1,-0.7) circle (0pt);
        \draw (1,0.6) -- (1.5,0) -- (1,0) -- (0.5,0) -- (1,0.6);
        \draw (1,0)--(1,-0.6);
        \draw[red] (1.5,0) -- (1,-0.6) -- (0.5,0);
        \draw[red] (1,0) -- (1,0.6);
        \filldraw[fill=white] (1,-0.6) circle (2pt) node[left]{$#1$};
        \filldraw (1,0) circle (2pt);
        \filldraw (1.5,0) circle (2pt);
        \filldraw (0.5,0) circle (2pt);
        \filldraw[fill=white] (1,0.6) circle (2pt) node[left]{$#2$};
    \end{tikzpicture}
}}
\newcommand{\FthreePtSeven}[2]{\raisebox{-20pt}{
    \begin{tikzpicture}[scale=1]
        \filldraw (1,-0.7) circle (0pt);
        \draw (1,-0.6) -- (0.5,0) -- (0.85,0) -- (1,0.6) -- (1.5,0) -- (1.15,0) -- (1,-0.6);
        \draw[red] (1,-0.6) -- (0.85,0);
        \draw[red] (1,-0.6) -- (1.5,0);
        \draw[red] (1,0.6) -- (0.5,0);
        \draw[red] (1,0.6) -- (1.15,0);
        \filldraw[fill=white] (1,-0.6) circle (2pt) node[left]{$#1$};
        \filldraw (1.15,0) circle (2pt);
        \filldraw (1.5,0) circle (2pt);
        \filldraw (0.5,0) circle (2pt);
        \filldraw (0.85,0) circle (2pt);
        \filldraw[fill=white] (1,0.6) circle (2pt) node[left]{$#2$};
    \end{tikzpicture}
}}
\newcommand{\FthreePtEight}[2]{\raisebox{-20pt}{
    \begin{tikzpicture}[scale=1]
        \filldraw (1,-0.7) circle (0pt);
        \draw (1,-0.6) -- (0.5,0) -- (0.85,0) -- (1,0.6) -- (1.85,0) -- (1.5,0) -- (1,-0.6);
        \draw (1,0.6) -- (1.15,0) -- (1.5,0);
        \draw[red] (1,-0.6) -- (0.85,0);
        \draw[red] (1,-0.6) -- (1.15,0);
        \draw[red] (1,-0.6) -- (1.85,0);
        \draw[red] (1,0.6) -- (0.5,0);
        \draw[red] (1,0.6) -- (1.5,0);
        \filldraw[fill=white] (1,-0.6) circle (2pt) node[left]{$#1$};
        \filldraw (1.15,0) circle (2pt);
        \filldraw (1.5,0) circle (2pt);
        \filldraw (0.5,0) circle (2pt);
        \filldraw (0.85,0) circle (2pt);
        \filldraw (1.85,0) circle (2pt);
        \filldraw[fill=white] (1,0.6) circle (2pt) node[left]{$#2$};
    \end{tikzpicture}
}}
\newcommand{\FthreePtNine}[2]{\raisebox{-20pt}{
    \begin{tikzpicture}[scale=1]
        \filldraw (1,-0.7) circle (0pt);
        \draw (1,-0.6) -- (0.5,0) -- (0.15,0) -- (1,0.6) -- (1.85,0) -- (1.5,0) -- (1,-0.6);
        \draw (1,0.6) -- (1.15,0) -- (1.5,0);
        \draw (1,0.6) -- (0.85,0) -- (0.5,0);
        \draw[red] (1,-0.6) -- (0.85,0);
        \draw[red] (1,-0.6) -- (1.15,0);
        \draw[red] (1,-0.6) -- (1.85,0);
        \draw[red] (1,-0.6) -- (0.15,0);
        \draw[red] (1,0.6) -- (0.5,0);
        \draw[red] (1,0.6) -- (1.5,0);
        \filldraw[fill=white] (1,-0.6) circle (2pt) node[left]{$#1$};
        \filldraw (1.15,0) circle (2pt);
        \filldraw (1.5,0) circle (2pt);
        \filldraw (0.5,0) circle (2pt);
        \filldraw (0.15,0) circle (2pt);
        \filldraw (0.85,0) circle (2pt);
        \filldraw (1.85,0) circle (2pt);
        \filldraw[fill=white] (1,0.6) circle (2pt) node[left]{$#2$};
    \end{tikzpicture}
}}
\newcommand{\FthreePtTen}{\raisebox{-20pt}{
    \begin{tikzpicture}[scale=1]
        \filldraw (1,-0.7) circle (0pt);
        \draw (1,-0.6) -- (0.5,0) -- (1,0.6) -- (1.5,0) -- (1.25,0) -- (1,-0.6);
        \draw[red] (1,-0.6) -- (1,0.6);
        \draw[red] (1,-0.6) -- (1.5,0);
        \draw[red] (1,0.6) -- (1.25,0);
        \filldraw[fill=white] (1,-0.6) circle (2pt);
        \filldraw (1.25,0) circle (2pt);
        \filldraw (1.5,0) circle (2pt);
        \filldraw (0.5,0) circle (2pt);
        \filldraw (1,0.6) circle (2pt);
    \end{tikzpicture}
}}
\newcommand{\GtwoPtOne}{\raisebox{-18pt}{
    \begin{tikzpicture}[scale=1]
        \filldraw (1,-0.7) circle (0pt);
        \draw (0,0) -- (1,0.6) -- (2,0) -- (1,-0.6) -- (0,0);
        \draw[red] (1,0.6) -- (1,-0.6);
        \filldraw (1,-0.6) circle (2pt);
        \filldraw[fill=white] (0,0) circle (2pt);
        \filldraw (2,0) circle (2pt);
        \filldraw (1,0.6) circle (2pt);
    \end{tikzpicture}
}}
\newcommand{\GtwoPtTwo}{\raisebox{-18pt}{
    \begin{tikzpicture}[scale=1]
        \filldraw (1,-0.7) circle (0pt);
        \draw (0,0) -- (1,0.6) -- (1.5,0) -- (1,-0.6) -- (0,0);
        \draw (1,0.6) -- (2,0) -- (1,-0.6);
        \draw[red] (1,0.6) -- (1,-0.6);
        \filldraw (1,-0.6) circle (2pt);
        \filldraw[fill=white] (0,0) circle (2pt);
        \filldraw (1.5,0) circle (2pt);
        \filldraw (1,0.6) circle (2pt);
        \filldraw (2,0) circle (2pt);
    \end{tikzpicture}
}}
\newcommand{\GthreePtOne}{\raisebox{-18pt}{
    \begin{tikzpicture}[scale=1]
        \filldraw (1,-0.7) circle (0pt);
        \draw (0,0) -- (1,0.6) -- (2,0) -- (1,-0.6) -- (0,0);
        \draw[red] (1,0.6) -- (1,-0.6);
        \filldraw[fill=white] (1,-0.6) circle (2pt);
        \filldraw (0,0) circle (2pt);
        \filldraw (2,0) circle (2pt);
        \filldraw (1,0.6) circle (2pt);
    \end{tikzpicture}
}}
\newcommand{\GthreePtTwo}{\raisebox{-18pt}{
    \begin{tikzpicture}[scale=1]
        \filldraw (1,-0.7) circle (0pt);
        \draw (0,0) -- (1,0.6) -- (1.5,0) -- (1,-0.6) -- (0,0);
        \draw (1,0.6) -- (2,0) -- (1,-0.6);
        \draw[red] (1,0.6) -- (1,-0.6);
        \filldraw[fill=white] (1,-0.6) circle (2pt);
        \filldraw (0,0) circle (2pt);
        \filldraw (1.5,0) circle (2pt);
        \filldraw (1,0.6) circle (2pt);
        \filldraw (2,0) circle (2pt);
    \end{tikzpicture}
}}
\newcommand{\GfourPtOne}{\raisebox{-20pt}{
    \begin{tikzpicture}[scale=1]
        \filldraw (1,-0.7) circle (0pt);
        \draw (1,-0.6) -- (0.5,0) -- (1,0.6) -- (1.5,0) -- (1.25,0) -- (1,-0.6);
        \draw[red] (1,-0.6) -- (1,0.6);
        \draw[red] (1,-0.6) -- (1.5,0);
        \draw[red] (1,0.6) -- (1.25,0);
        \filldraw (1,-0.6) circle (2pt);
        \filldraw (1.25,0) circle (2pt);
        \filldraw (1.5,0) circle (2pt);
        \filldraw [fill=white] (0.5,0) circle (2pt);
        \filldraw (1,0.6) circle (2pt);
    \end{tikzpicture}
}}
\newcommand{\GfourPtTwo}{\raisebox{-20pt}{
    \begin{tikzpicture}[scale=1]
        \filldraw (1,-0.7) circle (0pt);
        \draw (1,-0.6) -- (0.5,0) -- (1,0.6) -- (1.95,0) -- (1.6,0) -- (1,-0.6);
        \draw (1,0.6) -- (1.25,0) -- (1.6,0);
        \draw[red] (1,-0.6) -- (1.25,0);
        \draw[red] (1,-0.6) -- (1.95,0);
        \draw[red] (1,0.6) -- (1.6,0);
        \draw[red] (1,-0.6) -- (1,0.6);
        \filldraw (1,-0.6) circle (2pt);
        \filldraw (1.25,0) circle (2pt);
        \filldraw (1.6,0) circle (2pt);
        \filldraw[fill=white] (0.5,0) circle (2pt);
        \filldraw (1.95,0) circle (2pt);
        \filldraw (1,0.6) circle (2pt);
    \end{tikzpicture}
}}
\newcommand{\GfourPtThree}{\raisebox{-20pt}{
    \begin{tikzpicture}[scale=1]
        \filldraw (1,-0.7) circle (0pt);
        \draw (1,-0.6) -- (0.5,0) -- (0.85,0) -- (1,0.6) -- (1.5,0) -- (1.15,0) -- (1,-0.6);
        \draw (1,-0.6) to [out=180,in=305] (0.15,0) to [out=55,in=180] (1,0.6);
        \draw[red] (1,-0.6) -- (0.85,0);
        \draw[red] (1,-0.6) -- (1.5,0);
        \draw[red] (1,0.6) -- (0.5,0);
        \draw[red] (1,0.6) -- (1.15,0);
        \draw[red] (1,-0.6) -- (1,0.6);
        \filldraw (1,-0.6) circle (2pt);
        \filldraw (1.15,0) circle (2pt);
        \filldraw (1.5,0) circle (2pt);
        \filldraw (0.5,0) circle (2pt);
        \filldraw (0.85,0) circle (2pt);
        \filldraw[fill=white] (0.15,0) circle (2pt);
        \filldraw (1,0.6) circle (2pt);
    \end{tikzpicture}
}}
\newcommand{\GfourPtFour}{\raisebox{-20pt}{
    \begin{tikzpicture}[scale=1]
        \filldraw (1,-0.7) circle (0pt);
        \draw (1,-0.6) -- (0.5,0) -- (0.85,0) -- (1,0.6) -- (1.85,0) -- (1.5,0) -- (1,-0.6);
        \draw (1,0.6) -- (1.15,0) -- (1.5,0);
        \draw (1,-0.6) to [out=180,in=305] (0.15,0) to [out=55,in=180] (1,0.6);
        \draw[red] (1,-0.6) -- (0.85,0);
        \draw[red] (1,-0.6) -- (1.15,0);
        \draw[red] (1,-0.6) -- (1.85,0);
        \draw[red] (1,0.6) -- (0.5,0);
        \draw[red] (1,0.6) -- (1.5,0);
        \draw[red] (1,0.6) -- (1,-0.6);
        \filldraw (1,-0.6) circle (2pt);
        \filldraw (1.15,0) circle (2pt);
        \filldraw (1.5,0) circle (2pt);
        \filldraw (0.5,0) circle (2pt);
        \filldraw (0.85,0) circle (2pt);
        \filldraw (1.85,0) circle (2pt);
        \filldraw[fill=white] (0.15,0) circle (2pt);
        \filldraw (1,0.6) circle (2pt);
    \end{tikzpicture}
}}
\newcommand{\GfourPtFive}{\raisebox{-20pt}{
    \begin{tikzpicture}[scale=1]
        \filldraw (1,-0.7) circle (0pt);
        \draw (1,-0.6) -- (0.5,0) -- (0.15,0) -- (1,0.6) -- (1.85,0) -- (1.5,0) -- (1,-0.6);
        \draw (1,-0.6) to [out=180,in=315] (-0.2,0) to [out=45,in=180] (1,0.6);
        \draw (1,0.6) -- (1.15,0) -- (1.5,0);
        \draw (1,0.6) -- (0.85,0) -- (0.5,0);
        \draw[red] (1,-0.6) -- (0.85,0);
        \draw[red] (1,-0.6) -- (1.15,0);
        \draw[red] (1,-0.6) -- (1.85,0);
        \draw[red] (1,-0.6) -- (0.15,0);
        \draw[red] (1,0.6) -- (0.5,0);
        \draw[red] (1,0.6) -- (1.5,0);
        \draw[red] (1,-0.6) -- (1,0.6);
        \filldraw (1,-0.6) circle (2pt);
        \filldraw (1.15,0) circle (2pt);
        \filldraw (1.5,0) circle (2pt);
        \filldraw (0.5,0) circle (2pt);
        \filldraw (0.15,0) circle (2pt);
        \filldraw (0.85,0) circle (2pt);
        \filldraw (1.85,0) circle (2pt);
        \filldraw[fill=white] (-0.2,0) circle (2pt);
        \filldraw (1,0.6) circle (2pt);
    \end{tikzpicture}
}}
\newcommand{\GfivePtOne}{\raisebox{-18pt}{
    \begin{tikzpicture}[scale=1]
        \filldraw (1,-0.7) circle (0pt);
        \draw (0,-0.6) -- (1,-0.6) -- (1.5,0) -- (0.5,0.6) -- (-0.5,0) -- (0,-0.6);
        \draw[red] (0,-0.6) -- (0.5,0.6) -- (1,-0.6);
        \filldraw[fill=white] (0,-0.6) circle (2pt);
        \filldraw (1,-0.6) circle (2pt);
        \filldraw (1.5,0) circle (2pt);
        \filldraw (0.5,0.6) circle (2pt);
        \filldraw (-0.5,0) circle (2pt);
    \end{tikzpicture}
}}
\newcommand{\GfivePtTwo}{\raisebox{-18pt}{
    \begin{tikzpicture}[scale=1]
        \filldraw (1,-0.7) circle (0pt);
        \draw (0,-0.6) -- (1,-0.6) -- (1.5,0) -- (0.5,0.6) -- (-0.5,0) -- (0,-0.6);
        \draw (0,-0.6) -- (-0.1,0) -- (0.5,0.6);
        \draw[red] (0,-0.6) -- (0.5,0.6) -- (1,-0.6);
        \filldraw[fill=white] (0,-0.6) circle (2pt);
        \filldraw (1,-0.6) circle (2pt);
        \filldraw (1.5,0) circle (2pt);
        \filldraw (0.5,0.6) circle (2pt);
        \filldraw (-0.1,0) circle (2pt);
        \filldraw (-0.5,0) circle (2pt);
    \end{tikzpicture}
}}
\newcommand{\GfivePtThree}{\raisebox{-18pt}{
    \begin{tikzpicture}[scale=1]
        \filldraw (1,-0.7) circle (0pt);
        \draw (0,-0.6) -- (1,-0.6) -- (1.5,0) -- (0.5,0.6) -- (-0.5,0) -- (0,-0.6);
        \draw (1,-0.6) -- (1.1,0) -- (0.5,0.6);
        \draw[red] (0,-0.6) -- (0.5,0.6) -- (1,-0.6);
        \filldraw[fill=white] (0,-0.6) circle (2pt);
        \filldraw (1,-0.6) circle (2pt);
        \filldraw (1.5,0) circle (2pt);
        \filldraw (0.5,0.6) circle (2pt);
        \filldraw (-0.5,0) circle (2pt);
        \filldraw (1.1,0) circle (2pt);
    \end{tikzpicture}
}}
\newcommand{\GfivePtFour}{\raisebox{-18pt}{
    \begin{tikzpicture}[scale=1]
        \filldraw (1,-0.7) circle (0pt);
        \draw (0,-0.6) -- (1,-0.6) -- (1.5,0) -- (0.5,0.6) -- (-0.5,0) -- (0,-0.6);
        \draw (0,-0.6) -- (-0.1,0) -- (0.5,0.6);
        \draw (1,-0.6) -- (1.1,0) -- (0.5,0.6);
        \draw[red] (0,-0.6) -- (0.5,0.6) -- (1,-0.6);
        \filldraw[fill=white] (0,-0.6) circle (2pt);
        \filldraw (1,-0.6) circle (2pt);
        \filldraw (1.5,0) circle (2pt);
        \filldraw (0.5,0.6) circle (2pt);
        \filldraw (-0.1,0) circle (2pt);
        \filldraw (-0.5,0) circle (2pt);
        \filldraw (1.1,0) circle (2pt);
    \end{tikzpicture}
}}
\newcommand{\GsixPtOne}{\raisebox{-20pt}{
    \begin{tikzpicture}[scale=1]
        \filldraw (1,-0.7) circle (0pt);
        \draw (1,-0.6) -- (0.5,0) -- (1,0.6) -- (1.5,0) -- (1.25,0) -- (1,-0.6);
        \draw[red] (1,-0.6) -- (1,0.6);
        \draw[red] (1,-0.6) -- (1.5,0);
        \draw[red] (1,0.6) -- (1.25,0);
        \filldraw[fill=white] (1,-0.6) circle (2pt);
        \filldraw (1.25,0) circle (2pt);
        \filldraw (1.5,0) circle (2pt);
        \filldraw (0.5,0) circle (2pt);
        \filldraw (1,0.6) circle (2pt);
    \end{tikzpicture}
}}
\newcommand{\GsixPtTwo}{\raisebox{-20pt}{
    \begin{tikzpicture}[scale=1]
        \filldraw (1,-0.7) circle (0pt);
        \draw (1,-0.6) -- (0.5,0) -- (1,0.6) -- (1.95,0) -- (1.6,0) -- (1,-0.6);
        \draw (1,0.6) -- (1.25,0) -- (1.6,0);
        \draw[red] (1,-0.6) -- (1.25,0);
        \draw[red] (1,-0.6) -- (1.95,0);
        \draw[red] (1,0.6) -- (1.6,0);
        \draw[red] (1,-0.6) -- (1,0.6);
        \filldraw[fill=white] (1,-0.6) circle (2pt);
        \filldraw (1.25,0) circle (2pt);
        \filldraw (1.6,0) circle (2pt);
        \filldraw (0.5,0) circle (2pt);
        \filldraw (1.95,0) circle (2pt);
        \filldraw (1,0.6) circle (2pt);
    \end{tikzpicture}
}}
\newcommand{\GsixPtThree}{\raisebox{-20pt}{
    \begin{tikzpicture}[scale=1]
        \filldraw (1,-0.7) circle (0pt);
        \draw (1,-0.6) -- (0.5,0) -- (0.85,0) -- (1,0.6) -- (1.5,0) -- (1.15,0) -- (1,-0.6);
        \draw (1,-0.6) to [out=180,in=305] (0.15,0) to [out=55,in=180] (1,0.6);
        \draw[red] (1,-0.6) -- (0.85,0);
        \draw[red] (1,-0.6) -- (1.5,0);
        \draw[red] (1,0.6) -- (0.5,0);
        \draw[red] (1,0.6) -- (1.15,0);
        \draw[red] (1,-0.6) -- (1,0.6);
        \filldraw[fill=white] (1,-0.6) circle (2pt);
        \filldraw (1.15,0) circle (2pt);
        \filldraw (1.5,0) circle (2pt);
        \filldraw (0.5,0) circle (2pt);
        \filldraw (0.85,0) circle (2pt);
        \filldraw (0.15,0) circle (2pt);
        \filldraw (1,0.6) circle (2pt);
    \end{tikzpicture}
}}
\newcommand{\GsixPtFour}{\raisebox{-20pt}{
    \begin{tikzpicture}[scale=1]
        \filldraw (1,-0.7) circle (0pt);
        \draw (1,-0.6) -- (0.5,0) -- (0.85,0) -- (1,0.6) -- (1.85,0) -- (1.5,0) -- (1,-0.6);
        \draw (1,0.6) -- (1.15,0) -- (1.5,0);
        \draw (1,-0.6) to [out=180,in=305] (0.15,0) to [out=55,in=180] (1,0.6);
        \draw[red] (1,-0.6) -- (0.85,0);
        \draw[red] (1,-0.6) -- (1.15,0);
        \draw[red] (1,-0.6) -- (1.85,0);
        \draw[red] (1,0.6) -- (0.5,0);
        \draw[red] (1,0.6) -- (1.5,0);
        \draw[red] (1,0.6) -- (1,-0.6);
        \filldraw[fill=white] (1,-0.6) circle (2pt);
        \filldraw (1.15,0) circle (2pt);
        \filldraw (1.5,0) circle (2pt);
        \filldraw (0.5,0) circle (2pt);
        \filldraw (0.85,0) circle (2pt);
        \filldraw (1.85,0) circle (2pt);
        \filldraw (0.15,0) circle (2pt);
        \filldraw (1,0.6) circle (2pt);
    \end{tikzpicture}
}}
\newcommand{\GsixPtFive}{\raisebox{-20pt}{
    \begin{tikzpicture}[scale=1]
        \filldraw (1,-0.7) circle (0pt);
        \draw (1,-0.6) -- (0.5,0) -- (0.15,0) -- (1,0.6) -- (1.85,0) -- (1.5,0) -- (1,-0.6);
        \draw (1,-0.6) to [out=180,in=315] (-0.2,0) to [out=45,in=180] (1,0.6);
        \draw (1,0.6) -- (1.15,0) -- (1.5,0);
        \draw (1,0.6) -- (0.85,0) -- (0.5,0);
        \draw[red] (1,-0.6) -- (0.85,0);
        \draw[red] (1,-0.6) -- (1.15,0);
        \draw[red] (1,-0.6) -- (1.85,0);
        \draw[red] (1,-0.6) -- (0.15,0);
        \draw[red] (1,0.6) -- (0.5,0);
        \draw[red] (1,0.6) -- (1.5,0);
        \draw[red] (1,-0.6) -- (1,0.6);
        \filldraw[fill=white] (1,-0.6) circle (2pt);
        \filldraw (1.15,0) circle (2pt);
        \filldraw (1.5,0) circle (2pt);
        \filldraw (0.5,0) circle (2pt);
        \filldraw (0.15,0) circle (2pt);
        \filldraw (0.85,0) circle (2pt);
        \filldraw (1.85,0) circle (2pt);
        \filldraw (-0.2,0) circle (2pt);
        \filldraw (1,0.6) circle (2pt);
    \end{tikzpicture}
}}
\newcommand{\PiOnePtOne}{\raisebox{-17pt}{
    \begin{tikzpicture}[scale=1]
        \filldraw (1,-0.7) circle (0pt);
        \draw[green] (0,0) -- (1,0.6) -- (1,-0.6) -- (0,0);
        \draw[green] (1,0.6) -- (2,0.6) -- (2,-0.6) -- (1,-0.6);
        \draw[green] (2,0.6) -- (3,0) -- (2,-0.6);
        \draw (1.5,-0.6) circle (0pt) node[above]{$\circ$};
        \draw (0.5,-0.3) circle (0pt) node[rotate=-31,above]{\tiny $\geq 2$};
        \draw (2.5,-0.3) circle (0pt) node[rotate=31,above]{\tiny $\geq 2$};
        \filldraw[fill=white] (0,0) circle (2pt);
        \filldraw (1,0.6) circle (2pt);
        \filldraw (1,-0.6) circle (2pt);
        \filldraw (2,0.6) circle (2pt);
        \filldraw (2,-0.6) circle (2pt);
        \filldraw (3,0) circle (2pt);
    \end{tikzpicture}
}}
\newcommand{\PiOnePtTwo}{\raisebox{-17pt}{
    \begin{tikzpicture}[scale=1]
        \filldraw (1,-0.7) circle (0pt);
        \draw[green] (0,0) -- (1,0.6) -- (1,-0.6) -- (0,0);
        \draw[green] (1,0.6) -- (2,0) -- (1,-0.6);
        \draw (0.5,-0.3) circle (0pt) node[rotate=-31,above]{\tiny $\geq 2$};
        \filldraw[fill=white] (0,0) circle (2pt);
        \filldraw (1,0.6) circle (2pt);
        \filldraw (1,-0.6) circle (2pt);
        \filldraw (2,0) circle (2pt);
    \end{tikzpicture}
}}
\newcommand{\PiOnePtThree}{\raisebox{-17pt}{
    \begin{tikzpicture}[scale=1]
        \filldraw (1,-0.7) circle (0pt);
        \draw[green] (0,0) -- (1,0.6) -- (1,-0.6) -- (0,0);
        \draw[green] (1,0.6) -- (2,0.6) -- (2,-0.6) -- (1,-0.6);
        \draw[green] (2,0.6) -- (3,0) -- (2,-0.6);
        \draw (1.5,-0.6) circle (0pt) node[above]{$\circ$};
        \draw (1,0) circle (0pt) node[rotate=90,above]{\tiny $\geq 2$};
        \draw (2.5,-0.3) circle (0pt) node[rotate=31,above]{\tiny $\geq 2$};
        \filldraw (0,0) circle (2pt);
        \filldraw[fill=white] (1,0.6) circle (2pt);
        \filldraw (1,-0.6) circle (2pt);
        \filldraw (2,0.6) circle (2pt);
        \filldraw (2,-0.6) circle (2pt);
        \filldraw (3,0) circle (2pt);
    \end{tikzpicture}
}}
\newcommand{\PiOnePtFour}{\raisebox{-17pt}{
    \begin{tikzpicture}[scale=1]
        \filldraw (1,-0.7) circle (0pt);
        \draw[green] (0,0) -- (1,0.6) -- (1,-0.6) -- (0,0);
        \draw[green] (1,0.6) -- (2,0) -- (1,-0.6);
        \draw (1,0) circle (0pt) node[rotate=90,above]{\tiny $\geq 2$};
        \filldraw (0,0) circle (2pt);
        \filldraw[fill=white] (1,0.6) circle (2pt);
        \filldraw (1,-0.6) circle (2pt);
        \filldraw (2,0) circle (2pt);
    \end{tikzpicture}
}}
\newcommand{\PiOnePtFive}{\raisebox{-17pt}{
    \begin{tikzpicture}[scale=1]
        \filldraw (1,-0.7) circle (0pt);
        \draw (1,0.6) -- (1,-0.6);
        \draw[green] (1,0.6) -- (2,0.6) -- (2,-0.6) -- (1,-0.6);
        \draw[green] (2,0.6) -- (3,0) -- (2,-0.6);
        \draw (1.5,-0.6) circle (0pt) node[above]{$\circ$};
        \draw (2.5,-0.3) circle (0pt) node[rotate=31,above]{\tiny $\geq 2$};
        \filldraw[fill=white] (1,0.6) circle (2pt);
        \filldraw (1,-0.6) circle (2pt);
        \filldraw (2,0.6) circle (2pt);
        \filldraw (2,-0.6) circle (2pt);
        \filldraw (3,0) circle (2pt);
    \end{tikzpicture}
}}
\newcommand{\PiOnePtSix}{\raisebox{-17pt}{
    \begin{tikzpicture}[scale=1]
        \filldraw (1,-0.7) circle (0pt);
        \draw (1,0.6) -- (1,-0.6);
        \draw[green] (1,0.6) -- (2,0) -- (1,-0.6);
        \filldraw[fill=white] (1,0.6) circle (2pt);
        \filldraw (1,-0.6) circle (2pt);
        \filldraw (2,0) circle (2pt);
    \end{tikzpicture}
}}
\numberwithin{equation}{section}
\title{Expansion of the Critical Intensity for the Random Connection Model}
\author{Matthew Dickson\footnote{University of British Columbia, Department of Mathematics, Vancouver, BC, Canada, V6T 1Z2; Email: dickson@math.ubc.ca; \includegraphics[height=1em]{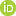}~https://orcid.org/0000-0002-8629-4796} \and Markus Heydenreich\footnote{Universität Augsburg, Institut für Mathematik, Universitätsstr.\ 2, 86135 Augsburg, Germany; Email: markus.heydenreich@uni-a.de; \includegraphics[height=1em]{ORCIDiD_icon16x16.png}~https://orcid.org/0000-0002-3749-7431}
}
\date{}
\begin{document}
\maketitle

\vspace{-1em}

{\centering{ \today}\par}

\vskip-3em

\begin{abstract}
We derive an asymptotic expansion for the critical percolation density of the random connection model as the dimension of the encapsulating space
tends to infinity. 
We calculate rigorously the first expansion terms for the Gilbert disk model, the hyper-cubic model, the Gaussian connection kernel, and a coordinate-wise Cauchy kernel. 
\end{abstract}

\noindent\emph{Mathematics Subject Classification (2020).} 
60K35, 82B43, 60G55.

\smallskip

\noindent\emph{Keywords and phrases.} Continuum percolation, Random connection model, Critical threshold, Asymptotic series, Lace expansion.

{\footnotesize
}

\section{Introduction}

    \subsection{Motivation} We study percolative systems, and address the question: \emph{What is the value of the critical percolation threshold?}
    A specific answer is only possible in very exceptional cases.  
    We are pursuing a different route instead, namely an asymptotic expression of the critical threshold as a function of the dimension $d$ of the encapsulating space in the $d\to\infty$ limit. This has been solved for percolation on the hypercubic lattice $\Z^d$: for \emph{bond} percolation on the hypercubic lattice it is known that 
    \begin{equation}\label{eq:pc_bond}
        p_c^\text{bond}(\mathbb Z^d)=\frac1{2d}+\frac{1}{\left(2d\right)^2} +\frac{7}{2}\frac{1}{\left(2d\right)^3} +\LandauBigO{\frac1{d^4}}\quad \text{as $d\to\infty$,}
    \end{equation} 
    cf.\ \cite{HarSla95,HofSla05}, whereas for hypercubic \emph{site} percolation on $\Z^d$ it is 
    \begin{equation}\label{eq:pc_site}
        p_c^\text{site}(\mathbb Z^d)= \frac1{2d}+\frac{5}{2}\frac{1}{\left(2d\right)^2} +\frac{31}{4}\frac{1}{\left(2d\right)^3} +\LandauBigO{\frac1{d^4}}\quad \text{as $d\to\infty$,}
    \end{equation}
    cf.\ \cite{HeyMat20}. 
    Mertens and Moore \cite{mertens2018series} use involved numerical enumeration to identify a few more terms (without a rigorous bound on the error). 
    In the present work, we address a corresponding question for continuum percolation. Interestingly, our analysis establishes an exponentially decaying series rather then an algebraic decay as on lattices. We shall discuss this point further in the discussion section. 

    
    \subsection{The Model} 
    To this end, we are considering the random connection model (henceforth abbreviated RCM), a spatial random graph model whose points are given as a homogeneous Poison process $\eta$ on $\R^d$ with intensity measure $\lambda\operatorname{Leb}$, and we refer to $\lambda>0$ as the \emph{intensity} of the model. Each pair of vertices $x,y$ in the support of $\eta$ are connected independently with probability 
    $\connf(x-y)$, where 
    \[\connf\colon \R^d\to[0,1]\]
    is integrable and symmetric (i.e. $\connf(x)=\connf(-x)$ for all $x\in\Rd$). 
    The classical example is the \emph{Gilbert disk model} \cite{Gil61} or hyper-sphere random connection model with 
    \[ \connf(x) = \Id_{\left\{\abs*{x}< R\right\}}\] 
    for some $R>0$: two vertices are connected whenever their (Euclidean) distance is at most $R$. 

    We are interested in the percolation phase transition of the RCM, that is, the critical intensity $\lambda_c$ given as the infimum of those values of $\lambda$ such that the resulting random graph has an infinite connected component: 
    \[ \lambda_c=\inf\{\lambda \mid \text{the RCM with intensity $\lambda$ has an infinite component}\}.\]
    See \cite[Section 2]{HeyHofLasMat19} for a more formal definition. 

    Penrose \cite{Pen91} uses the `method of generations' to show that for all dimensions $d\geq 1$ the critical intensity is strictly positive. In particular he derives the lower bound
    \begin{equation}
        \phiint\lambda_c \geq 1,
    \end{equation}
    where $\phiint= \int \connf(x)\dd x$. He also uses a coarse-graining argument to show that for $d\geq 2$ the critical intensity is finite if $\phiint>0$.
    Meester, Penrose and Sarkar \cite{MeePenSar97} prove the $0^{th}$ order asymptotics of $\lambda_c$ for radial non-increasing $\connf$ (with uniform bounds on the variance of the jumps taken by random walk with jump intensity proportional to $\connf$). Specifically they prove that for such models
    \begin{equation}
        \phiint\lambda_c \to 1
    \end{equation}
    as $d\to\infty$. In the present work we significantly expand their result by identifying several additional terms.

\subsection{Results}

We shall now make a couple of assumptions before formulating our main result. Throughout this paper we will denote the convolution of two non-negative functions $f,g\colon \Rd\to \R_{\geq 0}$ to be
    \begin{equation}
        f\star g(x) := \int f(x-u)g(u)\dd u,
    \end{equation}
and $f^{\star n}(x)$ to be the convolution of $n$ copies of $f$. In particular, $f^{\star 1}\equiv f$. We will also denote the Fourier transform of an integrable function $f\colon \Rd\to\R$ by
\begin{equation}
    \widehat{f}(k) := \int\e^{ik\cdot x}f(x)\dd x,
\end{equation}
for all $k\in\Rd$.

Our first set of assumptions are exactly those that allow us to use the results relating to lace expansion arguments.
\begin{assumptionp}{A}
\label{Assumption}
    We require $\connf$ to satisfy the following two properties: 
\begin{enumerate}[label=\textbf{(\ref{Assumption}.\arabic*)}]

\item\label{Assump:DecayBound} There exists a function $g\colon\mathbb N \to \R_{\geq 0}$ with the following three properties. Firstly, that $g(d)\to 0$ as $d\to\infty$. Secondly, that for $m\geq 3$, the $m$-fold convolution $\connf^{\star m}$ of $\connf$ satisfies
\begin{equation}
    \frac{1}{\phiint^{m-1}}\sup_{x\in\Rd}\connf^{\star m}(x) \leq g(d).
\end{equation}
Thirdly, that the Lebesgue volume
\begin{equation}\label{eqn:2-convolutionAssumption}
    \frac{1}{\phiint}\abs*{\left\{x\in\Rd\colon \frac{1}{\phiint} \connf\star\connf(x)  > g\left(d\right)\right\}} \leq g(d).
\end{equation}
\item\label{Assump:QuadraticBound} There are constants $b,c_1,c_2>0$ (independent of $d$) such that the Fourier transform $\fconnf$ satisfies
\begin{equation}
    \inf_{\abs*{k}\leq b}\frac{1}{\abs*{k}^2}\left(1-\frac{1}{\phiint}\fconnf(k)\right) > c_1,\qquad \inf_{\abs*{k}> b}\left(1-\frac{1}{\phiint}\fconnf(k)\right) > c_2.
\end{equation}
\end{enumerate}
\end{assumptionp}

\begin{remark}
    We believe that the condition \eqref{eqn:2-convolutionAssumption} is not necessary for our results. In \cite{DicHey2022triangle} it was required by the lace expansion argument to provide the skeleton of an argument that would work for ``spread out'' models in dimensions $d=7,8$ (in addition to $d\geq 9$). However, we are concerned here with taking $d\to\infty$, and so it should not be required. \hfill $\diamond$
\end{remark}

It will sometimes be more natural to work with a parameter $\beta(d)$ that is related to $g(d)$. From \cite{DicHey2022triangle} it is defined by
\begin{equation}
    \label{eqn:betafromgfunction}
        \beta(d) := \begin{cases}
            g(d)^{\frac{1}{4} -\frac{3}{2d}}d^{-\frac{3}{2}} &\colon \lim_{d\to\infty}g(d)\rho^{-d}\Gamma\left(\frac{d}{2}+1\right)^2 = 0 \qquad\forall\rho>0,\\
            g(d)^\frac{1}{4}&\colon \text{otherwise}.
        \end{cases}
    \end{equation}
Note that the Assumption~\ref{Assump:ExponentialDecay} below implies that $\beta(d)=g(d)^{\frac{1}{4}}$.

Our second set of assumptions allow us to keep suitable control of asymptotic properties. Let us define $h\colon \N\to\R_{\geq 0}$ and $N\colon \N\to\N$ by
    \begin{align}
        h(d) := & \frac{1}{\phiint^5}\connf^{\star 6}\left(\orig\right) + \frac{1}{\phiint^4}\int \connf(x)\connf^{\star 2}(x)\connf^{\star 3}(x)\dd x + \frac{1}{\phiint^4}\int \left(\connf^{\star 2}(x)\right)^3\dd x,\\
        N(d) := & \ceil*{\frac{\log h(d)}{\log \beta(d)}}.
    \end{align}
\begin{assumptionp}{B}
    \label{AssumptionBeta}
    We require that:
    \begin{enumerate}[label=\textbf{(\ref{AssumptionBeta}.\arabic*)}]
    \item\label{Assump:ExponentialDecay} There exists $\rho>0$ such that $\liminf_{d\to\infty}\rho^{-d}\phiint^{-5}\connf^{\star 6}\left(\orig\right)>0$.
    \item\label{Assump:NumberBound} $\limsup_{d\to\infty}N(d) < \infty$.
    \end{enumerate}    
\end{assumptionp}
Mind that Assumption~\ref{Assump:NumberBound} is in practice a lower bound on $h(d)$ because $\beta(d)<1$ for large $d$.
 \begin{remark}
     The factor $\connf^{\star 6}\left(\orig\right)$ appears in Assumption~\ref{Assump:ExponentialDecay} only because $\connf^{\star 6}\left(\orig\right)$ is the precision at which we stop our expansion. If we wished to proceed up to the $\connf^{\star m}\left(\orig\right)$ term, then we would need a version of ~\ref{Assump:ExponentialDecay} with $\connf^{\star m}\left(\orig\right)$ replacing $\connf^{\star 6}\left(\orig\right)$.  Assumption~\ref{Assump:ExponentialDecay} appears in our proof via Lemma~\ref{lem:tailbound}. A close inspection of the proof would reveal that it is a slightly stronger condition than is needed there. However the version presented is more concise and sufficient for the models we consider here. 
     
     The requirement that \ref{Assump:NumberBound} holds becomes apparent through Proposition~\ref{prop:PiBounds}. We take great care in describing the asymptotics of the first few terms in the expansion of $\fLacelamC(0)$ because they dictate the behaviour of $\lambda_c$ that we are interested in. On the other hand we can utilise pre-existing bounds for the tail of the expansion to show that it can be neglected in our calculations. If we fix a cut-off $N\geq 1$ in this expansion then these pre-existing bounds are of order $\beta^N$. Assumption~\ref{Assump:NumberBound} ensures that we can choose a fixed $N$ such that this tail error is smaller than the error terms arising elsewhere in the expansion. If this was not the case, we may try to let $N\to\infty$ as $d\to\infty$, but then we would be summing a diverging number of ``small'' terms prior to the cut-off and we would not have a good control on this. \hfill $\diamond$
 \end{remark}

\begin{definition}
    In addition to using the convolution operation to combine two non-negative functions $f,g\colon \Rd\to \R_{\geq 0}$, we will also find it convenient to use $f\cdot g$ to denote the pointwise multiplication of $f$ and $g$:
    \begin{equation}
        f\cdot g(x) := f(x)g(x).
    \end{equation}
    Furthermore, for $n_1,n_2,n_3\geq 1$, we will denote
    \begin{equation}
        \connf^{\star n_1\star n_2 \cdot n_3}\left(\orig\right) := \connf^{\star n_1}\star\left(\connf^{\star n_2}\cdot\connf^{\star n_3}\right)\left(\orig\right) = \int \connf^{\star n_1}(x)\connf^{\star n_2}(x)\connf^{\star n_3}(x)\dd x.
    \end{equation}
    This expression shows that $\connf^{\star n_1\star n_2 \cdot n_3}\left(\orig\right)$ is invariant under the permutation of $n_1$, $n_2$, and $n_3$.
\end{definition}

\begin{theorem}
\label{thm:CriticalIntensityExpansion}
    Suppose Assumptions~\ref{Assumption} and \ref{AssumptionBeta} are satisfied. Then as $d\to\infty$,
    \begin{multline}\label{eq:ExpansionMain}
        \lambda_c = \frac{1}{\phiint} + \frac{1}{\phiint^3}\connf^{\star 3}\left(\orig\right) + \frac{3}{2}\frac{1}{\phiint^4}\connf^{\star 4}\left(\orig\right) + 2\frac{1}{\phiint^5}\connf^{\star 5}\left(\orig\right) - \frac{5}{2}\frac{1}{\phiint^4}\connf^{\star 1\star 2\cdot2}\left(\orig\right) + 2\frac{1}{\phiint^5}\left(\connf^{\star 3}\left(\orig\right)\right)^2 \\+ \LandauBigO{\frac{1}{\phiint^6}\connf^{\star 3}\left(\orig\right)\connf^{\star 4}\left(\orig\right) + \frac{1}{\phiint^7}\left(\connf^{\star 3}\left(\orig\right)\right)^3 + \frac{1}{\phiint^6}\connf^{\star 6}\left(\orig\right) +  \frac{1}{\phiint^5}\connf^{\star 2\star 2\cdot 2}\left(\orig\right) + \frac{1}{\phiint^5}\connf^{\star 1 \star 2\cdot 3}\left(\orig\right)}.
    \end{multline}
\end{theorem}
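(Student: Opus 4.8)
The proof strategy rests on the lace expansion for the random connection model, which expresses the critical intensity through the two-point function at criticality. The plan is to write $\lambda_c$ in terms of $\fLacelamC(0)$, the Fourier transform at zero of the lace-expansion coefficient evaluated at the critical intensity. The defining relation of the lace expansion (an Ornstein--Zernike-type equation) gives at $k=0$, $\lambda_c = \frac{1}{\phiint}\bigl(1 - \fLacelamC(0)\bigr)$ up to a normalization, so the entire problem reduces to establishing the asymptotic expansion
\[
    \fLacelamC(0) = -\frac{1}{\phiint^2}\connf^{\star 3}(\orig) - \frac{3}{2}\frac{1}{\phiint^3}\connf^{\star 4}(\orig) - \cdots
\]
to the required precision, with error controlled by the quantities appearing in the $\LandauBigO{\cdot}$ term of \eqref{eq:ExpansionMain}.

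First I would recall the decomposition $\fLacelamC(0) = \sum_{N\geq 0}(-1)^N \fOpLacelamC^{(N)}(0)$ into the lace-expansion ``loops'' indexed by the number $N$ of cut points, where $\fOpLacelamC^{(N)}$ admits diagrammatic bounds of the type catalogued in the figures (the $\Pi^{(1)}$-diagrams, etc.). The leading term $\fOpLacelamC^{(0)}(0)$ must be expanded carefully: at $\lambda_c$ we substitute the relation $\phiint\lambda_c = 1 + o(1)$ and Taylor-expand the two-point function around the ``mean-field'' object, generating the terms $\connf^{\star 3}(\orig)$, $\connf^{\star 4}(\orig)$, $\connf^{\star 5}(\orig)$ and the genuinely non-tree contributions $\connf^{\star 1\star 2\cdot 2}(\orig)$ and $(\connf^{\star 3}(\orig))^2$. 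For $\fOpLacelamC^{(1)}(0)$ one needs the first two or three diagrams expanded to the same order, contributing at orders $\connf^{\star 3}(\orig)$ through the error level. Here Proposition~\ref{prop:PiBounds} (referenced in the excerpt) supplies the bounds showing that the tail $\sum_{N\geq N_0}$ is $\LandauBigO{\beta^{N_0}}$, and Assumption~\ref{Assump:NumberBound} guarantees a fixed $N_0$ makes this smaller than the stated error; Assumption~\ref{Assump:ExponentialDecay} feeds into Lemma~\ref{lem:tailbound} to bound the convolution tails.

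The bookkeeping step is then to collect all contributions of each ``type'' --- powers of $\connf^{\star m}(\orig)$, products such as $\connf^{\star 3}(\orig)\connf^{\star 4}(\orig)$, the mixed-diagram quantities $\connf^{\star n_1\star n_2\cdot n_3}(\orig)$ --- and verify via Assumptions~\ref{Assump:DecayBound} and \ref{AssumptionBeta} that everything not explicitly displayed is dominated by one of the five terms in the $\LandauBigO{\cdot}$. This requires establishing a hierarchy among the relevant quantities: each additional convolution factor costs a factor $g(d)\to 0$ (by \ref{Assump:DecayBound}), and the relation \eqref{eqn:betafromgfunction} together with \ref{Assump:ExponentialDecay} pins down $\beta(d)=g(d)^{1/4}$, so that $h(d)$ is comparable to $\connf^{\star 6}(\orig)/\phiint^5$ plus the two mixed terms, which precisely matches the error budget. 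Finally, one inverts $\lambda_c = \frac{1}{\phiint}(1-\fLacelamC(0))$, but since $\fLacelamC(0)$ itself depends on $\lambda_c$, this is a fixed-point/bootstrap argument: one plugs in the zeroth-order value $\lambda_c = 1/\phiint + o(\cdot)$, recomputes, and iterates, each iteration being justified because the correction terms are strictly smaller in the $g(d)\to 0$ hierarchy.

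The main obstacle I expect is the careful expansion of $\fOpLacelamC^{(0)}(0)$ and $\fOpLacelamC^{(1)}(0)$ to the precise order needed, keeping track of the combinatorial coefficients $\tfrac{3}{2}$, $2$, $-\tfrac{5}{2}$, $2$ exactly --- these come from counting how many diagrammatic shapes collapse onto each convolution monomial once one accounts for the indistinguishability encoded in the permutation-invariance of $\connf^{\star n_1\star n_2\cdot n_3}(\orig)$, and from the interplay between the sign $(-1)^N$ and the Taylor coefficients of the OZ inversion. A secondary difficulty is ensuring the error terms from \emph{every} diagram (including those in the $N\ge 2$ tail) are genuinely subsumed by the five displayed error quantities rather than producing some new type; this is where the explicit form of $h(d)$ and the quantity $N(d)$ earns its keep, and where Assumption~\ref{AssumptionBeta} is used in an essential way.
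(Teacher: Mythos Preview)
Your overall strategy matches the paper's: derive the critical relation from the Ornstein--Zernike equation, feed in the expansion of $\fLacelamC(0)=\sum_{n\ge 0}(-1)^n\fLacelamC^{(n)}(0)$ supplied by Proposition~\ref{prop:PiBounds}, and then solve for $\lambda_c$ by iterative substitution. However, two specifics in your outline are off and would lead you to the wrong coefficients if followed literally.

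First, the critical relation is $\lambda_c\bigl(\phiint+\fLacelamC(0)\bigr)=1$, i.e.\ (with the normalisation $\phiint=1$) $\lambda_c=1-\lambda_c\fLacelamC(0)$, not $\lambda_c=\tfrac{1}{\phiint}(1-\fLacelamC(0))$. The distinction matters because $\fLacelamC(0)$ itself carries powers of $\lambda_c$: Corollary~\ref{thm:CoefficientExpansion} gives $\lambda_c\fLacelamC(0)=-\lambda_c^2\connf^{\star 3}(\orig)-\tfrac32\lambda_c^3\connf^{\star 4}(\orig)-\cdots$, so the equation to iterate is $\lambda_c=1+a\lambda_c^2+b\lambda_c^3+c\lambda_c^4+\LandauBigO{r}$ with $a=\connf^{\star 3}(\orig)$, etc. The term $2(\connf^{\star 3}(\orig))^2$ in the final answer does \emph{not} come from any $\Pi^{(n)}$ diagram; it is generated purely by this substitution step (from $a\cdot 2a\lambda_c^2$ when one expands $a(1+a\lambda_c^2+\cdots)^2$).

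Second, you have the roles of $\Pi^{(0)}$ and $\Pi^{(1)}$ reversed. In the paper's Proposition~\ref{prop:PiBounds}, the leading $\connf^{\star 3}(\orig)$ contribution comes from $\fLacelamC^{(1)}(0)$ (the single-loop diagram with one pivotal step), while $\fLacelamC^{(0)}(0)$ starts only at order $\connf^{\star 4}(\orig)$, since the double-connection event with $\orig\not\sim x$ requires at least two independent two-step paths. Getting the signs and constants $\tfrac32$, $2$, $-\tfrac52$ right depends on combining the contributions from $n=0,1,2$ with the alternating sign $(-1)^n$, so the correct attribution is essential.
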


\paragraph{Remarks on Graphical Notation.}
It will often be convenient and clearer to represent the objects like $\connf^{\star n}\left(\orig\right)$ and $\connf^{\star n_1 \star n_2 \cdot n_3}\left(\orig\right)$ pictorially. By expanding out the convolutions in these expressions it is clear that they are integrals over some finite set of points with functions associating pairs of these points (and sometimes the origin). We are therefore able to represent these integrals pictorially as rooted graphs. In these we represent the spatial origin $\orig\in\Rd$ with the root vertex $\OriginDot$, and an integral of some $x\in\Rd$ with the vertex $\IntegralDot$. If we can interpret a $\connf$ function to be ``connecting'' two $\Rd$-values, then we draw a line $\connfline$ between the vertices corresponding to the two $\Rd$-values. For example, this allows us to graphically represent objects such as
\begin{align}
    \connf^{\star 3}\left(\orig\right) &= \int\connf(x)\connf(y)\connf(x-y)\dd x\dd y = \loopthree,\\
    \connf^{\star 1\star 2\cdot 2}\left(\orig\right) &= \int\connf(x)\connf(y)\connf(z)\connf(x-z)\connf(z-y)\dd x\dd y\dd z = \fourcrosstwo. 
\end{align}
Observe that convolution is a commutative binary relation. This means for example that various diagrams the position of the root vertex $\OriginDot$ is not important. The most common example of this in our arguments will relate to $\connf^{\star 1 \star 2 \cdot 2}\left(\orig\right)$. By first recalling that $\connf^{\star n_1\star n_2 \cdot n_3}\left(\orig\right)$ is invariant under the permutation of $n_1$, $n_2$ and $n_3$, and then using the commutativity property of convolution, we find
\begin{equation}
    \fourcrosstwo = \connf\star\left(\connf^{\star 2}\cdot\connf^{\star 2}\right)\left(\orig\right) = \connf^{\star 2}\star\left(\connf\cdot\connf^{\star 2}\right)\left(\orig\right) = \connf\star\left(\connf\cdot\connf^{\star 2}\right)\star\connf\left(\orig\right) = \fourcrossone.
\end{equation}
We will tend to prefer $\fourcrossone$ over $\fourcrosstwo$, as we find the former slightly easier to read.

This graphical notation allows us to write the expansion of Theorem~\ref{thm:CriticalIntensityExpansion} in a form that is much easier to read. By a rescaling argument (see \cite[Section~5.1]{HeyHofLasMat19} for the details), we may assume without loss of generality that
\begin{equation}
    \phiint=\int \connf(x)\dd x =1,
\end{equation}
and we shall silently make this assumption in our analysis. 
Under this scaling choice, the expansion \eqref{eq:ExpansionMain} is represented pictorially by
\begin{multline}\label{eq:expansionmainpic}
    \lambda_c = 1+\loopthreeempty + \frac{3}{2}\loopfourempty + 2\loopfiveempty - \frac{5}{2}\fourcrossoneempty +2\left(\loopthreeempty\right)^2 \\+ \LandauBigO{\loopthreeempty\times\loopfourempty + \left(\loopthreeempty\right)^3 + \loopsixempty +  \phiThreeTwoOneempty + \phiTwoTwoTwoempty}.
\end{multline}

For some calculations, we will want to integrate a $\tlam$ function instead of a $\connf$ ($\tlam$ is defined below at \eqref{eqn:two-point_function}). We will also sometimes find it convenient to write the sum of two integrals as one integral by using $1-\connf$ to associate two $\Rd$-values. When we can interpret a $\tlam$ function to be ``connecting'' two $\Rd$-values, then we draw a green line $\tlamline$ between the vertices corresponding to the two $\Rd$-values, and similarly we draw a red line $\Notconnfline$ when a $1-\connf$ connects two values. As examples, we can use these to represent the following two integrals:
\begin{align}
    \int\connf(y)\tlam(x)\tlam(x-y)\dd x\dd y & = \PiOnePtSix,\\
    \int\connf(x)\connf(y)\connf(z-x)\connf(z-y)\left(1-\connf(z)\right)\dd x\dd y\dd z& = \FtwoPtOne.
\end{align}

\subsection{Applications}\label{sec:applications}
The result of Theorem~\ref{thm:CriticalIntensityExpansion} is very general in that the Assumptions~\ref{Assumption} and \ref{AssumptionBeta} apply to very many models. We now apply it to a number of examples. 

\subsubsection{The Gilbert disk model resp.\ the Hyper-Sphere RCM}
        For $R>0$, the Hyper-Sphere RCM is defined by having
        \begin{equation}
            \connf(x) = \Id_{\left\{\abs*{x}< R\right\}}.
        \end{equation}
        This is the classical model for Boolean percolation studied by Gilbert in 1961 \cite{Gil61}. 
        Writing $B\left(x;a,b\right)=\int^x_0t^{a-1}\left(1-t\right)^{b-1}\dd t$ for the \emph{incomplete Beta function} and $\Gamma\left(x\right)=\int^\infty_0t^{x-1}\e^{-t}\dd t$ is the \emph{Gamma function}, we obtain the following expansion of the critical intensity: 
        
\begin{corollary}\label{cor:sphere}
    For the Hyper-Sphere RCM with radius $R=R(d)>0$,
    \begin{equation}
        \frac{\pi^{\frac{d}{2}}}{\Gamma\left(\frac{d}{2}+1\right)}R^{d}\lambda_c =1 + \frac{3}{2\sqrt{\pi}}\frac{\Gamma\left(\frac{d}{2}+1\right)}{\Gamma\left(\frac{d}{2}+\frac{1}{2}\right)}B\left(\frac{3}{4};\frac{d}{2}+\frac{1}{2},\frac{1}{2}\right) + \LandauBigO{\frac{1}{\sqrt{d}}\left(\frac{16}{27}\right)^\frac{d}{2}}.
    \end{equation}
\end{corollary}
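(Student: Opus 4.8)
The plan is to substitute $\connf=\Id_{\{|x|<R\}}$ into Theorem~\ref{thm:CriticalIntensityExpansion} and evaluate the surviving terms. First I would record that $\phiint=\int\Id_{\{|x|<R\}}\dd x=\frac{\pi^{d/2}}{\Gamma(d/2+1)}R^{d}=V_dR^{d}$, where $V_d$ is the volume of the unit ball, so that the left-hand side of the corollary is exactly $\phiint\lambda_c$; multiplying \eqref{eq:ExpansionMain} by $\phiint$ gives
\begin{equation*}
\phiint\lambda_c=1+\frac{\connf^{\star3}(\orig)}{\phiint^{2}}+\frac32\frac{\connf^{\star4}(\orig)}{\phiint^{3}}+2\frac{\connf^{\star5}(\orig)}{\phiint^{4}}-\frac52\frac{\connf^{\star1\star2\cdot2}(\orig)}{\phiint^{3}}+2\Bigl(\frac{\connf^{\star3}(\orig)}{\phiint^{2}}\Bigr)^{2}+\phiint\cdot(O\text{-remainder}).
\end{equation*}
Before this one must check the hypotheses for the ball; this is routine, the key point being that $\connf^{\star(m+1)}(\orig)=\int_{|x|<R}\connf^{\star m}(x)\dd x\le V_dR^{d}\,\connf^{\star m}(\orig)$, so $\connf^{\star m}(\orig)/\phiint^{m-1}$ is non-increasing in $m$ and decays exponentially, whence one may take $g(d)$ of order $\connf^{\star3}(\orig)/\phiint^{2}$ and $\beta(d)=g(d)^{1/4}$ (invoking the Remark after Assumption~\ref{Assumption} to dispense with \eqref{eqn:2-convolutionAssumption}), and $N(d)$ stays bounded.

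The heart of the argument is an exact evaluation of $\connf^{\star3}(\orig)/\phiint^{2}$. The two-point convolution of a ball is a spherical-cap volume: from $\int_a^1(1-t^2)^{(d-1)/2}\dd t=\tfrac12 B\bigl(1-a^2;\tfrac{d+1}2,\tfrac12\bigr)$ one gets $\connf^{\star2}(x)=V_{d-1}R^{d}\,B\bigl(1-\tfrac{|x|^{2}}{4R^{2}};\tfrac{d+1}2,\tfrac12\bigr)$. I would then write $\connf^{\star3}(\orig)=\int_{|x|<R}\connf^{\star2}(x)\dd x$, pass to polar coordinates, integrate by parts using $\tfrac{\dd}{\dd x}B(x;a,b)=x^{a-1}(1-x)^{b-1}$, and substitute $v=s^{2}/4$ in the resulting $\int_0^1 s^{d}(1-s^{2}/4)^{(d-1)/2}\dd s$, arriving at
\begin{equation*}
\connf^{\star3}(\orig)=V_{d-1}V_dR^{2d}\Bigl[B\bigl(\tfrac34;\tfrac{d+1}2,\tfrac12\bigr)+2^{d}B\bigl(\tfrac14;\tfrac{d+1}2,\tfrac{d+1}2\bigr)\Bigr].
\end{equation*}
Now the quadratic transformation of the regularised incomplete beta function, $I_{1/4}(a,a)=\tfrac12 I_{3/4}(a,\tfrac12)$ with $a=\tfrac{d+1}2$, combined with Legendre's duplication formula (which gives $B(\tfrac{d+1}2,\tfrac{d+1}2)/B(\tfrac{d+1}2,\tfrac12)=2^{-d}$), collapses the two beta terms into one: $2^{d}B(\tfrac14;\tfrac{d+1}2,\tfrac{d+1}2)=\tfrac12 B(\tfrac34;\tfrac{d+1}2,\tfrac12)$. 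Hence $\connf^{\star3}(\orig)=\tfrac32 V_{d-1}V_dR^{2d}B(\tfrac34;\tfrac{d+1}2,\tfrac12)$, and dividing by $\phiint^{2}=V_d^{2}R^{2d}$ and using $\tfrac{V_{d-1}}{V_d}=\tfrac{1}{\sqrt\pi}\tfrac{\Gamma(d/2+1)}{\Gamma(d/2+1/2)}$ produces precisely the stated second term.

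It remains to bound everything else by $\LandauBigO{d^{-1/2}(16/27)^{d/2}}$. The reference scale is $\connf^{\star4}(\orig)/\phiint^{3}=\frac{1}{\phiint}\int(\connf^{\star2}(x)/\phiint)^{2}\dd x$, which Laplace's method evaluates as $\Theta\bigl(d^{-1/2}(16/27)^{d/2}\bigr)$: the exponent $\tfrac12\log(1-s^{2}/4)+\log s$ has an interior maximum at $s=2R/\sqrt3$ with value $\log(4\sqrt3/9)$ and $(4\sqrt3/9)^{d}=(16/27)^{d/2}$. So the $\tfrac32\connf^{\star4}$ term is exactly of the claimed order and is absorbed into the error, while every other term must be shown to be of strictly smaller exponential rate. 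For this I would use monotonicity in $m$ (so $\connf^{\star5}(\orig)/\phiint^{4}\le\connf^{\star4}(\orig)/\phiint^{3}$), the pointwise bounds $\|\connf^{\star2}/\phiint\|_\infty\le 1$ and $\int\connf^{\star2}=\phiint^{2}$, radial monotonicity of the $\connf^{\star m}$, and the asymptotics $\connf^{\star3}(\orig)/\phiint^{2}\asymp d^{-1/2}(3/4)^{d/2}$: these reduce $\bigl(\connf^{\star3}(\orig)/\phiint^{2}\bigr)^{2}$, $\connf^{\star1\star2\cdot2}(\orig)/\phiint^{3}=\frac{1}{\phiint}\int_{|x|<R}(\connf^{\star2}(x)/\phiint)^{2}\dd x$, and the five $O$-diagrams of \eqref{eq:ExpansionMain} (involving $\connf^{\star3}\connf^{\star4}$, $(\connf^{\star3})^{3}$, $\connf^{\star6}$, $\connf^{\star2\star2\cdot2}$, $\connf^{\star1\star2\cdot3}$) to products whose exponential rate is a proper power of $3/4$ or of $16/27$, strictly below $(16/27)^{1/2}$; finally the lace-expansion tail contributes only $O(\beta(d)^{N})$ for the fixed $N$ of Assumption~\ref{Assump:NumberBound}, which with $\beta(d)\asymp(3/4)^{d/8}$ is again $o\bigl(d^{-1/2}(16/27)^{d/2}\bigr)$. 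The hard part will be exactly this last point — pinning down the exponential rate of every error diagram and of the tail and checking that $\connf^{\star4}(\orig)/\phiint^{3}$ is genuinely the largest (and that the bound $\beta(d)^{N}$ beats $(16/27)^{d/2}$, which is true only because the expansion is carried to precision $\connf^{\star6}(\orig)$) — since the whole point of the stated error term is that it is tight at that threshold.
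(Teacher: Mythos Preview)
Your overall strategy matches the paper's: apply Theorem~\ref{thm:CriticalIntensityExpansion}, identify the closed form for $\connf^{\star 3}(\orig)/\phiint^{2}$, and absorb all remaining terms into $\LandauBigO{d^{-1/2}(16/27)^{d/2}}$. Your derivation of that closed form --- integration by parts, then the quadratic transformation $I_{1/4}(a,a)=\tfrac12 I_{3/4}(a,\tfrac12)$ combined with Legendre duplication --- is in fact more explicit than what the paper does: the paper records the identity $\connf^{\star 3}(\orig)=\tfrac32\connf^{\star 2}(\tilde x)$ (with $|\tilde x|=R$), attributes it to \cite{Tor12}, and omits the trigonometric-substitution proof. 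Your Laplace computation for $\connf^{\star 4}(\orig)$ is likewise what the paper cites from \cite{luban1982third,joslin1982third}.

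There is, however, a genuine gap in your verification of the hypotheses. You invoke the Remark following Assumption~\ref{Assumption} to waive condition~\eqref{eqn:2-convolutionAssumption}, but that Remark records a belief, not a theorem; Theorem~\ref{thm:CriticalIntensityExpansion} as stated requires all of Assumption~\ref{Assumption}. You also do not address~\ref{Assump:QuadraticBound} at all, and for~\ref{Assump:ExponentialDecay} you need an exponential \emph{lower} bound on $\connf^{\star 6}(\orig)/\phiint^{5}$, whereas your monotonicity argument produces only upper bounds. The paper handles Assumption~\ref{Assumption} by citing \cite[Proposition~1.1]{HeyHofLasMat19}, and devotes a separate, non-routine lemma to~\ref{Assump:ExponentialDecay}, bounding $\int\fconnf(k)^{6}\,\dd k/(2\pi)^{d}$ from below via the first-maximum asymptotics of $J_{d/2}$. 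Without that lower bound your claim that ``$N(d)$ stays bounded'' is unsupported.

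A minor point: you do not need the residual terms to have ``strictly smaller exponential rate''; being $O$ of the stated error suffices, and that is all your domain-restriction and monotonicity bounds actually deliver (for instance $\connf^{\star 1\star 2\cdot 2}(\orig)\le\connf^{\star 4}(\orig)$ is immediate, while a strictly smaller rate for it is only conjectured in the paper).
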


\begin{remark}
    Here we only expand as far as the $\connf^{\star 3}\left(\orig\right)$ term, and our error is the asymptotic size of the $\connf^{\star 4}\left(\orig\right)$ term. This is because these are the only terms for which we have rigorous closed-form expressions for their asymptotic size. Conjecture~\ref{conj:HyperDiscModel} gives the expected terms in the expansion based on numerical estimates of their asymptotic behaviour. \hfill $\diamond$
\end{remark}
        
\begin{figure}
    \centering
    \includegraphics[width=.3\textwidth]{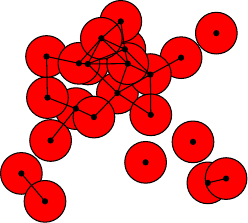}
    \hskip5em
    \includegraphics[width=.3\textwidth]{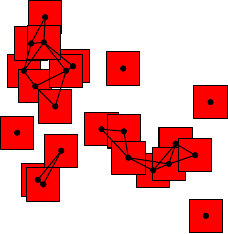}
    \caption{Left: The Hyper-Sphere RCM -- two Poisson points are connected whenever the circles of radius $R/2$ overlap.
    Right: The Hyper-Cube RCM -- two Poisson points are connected whenever the cubes of side length $L/2$ overlap.}
    \label{fig:enter-label}
\end{figure}
\subsubsection{The Hyper-Cube RCM}
While the hyper-sphere model is a good example showing that the numerical integration of the various convolutions of the adjacency function in \eqref{eq:ExpansionMain} can get fairly involved, the calculations simplify significantly for the Hyper-Cubic RCM given by 
        \begin{equation}
            \connf(x) = \prod^d_{j=1}\Id_{\left\{\abs*{x_j}\leq L/2\right\}},
        \end{equation}
where $x=\left(x_1,\ldots,x_d\right)\in\Rd$ and  $L>0$ is a parameter.

\begin{corollary}\label{cor:cube}
    For the Hyper-Cubic RCM with side length $L=L(d)>0$, as $d\to\infty$
    \begin{equation}
        L^d\lambda_c = 1 + \left(\frac{3}{4}\right)^d + \frac{3}{2}\left(\frac{2}{3}\right)^d + 2\left(\frac{115}{192}\right)^d - \frac{5}{2}\left(\frac{7}{12}\right)^d + 2\left(\frac{9}{16}\right)^d + \LandauBigO{\left(\frac{11}{20}\right)^d}.
    \end{equation}
\end{corollary}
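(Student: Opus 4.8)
The plan is to specialise Theorem~\ref{thm:CriticalIntensityExpansion} to the hyper-cubic kernel and exploit its product structure. After the rescaling that normalises $\phiint=1$ (equivalently $L=1$, which turns $L^d\lambda_c$ into exactly the quantity the theorem computes), we have $\connf(x)=\prod_{j=1}^d\Id_{\{\abs{x_j}\le 1/2\}}$, so every convolution power factorises over coordinates:
\begin{equation*}
\connf^{\star n}(x)=\prod_{j=1}^d f_n(x_j),\qquad f_n:=\bigl(\Id_{[-1/2,1/2]}\bigr)^{\star n},
\end{equation*}
where $f_n$ is the centred cardinal B-spline of order $n$, supported on $[-n/2,n/2]$. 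Hence each quantity in \eqref{eq:ExpansionMain} is a $d$-th power of a one-dimensional integral: $\connf^{\star n}(\orig)=f_n(0)^d$, $\connf^{\star 1\star 2\cdot 2}(\orig)=\bigl(\int f_1 f_2^2\bigr)^d$, $\bigl(\connf^{\star 3}(\orig)\bigr)^2=f_3(0)^{2d}$, $\connf^{\star 2\star 2\cdot 2}(\orig)=\bigl(\int f_2^3\bigr)^d$, $\connf^{\star 1\star 2\cdot 3}(\orig)=\bigl(\int f_1 f_2 f_3\bigr)^d$. The proof then reduces to (i) verifying Assumptions~\ref{Assumption} and~\ref{AssumptionBeta} for this kernel, and (ii) evaluating those integrals and substituting.

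For step (i): since $f_n$ is symmetric, unimodal and maximised at $0$, one has $\sup_x\connf^{\star m}(x)=f_m(0)^d$, and $f_{m+1}(0)=\int_{-1/2}^{1/2}f_m(s)\,\dd s\le f_m(0)$, so $f_m(0)\le f_3(0)=\tfrac34$ for all $m\ge 3$; the uniform-convolution part of Assumption~\ref{Assump:DecayBound} thus holds with $g(d)=\gamma^d$ for any fixed $\gamma\in(3/4,1)$. For the volume bound \eqref{eqn:2-convolutionAssumption}, $\{x:\connf^{\star 2}(x)>\gamma^d\}=\{x\in(-1,1)^d:\tfrac1d\sum_j\log(1-\abs{x_j})>\log\gamma\}$, whose Lebesgue measure is at most $\gamma^d$ once $\gamma$ is close enough to $1$, by a Cram\'er large-deviation bound for i.i.d.\ copies of $\log(1-U)$ with $U$ uniform on $[0,1]$ (a short computation with the rate function $I(a)=-1-a-\log(-a)$ shows any $\gamma$ above roughly $0.84$ works). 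For Assumption~\ref{Assump:QuadraticBound} we use $\fconnf(k)=\prod_{j=1}^d\tfrac{2\sin(k_j/2)}{k_j}$ together with the elementary facts that $1-\tfrac{2\sin(t/2)}{t}$ is comparable to $t^2$ for small $t$ and bounded away from $0$ for $\abs{t}\ge b$, which give the two required inequalities with $d$-independent constants. Finally $\connf^{\star 6}(\orig)=f_6(0)^d=(11/20)^d$ supplies the exponential lower bound of Assumption~\ref{Assump:ExponentialDecay} with $\rho=11/20$; this forces $\beta(d)=g(d)^{1/4}=\gamma^{d/4}$, and since $h(d)$ is asymptotic to $(11/20)^d$ we get $N(d)=\ceil{\log h(d)/\log\beta(d)}\to\ceil{4\log(11/20)/\log\gamma}<\infty$, so Assumption~\ref{Assump:NumberBound} holds.

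For step (ii): iterating the convolution, or using $f_n(0)=\tfrac1{(n-1)!}\sum_{k\ge 0,\,k<n/2}(-1)^k\binom nk\bigl(\tfrac n2-k\bigr)^{n-1}$, gives $f_3(0)=\tfrac34$, $f_4(0)=\tfrac23$, $f_5(0)=\tfrac{115}{192}$, $f_6(0)=\tfrac{11}{20}$; and since $f_2(t)=(1-\abs t)_+$ and $f_3(t)=\tfrac34-t^2$ on $[-\tfrac12,\tfrac12]$, elementary integration yields $\int f_1 f_2^2=\tfrac7{12}$, $\int f_2^3=\tfrac12$, $\int f_1 f_2 f_3=\tfrac{49}{96}$. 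Substituting into \eqref{eq:ExpansionMain} with $\phiint=1$, and using $\connf^{\star 3}(\orig)\connf^{\star 4}(\orig)=(\tfrac12)^d$ and $\bigl(\connf^{\star 3}(\orig)\bigr)^3=(\tfrac{27}{64})^d$ for the error:
\begin{equation*}
\lambda_c = 1 + \Bigl(\tfrac34\Bigr)^{d} + \tfrac32\Bigl(\tfrac23\Bigr)^{d} + 2\Bigl(\tfrac{115}{192}\Bigr)^{d} - \tfrac52\Bigl(\tfrac7{12}\Bigr)^{d} + 2\Bigl(\tfrac9{16}\Bigr)^{d} + \LandauBigO{\Bigl(\tfrac12\Bigr)^{d}+\Bigl(\tfrac{27}{64}\Bigr)^{d}+\Bigl(\tfrac{11}{20}\Bigr)^{d}+\Bigl(\tfrac{49}{96}\Bigr)^{d}}.
\end{equation*}
Since $\max\{\tfrac12,\tfrac{27}{64},\tfrac{49}{96}\}<\tfrac{11}{20}$, the error collapses to $\LandauBigO{(11/20)^d}$; undoing the rescaling replaces $\lambda_c$ by $L^d\lambda_c$ and gives the corollary for arbitrary $L=L(d)>0$, since the normalisation step and all displayed quantities (being the scale-invariant combinations $\phiint^{-(n-1)}\connf^{\star n}(\orig)$, etc.) do not depend on the chosen $L$.

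The only step requiring genuine (if brief) work is verifying Assumption~\ref{Assumption}: the large-deviation estimate behind the volume bound \eqref{eqn:2-convolutionAssumption} and the uniform-in-$d$ quadratic lower bound on $1-\fconnf$. Everything else is bookkeeping and one-variable calculus, and the remark after Assumption~\ref{Assumption} suggests that \eqref{eqn:2-convolutionAssumption} is not even needed, which would remove this obstacle entirely.
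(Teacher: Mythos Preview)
Your approach is essentially the same as the paper's: exploit the product structure to reduce everything to one-dimensional integrals (the paper computes the piecewise-polynomial formulas for $\connf_1^{\star 2},\connf_1^{\star 3},\connf_1^{\star 4}$ directly rather than via the B-spline sum, but the numbers agree), verify Assumptions~\ref{Assumption} and~\ref{AssumptionBeta}, and plug into Theorem~\ref{thm:CriticalIntensityExpansion}.

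Two remarks on the details. First, your handling of the volume condition \eqref{eqn:2-convolutionAssumption} via a Cram\'er bound is actually \emph{more} careful than the paper, which simply asserts that $g(d)=(3/4)^d$ works; in fact the set $\{x:\connf^{\star 2}(x)>(3/4)^d\}$ has volume growing like $e^{0.16d}$ by the same large-deviation computation, so the paper's specific choice of $g$ fails that clause (though, as the remark after Assumption~\ref{Assumption} notes and as you observe, this clause is not really needed for $d\to\infty$). Your choice of $\gamma$ closer to $1$ repairs this. Second, your verification of \ref{Assump:QuadraticBound} is a genuine sketch: the coordinate-wise bound $1-\tfrac{2\sin(t/2)}{t}\asymp t^2$ does not immediately give a $d$-independent bound $1-\fconnf(k)\ge c_1\abs{k}^2$ for the product, and one needs an argument like $\prod_j(1-c'k_j^2)\le e^{-c'\abs{k}^2}$ together with $1-e^{-x}\ge Cx$ on bounded intervals (or the explicit polynomial manipulation the paper carries out) to pass from coordinates to the Euclidean norm. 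This is routine, but should be said.
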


\subsubsection{The Gaussian RCM}
    For $\sigma^2>0$ and $0<\Acal\leq \left(2\pi\sigma^2\right)^\frac{d}{2}$, the Gaussian RCM is defined by having
        \begin{equation}
            \connf\left(x\right) = \frac{\Acal}{\left(2\pi \sigma^2\right)^\frac{d}{2}}\exp\left(-\frac{1}{2\sigma^2}\abs*{x}^2\right).
        \end{equation}
    The parameter $\sigma$ is a length scale parameter while the $\Acal$ factor ensures $\Acal = \int\connf(x)\dd x$. The upper bound on $\Acal$ is only there to ensure $\connf$ is $\left[0,1\right]$-valued.
    Then we have the following expansion: 
\begin{corollary}\label{cor:Gaussian}
    For the Gaussian RCM with $\Acal=\Acal(d)>0$ and $\sigma=\sigma(d)>0$ such that $\liminf_{d\to\infty}\connf\left(\orig\right)^{\frac{1}{d}}>0$, as $d\to\infty$
    \begin{equation}
        \Acal\lambda_c = 1 + \Acal\left(6\pi\sigma^2\right)^{-\frac{d}{2}} + \frac{3}{2}\Acal\left(8\pi\sigma^2\right)^{-\frac{d}{2}} + 2\Acal\left(10\pi\sigma^2\right)^{-\frac{d}{2}} + \LandauBigO{\Acal\left(12\pi\sigma^2\right)^{-\frac{d}{2}}}.
    \end{equation}
    In particular, if $\connf\left(\orig\right)=\Acal\left(2\pi\sigma^2\right)^{-\frac{d}{2}} = 1$, then
    \begin{equation}
        \Acal\lambda_c = 1 + 3^{-\frac{d}{2}} + \frac{3}{2}\times4^{-\frac{d}{2}} +2\times5^{-\frac{d}{2}} + \LandauBigO{6^{-\frac{d}{2}}}. 
    \end{equation}
\end{corollary}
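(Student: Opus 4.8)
\emph{Proof strategy for Corollary~\ref{cor:Gaussian}.}
The plan is to substitute the Gaussian kernel into Theorem~\ref{thm:CriticalIntensityExpansion} and evaluate every term in closed form, which is possible because Gaussians are stable under convolution. With $\connf(x)=\Acal(2\pi\sigma^2)^{-d/2}\e^{-|x|^2/(2\sigma^2)}$ and $\phiint=\Acal$, induction gives $\connf^{\star n}(x)=\Acal^n(2\pi n\sigma^2)^{-d/2}\e^{-|x|^2/(2n\sigma^2)}$, hence $\phiint^{-(n-1)}\connf^{\star n}(\orig)=\Acal(2\pi n\sigma^2)^{-d/2}=\connf(\orig)\,n^{-d/2}$. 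For the mixed diagrams one uses that a pointwise product of centred Gaussians of variances $t_1,\dots,t_r$ and masses $c_1,\dots,c_r$ integrates to $\bigl(\prod_i c_i\bigr)(2\pi t)^{d/2}$ with $t^{-1}=\sum_i t_i^{-1}$. Consequently, after inserting the relevant powers of $\phiint$, every term of \eqref{eq:ExpansionMain} evaluates to $\connf(\orig)^{\,j}$ times a model-independent constant raised to the power $d/2$, for some integer $j\ge1$.

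Before substituting I would check the hypotheses. For Assumption~\ref{Assumption}, take $g(d):=\phiint^{-1}\connf^{\star2}(\orig)=\connf(\orig)2^{-d/2}$: this tends to $0$ since $\connf(\orig)\le1$; as each $\connf^{\star m}$ is radially decreasing, $\phiint^{-(m-1)}\sup_x\connf^{\star m}(x)=\connf(\orig)m^{-d/2}\le\connf(\orig)3^{-d/2}\le g(d)$ for all $m\ge3$; and $\phiint^{-1}\connf\star\connf(x)\le\connf(\orig)2^{-d/2}=g(d)$ for \emph{every} $x$, so the set in \eqref{eqn:2-convolutionAssumption} is empty. Assumption~\ref{Assump:QuadraticBound} reduces to studying $1-\phiint^{-1}\fconnf(k)=1-\e^{-\sigma^2|k|^2/2}$; since $r\mapsto r^{-2}(1-\e^{-\sigma^2 r^2/2})$ is decreasing, both infima are elementary, provided $\sigma$ is bounded above and below uniformly in $d$. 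This may be assumed because $\Acal\lambda_c$ and Assumptions~\ref{Assumption}--\ref{AssumptionBeta} are all invariant under the rescaling $\connf(\cdot)\mapsto\connf(\cdot/a)$, which sends the Gaussian with parameters $(\Acal,\sigma)$ to $(\Acal a^d,a\sigma)$ and fixes $\connf(\orig)$; so we rescale to $\sigma\equiv1$. For Assumption~\ref{AssumptionBeta}: $\phiint^{-5}\connf^{\star6}(\orig)=\connf(\orig)6^{-d/2}$, so \ref{Assump:ExponentialDecay} holds with any $\rho^2<\tfrac16\bigl(\liminf_d\connf(\orig)^{1/d}\bigr)^2$ and hence $\beta(d)=g(d)^{1/4}$. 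Moreover $h(d)=\connf(\orig)6^{-d/2}+\connf(\orig)^2 11^{-d/2}+\connf(\orig)^2 12^{-d/2}=\connf(\orig)6^{-d/2}(1+o(1))$, whence
\[
\frac{\log h(d)}{\log\beta(d)}=\frac{\tfrac1d\log\connf(\orig)-\tfrac12\log6+o(\tfrac1d)}{\tfrac14\bigl(\tfrac1d\log\connf(\orig)-\tfrac12\log2\bigr)};
\]
as $\tfrac1d\log\connf(\orig)$ is bounded (at most $0$, and bounded below by hypothesis) and the denominator stays $\le-\tfrac18\log2<0$, the ratio is bounded, so $N(d)$ is bounded and \ref{Assump:NumberBound} holds.

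With the assumptions verified, multiply \eqref{eq:ExpansionMain} by $\phiint=\Acal$ and substitute. Using $\Acal(2\pi n\sigma^2)^{-d/2}=\connf(\orig)n^{-d/2}$, the first four terms are exactly $1+\Acal(6\pi\sigma^2)^{-d/2}+\tfrac32\Acal(8\pi\sigma^2)^{-d/2}+2\Acal(10\pi\sigma^2)^{-d/2}$. The remaining terms evaluate to $-\tfrac52\connf(\orig)^2 8^{-d/2}$, $2\connf(\orig)^2 9^{-d/2}$, and the five $\LandauBigO$-contributions $\connf(\orig)^2 12^{-d/2}$, $\connf(\orig)^3 27^{-d/2}$, $\connf(\orig)6^{-d/2}$, $\connf(\orig)^2 12^{-d/2}$, $\connf(\orig)^2 11^{-d/2}$. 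The middle one is precisely $\Acal(12\pi\sigma^2)^{-d/2}$, the claimed error; each of the others has the form $\connf(\orig)^j a^{-d/2}$ with $j\ge2$ and $a\in\{8,9,11,12,27\}$, all exceeding $6$, and since $\connf(\orig)\le1$ we get $\connf(\orig)^j a^{-d/2}\le\connf(\orig)a^{-d/2}=o\bigl(\connf(\orig)6^{-d/2}\bigr)$; so all of them are absorbed into $\LandauBigO{\Acal(12\pi\sigma^2)^{-d/2}}$. This is the first displayed identity; setting $\connf(\orig)=\Acal(2\pi\sigma^2)^{-d/2}=1$, so that $\Acal(2\pi n\sigma^2)^{-d/2}=n^{-d/2}$, gives the second.

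The genuine obstacle is Assumption~\ref{Assump:NumberBound}: one must pin down $h(d)$ and $\beta(d)=g(d)^{1/4}$ precisely enough (not just up to constants) to see that $\log h(d)/\log\beta(d)$ stays bounded, and this is exactly where $\liminf_d\connf(\orig)^{1/d}>0$ is used. The only other delicate point is the rescaling to $\sigma\equiv1$ needed to make Assumption~\ref{Assump:QuadraticBound} uniform in $d$; everything else is the Gaussian-convolution bookkeeping described above.
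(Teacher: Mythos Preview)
Your proposal is correct and follows essentially the same route as the paper: compute the Gaussian convolutions and mixed diagrams explicitly (the paper's Lemma~\ref{lem:GaussianCalculations}), verify Assumptions~\ref{Assumption} and~\ref{AssumptionBeta} (the paper's Lemma~\ref{lem:GaussianAssumptions}), and substitute into Theorem~\ref{thm:CriticalIntensityExpansion}. The only cosmetic difference is that the paper rescales to $\phiint=\Acal=1$ (whence $\widetilde\sigma^2\ge 1/(2\pi)$ and $\limsup\widetilde\sigma<\infty$) while you rescale to $\sigma\equiv1$; both choices make the constants in \ref{Assump:QuadraticBound} uniform in $d$, and your observation that the relevant dimensionless quantities (including $\phiint\lambda_c$ and $\connf(\orig)$) are invariant under such rescalings is exactly what justifies either choice.
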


\subsubsection{The Coordinate-wise Cauchy RCM}
    In a similar flavour to the previous example, let  $\gamma>0$ and $0<\Acal\leq \left(\gamma\pi\right)^d$, and define the Coordinate-Cauchy RCM through
        \begin{equation}
            \connf(x) = \frac{\Acal}{\left(\gamma\pi\right)^d}\prod^d_{j=1}\frac{\gamma^2}{\gamma^2+x^2_j},
        \end{equation}
        where $x=\left(x_1,\ldots,x_d\right)\in\Rd$. Like for the Gaussian RCM we have a length-scale parameter $\gamma$ while the $\Acal$ factor ensures $\Acal = \int\connf(x)\dd x$ and the upper bound on $\Acal$ is only there to ensure $\connf$ is $\left[0,1\right]$-valued.
    Then the expansion of the critical intensity is as follows:     
    \begin{corollary}\label{cor:Cauchy}
        For the Coordinate-Cauchy RCM with $\Acal=\Acal(d)>0$ and $\gamma=\gamma(d)>0$ such that $\liminf_{d\to\infty}\connf\left(\orig\right)^{\frac{1}{d}}>0$, as $d\to\infty$
        \begin{equation}
            \Acal\lambda_c = 1 + \Acal\left(3\gamma\pi\right)^{-d} + \frac{3}{2}\Acal\left(4\gamma\pi\right)^{-d} + 2\Acal\left(5\gamma\pi\right)^{-d} + \LandauBigO{\Acal\left(6\gamma\pi\right)^{-d}}.
        \end{equation}
        In particular, if $\connf\left(\orig\right) = \Acal\left(\gamma\pi\right)^{-d} = 1$, then
        \begin{equation}
            \Acal \lambda_c = 1 + 3^{-d} + \frac{3}{2}\times 4^{-d} + 2\times 5^{-d} + \LandauBigO{6^{-d}}.
        \end{equation}
    \end{corollary}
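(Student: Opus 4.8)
The plan is to apply Theorem~\ref{thm:CriticalIntensityExpansion}, so the work splits into three parts: verifying Assumptions~\ref{Assumption} and \ref{AssumptionBeta} for the coordinate-Cauchy kernel, evaluating the convolution quantities entering \eqref{eq:ExpansionMain}, and showing that everything beyond $2\phiint^{-5}\connf^{\star5}(\orig)$ is $\LandauBigO{\Acal(6\gamma\pi)^{-d}}$. The starting point is that $\connf$ tensorises, $\connf(x)=\Acal\prod_{j=1}^d p_\gamma(x_j)$ with $p_\gamma(t)=\tfrac1\pi\tfrac{\gamma}{\gamma^2+t^2}$ the standard Cauchy density, and that $\Acal\lambda_c=\phiint\lambda_c$, the claimed right-hand side, and the hypothesis $\liminf_{d\to\infty}\connf(\orig)^{1/d}>0$ are all invariant under the spatial rescaling $\connf(\cdot)\mapsto\connf(\cdot/a)$ (which sends $\phiint\mapsto a^d\phiint$, $\lambda_c\mapsto a^{-d}\lambda_c$, $\gamma\mapsto a\gamma$ and fixes $\connf(\orig)$). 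I would first exploit this to normalise $\gamma\pi=1$, so that $\Acal=\connf(\orig)$, the constraint reads $\Acal\le1$, the hypothesis becomes $\liminf_{d\to\infty}\Acal^{1/d}>0$, and it suffices to prove $\Acal\lambda_c=1+\Acal 3^{-d}+\tfrac32\Acal 4^{-d}+2\Acal 5^{-d}+\LandauBigO{\Acal 6^{-d}}$.

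The engine is the Cauchy semigroup property: in one dimension $p_\gamma$ has characteristic function $\e^{-\gamma\abs{k}}$, hence $p_\gamma^{\star n}=p_{n\gamma}$, so $\connf^{\star n}(x)=\Acal^n\prod_j p_{n\gamma}(x_j)$ and $\connf^{\star n}(\orig)=\Acal^n(n\gamma\pi)^{-d}=\Acal^n n^{-d}$, giving $\phiint^{-(n-1)}\connf^{\star n}(\orig)=\Acal\,n^{-d}$. The mixed quantities factor over coordinates into one-dimensional rational integrals, $\connf^{\star n_1\star n_2\cdot n_3}(\orig)=\Acal^{\,n_1+n_2+n_3}\bigl(\int_\R p_{n_1\gamma}p_{n_2\gamma}p_{n_3\gamma}\,\dd t\bigr)^{d}$, which I would compute by residues; the three needed are (with $\gamma\pi=1$) $\int_\R p_\gamma p_{2\gamma}^2\,\dd t=\tfrac5{36}$, $\int_\R p_{2\gamma}^3\,\dd t=\tfrac3{32}$, $\int_\R p_\gamma p_{2\gamma}p_{3\gamma}\,\dd t=\tfrac1{10}$. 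Feeding these into \eqref{eq:ExpansionMain} and multiplying by $\phiint=\Acal$, the contributions to $\Acal\lambda_c$ are $1$, $\Acal 3^{-d}$, $\tfrac32\Acal 4^{-d}$, $2\Acal 5^{-d}$, then $-\tfrac52\Acal^2(\tfrac5{36})^d$ and $2\Acal^2 9^{-d}$, and finally the five error contributions $\Acal^2 12^{-d}$, $\Acal^3 27^{-d}$, $\Acal 6^{-d}$, $\Acal^2(\tfrac3{32})^d$, $\Acal^2 10^{-d}$. Since $\Acal\le1$ and each base $\tfrac5{36},\tfrac19,\tfrac1{12},\tfrac1{27},\tfrac3{32},\tfrac1{10}$ is $<\tfrac16$, each of these is $\LandauBigO{\Acal 6^{-d}}$, so only the four leading terms survive; undoing the rescaling turns $\Acal n^{-d}$ back into $\Acal(n\gamma\pi)^{-d}$, and the ``in particular'' case is the substitution $\Acal=(\gamma\pi)^d$.

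For the hypotheses: each factor $p_{m\gamma}$ is maximised at $0$, so $\sup_x\connf^{\star m}(x)=\connf^{\star m}(\orig)=\Acal^m m^{-d}$ and $\phiint^{-(m-1)}\sup_x\connf^{\star m}(x)=\Acal m^{-d}\le\Acal 3^{-d}$ for $m\ge3$, while $\phiint^{-1}\connf^{\star2}(x)=\Acal\prod_j p_{2\gamma}(x_j)\le\Acal 2^{-d}$ pointwise; taking $g(d):=\Acal(2/3)^d$ makes $g(d)\to0$, dominates $\phiint^{-(m-1)}\sup_x\connf^{\star m}(x)$ for $m\ge3$, and empties the exceptional set in \eqref{eqn:2-convolutionAssumption} (as $\Acal 2^{-d}\le g(d)$), so \ref{Assump:DecayBound} holds. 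For \ref{Assump:QuadraticBound}, $\phiint^{-1}\fconnf(k)=\e^{-\gamma\sum_j\abs{k_j}}$ with $\sum_j\abs{k_j}\ge\abs{k}$; because $\gamma=1/\pi$ is now a fixed constant, a small $b$ gives $1-\phiint^{-1}\fconnf(k)\ge\tfrac12\gamma\abs{k}$ for $\abs{k}\le b$ and $\ge1-\e^{-\gamma b}$ for $\abs{k}>b$, yielding $d$-independent $c_1=\gamma/(2b)$ and $c_2=1-\e^{-\gamma b}$. Then \ref{Assump:ExponentialDecay} follows from $\phiint^{-5}\connf^{\star6}(\orig)=\Acal 6^{-d}$ and $\liminf_{d\to\infty}\Acal^{1/d}>0$ (any $\rho$ below a sixth of this $\liminf$ works), whence $\beta(d)=g(d)^{1/4}=\Acal^{1/4}(2/3)^{d/4}$ by \eqref{eqn:betafromgfunction}; finally \ref{Assump:NumberBound} holds because $h(d)=\Acal 6^{-d}+\Acal^2 10^{-d}+\Acal^2(\tfrac3{32})^d$ is comparable to $\Acal 6^{-d}$, so $\log h(d)$ and $\log\beta(d)=\tfrac14\log\Acal-\tfrac d4\log\tfrac32$ are both comparable to $-d$ and their quotient $N(d)$ stays bounded.

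The main obstacle is ensuring Assumption~\ref{Assumption} holds \emph{uniformly in} $d$: without the scale-invariance reduction the quadratic Fourier bound \ref{Assump:QuadraticBound} degenerates as soon as $\gamma=\gamma(d)\to0$, and the volume condition \eqref{eqn:2-convolutionAssumption} together with $\limsup_{d\to\infty} N(d)<\infty$ both rely on the normalisation $\gamma\pi=1$ and on $\Acal\le1$; the contour-integral evaluations of the three one-dimensional integrals are routine but need care, as is the bookkeeping that separates the four genuine terms from the six that decay faster than $\Acal 6^{-d}$.
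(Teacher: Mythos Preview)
Your proposal is correct and follows essentially the same strategy as the paper: verify Assumptions~\ref{Assumption} and \ref{AssumptionBeta}, compute the convolution quantities via the Cauchy semigroup property $p_\gamma^{\star n}=p_{n\gamma}$ and a one-dimensional integral for the mixed terms, and feed everything into Theorem~\ref{thm:CriticalIntensityExpansion}. The only real difference is the choice of normalisation: the paper rescales so that $\phiint=\Acal=1$ (making $\gamma$ variable but with $\gamma\pi\ge1$ and $\limsup\gamma<\infty$), whereas you rescale so that $\gamma\pi=1$ (making $\gamma=1/\pi$ a fixed constant and $\Acal=\connf(\orig)\le1$). Both are legitimate and lead to the same arithmetic; your choice has the slight advantage that the constants $c_1,c_2$ in \ref{Assump:QuadraticBound} are manifestly $d$-independent because $\gamma$ is literally fixed, while the paper must separately invoke $\gamma\ge1/\pi$. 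Your $g(d)=\Acal(2/3)^d$ is a looser choice than the paper's $g(d)=2^{-d}\connf(\orig)$, but either works.
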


    \begin{remark}
    The condition on $\connf\left(\orig\right)$ appearing in Corollaries~\ref{cor:Gaussian} and \ref{cor:Cauchy} is to 
    ensure that \ref{Assump:ExponentialDecay} is satisfied. If this were not imposed, then the terms in our expansion could be so small that extra error terms arising from the volume of small balls of fixed radius could become significant and dominate. \hfill $\diamond$
\end{remark}


\subsection{Discussion}

Our results reveal a remarkable difference between continuum percolation models and lattice percolation: while the expansion in \eqref{eq:pc_bond} and \eqref{eq:pc_site} decays algebraically in $d$, we observe that the expansions in Corollaries \ref{cor:sphere}--\ref{cor:Cauchy} decay exponentially in $d$. Interestingly, the expansion in \eqref{eq:ExpansionMain} resp.\ \eqref{eq:expansionmainpic} is indeed algebraic, and it is the calculation of the convolutions of $\connf$ that transform it to an exponentially decaying series. This is reflected in the observation that the hypercubic lattice is a ``sparse'' graph in high dimensional Euclidean space. 
Indeed, the analysis in \cite{SatKamSak20} suggests that we do have exponential decay on lattices that use the space more efficiently such as the body-centred cubic lattice. 

Torquato \cite{Tor12} has provided an expansion for $\lambda_c$ using exact calculations. Interestingly, for the hyper-cubic Boolean model, we seem to get a slightly different expansion as the $\connf^{\star 5}\left(\orig\right)$ term is absent from their expression. 

It is clear that the value of $\lambda_c$ is highly sensitive to the choice of the connectivity function $\connf$. As a result, we get fairly different expansions for the four models in Section \ref{sec:applications}. 
Jonasson \cite{Jon01} has shown that for Boolean models, $\lambda_c$ is maximised for the hypersphere model, and minimised for a certain triangular shape. 

Our analysis is based on the lace expansion for the (plain) random connection model derived in \cite{HeyHofLasMat19}. 
A key quantity in that expansion is the \emph{lace expansion coefficient} $\Pi_{\lambda_c}(x)$ (defined in Definition \ref{def:Pi} below), see \eqref{eqn:OZE}. The main insight is that $\int\Pi_{\lambda_c}(x)\dd x$ encodes $\lambda_c$, see \eqref{eqn:critical}, and we therefore need to investigate this integral as the dimension $d$ increases. While the original lace expansion only needs (fairly crude) upper bounds on the different terms that constitute $\Pi_{\lambda_c}(x)$, in the present work we need to improve and refine these bounds to get asymptotically matching upper and lower bounds. This is the content of Section \ref{sec:Pibd}. 

In our main expansion in \eqref{eq:ExpansionMain}, there are various terms appearing on the right-hand side. Apart from the constant term ${\phiint}^{-1}$, the main contribution is given by the single loop diagram ${\phiint^{-3}}\connf^{\star 3}\left(\orig\right)$. 
However, the order of the further terms may depend on the particular form of $\connf$, e.g.\ compare Corollaries \ref{cor:cube} and \ref{cor:Gaussian}.

It is an open problem to extend this analysis to the marked random connection model, for which the lace expansion has recently been derived in \cite{DicHey2022triangle}.

\section{Preliminaries}

    Recall that $\eta$ denotes the homogeneous Poisson point process on $\Rd$ that gives the vertex set of the RCM. We then let $\xi$ denote the vertex set and the edge set together - the whole random graph. We also want to consider the augmented configurations $\eta^x$ and $\xi^x$. Here $\eta^x$ is produced by introducing an extra vertex at $x\in\Rd$, and $\xi^x$ then takes this augmented vertex set, copies the old edges, and independently forms edges between the old vertices and the new vertex. This can also be extended to get $\eta^{x,y}$ and $\xi^{x,y}$ for $x,y\in\Rd$, or for any finite number of augmenting vertices. For the full details of this construction see \cite[Section~2.2]{HeyHofLasMat19}. 

    Recall that $\connf(x)$ returns the probability that a vertex at the origin and a vertex at $x$ have a common edge, or are adjacent. Given two vertices $x,y\in\Rd$, we say that $x$ and $y$ are \emph{connected in $\xi^{x,y}$}, or $\conn{x}{y}{\xi^{x,y}}$, if there exists a finite sequence of distinct vertices $x=u_0,u_1,\ldots,u_k,u_{k+1}=y\in\eta^{x,y}$ (with $k\in\N_0$) such that $u_i\sim u_{i+1}$ for all $0\leq i\leq k$. We can then define the \emph{two-point} (or \emph{pair-connectedness}) function $\tlam\colon \Rd \to \left[0,1\right]$ by
    \begin{equation}
    \label{eqn:two-point_function}
        \tlam(x) := \pla\left(\conn{\orig}{x}{\xi^{\orig,x}}\right).
    \end{equation}

Now we introduce two preliminary results that we will use on many occasions in this paper: Mecke's (multivariate) equation, and the BK inequality. 

\paragraph{Mecke's Equation} Since our vertex set $\eta$ is a Poisson point process, we will often rely on a result called Mecke's Equation to use integral expressions to describe the expected number of certain configurations in our RCM. For a discussion of this result see \cite[Chapter~4]{LasPen17}. Given $m\in\N$ and a measurable non-negative function $f=f(\xi,\vec{x})$, the Mecke equation for $\xi$ states that 
\begin{equation}
    \E_\lambda \left[ \sum_{\vec x \in \eta^{(m)}} f(\xi, \vec{x})\right] = \lambda^m \int
				\E_\lambda\left[ f\left(\xi^{x_1, \ldots, x_m}, \vec x\right)\right] \dd \vec{x},  \label{eq:prelim:mecke_n}
\end{equation}
where $\vec x=(x_1,\ldots,x_m)$ and $\eta^{(m)}=\{(x_1,\ldots,x_m)\colon x_i \in \eta, x_i \neq x_j \text{ for } i \neq j\}$.

\paragraph{BK Inequality} We give an overview here, but the full details can be found in \cite{HeyHofLasMat19}. Given two increasing events $E_1$ and $E_2$, we define $E_1\circ E_2$ to be the event that $E_1$ and $E_2$ both occur, but do so on disjoint subsets of the vertices $\eta$. Note that in the case of $E_1=\left\{\conn{x}{y}{\xi^{x,y}}\right\}$ and $E_2=\left\{\conn{u}{v}{\xi^{u,v}}\right\}$, $E_1\circ E_2$ can still occur if $x\in\left\{u,v\right\}$ or $y\in\left\{u,v\right\}$ - the intermediate vertices need to be disjoint. The BK inequality then gives us a simple upper bound on the probability of this disjoint occurence.

\begin{theorem}[BK inequality]
    Let $E_1$ and $E_2$ be two increasing events that live on some bounded measurable subset on $\Rd$. Then
    \begin{equation}
        \pla\left(E_1\circ E_2\right) \leq \pla\left(E_1\right)\pla\left(E_2\right).
    \end{equation}
\end{theorem}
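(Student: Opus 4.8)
The plan is to deduce the continuum inequality from the classical (discrete) van den Berg--Kesten inequality by a space-discretisation together with a limiting argument. First localise: since $E_1$ and $E_2$ are determined by the restriction $\xi|_\Lambda$ of the RCM to the bounded box $\Lambda\subset\Rd$ on which they live (enlarging $\Lambda$, if needed, to also contain the finitely many deterministic vertices entering the events, such as the endpoints $x,y,u,v$ in the connection examples), it suffices to bound probabilities for the model restricted to $\Lambda$. For $n\in\N$, partition $\Lambda$ into $N=N(n)$ congruent subcubes $Q_1,\dots,Q_N$ of side length tending to $0$, and build a discretised RCM $\xi^{(n)}$, with law $\p^{(n)}$, from an independent \emph{binary} family: each cube $Q_i$ is declared \emph{occupied} with probability $1-\e^{-\lambda\abs{Q_i}}$, and when occupied carries a single vertex at its centre $c_i$; each pair $\{i,j\}$ of occupied cubes is joined by an edge, independently, with probability $\overline{\connf}_{ij}:=\abs{Q_i}^{-1}\abs{Q_j}^{-1}\int_{Q_i}\int_{Q_j}\connf(p-q)\,\dd p\,\dd q$; and the deterministic vertices are always present, joined to each $c_i$ (resp.\ to each other) with the corresponding cube-averaged (resp.\ exact) probability. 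Because all the relevant events are monotone under adding vertices and adding edges, and because positions have been collapsed to the cube centres so that no non-monotone position coordinate survives, $E_1$ and $E_2$ pull back to \emph{increasing} events on the finite binary product space carrying $\xi^{(n)}$.

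On this finite space I would apply the van den Berg--Kesten inequality to get $\p^{(n)}(E_1\circ E_2)\le\p^{(n)}(E_1)\,\p^{(n)}(E_2)$. The key point is that the disjoint-occurrence event passes from the continuum to the discrete model: if $E_1$ and $E_2$ occur disjointly — understood, as in \cite{HeyHofLasMat19}, via witnessing sub-configurations sharing no intermediate vertices and no edges — then the set of coordinates of $\xi^{(n)}$ witnessing $E_1$ (the occupancy bits of the cubes it uses together with the edge bits of the subgraph it uses) is disjoint from the one witnessing $E_2$; and, since $E_1$ depends on those coordinates through an increasing function while the witness sub-configuration is a subgraph of $\xi^{(n)}$, fixing $\xi^{(n)}$ on those coordinates already forces $E_1$ regardless of all other coordinates (and likewise for $E_2$). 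Thus $E_1\circ E_2$ is contained in van den Berg and Kesten's disjoint-occurrence event on the finite product space, and their inequality applies verbatim.

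It remains to let $n\to\infty$, showing $\p^{(n)}(E_i)\to\pla(E_i)$ for $i=1,2$ and $\p^{(n)}(E_1\circ E_2)\to\pla(E_1\circ E_2)$; the theorem then follows by taking $n\to\infty$ in the inequality above. Here one couples ``$Q_i$ occupied'' with ``$Q_i$ contains at least one Poisson point'' (these events have equal probability), notes that the bad event of some cube containing two or more points has probability $\LandauBigO{\abs{\Lambda}^2/N}\to 0$, and observes that on the good event the true RCM graph on $\Lambda$ and the discretised graph are in vertex-bijection and, sharing the uniform edge-marks, differ only in the pairs whose mark falls between $\connf(p-q)$ and $\overline{\connf}_{i(p)j(q)}$; by Mecke's equation the expected number of such pairs is $\lambda^2\int_\Lambda\int_\Lambda\bigl|\connf(p-q)-\overline{\connf}_{i(p)j(q)}\bigr|\,\dd p\,\dd q$, which tends to $0$ since $\connf\in L^1$ and conditional expectations against the refining partition converge in $L^1$. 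As the events of interest are graph-theoretic, they take the same value on the two coupled graphs, giving the required convergences. The main obstacle is precisely this limiting step: $E_1\circ E_2$ is \emph{not} an increasing event but a countable union over pairs of disjoint finite vertex sets of products of increasing events, so one must argue its continuity under the discretisation with some care when $\connf$ is merely integrable and possibly discontinuous, as in the Gilbert disk model. Everything else is bookkeeping, and the genuinely probabilistic input — van den Berg--Kesten on a finite product space — is classical.
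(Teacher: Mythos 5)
The paper offers no proof of its own here: it defers entirely to \cite[Theorem~2.1]{HeyHofLasMat19}. Your argument is, in outline, the standard discretisation proof of such continuum BK inequalities (localise, discretise to a finite Bernoulli product space, apply the classical van den Berg--Kesten inequality, couple and pass to the limit, in the spirit of van den Berg's treatment of marked Poisson processes), so you have reconstructed the substance of the cited result rather than found a genuinely different route. The architecture is sound, and the coupling you describe already resolves the ``main obstacle'' you flag at the end: on the event that the two coupled graphs coincide, $E_1\circ E_2$ holds for one iff it holds for the other, because disjoint occurrence is a graph-theoretic property of the configuration; you never need $E_1\circ E_2$ to be increasing, only that the two graphs agree with probability tending to one.

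Two points deserve emphasis. First, your insertion of ``and no edges'' into the definition of disjoint occurrence is not cosmetic but essential, and it is strictly stronger than the definition given in this paper (``disjoint subsets of the vertices $\eta$''). With vertex-disjointness alone the stated inequality is false: take $E_1=E_2=\left\{\conn{x}{y}{\xi^{x,y}}\right\}$; on $\left\{x\sim y\right\}$ both events occur on the empty subset of $\eta$, so $\pla\left(E_1\circ E_2\right)\geq\connf(x-y)$, while $\pla\left(E_1\right)\pla\left(E_2\right)=\tlam(x-y)^2\to\connf(x-y)^2<\connf(x-y)$ as $\lambda\downarrow 0$ whenever $\connf(x-y)\in(0,1)$. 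Your edge-disjoint formulation is exactly what makes the containment of $E_1\circ E_2$ in the discrete box event true (otherwise the two witness coordinate sets would share the edge bit of the pair $\{x,y\}$), and it matches the careful definition in the cited reference. Second, the theorem is stated for arbitrary increasing events on a bounded set, whereas your pullback to the discretised model is only canonically defined for graph-theoretic events; an event depending on the exact positions of the Poisson points (say, the presence of a point in a set that is not a union of cubes) requires an additional approximation step. Since every application in this paper is to connection-type events, that is a presentational rather than a substantive gap, but it should be acknowledged if the theorem is to be proved in the stated generality.
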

\begin{proof}
    See \cite[Theorem~2.1]{HeyHofLasMat19}.
\end{proof}

\begin{definition}
    We make use of a bootstrap function also used in \cite{HeyHofLasMat19} (itself adapted from an argument in \cite{HeyHofSak08}). Recall that we are using the scaling choice that $\fconnf(0) = \phiint = 1$. For $\lambda\geq 0$ and $k,l\in\Rd$, we define
    \begin{align}
        \mulam &:= 1- \frac{1}{\ftlam(0)}\\
        \fgmu(k) &:= \frac{1}{1-\mulam\fconnf(k)}.
    \end{align}
    Note that $\fgmu$ can be interpreted as the Fourier transform of the Green's function of a random walk with transition density $\mulam\connf$. We can define $f \colon \R_{\geq 0}\to \R_{\geq 0}$ with
    \begin{equation}
        f(\lambda):= \sup_{k\in\Rd}\frac{\abs*{\ftlam(k)}}{\fgmu(k)}.
    \end{equation}
\end{definition}

\begin{prop}
\label{prop:bootstrapbound}
    Suppose Assumption~\ref{Assumption} holds. Then for $d$ sufficiently large, $f(\lambda)\leq 2$ for all $\lambda\in\left[0,\lambda_c\right)$.
\end{prop}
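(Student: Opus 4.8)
The plan is to run the standard bootstrap (``forbidden--region'') argument for $f$ on the connected interval $[0,\lambda_c)$, which is nonempty since $\lambda_c>0$ (Penrose, \cite{Pen91}). First I would record the elementary properties of $f$ there. For $\lambda<\lambda_c$ the mean cluster size $\ftlam(0)=\int\tlam(x)\dd x$ is finite (part of the subcritical theory, cf.\ \cite{HeyHofLasMat19}), and since $\tlam\geq\connf$ pointwise we have $\ftlam(0)\geq\fconnf(0)=1$, so $\mulam=1-1/\ftlam(0)\in[0,1)$. Using $\abs*{\fconnf(k)}\leq\fconnf(0)=1$ this gives $1-\mulam\fconnf(k)\in(0,2)$ for every $k$, hence $\fgmu(k)\in(\tfrac12,\ftlam(0)]$ and $f(\lambda)\leq 2\ftlam(0)<\infty$; evaluating at $k=0$, where $\fgmu(0)=\ftlam(0)$, gives moreover $f(\lambda)\geq 1$. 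Continuity of $\lambda\mapsto f(\lambda)$ on $[0,\lambda_c)$ follows from continuity of $\lambda\mapsto\ftlam(k)$ (a standard coupling/monotone-convergence fact) together with the uniform lower bound $\fgmu(k)>\tfrac12$; at the left endpoint $\eta$ is empty, so $\tlam=\connf$, $\mulam=0$, $\fgmu\equiv 1$, and $f(0)=\sup_k\abs*{\fconnf(k)}\leq 1$.

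The argument then reduces to an \emph{improvement step}: there is $d_0$ such that for all $d\geq d_0$ and all $\lambda\in[0,\lambda_c)$, if $f(\lambda)\leq 3$ then in fact $f(\lambda)\leq 2$. Granting this, the set $S=\{\lambda\in[0,\lambda_c):f(\lambda)\leq 2\}$ contains $0$, is closed by continuity, and is open (if $f(\lambda_0)\leq 2<3$, continuity gives a neighbourhood on which $f<3$, hence $f\leq 2$ there by the improvement step); since $[0,\lambda_c)$ is connected, $S=[0,\lambda_c)$, which is the assertion.

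For the improvement step I would use the Ornstein--Zernike equation from the lace expansion, $\ftlam(k)=\widehat\Lambda_\lambda(k)/(1-\lambda\widehat\Lambda_\lambda(k))$ with $\widehat\Lambda_\lambda=\fconnf+\fLacelam$ (cf.\ \eqref{eqn:OZE}), which at $k=0$ yields the identity $1-\lambda\widehat\Lambda_\lambda(0)=\widehat\Lambda_\lambda(0)(1-\mulam)$; note that $\fconnf(k)$ and $\fLacelam(k)$ are real by symmetry. Splitting numerator and denominator around $k=0$,
\[
\frac{\ftlam(k)}{\fgmu(k)} = \ftlam(k)\bigl(1-\mulam\fconnf(k)\bigr) = \frac{\bigl(\fconnf(k)+\fLacelam(k)\bigr)\bigl[(1-\mulam)+\mulam\bigl(1-\fconnf(k)\bigr)\bigr]}{\bigl(1+\fLacelam(0)\bigr)(1-\mulam)+\lambda\bigl[\bigl(1-\fconnf(k)\bigr)+\bigl(\fLacelam(0)-\fLacelam(k)\bigr)\bigr]}.
\]
Under the a priori hypothesis $f(\lambda)\leq 3$, the (conditional) diagrammatic bounds on the lace-expansion coefficient (whose only inputs are $f(\lambda)\leq 3$ and Assumption~\ref{Assumption}; cf.\ Section~\ref{sec:Pibd} and \cite{DicHey2022triangle}) give $\abs*{\fLacelam(0)}\leq C\beta(d)$ and the difference bound $\abs*{\fLacelam(0)-\fLacelam(k)}\leq C\beta(d)\bigl(1-\fconnf(k)\bigr)$. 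Combined with $\abs*{\fconnf(k)}\leq 1$, Assumption~\ref{Assump:QuadraticBound} (which keeps $1-\fconnf(k)$ bounded below for $\abs*{k}>b$, so that $\widehat\Lambda_\lambda(0)-\widehat\Lambda_\lambda(k)$ stays positive and comparable to $1-\fconnf(k)$), and the $k=0$ identity (which forces $\lambda=\mulam+\LandauBigO{\beta(d)}$), these make the numerator at most $\bigl(1+\LandauBigO{\beta(d)}\bigr)\bigl[(1-\mulam)+\mulam(1-\fconnf(k))\bigr]$ and the denominator at least $\bigl(1-\LandauBigO{\sqrt{\beta(d)}}\bigr)\bigl[(1-\mulam)+\mulam(1-\fconnf(k))\bigr]$, uniformly in $k$, provided $\mulam\geq\sqrt{\beta(d)}$; in the complementary regime $\mulam<\sqrt{\beta(d)}$ one instead bounds directly $\ftlam(0)=1/(1-\mulam)=1+\LandauBigO{\sqrt{\beta(d)}}$ and $\fgmu(k)\geq 1/(1+\mulam)$. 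Either way $f(\lambda)\leq 1+\LandauBigO{\sqrt{\beta(d)}}$, which is below $2$ once $d$ is large, since $\beta(d)\to0$ by Assumption~\ref{Assump:DecayBound}.

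I expect the genuine work to lie entirely in the improvement step, and within it in establishing the two estimates on $\fLacelam$ --- showing, from the diagrammatic expansion of $\Pi_\lambda$ and under the sole a priori bound $f(\lambda)\leq 3$, that $\abs*{\fLacelam(0)}$ and $\abs*{\fLacelam(0)-\fLacelam(k)}/(1-\fconnf(k))$ are $\LandauBigO{\beta(d)}$. This is where Assumption~\ref{Assumption} genuinely enters, through the translation of the convolution hypotheses on $\connf$ into bounds on the triangle-type diagrams built from $\tlam$. Relative to that, the Fourier bookkeeping of the $\LandauBigO{\cdot}$ errors (in particular isolating the degenerate small-$\mulam$ regime above) and the topological closing argument are routine, as are the finiteness and continuity of $f$.
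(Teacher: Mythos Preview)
Your sketch is essentially correct and reproduces the standard bootstrap (forbidden-region) argument; the paper's own proof, however, consists of a single line citing \cite[Proposition~5.10]{HeyHofLasMat19}, which is precisely the result whose proof you have outlined. So you and the paper agree on approach---you have simply unpacked what the paper defers to a reference---and your identification of the diagrammatic bounds $\abs*{\fLacelam(0)}=\LandauBigO{\beta}$ and $\abs*{\fLacelam(0)-\fLacelam(k)}=\LandauBigO{\beta(1-\fconnf(k))}$ (conditional on $f\leq 3$) as the genuine content is exactly right.
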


\begin{proof}
    This is implied by \cite[Proposition~5.10]{HeyHofLasMat19}.
\end{proof}

\begin{lemma}
\label{lem:tailbound}
    Suppose Assumption~\ref{Assumption} holds and that there exists $\rho>0$ such that $\liminf_{d\to\infty}\rho^{-d}\connf^{\star m}\left(\orig\right) >0$. Let $d$ be sufficiently large, $m\geq 1$ be even, $s\geq 1$, and $\lambda\in\left[0,\lambda_c\right]$. Then there exists $K_{s}<\infty$ independent of $d$, $m$, and $\lambda$ such that
    \begin{equation}
        \sup_{x\in\Rd}\connf^{\star m}\star\tlam^{\star s}\left(x\right) \leq K_{s}\connf^{\star m}\left(\orig\right).
    \end{equation}
\end{lemma}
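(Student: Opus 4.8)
The plan is to move to Fourier space, feed in the bootstrap bound of Proposition~\ref{prop:bootstrapbound}, and then exploit that a ball of fixed radius occupies a super-exponentially small volume in high dimension. Throughout recall the normalisation $\phiint=1$. First I would reduce to $\lambda<\lambda_c$: the RCM is monotone in $\lambda$ (a larger intensity couples to contain a smaller one), so $\tlam(x)$, and hence $\connf^{\star m}\star\tlam^{\star s}(x)$, is non-decreasing in $\lambda$; the case $\lambda=\lambda_c$ then follows from $\lambda<\lambda_c$ by monotone convergence. For $\lambda<\lambda_c$ one has $\ftlam(0)=\int\tlam<\infty$ (finiteness of the mean cluster size below $\lambda_c$, which is also implicit in the bootstrap quantities being defined), so $\connf^{\star m}$ and $\tlam^{\star s}$ lie in $L^1(\Rd)$; setting $h:=\connf^{\star m}\star\tlam^{\star s}\ge 0$ we have $\widehat h=\fconnf^{\,m}\ftlam^{\,s}$, and once $\widehat h\in L^1$ is confirmed, Fourier inversion gives $\sup_x h(x)\le(2\pi)^{-d}\int|\widehat h|$. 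Since $m$ is even, $|\fconnf(k)|^m=\fconnf(k)^m\ge 0$, and $|\ftlam(k)|\le 2\fgmu(k)$ by Proposition~\ref{prop:bootstrapbound}, so $\sup_x h(x)\le 2^s(2\pi)^{-d}\int\fconnf(k)^m\fgmu(k)^s\dd k$, while $\connf^{\star m}(\orig)=(2\pi)^{-d}\int\fconnf(k)^m\dd k$. It therefore suffices to prove $\int\fconnf^{\,m}\fgmu^{\,s}\le K'_s\int\fconnf^{\,m}$ with $K'_s$ depending only on $s,b,c_1,c_2$.

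Next I would bound $\fgmu$ using $\mulam\in[0,1)$, $|\fconnf|\le 1$ and Assumption~\ref{Assump:QuadraticBound}: for $|k|>b$ one gets $\fgmu(k)\le 1/c_2$, and for $0<|k|\le b$ one gets $\fgmu(k)\le C_0/|k|^2$ with $C_0:=\max\{2b^2,2/c_1\}$ (split on $\mulam\le\tfrac12$, where $1-\mulam\fconnf(k)\ge\tfrac12$, versus $\mulam>\tfrac12$, where $1-\mulam\fconnf(k)\ge\mulam(1-\fconnf(k))\ge\tfrac12 c_1|k|^2$). Splitting the integral at $|k|=b$, the outer part is $\le c_2^{-s}\int_{|k|>b}\fconnf^{\,m}\le c_2^{-s}\int\fconnf^{\,m}$. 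For the inner part, bounding $\fconnf(k)^m\le 1$,
\[
\int_{|k|\le b}\fconnf(k)^m\fgmu(k)^s\dd k\ \le\ C_0^s\int_{|k|\le b}|k|^{-2s}\dd k\ =\ C_0^s\,\frac{\omega_{d-1}\,b^{\,d-2s}}{d-2s},\qquad \omega_{d-1}=\frac{2\pi^{d/2}}{\Gamma(d/2)},
\]
valid once $d>2s$.

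Finally I would invoke the hypothesis: it gives $\varepsilon_0>0$ with $\connf^{\star m}(\orig)\ge\varepsilon_0\rho^d$ for all large $d$, hence $\int\fconnf^{\,m}=(2\pi)^d\connf^{\star m}(\orig)\ge\varepsilon_0(2\pi\rho)^d$, so the ratio of the inner bound above to $\int\fconnf^{\,m}$ is at most
\[
\frac{2C_0^s b^{-2s}}{\varepsilon_0(d-2s)\,\Gamma(d/2)}\Bigl(\tfrac{b}{2\sqrt{\pi}\,\rho}\Bigr)^{\!d}\ \xrightarrow[\,d\to\infty\,]{}\ 0,
\]
since $\Gamma(d/2)$ outgrows every fixed exponential. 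Thus for $d$ large the inner part is $\le\int\fconnf^{\,m}$; combining, $\int\fconnf^{\,m}\fgmu^{\,s}\le(c_2^{-s}+1)\int\fconnf^{\,m}<\infty$ (which also confirms $\widehat h\in L^1$), and therefore $\sup_x\connf^{\star m}\star\tlam^{\star s}(x)\le 2^s(c_2^{-s}+1)\connf^{\star m}(\orig)$. So $K_s:=2^s(c_2^{-s}+1)$ works, depending on neither $m$, $d$, nor $\lambda$ — the entire $m$-dependence is absorbed into how large $d$ must be taken.

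The main obstacle is conceptual rather than computational: $\fgmu(k)^s$ blows up like $|k|^{-2s}$ near $k=0$ (and $\fgmu(0)=\ftlam(0)\to\infty$ as $\lambda\uparrow\lambda_c$), so a priori the Fourier integral looks hazardous; everything rests on the observation that the offending region $\{|k|\le b\}$ carries volume of order $\Gamma(d/2)^{-1}$ times a fixed exponential, which is beaten by the merely exponentially small lower bound $\connf^{\star m}(\orig)\ge\varepsilon_0\rho^d$ supplied by the hypothesis — this is precisely why that lower bound is imposed. The remaining technical points (Fourier integrability and inversion at $\lambda=\lambda_c$, and the condition $d>2s$) are dispatched by the monotone limit of the first step and by taking $d$ sufficiently large.
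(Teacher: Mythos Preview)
Your proof is correct and follows essentially the same route as the paper's: Fourier inversion, the bootstrap bound $|\ftlam|\le 2\fgmu$, splitting the integral at $|k|=b$ via Assumption~\ref{Assump:QuadraticBound}, and absorbing the small-$k$ contribution using Stirling against the exponential lower bound $\connf^{\star m}(\orig)\ge\varepsilon_0\rho^d$, with monotone convergence to reach $\lambda=\lambda_c$. The only cosmetic difference is that the paper bounds $\fgmu(k)\le(1-\fconnf(k))^{-1}$ directly via $\mulam\le 1$, whereas you split on $\mulam\lessgtr\tfrac12$; your version is arguably cleaner when $\fconnf(k)$ can be negative.
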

This is a key lemma in our proof as it allows us to identify leading order decay for convolutions of the adjacency function and the two-point function. 
\begin{proof}
    First let us consider $\lambda<\lambda_c$. We slightly adapt \cite[Lemma~5.4]{HeyHofLasMat19} for our purposes. From the Fourier inverse formula,
    \begin{equation}
        \sup_{x\in\Rd}\connf^{\star m}\star\tlam^{\star s}\left(x\right) \leq \sup_{x\in\Rd}\int \e^{-ik\cdot x}\fconnf(k)^m\ftlam(k)^s\frac{\dd k}{\left(2\pi\right)^d} \leq \int \fconnf(k)^m\abs*{\ftlam(k)}^s\frac{\dd k}{\left(2\pi\right)^d}.
    \end{equation}
    We can omit $\abs*{\cdot}$ from around $\fconnf(k)^m$ because $\fconnf(k)$ is real and $m$ is even. From the definition of the bootstrap function $f(\lambda)$, we can bound $\abs*{\ftlam(k)}$ with $f(\lambda)\fgmu(k)$ and then use $\mulam\leq 1$ to get
    \begin{equation}
    \label{eqn:integralbound}
        \sup_{x\in\Rd}\connf^{\star m}\star\tlam^{\star s}\left(x\right) \leq f(\lambda)^{s}\int \frac{\fconnf(k)^m}{\left(1-\mulam\fconnf(k)\right)^s}\frac{\dd k}{\left(2\pi\right)^d} \leq f(\lambda)^{s}\int \frac{\fconnf(k)^m}{\left(1-\fconnf(k)\right)^s}\frac{\dd k}{\left(2\pi\right)^d}.
    \end{equation}
    Recall the parameter $b>0$ arising from Assumption~\ref{Assump:QuadraticBound}. We partition the integral on the right hand side of \eqref{eqn:integralbound} into one integral over $\abs*{k}\leq b$, and one integral over $\abs*{k} > b$. For $\abs*{k}\leq b$, \ref{Assump:QuadraticBound} tells us that there exists $c_1>0$ such that $\left(1-\fconnf(k)\right)^{-1}\leq c_1^{-1}\abs*{k}^{-2}$, and therefore
    \begin{equation}
        \int_{\abs*{k}\leq b} \frac{\fconnf(k)^m}{\left(1-\fconnf(k)\right)^s}\frac{\dd k}{\left(2\pi\right)^d} \leq \frac{1}{c_1^s}\int_{\abs*{k}\leq b} \frac{1}{\abs*{k}^{2s}} \frac{\dd k}{\left(2\pi\right)^d} = \frac{1}{c_1^s}\frac{\mathfrak{S}_{d-1}}{d-2s}\frac{b^{d-2s}}{\left(2\pi\right)^{d}},
    \end{equation}
    where $\mathfrak{S}_{d-1} = d\pi^{\frac{d}{2}}\Gamma\left(1 + \frac{d}{2}\right)^{-1}$ is the surface area of a dimension $d$ hyper-sphere with unit radius. An application of Stirling's formula tells us that for all $\rho>0$ we have $\frac{\mathfrak{S}_{d-1}}{d-2s}\frac{b^{d-2s}}{\left(2\pi\right)^{d}} \leq \rho^d$ for sufficiently large $d$. Therefore this contribution is negligible for our purposes. For $\abs*{k}> b$, \ref{Assump:QuadraticBound} tells us that $\left(1-\fconnf(k)\right)^{-1}\leq c_2^{-1}$, and therefore
    \begin{equation}
        \int_{\abs*{k}> b} \frac{\fconnf(k)^m}{\left(1-\fconnf(k)\right)^s}\frac{\dd k}{\left(2\pi\right)^d} \leq \frac{1}{c_2^s}\int \fconnf(k)^m \frac{\dd k}{\left(2\pi\right)^d} = \frac{1}{c_2^s}\connf^{\star m}\left(\orig\right).
    \end{equation}
    In conjunction with \eqref{eqn:integralbound} and Proposition~\ref{prop:bootstrapbound}, this proves the result for $\lambda<\lambda_c$.

    To extend the result to $\lambda\leq \lambda_c$, we note that $\tlam(x)$ is monotone increasing in $\lambda$ for all $x\in\Rd$. Monotone convergence and the independence of the bound on $\lambda$ then proves the full result.    
\end{proof}

\begin{definition}
For $n\in\N$ and $x,y\in \Rd$, $x$ is connected to $y$ in $\xi^{x,y}$ by a path of length exactly $n$ if there exists a sequence of vertices $x=u_0,u_1,\ldots,u_{n-1},u_n=y$ such that $u_i\sim u_{i+1}$ for $0\leq i \leq n-1$. We then define $\left\{\xconn{x}{y}{\xi^{x,y}}{=n}\right\}$ as the event that $x$ is connected to $y$ in $\xi^{x,y}$ by a path of length exactly $n$, but no path of length $<n$. For $\lambda>0$ we denote
\begin{equation}
    \connf^{[n]}(x) := \pla\left(\xconn{\orig}{x}{\xi^{\orig,x}}{= n}\right).
\end{equation}
In particular, $\connf^{[1]}\equiv \connf$.

Additionally define for finite $A\subset\Rd$, 
\begin{equation}
    \connf^{[n]}_{\thinn{A}}\left(x,y\right) := \pla\left(\xconn{x}{y}{\xi^{x,y}_{\thinn{A}}}{= n}\right).
\end{equation}
That is, $\connf^{[n]}_{\thinn{A}}\left(x,y\right)$ is the probability that there exists a path of length $n$ connecting $x$ and $y$ in $\xi^{x,y}$, but none of the interior vertices in this path are adjacent to any vertices in $A$ and there is no path connecting $x$ and $y$ in $\xi^{x,y}$ that is of length $<n$. A more formal definition of $\xi^{x,y}_{\thinn{A}}$ can be found below in Definition~\ref{defn:ThinningsPivots}.

\end{definition}

\begin{lemma}
Let $x,y\in\Rd$ be distinct, $\lambda>0$, and $A\subset \Rd$ be a finite number of singletons. Then for $n\geq 1$,
    \begin{align}
        \connf^{[1]}_{\thinn{A}}(x,y) &= \connf(x-y) \label{eqn:Exconnf_1_AvoidA}\\
        \connf^{[n+1]}_{\thinn{A}}(x,y) &= \left(1-\connf(x-y)\right)\left(1 - \exp\left(-\lambda\int \connf(v-y)\connf^{[n]}_{\thinn{A\cup \left\{y\right\}}}(x,v)\prod_{z\in A}\left(1-\connf(v-z)\right)\dd v\right)\right)\label{eqn:Exconnf_n+1_AvoidA}\\
        \connf^{[n+1]}(x) &= \left(1-\connf(x)\right)\left(1 - \exp\left(-\lambda\int \connf(v)\connf^{[n]}_{\thinn{\orig}}(x,v)\dd v\right)\right). \label{eqn:Exconnf_n+1}
    \end{align}
In particular,
\begin{align}
    \connf^{[2]}(x) &= \left(1-\connf(x)\right)\left(1-\exp\left(-\lambda\connf^{\star 2}(x)\right)\right)\label{eqn:ExConnf2}\\
    \connf^{[3]}(x) &= \left(1-\connf(x)\right)\left(1 - \exp\left(-\lambda\int \connf(v)\left(1-\connf(x-v)\right)\right.\right.\nonumber\\
        &\hspace{3cm}\left.\left.\times\left(1 - \exp\left(-\lambda\int \connf(w-v)\connf(x-w)\left(1-\connf(w)\right)\dd w\right)\right)\dd v\right)\right).\label{eqn:ExConnf3}
\end{align}
\end{lemma}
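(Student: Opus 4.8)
The plan is to treat the two identities \eqref{eqn:Exconnf_1_AvoidA} and \eqref{eqn:Exconnf_n+1_AvoidA} as the core statements and to deduce the rest from them. Identity \eqref{eqn:Exconnf_1_AvoidA} is immediate: a path of length exactly $1$ from $x$ to $y$ is nothing but the edge $\{x,y\}$, which is present with probability $\connf(x-y)$ independently of everything else, and it has no interior vertex, so the thinning by $A$ is vacuous. Granting \eqref{eqn:Exconnf_n+1_AvoidA}, identity \eqref{eqn:Exconnf_n+1} is its special case $A=\emptyset$: one can either repeat the argument below while peeling the edge at $\orig$ rather than at $y$, or apply \eqref{eqn:Exconnf_n+1_AvoidA} with first endpoint $\orig$ and second endpoint $x$ and use translation invariance, the reflection $z\mapsto x-z$, and $\connf(z)=\connf(-z)$ to rewrite $\int\connf(v-x)\,\connf^{[n]}_{\langle\{x\}\rangle}(\orig,v)\,\dd v=\int\connf(v)\,\connf^{[n]}_{\langle\orig\rangle}(x,v)\,\dd v$. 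Finally, \eqref{eqn:ExConnf2} and \eqref{eqn:ExConnf3} follow by substituting \eqref{eqn:Exconnf_1_AvoidA} once, resp.\ \eqref{eqn:Exconnf_1_AvoidA} and then \eqref{eqn:Exconnf_n+1_AvoidA} at level $n=1$ with $A=\{\orig\}$, into \eqref{eqn:Exconnf_n+1} and simplifying, so the heart of the matter is \eqref{eqn:Exconnf_n+1_AvoidA}.

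For \eqref{eqn:Exconnf_n+1_AvoidA}, fix $x\neq y$, finite $A\subset\Rd$, and $n\ge 1$, and work in $\xi^{x,y}$. First I would split off the edge $\{x,y\}$: it is present with probability $\connf(x-y)$, independently of all other edges and of $\eta$, and on that event $x$ and $y$ are at distance $1\le n$; this produces the prefactor $1-\connf(x-y)$ and reduces the claim to computing, off the edge $\{x,y\}$, the conditional probability that $x$ and $y$ are at distance exactly $n+1$ in the $A$-thinned configuration. Next I would decompose a geodesic of length $n+1$ by its penultimate vertex $v$: necessarily $v\sim y$, $v\not\sim z$ for every $z\in A$ (so that $v$, an interior vertex of the $x$--$y$ path, survives the $A$-thinning), and the initial segment from $x$ to $v$ must be a geodesic of length $n$ whose interior vertices avoid adjacency to $A$ \emph{and} to $y$ --- the avoidance of $y$ being forced, since an interior vertex adjacent to $y$ at distance $\le n-1$ from $x$ would give a shorter $x$--$y$ path. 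I would then explore the cluster of $x$ in the $(A\cup\{y\})$-thinned configuration generation by generation out to depth $n$, revealing only the edges needed to identify the vertices at distance $\le n$ together with their distances. Conditionally on this exploration, whether a vertex $v\in\eta$ outside the explored cluster is an admissible penultimate vertex is decided by three families of edges --- those joining $v$ to $y$, to $A$, and to the explored cluster --- and these families are pairwise disjoint across distinct candidates. Hence the admissible $v$ form a thinning of $\eta$, thus a Poisson process, whose intensity is $\lambda$ times the per-point admissibility probability; by the Mecke equation \eqref{eq:prelim:mecke_n} (which reinstates $v$ as a marked point) together with the independence just noted, that per-point probability factors as $\connf(v-y)\,\connf^{[n]}_{\langle A\cup\{y\}\rangle}(x,v)\,\prod_{z\in A}(1-\connf(v-z))$. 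Since $x$ and $y$ are at distance exactly $n+1$ iff this Poisson process is nonempty, the void-probability formula yields the conditional probability as $1-\exp(-\lambda\int\connf(v-y)\,\connf^{[n]}_{\langle A\cup\{y\}\rangle}(x,v)\,\prod_{z\in A}(1-\connf(v-z))\,\dd v)$, which is \eqref{eqn:Exconnf_n+1_AvoidA}.

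The step I expect to be the main obstacle is making the middle paragraph airtight, namely checking that the generation-by-generation exploration interacts correctly with the ``no shorter path'' clause built into $\connf^{[\cdot]}_{\langle\cdot\rangle}$. Concretely, one must verify that enlarging the thinning set from $A$ to $A\cup\{y\}$ accounts \emph{exactly} for the absence of shortcuts through $y$, so that (i) the per-candidate probability really is the marked two-point function $\connf^{[n]}_{\langle A\cup\{y\}\rangle}(x,v)$ and not some quantity conditioned on the exploration, and (ii) no admissible $v$ at distance $<n$ can coexist with one at distance $=n$, so that nonemptiness of the Poisson process genuinely characterises distance exactly $n+1$. This is precisely where the recursive construction of the thinned configuration $\xi^{x,y}_{\langle A\rangle}$ from Definition~\ref{defn:ThinningsPivots} must be invoked with care; once that decoupling is established, the remaining ingredients --- Mecke's equation, the thinning theorem and void-probability formula for Poisson processes, and the elementary substitutions leading to \eqref{eqn:ExConnf2}--\eqref{eqn:ExConnf3} --- are routine.
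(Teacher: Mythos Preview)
Your outline matches the paper's proof closely: both peel off the direct edge, look at the penultimate vertex $v$ on a geodesic, and then invoke the Poisson structure to get $1-\e^{-\text{mean}}$. The paper asserts in one line that the number of admissible $v$ is Poisson; you are more careful and correctly single out the decoupling of the ``no shorter path'' clause as the crux.

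That crux, however, is fatal for $n\ge 2$, and neither your sketch nor the paper's argument can be completed as stated. The event on the right of the claimed equivalence---``$x\not\sim y$ and there exists $v\sim y$, $v$ not adjacent to $A$, with $d_{\langle A\cup\{y\}\rangle}(x,v)=n$''---does \emph{not} preclude the simultaneous existence of some other $v'\sim y$ (not adjacent to $A$) with $d_{\langle A\cup\{y\}\rangle}(x,v')<n$, or even with $v'\sim x$; either would force $d_{\langle A\rangle}(x,y)<n+1$. So the right-hand event strictly contains $\{d_{\langle A\rangle}(x,y)=n+1\}$, which is precisely your worry (ii). In addition, for $n\ge 2$ the admissibility of distinct candidates $v$ depends on the common random environment (the $(A\cup\{y\})$-thinned cluster of $x$), so the count is only a Poisson \emph{mixture} with random mean, and $1-\exp(-\E[\text{count}])$ is not the nonemptiness probability---your worry (i). A quick sanity check confirms the failure: for $A=\emptyset$, pick $x$ with $\connf(x)=0$ and $\connf^{\star 2}(x)>0$; then $\connf^{[3]}(x)\le\pla\big(d(\orig,x)\ge 3\big)=\exp\big(-\lambda\connf^{\star 2}(x)\big)\to 0$ as $\lambda\to\infty$, whereas the right side of \eqref{eqn:ExConnf3} tends to $1$. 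Thus \eqref{eqn:Exconnf_n+1_AvoidA}, \eqref{eqn:Exconnf_n+1} and \eqref{eqn:ExConnf3} cannot hold as exact identities for $n\ge 2$. The case $n=1$ is unaffected, because there the admissibility of $v$ depends only on edges from $v$ to the deterministic set $\{x,y\}\cup A$, so the marking theorem applies cleanly and \eqref{eqn:ExConnf2} is correct.
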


\begin{proof}
    To show \eqref{eqn:Exconnf_1_AvoidA}, observe that if $\adja{x}{y}{\xi^{x,y}}$ then there are no interior points on this path to be adjacent to $A$. Therefore $\Exconnf{1}_{\thinn{A}}(x,y) = \Exconnf{1}(x-y) = \connf(x-y)$.

    For \eqref{eqn:Exconnf_n+1_AvoidA}, we first note that the existence of a single edge connecting $x$ and $y$ is independent of everything else. Since we cannot have this edge, we have a factor of $1-\connf(x-y)$ outside everything else. Let us now consider the neighbours of $y$ in $\eta$. The event $\left\{\xconn{x}{y}{\xi^{x,y}_{\thinn{A}}}{= n+1}\right\}$ occurs exactly when $x\not\sim y$ and there exists a neighbour $v$ of $y$ that is not adjacent to any point in $A$ and has a path of length $n$ from $v$ to $x$ that does not use any vertex adjacent to $A$ or adjacent to $y$ (otherwise a ``shortcut'' would exist). The existence of such a path is exactly the event $\left\{\xconn{x}{v}{\xi^{x,v}_{\thinn{A\cup\left\{y\right\}}}}{=n}\right\}$. Since $\eta$ is a Poisson point process, the number of such vertices is a Poisson distributed random variable with mean given by (via Mecke's equation)
    \begin{multline}
        \E_\lambda\left[\#\left\{v\in\eta\colon v\sim y, \xconn{x}{y}{\xi^{x,y}_{\thinn{A}}}{= n+1}, v\not\sim z \text{ for all }z\in A\right\}\right] \\= \lambda\int \connf(v-y)\connf^{[n]}_{\thinn{A\cup \left\{y\right\}}}(x,v)\prod_{z\in A}\left(1-\connf(v-z)\right)\dd v.
    \end{multline}
    If $X$ is a Poisson random variable with mean $M$, then $\mathbb{P}\left(X\geq 1\right) = 1 - \e^{-M}$. Since the number $\#\left\{v\in\eta\colon v\sim y, \xconn{x}{y}{\xi^{x,y}_{\thinn{A}}}{= n+1}, v\not\sim z \text{ for all }z\in A\right\}$ is a Poisson random variable, this returns the required second factor in \eqref{eqn:Exconnf_n+1_AvoidA}.

    To get \eqref{eqn:Exconnf_n+1}, use \eqref{eqn:Exconnf_n+1_AvoidA} with $A=\emptyset$ and $y=\orig$.

    To calculate $\Exconnf{2}$ and $\Exconnf{3}$, we iteratively use \eqref{eqn:Exconnf_1_AvoidA}, \eqref{eqn:Exconnf_n+1_AvoidA}, and \eqref{eqn:Exconnf_n+1}. For $\Exconnf{2}$ we have
    \begin{align}
        \Exconnf{2}(x) &= \left(1-\connf(x)\right)\left(1 - \exp\left(-\lambda\int \connf(v)\connf^{[1]}_{\thinn{\orig}}(x,v)\dd v\right)\right)\nonumber\\
        &= \left(1-\connf(x)\right)\left(1 - \exp\left(-\lambda\int \connf(v)\connf(x-v)\dd v\right)\right) \nonumber\\
        &= \left(1-\connf(x)\right)\left(1-\exp\left(-\lambda\connf^{\star 2}(x)\right)\right).
    \end{align}
    Similarly, we find
    \begin{equation}
        \Exconnf{2}_{\thinn{\orig}}(x,v) = \left(1-\connf(x-v)\right)\left(1 - \exp\left(-\lambda\int \connf(w-v)\connf(x-w)\left(1-\connf(w)\right)\dd w\right)\right),
    \end{equation}
    and therefore
    \begin{align}
        \Exconnf{3}(x) &= \left(1-\connf(x)\right)\left(1 - \exp\left(-\lambda\int \connf(v)\Exconnf{2}_{\thinn{\orig}}(x,v)\dd v\right)\right)\nonumber\\
        &= \left(1-\connf(x)\right)\left(1 - \exp\left(-\lambda\int \connf(v)\left(1-\connf(x-v)\right)\right.\right.\nonumber\\
        &\hspace{4cm}\left.\left.\times\left(1 - \exp\left(-\lambda\int \connf(w-v)\connf(x-w)\left(1-\connf(w)\right)\dd w\right)\right)\dd v\right)\right).
    \end{align}
\end{proof}

\begin{lemma}
\label{lem:ExconnfBound}
For $n\geq 1$, $\lambda>0$, and $x\in\Rd$,
    \begin{equation}
        \connf^{[n]}(x) \leq \lambda^{n-1}\connf^{\star n}(x).
    \end{equation}
\end{lemma}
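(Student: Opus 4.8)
The plan is to prove the bound $\connf^{[n]}(x) \leq \lambda^{n-1}\connf^{\star n}(x)$ by induction on $n$, using the recursion \eqref{eqn:Exconnf_n+1} together with the elementary inequality $1 - \e^{-M} \leq M$ valid for all $M \geq 0$.

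For the base case $n = 1$, we have $\connf^{[1]}(x) = \connf(x) = \connf^{\star 1}(x)$, which is exactly the claim with $\lambda^0 = 1$. For the inductive step, suppose the bound holds for some $n \geq 1$. Applying \eqref{eqn:Exconnf_n+1} and then $1 - \e^{-M} \leq M$ with $M = \lambda\int \connf(v)\connf^{[n]}_{\thinn{\orig}}(x,v)\dd v$, together with $1 - \connf(x) \leq 1$, we get
\begin{equation}
    \connf^{[n+1]}(x) \leq \lambda\int \connf(v)\connf^{[n]}_{\thinn{\orig}}(x,v)\dd v.
\end{equation}
The remaining point is that the thinned quantity $\connf^{[n]}_{\thinn{\orig}}(x,v)$ is dominated by the un-thinned $\connf^{[n]}(x-v)$: thinning only removes configurations (it imposes that interior vertices avoid being adjacent to $\orig$), so $\connf^{[n]}_{\thinn{A}}(x,v) \leq \connf^{[n]}(x-v)$ for any finite $A$, and in particular for $A = \{\orig\}$. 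Hence, using the inductive hypothesis $\connf^{[n]}(x-v) \leq \lambda^{n-1}\connf^{\star n}(x-v)$,
\begin{equation}
    \connf^{[n+1]}(x) \leq \lambda\int \connf(v)\lambda^{n-1}\connf^{\star n}(x-v)\dd v = \lambda^n \connf^{\star n} \star \connf(x) = \lambda^n\connf^{\star(n+1)}(x),
\end{equation}
which completes the induction.

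The only step requiring a little care is the monotonicity-under-thinning inequality $\connf^{[n]}_{\thinn{A}}(x,v) \leq \connf^{[n]}(x-v)$; this follows because the event defining $\connf^{[n]}_{\thinn{A}}$ is contained in the event defining $\connf^{[n]}$ (a path of length exactly $n$ with the extra avoidance constraint is in particular such a path), and translation invariance identifies $\connf^{[n]}(v-x)$ with $\connf^{[n]}(x-v)$ by symmetry of $\connf$. Everything else is the trivial linearisation $1 - \e^{-M} \leq M$ and the definition of convolution, so there is no real obstacle here — the lemma is a soft consequence of the recursion established in the previous lemma.
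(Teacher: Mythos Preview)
Your proof is correct but takes a different route from the paper's. The paper argues directly: drop the ``no shorter path'' clause (which only enlarges the event), bound the probability that at least one $n$-path exists by the expected number of $n$-paths via Markov's inequality, and compute that expectation with Mecke's equation to get $\lambda^{n-1}\connf^{\star n}(x)$. This is a one-step argument that does not touch the recursion at all. Your approach instead inducts through the recursion \eqref{eqn:Exconnf_n+1}, linearising $1-\e^{-M}\le M$ and using the monotonicity $\connf^{[n]}_{\thinn{\orig}}(x,v)\le\connf^{[n]}(x-v)$ to feed the inductive hypothesis back in.

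One point deserves more care in your write-up. Your justification for the monotonicity step (``a path of length exactly $n$ with the extra avoidance constraint is in particular such a path'') addresses the existence of the $n$-path but is silent on the ``no shorter path'' clause. If $\connf^{[n]}_{\thinn{A}}$ were read as ``shortest path equals $n$ in the thinned graph'', the containment would \emph{fail}: thinning can destroy short paths, so the event $\{\text{shortest path in thinned}=n\}$ is not contained in $\{\text{shortest path in original}=n\}$. The inequality holds here because the paper's description of $\connf^{[n]}_{\thinn{A}}$ stipulates that the ``no path of length $<n$'' condition is taken in the \emph{unthinned} graph $\xi^{x,y}$; under that reading the event is a genuine sub-event of $\{\xconn{x}{y}{\xi^{x,y}}{=n}\}$ and your induction closes. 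The paper's Markov--Mecke argument sidesteps this subtlety entirely and is both shorter and independent of the recursion lemma.
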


\begin{proof}
    The expression $\connf^{[n]}(x)$ gives the probability that there exists at least one path from $\orig$ to $x$ of length $n$, and no shorter paths. We can bound this by the probability that there exists at least one path from $\orig$ to $x$ of length $n$. Then by Markov's inequality this is bounded by the expected number of paths from $\orig$ to $x$ of length $n$. By Mecke's equation this is given by $\lambda^{n-1}\connf^{\star n}(x)$.
\end{proof}

\begin{lemma}\label{lem:tauUpperbound}
For $m,n\geq 1$, $\lambda>0$, and $x\in\Rd$,
    \begin{equation}
        \sum^{m}_{i=1}\Exconnf{i}(x) \leq \tlam(x) \leq \sum^{n}_{i=1}\Exconnf{i}(x) + \lambda^n\connf^{\star (n+1)}(x) + \lambda^{n+1}\connf^{\star (n+1)}\star\tlam(x).
    \end{equation}
\end{lemma}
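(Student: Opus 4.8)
The plan is to decompose $\tlam(x)$ according to the graph distance between $\orig$ and $x$ in $\xi^{\orig,x}$. First I would observe that the events $\{\xconn{\orig}{x}{\xi^{\orig,x}}{=i}\}$, $i\geq 1$, are pairwise disjoint (when $\orig$ and $x$ are connected the length of a shortest path is a well-defined number) and that their union is exactly $\{\conn{\orig}{x}{\xi^{\orig,x}}\}$. Taking probabilities yields
\[
    \tlam(x) = \sum_{i=1}^{\infty}\Exconnf{i}(x),
\]
and the lower bound $\sum_{i=1}^{m}\Exconnf{i}(x)\leq\tlam(x)$ follows at once, being the truncation of a series of non-negative terms.

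For the upper bound I would split off the first $n+1$ summands,
\[
    \tlam(x) = \sum_{i=1}^{n}\Exconnf{i}(x) + \Exconnf{n+1}(x) + \sum_{i\geq n+2}\Exconnf{i}(x),
\]
bound the middle term by $\lambda^{n}\connf^{\star(n+1)}(x)$ directly from Lemma~\ref{lem:ExconnfBound}, and then establish the tail estimate
\[
    \sum_{i\geq n+2}\Exconnf{i}(x) \leq \lambda^{n+1}\connf^{\star(n+1)}\star\tlam(x).
\]

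To prove this last inequality I would rerun the argument of Lemma~\ref{lem:ExconnfBound}, but retaining one interior vertex. On the event that the shortest path from $\orig$ to $x$ in $\xi^{\orig,x}$ has length at least $n+2$, fix such a shortest path $\orig=u_0,u_1,\dots,u_\ell=x$: it is self-avoiding and $\ell\geq n+2$, so $u_1,\dots,u_{n+1}$ are distinct points of $\eta$, none equal to $\orig$ or $x$, they satisfy $\orig\sim u_1\sim\cdots\sim u_{n+1}$, and the remaining portion $u_{n+1},\dots,u_\ell$ realises $\offconn{u_{n+1}}{x}{\xi^{\orig,x}}{\{\orig,u_1,\dots,u_n\}}$. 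Consequently this event is contained in the event that some tuple $(u_1,\dots,u_{n+1})\in\eta^{(n+1)}$ enjoys all of these properties, so its probability is at most the expected number of such tuples. Applying Mecke's equation~\eqref{eq:prelim:mecke_n} with $m=n+1$ renders $u_1,\dots,u_{n+1}$ deterministic, producing a factor $\lambda^{n+1}$ and an integral over $(u_1,\dots,u_{n+1})$; the $n+1$ adjacency events contribute $\connf(u_1)\connf(u_2-u_1)\cdots\connf(u_{n+1}-u_n)$ and --- the crucial point --- are independent of the ``off'' connection event, since a connection from $u_{n+1}$ to $x$ avoiding $\orig,u_1,\dots,u_n$ uses neither the edge $\{u_n,u_{n+1}\}$ nor any other edge incident to $\orig,u_1,\dots,u_n$. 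Finally such an ``off'' connection lives entirely in the induced subgraph on $\eta\cup\{u_{n+1},x\}$, so its probability is at most $\pla(\conn{u_{n+1}}{x}{\xi^{u_{n+1},x}})=\tlam(x-u_{n+1})$; collecting the factors and changing variables identifies the integral with $\connf^{\star(n+1)}\star\tlam(x)$, as required.

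The only genuine difficulty I anticipate is the bookkeeping in this last step: one must be careful that the tail of the shortest path really does avoid $\orig,u_1,\dots,u_n$ so that the adjacency variables factor out, and that it is precisely this avoidance that permits the ``off'' connection to be dominated by $\tlam$ --- reinstating the deterministic points $u_1,\dots,u_n$ would break that bound. All the remaining ingredients (disjoint unions, Mecke's equation, and translation invariance of the RCM) are routine.
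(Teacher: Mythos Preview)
Your proof is correct and reaches the same conclusion, but the route for the tail estimate differs from the paper's. The paper applies Mecke's equation to extract a \emph{single} vertex $y$ (the $(n{+}1)$-st vertex along a shortest path) and then invokes the BK inequality to factorise
\[
\pla\big(\{\xconn{\orig}{y}{\xi^{\orig,y}}{=n+1}\}\circ\{\conn{y}{x}{\xi^{y,x}}\}\big)\le \Exconnf{n+1}(y)\,\tlam(x-y),
\]
followed by $\Exconnf{n+1}\le\lambda^{n}\connf^{\star(n+1)}$. You instead apply Mecke for all $n{+}1$ vertices $u_1,\dots,u_{n+1}$ at once and use that the residual connection from $u_{n+1}$ to $x$ runs off $\{\orig,u_1,\dots,u_n\}$, so it is measurable with respect to edge variables disjoint from those determining $\orig\sim u_1\sim\cdots\sim u_{n+1}$; this gives the factorisation by plain independence rather than BK. Your argument requires a bit more bookkeeping (tracking $n{+}1$ integration variables and checking the ``off'' structure carefully), but it has the pleasant feature of avoiding the BK inequality altogether, which is a genuine simplification of the toolkit. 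Both approaches land on the identical bound $\lambda^{n+1}\connf^{\star(n+1)}\star\tlam(x)$.
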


\begin{proof}
    First note that the events $\left\{\left\{\xconn{\orig}{x}{\xi^{\orig,x}}{=i}\right\}\right\}_{i\in\N}$ are pairwise disjoint. They are also all contained in the event $\left\{\conn{\orig}{x}{\xi^{\orig,x}}\right\}$. Therefore $\sum^{m}_{i=1}\Exconnf{i}(x) \leq \tlam(x)$.

    For the upper bound, the above comments imply that $\tlam(x)-\sum^{n+1}_{i=1}\Exconnf{i}(x)$ is the probability that $\orig$ and $x$ are connected in $\xi^{\orig,x}$ by some path of length $n+2$ or longer. We can then use Markov's inequality to bound this probability by the expected number of paths of length $n+2$ or longer. By using Mecke's equation, we get
    \begin{align}
        \tlam(x)-\sum^{n+1}_{i=1}\Exconnf{i}(x) &\leq \E_{\lambda}\left[\sum_{y\in\eta}\Id_{\left\{\xconn{\orig}{y}{\xi^{\orig}}{=n+1}\right\}\circ\left\{\conn{y}{x}{\xi^x}\right\}}\right]\nonumber\\
        &= \lambda \int \pla\left(\left\{\xconn{\orig}{y}{\xi^{\orig,y}}{=n+1}\right\}\circ\left\{\conn{y}{x}{\xi^{y,x}}\right\}\right)\dd y\nonumber\\
        &\leq \lambda\int\Exconnf{n+1}(y)\tlam(x-y)\dd y.
    \end{align}
    In this last inequality we have used the the BK inequality to bound the probability of the vertex-disjoint occurrence. We therefore have
    \begin{equation}
        \tlam(x) \leq \sum^{n}_{i=1}\Exconnf{i}(x) + \Exconnf{n+1}(x) + \lambda\Exconnf{n+1}\star\tlam(x).
    \end{equation}
    Bounding $\Exconnf{n+1}(x) \leq \lambda^{n}\connf^{\star (n+1)}(x)$ (as shown in Lemma~\ref{lem:ExconnfBound}) in these last two terms then gives the result.
\end{proof}

\begin{lemma}
    If $n_1,n_2,n_3\geq 2$, then
    \begin{equation}
        \int \connf^{\star n_1}(x)\connf^{\star n_2}(x)\connf^{\star n_3}(x)\dd x \leq \left(\int\connf(x)\dd x\right)^{n_1+n_2+n_3 - 6} \int\left(\connf^{\star 2}(x)\right)^3\dd x.
    \end{equation}
    If $n_1,n_2\geq 2$ and $n_1+n_2\geq 6$, then
    \begin{equation}
        \int \connf^{\star n_1}(x)\connf^{\star n_2}(x)\connf(x)\dd x \leq \connf^{\star (n_1+n_2)}\left(\orig\right) \leq \left(\int\connf(x)\dd x\right)^{n_1+n_2 - 6}\connf^{\star 6}\left(\orig\right).
    \end{equation}
\end{lemma}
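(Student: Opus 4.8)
The plan is to pass to Fourier space and lean on two elementary facts about $\fconnf$. First, $\abs*{\fconnf(k)} \le \fconnf(\orig) = \phiint$ for every $k$, because $\connf \ge 0$. Second, every even power $\fconnf(k)^{2j}$ is non-negative, being the square of the real number $\fconnf(k)^{j}$; equivalently $\connf^{\star 2j}$ is positive-definite, so $0 \le \connf^{\star 2j}(x) \le \connf^{\star 2j}(\orig)$ for all $x$. The Fourier manipulations below are legitimate because $\connf$ is bounded and integrable, hence lies in $L^2$, so that $\fconnf \in L^2$ and $\fconnf^{n} \in L^1$ for every $n \ge 2$ (being dominated by $\phiint^{\,n-2}\fconnf^2$).

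For the first inequality, I would express the integral of the triple product through the duality between pointwise multiplication and convolution: since $\widehat{f_1 f_2 f_3}(\orig) = (2\pi)^{-2d}(\widehat{f_1}\star\widehat{f_2}\star\widehat{f_3})(\orig)$ and $\widehat{\connf^{\star n}} = \fconnf^{n}$, one gets
\[
    \int \connf^{\star n_1}(x)\connf^{\star n_2}(x)\connf^{\star n_3}(x)\dd x = \frac{1}{(2\pi)^{2d}}\int\!\!\int \fconnf(k)^{n_1}\,\fconnf(l)^{n_2}\,\fconnf(k+l)^{n_3}\,\dd k\,\dd l .
\]
The left-hand side is a non-negative real number, so it equals the modulus of the right-hand side; bounding the modulus of the integrand via $\abs*{\fconnf(\cdot)}^{n_i} \le \phiint^{\,n_i-2}\fconnf(\cdot)^2$ (valid since $n_i \ge 2$) peels off a factor $\phiint^{\,n_1+n_2+n_3-6}$ and leaves $(2\pi)^{-2d}\int\!\int \fconnf(k)^2\fconnf(l)^2\fconnf(k+l)^2\,\dd k\,\dd l$, which equals $\int (\connf^{\star 2}(x))^3\,\dd x$ by the same duality read in reverse.

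For the second statement, the left inequality is immediate: from $0 \le \connf \le 1$ and the evenness of $\connf^{\star n_2}$,
\[
    \int \connf^{\star n_1}(x)\connf^{\star n_2}(x)\connf(x)\,\dd x \;\le\; \int \connf^{\star n_1}(x)\connf^{\star n_2}(x)\,\dd x = \left(\connf^{\star n_1}\star\connf^{\star n_2}\right)(\orig) = \connf^{\star(n_1+n_2)}(\orig).
\]
For the right inequality, set $m = n_1+n_2 \ge 6$ and use Fourier inversion at the origin, $\connf^{\star m}(\orig) = (2\pi)^{-d}\int \fconnf(k)^{m}\,\dd k = (2\pi)^{-d}\int \fconnf(k)^{6}\,\fconnf(k)^{m-6}\,\dd k$. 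Since $\fconnf(k)^{6}\ge 0$ and $\fconnf(k)^{m-6} \le \phiint^{\,m-6}$, this is at most $\phiint^{\,m-6}(2\pi)^{-d}\int\fconnf(k)^{6}\,\dd k = \phiint^{\,m-6}\connf^{\star 6}(\orig)$.

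I do not expect a genuine obstacle here; the only points needing care are checking that $\fconnf^{n_i}$ is integrable (noted above, so that the Fourier identities hold at the relevant points) and making the right choice of domination — estimating $\abs*{\fconnf}^{n_i}$ by $\phiint^{\,n_i-2}\fconnf^2$ rather than, say, by $\phiint^{\,n_i-1}\abs*{\fconnf}$ — so that the resulting bounds land precisely on $\int(\connf^{\star 2}(x))^3\,\dd x$ and $\connf^{\star 6}(\orig)$, as the statement requires.
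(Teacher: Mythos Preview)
Your proposal is correct and follows essentially the same route as the paper: pass to Fourier space, use $\abs{\fconnf}\le\phiint$ to strip off the excess factors (leaving $\fconnf^2$ in each of the three slots for the first inequality and $\fconnf^6$ for the second), and then undo the Fourier identity. The paper's proof is terser and does not spell out the $L^1$--$L^2$ integrability check you included, but the mechanism is identical.
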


\begin{proof}
    Recall that the Fourier transform of the convolution of two functions equals the pointwise product of their individual Fourier transforms, and the Fourier transform of the pointwise product of two functions equals the convolution of their individual Fourier transforms. Therefore
    \begin{equation}
        \int \connf^{\star n_1}(x)\connf^{\star n_2}(x)\connf^{\star n_3}(x)\dd x = \int \fconnf(k)^{n_1}\fconnf(k-l)^{n_2}\fconnf(l)^{n_3}\frac{\dd k \dd l}{\left(2\pi\right)^{2d}}.
    \end{equation}
    We then note that having $\connf(x)\geq 0$ implies $\sup_{k}\abs*{\fconnf(k)} = \fconnf(0) = \int\connf(x)\dd x$. Therefore a supremum bound implies
    \begin{align}
        \int \fconnf(k)^{n_1}\fconnf(k-l)^{n_2}\fconnf(l)^{n_3}\frac{\dd k \dd l}{\left(2\pi\right)^{2d}} &\leq \left(\int\connf(x)\dd x\right)^{n_1+n_2+n_3 - 6} \int \abs*{\fconnf(k)}^{2}\abs*{\fconnf(k-l)}^{2}\abs*{\fconnf(l)}^{2}\frac{\dd k \dd l}{\left(2\pi\right)^{2d}}\nonumber\\
        & = \left(\int\connf(x)\dd x\right)^{n_1+n_2+n_3 - 6} \int \fconnf(k)^{2}\fconnf(k-l)^{2}\fconnf(l)^{2}\frac{\dd k \dd l}{\left(2\pi\right)^{2d}}\nonumber\\
        & = \left(\int\connf(x)\dd x\right)^{n_1+n_2+n_3 - 6} \int\left(\connf^{\star 2}(x)\right)^3\dd x.
    \end{align}

    For the second inequality, we bound $\connf(x)\leq 1$ to leave the convolution $\connf^{\star n_1}\star\connf^{\star n_2}\left(\orig\right)= \connf^{\star (n_1+n_2)}\left(\orig\right)$. Then like above we have
    \begin{multline}
        \connf^{\star (n_1+n_2)}\left(\orig\right) = \int \fconnf(k)^{n_1 + n_2}\frac{\dd k}{\left(2\pi\right)^d} \\
        \leq \left(\int\connf(x)\dd x\right)^{n_1+n_2 - 6} \int \fconnf(k)^{6}\frac{\dd k}{\left(2\pi\right)^d} = \left(\int\connf(x)\dd x\right)^{n_1+n_2 - 6}\connf^{\star 6}\left(\orig\right).
    \end{multline}
\end{proof}

\section{Lace Expansion Coefficients}\label{sec:Pibd}
The key to our proof is a decomposition of the lace expansion coefficients. In preparation for defining them, we need a few more elementary definitions. The full definitions can be found in \cite{HeyHofLasMat19}.

\begin{definition}[Thinnings and Pivotal Points]
\label{defn:ThinningsPivots}
    Let $x,y\in\Rd$ and $A\subset \Rd$ be a locally finite set. 
    \begin{enumerate}
        \item Let $\eta$ be a vertex set. We produce a vertex set $\eta_{\thinn{A}}$ by retaining each $\omega\in\eta$ with probability $\overline{\connf}(A,\omega):= \prod_{z\in A}\left(1-\connf(\omega,z)\right)$. We call $\eta_{\thinn{A}}$ an \emph{$A$-thinning} of $\eta$. A similar procedure can be followed to define $\eta^x_{\thinn{A}}$ from $\eta^x$.

        \item Define $\left\{\xconn{x}{y}{\xi}{A}\right\}$ to be the event that $x,y\in\eta$ and $x$ is connected to $y$ in $\xi$, but that this connection does not survive an $A$-thinning of $\eta\setminus\left\{x\right\}$. In particular, the connection does not survive if $y$ is thinned out.

        \item The vertex $u\in\Rd$ is \emph{pivotal} and $u\in\piv{x,y,\xi}$ if every path on $\xi^{x,y}$ that connects $x$ to $y$ uses the vertex $u$. The end points $x$ and $y$ are never said to be pivotal.

        \item Define
        \begin{equation}
            E\left(x,y;A,\xi\right) := \left\{\xconn{x}{y}{\xi}{A}\right\}\cap\left\{\not\exists w\in\piv{x,y;\xi}\colon \xconn{x}{w}{\xi}{A}\right\}.
        \end{equation}
        If one considers the pivotal points from $x$ to $y$ in $\xi$ in sequence, then this is the event that an $A$-thinning breaks the connection after the last pivotal point and not before.

        \item Define
        \begin{equation}
            \left\{\dconn{x}{y}{\xi^{x,y}}\right\}:= \left\{\conn{x}{y}{\xi^{x,y}}\right\}\circ\left\{\conn{x}{y}{\xi^{x,y}}\right\}.
        \end{equation}
        Note that this is equal to the event that $x$ and $y$ are adjacent or there exist vertices $u,v$ in $\eta$ that are adjacent to $x$ and have disjoint paths to $y$ that both do not contain $x$. Alternatively, there are no pivotal points for the connection of $x$ and $y$ in $\xi^{x,y}$.
    \end{enumerate}
\end{definition}

We are now able to define the lace expansion coefficients, which will be the main objects of study in the remainder of the paper.

\begin{definition}\label{def:Pi}
    For $n\in\N$, $x\in\Rd$, and $\lambda\in\left[0,\lambda_c\right]$ we define
	\begin{align}
	    \Pi_\lambda^{(0)}(x) &:= \pla \left(\dconn{\orig}{x}{\xi^{\orig,x}}\right) - \connf(x), \label{eq:LE:Pi0_def} \\
     	\Pi_\lambda^{(n)}(x) &:= \lambda^n \int \pla \left( \{\dconn{\orig}{u_0}{\xi^{\orig, u_0}_{0}}\} \cap \bigcap_{i=1}^{n} E\left(u_{i-1},u_i; \C_{i-1}, \xi^{u_{i-1}, u_i}_{i}\right) \right) \dd \vec u_{[0,n-1]} , \label{eq:LE:Pin_def}
	\end{align}
where $u_n=x$, $\left\{\xi_i\right\}_{i\geq 0}$ are independent copies of $\xi$, and $\C_{i} = \C\left(u_{i-1}, \xi^{u_{i-1}}_{i}\right)$ is the cluster of $u_{i-1}$ in $\xi^{u_{i-1}}_i$. Then we further define
\begin{equation}
    \Pi_\lambda(x) = \sum_{n=0}^\infty\left(-1\right)^n\Pi_\lambda^{(n)}(x).
\end{equation}
Note that \cite[Corollary~6.1]{HeyHofLasMat19} proves that $\LacelamC^{(n)}(x) = \lim_{\lambda\nearrow\lambda_c}\Lacelam^{(n)}(x)$, and (in the proof) that $\fLacelamC^{(n)}(0) = \lim_{\lambda\nearrow\lambda_c}\fLacelam^{(n)}(0)$ and $\fLacelamC(0) = \lim_{\lambda\nearrow\lambda_c}\fLacelam(0)$.
\end{definition}

\begin{prop}
    Suppose Assumption~\ref{Assumption} holds and $d$ is sufficiently large. Then for all $\lambda\leq\lambda_c$ and $x\in\Rd$
    \begin{equation}
    \label{eqn:OZE}
        \tlam(x) = \connf(x) + \Lacelam(x) + \lambda\left(\connf + \Lacelam\right)\star\tlam(x).
    \end{equation}
\end{prop}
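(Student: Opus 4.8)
The plan is to obtain \eqref{eqn:OZE} as the infinite-volume Ornstein--Zernike equation produced by the lace expansion of \cite{HeyHofLasMat19}, whose hypotheses are exactly matched by Assumption~\ref{Assumption} together with ``$d$ sufficiently large''. The proof splits into two parts: (i) a combinatorial/algebraic step giving a \emph{finite-level} lace-expansion identity with an explicit non-negative remainder, and (ii) an analytic step showing the remainder vanishes and that $\Lacelam=\sum_n(-1)^n\Lacelam^{(n)}$ converges absolutely, so that the limit may be taken.

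For part (i), I would partition $\{\conn{\orig}{x}{\xi^{\orig,x}}\}$ according to whether it possesses a pivotal vertex. The pivotal-free part is $\{\dconn{\orig}{x}{\xi^{\orig,x}}\}$, contributing $\connf(x)+\Lacelam^{(0)}(x)$ by the definition of $\Lacelam^{(0)}$ in Definition~\ref{def:Pi}. On the complementary event, let $u_0$ be the first pivotal vertex (the one with $\orig \Longleftrightarrow u_0$ and no earlier pivotal); Mecke's equation \eqref{eq:prelim:mecke_n} applied in $u_0$ produces the factor $\lambda$ and splits the configuration into a doubly-connected piece from $\orig$ to $u_0$ and a connection from $u_0$ to $x$ that must avoid the cluster $\C_0=\C(\orig)$. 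Replacing this constrained connection by the unconstrained $\tlam(x-u_0)$ introduces an error which, by inclusion--exclusion, re-exhibits exactly the same ``doubly connected, then thinned connection'' structure one level deeper; iterating $N$ times and collecting the alternating signs yields, for every $N\ge 0$,
\begin{equation*}
    \tlam(x) = \connf(x) + \sum_{n=0}^{N}(-1)^n\Lacelam^{(n)}(x) + \lambda\left(\connf + \sum_{n=0}^{N}(-1)^n\Lacelam^{(n)}\right)\star\tlam(x) + (-1)^{N+1}R_\lambda^{(N+1)}(x),
\end{equation*}
with $R_\lambda^{(N+1)}\ge 0$ an explicit $(N{+}1)$-fold diagram. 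This is precisely the content of the lace expansion derived in \cite{HeyHofLasMat19}, and every step there is justified by the BK inequality and Mecke's equation exactly as recalled in Section~2.

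For part (ii), first take $\lambda<\lambda_c$. Using Proposition~\ref{prop:bootstrapbound} to bound $\abs{\ftlam(k)}\le 2\fgmu(k)$ and Assumption~\ref{Assumption} to control the resulting Fourier integrals (the same mechanism as in the proof of Lemma~\ref{lem:tailbound}), one derives triangle-type estimates showing that for $d$ large there is $\delta<1$ with $\sum_x\Lacelam^{(n)}(x)\le C\delta^{\,n}$ and $\sup_x R_\lambda^{(N+1)}(x)\to 0$ as $N\to\infty$. Hence $\Lacelam=\sum_n(-1)^n\Lacelam^{(n)}$ converges absolutely and one may let $N\to\infty$ in the displayed identity to obtain \eqref{eqn:OZE} for $\lambda<\lambda_c$. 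Finally, extend to $\lambda=\lambda_c$ by monotone convergence in $\lambda$, using that $\tlam(x)$ is increasing in $\lambda$ together with the limit statements $\LacelamC^{(n)}(x)=\lim_{\lambda\nearrow\lambda_c}\Lacelam^{(n)}(x)$ and $\fLacelamC(0)=\lim_{\lambda\nearrow\lambda_c}\fLacelam(0)$ recorded in Definition~\ref{def:Pi}.

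I expect the main obstacle to be the diagrammatic bound on $R_\lambda^{(N+1)}$ and on the individual $\Lacelam^{(n)}$: organising the repeated BK splittings into products of triangle-type diagrams and then making those diagrams small via the convolution/Fourier estimates is the technical heart of the lace expansion, and it is precisely where the largeness of $d$ and Assumption~\ref{Assumption} are genuinely used. Everything else is either bookkeeping (part (i)) or a routine limiting argument (the $\lambda\nearrow\lambda_c$ extension).
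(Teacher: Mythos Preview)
Your proposal is correct and matches the paper's approach: the paper simply cites \cite{HeyHofLasMat19} (Corollary~5.3 for $\lambda<\lambda_c$ and Corollary~6.1 for $\lambda=\lambda_c$), and what you have written is an accurate outline of the argument carried out there. The only minor refinement is that the extension to $\lambda=\lambda_c$ in \cite{HeyHofLasMat19} uses a dominated-convergence argument for the convolution term rather than pure monotone convergence, but this does not affect the substance of your sketch.
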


\begin{proof}
    This is the Ornstein-Zerneke equation for the random connection model, and it is proven in \cite{HeyHofLasMat19}. The $\lambda<\lambda_c$ result is in Corollary~5.3, and the $\lambda=\lambda_c$ result is in Corollary~6.1.
\end{proof}

Our main result for this section is the following proposition. 
\begin{prop}
\label{prop:PiBounds}
Suppose Assumptions~\ref{Assumption} and \ref{Assump:ExponentialDecay} hold. Also let $n_0\geq 4$ and $N\geq 1$ be fixed. Then as $d\to\infty$,
    \begin{align}
        \lambda_c\fLacelamC^{(0)}(0) &= \frac{1}{2}\lambda_c^3\loopfour - \frac{1}{2}\lambda_c^3\fourcrossone + \lambda_c^4\loopfive \nonumber\\ 
        &\hspace{4cm}+ \LandauBigO{\loopsix + \phiThreeTwoOne+\phiTwoTwoTwo},\label{eqPi0bd}\\
        \lambda_c\fLacelamC^{(1)}(0) &= \lambda_c^2\loopthree + 2\lambda_c^3\loopfour + 3\lambda_c^4\loopfive - 2\lambda_c^3\fourcrossone \nonumber\\
        &\hspace{4cm}+ \LandauBigO{\loopsix + \phiThreeTwoOne+\phiTwoTwoTwo},\label{eqPi1bd}\\
        \lambda_c\fLacelamC^{(2)}(0) &= \lambda_c^3\fourcrossone + \LandauBigO{\loopsix + \phiThreeTwoOne+\phiTwoTwoTwo},\label{eqPi2bd}\\
        \lambda_c\fLacelamC^{(3)}(0) &= \LandauBigO{\loopsix + \phiThreeTwoOne+\phiTwoTwoTwo},\label{eqPi3bd}\\
        \lambda_c\sum^{n_0}_{n=4}\left(-1\right)^n\fLacelamC^{(n)}(0) &= \LandauBigO{\loopsix},\label{eqPinbd}\\
        \lambda_c\sum^\infty_{n=N}\left(-1\right)^n\fLacelamC^{(n)}(0) &= \LandauBigO{\beta^N}. \label{eqn:PiTailBound}
    \end{align}
\end{prop}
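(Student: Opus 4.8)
\emph{Strategy.} All six assertions follow the same three-step scheme, which I sketch here. Fix $\lambda<\lambda_c$ throughout; the extension to $\lambda=\lambda_c$ is by monotone convergence together with the continuity statement recorded after Definition~\ref{def:Pi}. \emph{First,} turn each $\fLacelamC^{(n)}(0)=\int\Lacelam^{(n)}(x)\dd x$ into an explicit integral: unfold the double connections and the $E$-events of Definition~\ref{def:Pi} using the descriptions in Definition~\ref{defn:ThinningsPivots}, apply Mecke's equation~\eqref{eq:prelim:mecke_n} to each Poisson point, and bound the disjoint occurrences by the BK inequality from above and by their minimal sub-configurations from below. This expresses $\Lacelam^{(n)}(x)$, up to matching errors, as an integral over finitely many space variables of a product of mandatory edge factors $\connf$, two-point functions $\tlam$ for the re-connecting stretches, thinning probabilities $1-\prod_{y}(1-\connf(\cdot-y))$ coming from the clusters $\C_i$, and weights $1-\connf$. \emph{Second,} expand each $\tlam$ by Lemma~\ref{lem:tauUpperbound} with cut-off $3$, writing $\tlam=\connf+\Exconnf{2}+\Exconnf{3}+R$ with $0\le R(x)\le\lambda^3\connf^{\star4}(x)+\lambda^4\connf^{\star4}\star\tlam(x)$, and expand $\Exconnf{2}$, $\Exconnf{3}$ via~\eqref{eqn:ExConnf2}--\eqref{eqn:ExConnf3}, Taylor-expanding the Poisson factors $1-\e^{-\lambda(\cdots)}$ to first and, where needed, second order. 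Multiplying out, each resulting integral is one of the finitely many leading diagrams $\loopthree$, $\loopfour$, $\loopfive$, $\fourcrossone$ (after collecting permutation-equivalent diagrams), or an error integral. \emph{Third,} show every error integral is $\LandauBigO{\loopsix+\phiThreeTwoOne+\phiTwoTwoTwo}$ (resp.\ $\LandauBigO{\loopsix}$ in~\eqref{eqPinbd}): replace stray $\Exconnf{i}$ factors by $\lambda^{i-1}\connf^{\star i}$ via Lemma~\ref{lem:ExconnfBound}, collapse convolutions of the form $\connf^{\star m}\star\tlam^{\star s}(\orig)$ with $m$ even (arranged by pairing factors, using $f(\lambda)\le2$ from Proposition~\ref{prop:bootstrapbound}) onto $\connf^{\star m}(\orig)$ by Lemma~\ref{lem:tailbound}, and finally collapse any $\connf^{\star m}(\orig)$ with $m\ge6$ or $\int\connf^{\star n_1}\connf^{\star n_2}\connf^{\star n_3}\dd x$ with $n_1+n_2+n_3\ge6$ onto $\loopsix$, $\phiThreeTwoOne$, $\phiTwoTwoTwo$ by the final (convolution-comparison) lemma of the preliminaries.

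\emph{The terms $n=0,1,2,3$.} For $n=0$, $\Lacelam^{(0)}(x)=\pla(\dconn{\orig}{x}{\xi^{\orig,x}})-\connf(x)$ equals the probability that $\orig\not\sim x$ yet there are two paths from $\orig$ to $x$ with disjoint interiors, each of length at least $2$; classifying by the two interior lengths, the pair $(2,2)$ contributes $\tfrac12(1-\connf(x))\bigl(1-\e^{-\lambda\connf^{\star2}(x)}\bigr)^2$ at leading order, whose linear and quadratic expansion yields $\tfrac12\lambda_c^3\loopfour-\tfrac12\lambda_c^3\fourcrossone$ after the outer integration and multiplication by $\lambda_c$, while the pair $(2,3)$ contributes $\lambda_c^4\loopfive$ and all other pairs are error. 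For $n=1$, the $\C_0$-thinning inside $E(u_0,x;\C_0,\xi_1^{u_0,x})$ forces the re-connection from $u_0$ to $x$ to interact with the cluster of $\orig$; the minimal configuration, a single edge $u_0\sim x$ whose endpoint $x$ is removed by a thinning governed by $\connf(x)$, gives the leading $\lambda_c^2\loopthree$, the next configurations give $2\lambda_c^3\loopfour$, $-2\lambda_c^3\fourcrossone$, and $3\lambda_c^4\loopfive$, and the rest is error. For $n=2$ the only surviving leading diagram is $\lambda_c^3\fourcrossone$, and for $n=3$ all contributions are already of error size. In each case one checks that the upper (BK) and lower (minimal sub-configuration) bounds agree to the stated order.

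\emph{The sums over $n$.} For~\eqref{eqPinbd}, since $n_0$ is fixed it suffices to bound $\abs*{\fLacelamC^{(n)}(0)}=\LandauBigO{\loopsix}$ for each $4\le n\le n_0$; bounding all double connections and $E$-events as above and estimating every stretch by $\tlam$, iterating Lemma~\ref{lem:tailbound} collapses $\fLacelamC^{(n)}(0)$ onto a constant multiple of $\connf^{\star m}(\orig)$ with $m\ge6$, which the convolution-comparison lemma bounds by $\LandauBigO{\connf^{\star6}(\orig)}$; summing the finitely many $n$ preserves this. For~\eqref{eqn:PiTailBound} we invoke the pre-existing diagrammatic tail estimate of the lace expansion: under Assumption~\ref{Assumption}, for $d$ large one has $\sum_{n\ge N}\abs*{\fLacelamC^{(n)}(0)}=\LandauBigO{\beta(d)^N}$ (this is the crude $\Pi$-bound of \cite{HeyHofLasMat19}, in the form used in \cite{DicHey2022triangle}; Assumption~\ref{Assump:ExponentialDecay} guarantees $\beta(d)=g(d)^{1/4}<1$, as noted after~\eqref{eqn:betafromgfunction}, so this is genuine decay). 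Multiplying by the bounded factor $\lambda_c$ gives~\eqref{eqn:PiTailBound}.

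\emph{Main obstacle.} The bulk of the work lies in the $n=0$ and $n=1$ identities. One must verify that after expanding the two disjoint connections (for $n=0$), resp.\ the double connection together with the thinned $E$-event (for $n=1$), the surviving integrals combine --- with the exact rational coefficients $\tfrac12,-\tfrac12,1,2,-2,3$ --- into the listed leading diagrams, and that every remaining contribution is genuinely of size $\loopsix$, $\phiThreeTwoOne$ or $\phiTwoTwoTwo$ uniformly in $d$. This demands (i) the correct symmetry factors for interchangeable arms; (ii) Taylor-expanding the Poisson exponentials $1-\e^{-\lambda(\cdots)}$ to second order and recognising the resulting corrections as the leading diagram $\fourcrossone$ rather than as error; and (iii) showing that the weights $1-\connf$ and the constraints that interior vertices avoid the relevant clusters produce only contributions controlled by Lemma~\ref{lem:tailbound} and the convolution-comparison lemma --- which is precisely why those two results were set up as they were.
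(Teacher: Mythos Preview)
Your outline is correct and follows the paper's route: matching upper and lower bounds for each $n\le 3$, with errors absorbed via Lemma~\ref{lem:tailbound} and the convolution-comparison lemma, and the tail handled by the pre-existing bound from \cite{HeyHofLasMat19}. The one place your sketch is thinner than the paper is the phrase ``unfold the $E$-events'': because the thinning set $\C_{i-1}$ is itself random, this unfolding for $n\ge 1$ is not just BK plus Mecke but requires the specific $\psi_0,\psi,\psi_n$ decomposition imported from \cite[Proposition~7.2]{HeyHofLasMat19} (here Definition~\ref{def:DB:psi_functions} and Proposition~\ref{prop:LaceBoundwithDiagram}), which encodes the ``last point thinned by $\C_{i-1}$'' argument; also, for $n=0$ the exact Poisson expression is $1-\e^{-M}-M\e^{-M}$ with $M=\lambda\connf\star\tlam(x)$ rather than your $\tfrac12(1-\e^{-\lambda\connf^{\star2}(x)})^2$, though these agree to the order you need.
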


Note that when Assumption~\ref{Assump:NumberBound} holds we can choose a fixed finite $N^*$ such that
\begin{equation}
    N^* \geq \ceil*{\frac{1}{\log \beta(d)}\log\left(\loopsix + \phiThreeTwoOne + \phiTwoTwoTwo\right)}
\end{equation}
for all $d\in\N$. If we then let $N=N^*$ in \eqref{eqn:PiTailBound}, the bound becomes
\begin{equation}
    \lambda_c\sum^\infty_{n=N^*}\left(-1\right)^n\fLacelamC^{(n)}(0) = \LandauBigO{\loopsix + \phiThreeTwoOne+\phiTwoTwoTwo}.
\end{equation}

\begin{corollary}
\label{thm:CoefficientExpansion}
Suppose Assumptions~\ref{Assumption} and \ref{AssumptionBeta} hold. Then as $d\to\infty$,
    \begin{multline}
        \lambda_c\fLacelamC(0) = - \lambda_c^2\loopthree - \frac{3}{2}\lambda_c^3\loopfour - 2\lambda_c^4\loopfive + \frac{5}{2}\lambda_c^3\fourcrossone \\+ \LandauBigO{\loopsix + \phiThreeTwoOne + \phiTwoTwoTwo}.
    \end{multline}
\end{corollary}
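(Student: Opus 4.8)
The plan is to read off Corollary~\ref{thm:CoefficientExpansion} directly from Proposition~\ref{prop:PiBounds}. Recall from Definition~\ref{def:Pi} (and the limit relations recorded there) that $\fLacelamC(0)=\sum_{n\geq 0}(-1)^n\fLacelamC^{(n)}(0)$, so the corollary is obtained by adding the asymptotic identities \eqref{eqPi0bd}--\eqref{eqn:PiTailBound}, once the free cut-offs $n_0$ and $N$ appearing there are pinned down. Concretely, the plan has three steps: fix a $d$-independent cut-off using Assumption~\ref{Assump:NumberBound}; split the alternating series into the four ``explicit'' indices $n\in\{0,1,2,3\}$, a middle block of bounded length, and a tail, disposing of the latter two as error; then collect the explicit diagrams.

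For the cut-off: since Assumption~\ref{Assump:NumberBound} holds, $N(d)$ is bounded, so (as observed right after Proposition~\ref{prop:PiBounds}) there is a fixed finite $N^*$, independent of $d$, with $\beta(d)^{N^*}=\LandauBigO{\loopsix+\phiThreeTwoOne+\phiTwoTwoTwo}$ for all $d$. I would set $n_0:=\max\{4,N^*-1\}$, a constant, and write
\begin{align*}
\lambda_c\fLacelamC(0) &= \sum_{n=0}^{3}(-1)^n\lambda_c\fLacelamC^{(n)}(0) + \sum_{n=4}^{n_0}(-1)^n\lambda_c\fLacelamC^{(n)}(0)\\
&\qquad {}+ \sum_{n=n_0+1}^{\infty}(-1)^n\lambda_c\fLacelamC^{(n)}(0).
\end{align*}
The middle sum has a fixed number of summands and is $\LandauBigO{\loopsix}$ by \eqref{eqPinbd}; the tail, via \eqref{eqn:PiTailBound} applied with $N=n_0+1\geq N^*$, is $\LandauBigO{\beta(d)^{n_0+1}}$, which (as $\beta(d)<1$ for large $d$) is $\LandauBigO{\beta(d)^{N^*}}=\LandauBigO{\loopsix+\phiThreeTwoOne+\phiTwoTwoTwo}$.

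It then remains to substitute \eqref{eqPi0bd}--\eqref{eqPi3bd} into the first sum and collect like diagrams: $\loopthree$ gets coefficient $-\lambda_c^2$ (only from $-\fLacelamC^{(1)}(0)$); $\loopfour$ gets $\tfrac12\lambda_c^3-2\lambda_c^3=-\tfrac32\lambda_c^3$; $\loopfive$ gets $\lambda_c^4-3\lambda_c^4=-2\lambda_c^4$; and $\fourcrossone$ gets $-\tfrac12\lambda_c^3+2\lambda_c^3+\lambda_c^3=\tfrac52\lambda_c^3$; the $n=3$ term contributes nothing explicit. All remaining contributions are $\LandauBigO{\loopsix+\phiThreeTwoOne+\phiTwoTwoTwo}$, and since only finitely many such error terms are summed their total stays in this class, which gives the stated expansion. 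This last remark is the only delicate point: the number of $\LandauBigO{\cdot}$ terms being added must be bounded uniformly in $d$, which forces $n_0$ (hence $N^*$) to be a fixed constant, and this is exactly why Assumption~\ref{Assump:NumberBound} is needed here rather than merely Assumption~\ref{Assump:ExponentialDecay} as in Proposition~\ref{prop:PiBounds} — otherwise one would have to let the cut-off grow with $d$ and could no longer control the accumulated tail against the displayed diagrams. Everything else is direct substitution and arithmetic, so I expect no genuine obstacle beyond this bookkeeping.
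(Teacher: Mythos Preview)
Your proposal is correct and follows essentially the same approach as the paper: the paper's proof is a one-line reference to $\fLacelamC(0)=\sum_{n\ge0}(-1)^n\fLacelamC^{(n)}(0)$ together with Proposition~\ref{prop:PiBounds}, and you have simply spelled out the bookkeeping (fixing $N^*$ via Assumption~\ref{Assump:NumberBound} as in the paragraph immediately following Proposition~\ref{prop:PiBounds}, splitting the sum, and collecting the diagram coefficients) that the paper leaves implicit. Your coefficient arithmetic is correct.
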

\begin{proof}
The corollary follows from 
$\fLacelamC(0)=\sum^\infty_{n=0}\left(-1\right)^n\fLacelamC^{(n)}(0)$ and the bounds in Proposition \ref{prop:PiBounds}.
\end{proof}

We prove Proposition \ref{prop:PiBounds} in the remainder of the section: 
\eqref{eqPi0bd} is proved in Section \ref{sec:Pi0bd}, \eqref{eqPi1bd} is proved in Section \ref{sec:Pi1bd}, \eqref{eqPi2bd} is proved in Section \ref{sec:Pi2bd}, \eqref{eqPi3bd} and \eqref{eqPinbd} are proven in Section \ref{sec:Pi0bd}. But first we show how it implies our main result.

\begin{proof}[Proof of Theorem~\ref{thm:CriticalIntensityExpansion}]
    By applying the Fourier transform to both sides of \eqref{eqn:OZE}, we can rearrange terms to find
    \begin{equation}
        \ftlam(k) = \frac{\fconnf(k) + \fLacelam(k)}{1- \lambda\left(\fconnf(k) + \fLacelam(k)\right)}
    \end{equation}
    for all $k\in\Rd$ and $\lambda\leq\lambda_c$ (where we interpret the right hand side as $=\infty$ if the denominator vanishes). Since Mecke's equation implies $\chi\left(\lambda\right) = 1 + \lambda \ftlam(0)$, and $\lambda_c = \inf\left\{\lambda>0 \colon \chi\left(\lambda\right)=\infty\right\}$, this tells us that $\lambda_c$ satisfies
    \begin{equation}
    \label{eqn:critical}
        \lambda_c\left(1+\fLacelamC(0)\right) = 1,
    \end{equation}
    where we have used $\fconnf(0)=1$. We now aim to use our expansion for $\fLacelamC(0)$ to get an expansion for $\lambda_c$.
    
    Let us denote $a=\loopthreeempty$, $b=\frac{3}{2}\loopfourempty- \frac{5}{2}\fourcrossoneempty$, $c=2\loopfiveempty$, and $r=\loopsixempty + \phiThreeTwoOneempty + \phiTwoTwoTwoempty$. 
    Using Corollary \ref{thm:CoefficientExpansion}, \eqref{eqn:critical} becomes
    \begin{equation}
        \lambda_c - a\lambda_c^2 - b\lambda_c^3 - c\lambda_c^4 +  \LandauBigO{r}= 1.
    \end{equation}

    We can rearrange this to get
    \begin{equation}
        \lambda_c = 1+ a\lambda_c^2 + b\lambda_c^3 + c\lambda_c^4 + \LandauBigO{r},
    \end{equation}
    and by substituting this into itself produces
    \begin{align}
        \lambda_c &= 1 + a\left(1+ a\lambda_c^2 + b\lambda_c^3 + c\lambda_c^4 + \LandauBigO{r}\right)^2 + b\left(1+ a\lambda_c^2 + b\lambda_c^3 + c\lambda_c^4 + \LandauBigO{r}\right)^3\nonumber\\
        &\hspace{7cm} + c\left(1+ a\lambda_c^2 + b\lambda_c^3 + c\lambda_c^4 + \LandauBigO{r}\right)^4 + \LandauBigO{r}\nonumber\\
        &= 1 + a + 2a^2\lambda_c^2 + \LandauBigO{ab\lambda_c^3 + a^3\lambda_c^4} +b + \LandauBigO{ab\lambda^2_c} + c + \LandauBigO{ac\lambda_c^2} + \LandauBigO{r}\nonumber\\
        & = 1 + a + b+ c+ 2a^2 + \LandauBigO{ab + a^3 + r}.
    \end{align}
    Finally, note that $b=\LandauBigO{\loopfourempty}$ and so the last term is exatly as stated in our result.
\end{proof}

\subsection{Bounds on the Zeroth Lace Expansion Coefficient}
\label{sec:Pi0bd}

In this subsection we prove \eqref{eqPi0bd}. 

\paragraph{Upper Bound on $\fLacelamC^{(0)}(0)$}

\begin{lemma}
Suppose Assumption~\ref{Assumption} holds. Then as $d\to\infty$,
    \begin{multline}
        \lambda_c\fLacelamC^{(0)}(0) \leq \frac{1}{2}\lambda_c^3\loopfour - \frac{1}{2}\lambda_c^3\fourcrossone + \lambda_c^4\loopfive \\+ \LandauBigO{\loopsix + \phiThreeTwoOne + \phiTwoTwoTwo}
    \end{multline}
\end{lemma}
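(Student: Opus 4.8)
The plan is to work from the identity $\Pi_\lambda^{(0)}(x)=\pla\big(\dconn{\orig}{x}{\xi^{\orig,x}}\big)-\connf(x)$, valid for $\lambda\le\lambda_c$. Since a direct edge between $\orig$ and $x$ leaves no pivotal vertex, $\{\orig\sim x\}\subseteq\{\dconn{\orig}{x}{\xi^{\orig,x}}\}$, and as the potential edge $\{\orig,x\}$ is independent of everything else this rewrites as $\Pi_\lambda^{(0)}(x)=\pla\big(\dconn{\orig}{x}{\xi^{\orig,x}},\,\orig\not\sim x\big)$. On this event there is still no pivotal vertex although $\orig$ and $x$ are not adjacent, so by the characterisation in Definition~\ref{defn:ThinningsPivots} there are two \emph{distinct} Poisson points $u\neq v$, both adjacent to $x$, whose connections to $\orig$ take place disjointly and avoid $x$. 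I would then apply the multivariate Mecke equation \eqref{eq:prelim:mecke_n} to ordered pairs $(u,v)\in\eta^{(2)}$, the BK inequality to the two disjoint connections, and the mutual independence of the potential edges $\{\orig,x\},\{u,x\},\{v,x\}$, dividing by $2$ because each witnessing unordered pair is counted twice. Using that $\pla(\orig\leftrightarrow u\text{ in }\xi^{\orig,u}\text{ avoiding }x,v)\le\tlam(u)$ (deleting the vertices $x,v$ only decreases connectivity), this gives the pointwise bound
\[
\Pi_\lambda^{(0)}(x)\ \le\ \frac{\lambda^2}{2}\,\big(1-\connf(x)\big)\,\big(\connf\star\tlam(x)\big)^2 .
\]

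Next I would split $\connf\star\tlam=\connf^{\star2}+\connf\star(\tlam-\connf)$ and expand the square into the three pieces $(\connf^{\star2})^2$, $2\,\connf^{\star2}\cdot\big(\connf\star(\tlam-\connf)\big)$ and $\big(\connf\star(\tlam-\connf)\big)^2$. Integrating the first piece and using $\int(\connf^{\star2}(x))^2\dd x=\connf^{\star4}(\orig)$ yields $\tfrac{\lambda^2}{2}\big(\connf^{\star4}(\orig)-\connf^{\star1\star2\cdot2}(\orig)\big)$, which upon multiplication by $\lambda_c$ is exactly $\tfrac12\lambda_c^3\loopfour-\tfrac12\lambda_c^3\fourcrossone$. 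For the cross piece I would use Lemma~\ref{lem:tauUpperbound} (with $n=2$) to write $\tlam-\connf\le\connf^{[2]}+\lambda^2\connf^{\star3}+\lambda^3\connf^{\star3}\star\tlam$ and Lemma~\ref{lem:ExconnfBound} to bound $\connf^{[2]}\le\lambda\connf^{\star2}$; the leading contribution $\lambda^2\int\connf^{\star2}(x)\,\connf\star\connf^{[2]}(x)\dd x\le\lambda^3\connf^{\star5}(\orig)$ gives the $\lambda_c^4\loopfive$ term, while the slack in $\connf^{[2]}\le\lambda\connf^{\star2}$ produces a term equal to $\lambda^3\connf^{\star1\star2\cdot3}(\orig)$ together with one that the final convolution lemma of Section~2 bounds by $\connf^{\star2\star2\cdot2}(\orig)$, and the tail terms $\lambda^2\connf^{\star3},\lambda^3\connf^{\star3}\star\tlam$ produce contributions bounded by $\connf^{\star6}(\orig)$ and, via Lemma~\ref{lem:tailbound}, by $\connf^{\star6}\star\tlam(\orig)\le K_1\connf^{\star6}(\orig)$ — all inside the stated error. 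The third piece I would bound, using the same expansion of $\tlam-\connf$, by $\tfrac{\lambda^2}{2}\big(\lambda\connf^{\star3}+\lambda^2\connf^{\star4}+\lambda^3\connf^{\star4}\star\tlam\big)^2$, whose integral is dominated by $\tfrac{\lambda^4}{2}\connf^{\star6}(\orig)$ and is therefore $\LandauBigO{\loopsix}$.

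It then remains to verify that every error contribution is $\LandauBigO{\loopsix+\phiThreeTwoOne+\phiTwoTwoTwo}$. Each is a closed convolution diagram of combinatorial degree at least $6$ (counting $\connf$-factors), possibly carrying one factor of $\tlam$. The $\tlam$-factors are removed by Lemma~\ref{lem:tailbound} — this is where Assumption~\ref{Assump:ExponentialDecay} enters, via $m=6$ — after pulling out surplus $\connf$-factors using $\int\connf^{\star k}=1$; the few odd-order convolutions that appear are treated by rerunning the argument in the proof of Lemma~\ref{lem:tailbound} with the bootstrap bound $\abs{\ftlam}\le 2\fgmu$. The purely polynomial diagrams are reduced to the canonical ones $\loopsix,\phiThreeTwoOne,\phiTwoTwoTwo$ by taking suprema of Fourier transforms (so $\connf^{\star m}(\orig)\le\connf^{\star6}(\orig)$ for $m\ge6$) together with the final convolution lemma of Section~2. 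Since $\lambda_c$ is bounded for $d$ large under Assumption~\ref{Assumption}, multiplying through by $\lambda_c$ does not change the error order.

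The step I expect to be the main obstacle is the derivation of the pointwise bound $\Pi_\lambda^{(0)}(x)\le\tfrac{\lambda^2}{2}(1-\connf(x))(\connf\star\tlam(x))^2$: one must correctly extract the two distinct penultimate vertices $u,v$, check that the disjoint-occurrence structure survives stripping the last edges $\{u,x\},\{v,x\}$, apply BK in the presence of the ``avoid $x$'' constraint, and pin down the constant $\tfrac12$ and the prefactor $(1-\connf(x))$ precisely — an error in either of the last two would corrupt the leading coefficients $\tfrac12$ and $-\tfrac12$ in the statement. Everything downstream is careful but essentially routine asymptotic bookkeeping with the preliminary lemmas.
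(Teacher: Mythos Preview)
Your approach matches the paper's: both obtain the pointwise estimate $\Pi_\lambda^{(0)}(x)\le\tfrac{\lambda^2}{2}(1-\connf(x))(\connf\star\tlam(x))^2$ and then expand $\connf\star\tlam$ iteratively, controlling all remainder diagrams with Lemma~\ref{lem:tailbound}. The only cosmetic difference is that the paper derives the pointwise bound by counting neighbours of $\orig$ connected to $x$ as a Poisson variable with mean $M=\lambda\connf\star\tlam(x)$ and using $1-(1+M)e^{-M}\le\tfrac{M^2}{2}+\tfrac{M^3}{6}$ (so it carries an extra cubic term which it then shows is $\LandauBigO{\phiTwoTwoTwo+\loopsix}$), whereas you reach $\tfrac{M^2}{2}$ directly via Mecke plus BK on the penultimate pair $(u,v)$.
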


\begin{proof}
    We first consider $\pla\left(\dconn{\orig}{x}{\xi^{\orig,x}}\right)$. Since the existence of an edge between $\orig$ and $x$ is independent of everything else,
    \begin{multline}
        \pla\left(\dconn{\orig}{x}{\xi^{\orig,x}}\right) \\= \connf(x) + \left(1-\connf(x)\right)\pla\left(\exists u,v\in\eta\colon \orig\sim u, \orig\sim v, \left\{\conn{u}{x}{\xi^{x}}\right\}\circ\left\{\conn{v}{x}{\xi^{x}}\right\}\right).
    \end{multline}
    Then note that the disjoint occurrence is a subset of the intersection of each occurrence: $\left\{\conn{u}{x}{\xi^{x}}\right\}\circ\left\{\conn{v}{x}{\xi^{x}}\right\} \subset \left\{\conn{u}{x}{\xi^{x}}\right\}\cap\left\{\conn{v}{x}{\xi^{x}}\right\}$. Therefore
    \begin{multline}
        \pla\left(\exists u,v\in\eta\colon u\ne v, \orig\sim u, \orig\sim v, \left\{\conn{u}{x}{\xi^{x}}\right\}\circ\left\{\conn{v}{x}{\xi^{x}}\right\}\right) \\\leq \pla\left(\#\left\{u\in\eta\colon \orig\sim u, \conn{u}{x}{\xi^{x}}\right\}\geq 2\right).
    \end{multline}
    Since $\eta$ is a Poisson point process, the number of such vertices is Poisson distributed and Mecke's equation tells us that the expected number of such vertices is given by
    \begin{equation}
        \E_{\lambda}\left[\#\left\{u\in\eta\colon\orig\sim u, \conn{u}{x}{\xi^{x}}\right\}\right] = \lambda\int\connf(v)\tlam(x-v)\dd v = \lambda\connf\star\tlam(x).
    \end{equation}
    Therefore
    \begin{align}
        &\pla\left(\exists u,v\in\eta\colon u\ne v, \orig\sim u, \orig\sim v, \left\{\conn{u}{x}{\xi^{x}}\right\}\circ\left\{\conn{v}{x}{\xi^{x}}\right\}\right) \nonumber\\
        &\hspace{5cm}\leq 1 - \pla\left(\#\left\{u\in\eta\colon\orig\sim u, \conn{u}{x}{\xi^{x}}\right\}\leq 1\right)\\
        & \hspace{5cm}= 1 - \exp\left(-\lambda\connf\star\tlam(x)\right) - \lambda\connf\star\tlam(x)\exp\left(-\lambda\connf\star\tlam(x)\right).
    \end{align}
    Using this with $1-\e^{-x} - x\e^{-x}\leq \frac{1}{2}x^2 + \frac{1}{6}x^3$ for all $x\in\R$ and \eqref{eq:LE:Pi0_def}, we can get
    \begin{multline}
        \lambda\fLacelam^{(0)}(0) \leq \lambda\int \left(1-\connf(x)\right)\left(1 - \exp\left(-\lambda\connf\star\tlam(x)\right) - \lambda\connf\star\tlam(x)\exp\left(-\lambda\connf\star\tlam(x)\right)\right)\dd x \\
        \leq \frac{1}{2}\lambda\int\left(1-\connf(x)\right)\left(\lambda\connf\star\tlam(x)\right)^2\dd x + \frac{1}{6}\lambda\int\left(1-\connf(x)\right)\left(\lambda\connf\star\tlam(x)\right)^3\dd x.
    \end{multline}
    By applying $\tlam(x) \leq \connf(x) + \lambda\connf\star\tlam(x)$ iteratively, we get
    \begin{align}
        &\int\left(1-\connf(x)\right)\left(\connf\star\tlam(x)\right)^2\dd x \nonumber\\
        &\hspace{1cm}\leq \int \left(1-\connf(x)\right)\connf^{\star 2}(x)^2\dd x + 2\lambda\int \left(1-\connf(x)\right)\connf^{\star 2}(x)\connf^{\star 3}(x)\dd x + 2\lambda^2\int \left(1-\connf(x)\right)\connf^{\star 2}(x)\connf^{\star 4}(x)\dd x\nonumber\\
        &\hspace{2cm} + 2\lambda^3\int \left(1-\connf(x)\right)\connf^{\star 2}(x)\connf^{\star 4}\star\tlam(x)\dd x + \lambda^2\int \left(1-\connf(x)\right)\connf^{\star 3}(x)^2\dd x \nonumber\\
        &\hspace{2cm} + 2\lambda^3\int \left(1-\connf(x)\right)\connf^{\star 3}(x)\connf^{\star 3}\star\tlam(x)\dd x + \lambda^4\int \left(1-\connf(x)\right)\connf^{\star 3}\star\tlam(x)^2\dd x\nonumber\\
        &\hspace{1cm}\leq \int \left(1-\connf(x)\right)\connf^{\star 2}(x)^2\dd x + 2\lambda\int \left(1-\connf(x)\right)\connf^{\star 2}(x)\connf^{\star 3}(x)\dd x\nonumber\\
        &\hspace{2cm}+ 3\lambda^2\connf^{\star 6}\left(\orig\right) + 4\lambda^3\connf^{\star 6}\star\tlam\left(\orig\right) + \lambda^4\connf^{\star 6}\star\tlam^{\star 2}\left(\orig\right).
    \end{align}
    From Lemma~\ref{lem:tailbound}, we know that for $\lambda\leq\lambda_c$ these last three terms are all $\LandauBigO{\connf^{\star 6}\left(\orig\right)}$. By further expanding the first two terms via the $\left(1-\connf(x)\right)$ factors, we find
    \begin{multline}
        \int\left(1-\connf(x)\right)\left(\connf\star\tlamC(x)\right)^2\dd x = \connf^{\star 4}\left(\orig\right)  - \int \connf(x)\connf^{\star 2}(x)^2\dd x + 2\lambda_c\connf^{\star 5}\left(\orig\right)\\ + \LandauBigO{\int \connf(x)\connf^{\star 2}(x)\connf^{\star 3}(x)\dd x + \connf^{\star 6}\left(\orig\right)}.
    \end{multline}
    By the same approach, we find
    \begin{align}
        &\int\left(1-\connf(x)\right)\left(\connf\star\tlam(x)\right)^3\dd x \nonumber\\
        &\hspace{1cm}\leq \int \left(1-\connf(x)\right)\connf^{\star 2}(x)^3\dd x 
        + 3\lambda\int \left(1-\connf(x)\right)\connf^{\star 2}(x)^2\connf^{\star 3}(x)\dd x \nonumber\\
        &\hspace{2cm} + 3\lambda^2\int \left(1-\connf(x)\right)\connf^{\star 2}(x)^2\connf^{\star 4}(x)\dd x 
        + 3\lambda^3\int \left(1-\connf(x)\right)\connf^{\star 2}(x)^2\connf^{\star 4}\star\tlam(x)\dd x  \nonumber\\
        &\hspace{2cm}+ 3\lambda^2\int \left(1-\connf(x)\right)\connf^{\star 2}(x)\connf^{\star 3}(x)^2\dd x
        + 6\lambda^3\int \left(1-\connf(x)\right)\connf^{\star 2}(x)\connf^{\star 3}(x)\connf^{\star 4}(x)\dd x  \nonumber\\
        &\hspace{2cm} + 6\lambda^4\int \left(1-\connf(x)\right)\connf^{\star 2}(x)\connf^{\star 3}(x)\connf^{\star 4}\star\tlam(x)\dd x 
        + 3\lambda^4\int \left(1-\connf(x)\right)\connf^{\star 2}(x)\connf^{\star 3}\star\tlam(x)^2\dd x \nonumber\\
        &\hspace{2cm} + \lambda^3\int \left(1-\connf(x)\right)\connf^{\star 3}(x)^3\dd x 
        + 3\lambda^4\int \left(1-\connf(x)\right)\connf^{\star 3}(x)^2\connf^{\star 3}\star\tlam(x)\dd x \nonumber\\
        &\hspace{2cm}+ 3\lambda^5\int \left(1-\connf(x)\right)\connf^{\star 3}(x)\connf^{\star 3}\star\tlam(x)^2\dd x 
        + \lambda^6\int \left(1-\connf(x)\right)\connf^{\star 3}\star\tlam(x)^3\dd x\nonumber\\
        &\hspace{1cm}\leq \int \left(1-\connf(x)\right)\connf^{\star 2}(x)^3\dd x 
        + 3\lambda\int \left(1-\connf(x)\right)\connf^{\star 2}(x)^2\connf^{\star 3}(x)\dd x \nonumber\\
        &\hspace{2cm} + 6\lambda^2\connf^{\star 6}\left(\orig\right)\int\connf(v)\dd v + 3\lambda^3\connf^{\star 6}\star\tlam\left(\orig\right)\int\connf(v)\dd v +6\lambda^3\connf^{\star 7}\left(\orig\right)\int\connf(v)\dd v \nonumber\\
        &\hspace{2cm} + 6\lambda^4\connf^{\star 7}\star\tlam\left(\orig\right)\int\connf(v)\dd v + 3\lambda^4\connf^{\star 6}\star\tlam^{\star 2}\left(\orig\right)\int\connf(v)\dd v \nonumber\\
        &\hspace{2cm}+ \lambda^3\connf^{\star 6}\left(\orig\right)\left(\int\connf(v)\dd v\right)^2  + 3\lambda^4\connf^{\star 6}\star\tlam\left(\orig\right)\left(\int\connf(v)\dd v\right)^2 + 3\lambda^5\connf^{\star 6}\star\tlam^{\star 2}\left(\orig\right)\left(\int\connf(v)\dd v\right)^2 \nonumber\\
        &\hspace{2cm} + \lambda^6\connf^{\star 6}\star \tlam^{\star 2}\left(\orig\right)\left(\int\connf(v)\dd v\right)^3.
    \end{align}
    Note that in this last inequality we identify two paths that form a loop - this contributes the terms $\connf^{\star 6}\left(\orig\right)$, $\connf^{\star 6}\star\tlam\left(\orig\right)$, etc. The $\left(1-\connf(x)\right)$ we again simply bound by $1$. This leaves a third path from $\orig$ to $x$. We deal with this by bounding one of the steps in the convolution by $1$ and the remaining steps form a `loose' integration. For example,
    \begin{multline}
        \int \left(1-\connf(x)\right)\connf^{\star 3}\star\tlam(x)^3\dd x \leq \int \connf^{\star 3}\star\tlam(x)^2\left(\int\connf^{\star 3}(u)\tlam(x-u)\dd u\right)\dd x \\\leq \int \connf^{\star 3}\star\tlam(x)^2\left(\int\connf^{\star 3}(u)\dd u\right)\dd x = \connf^{\star 6}\star \tlam^{\star 2}\left(\orig\right)\left(\int\connf(v)\dd v\right)^3.
    \end{multline}
    Recall that we have chosen the scaling $\int\connf(v)\dd v=1$ for our proof. Furthermore, by bounding $1-\connf(x)$ we find that the first two terms are $\LandauBigO{\int \connf^{\star 2}(x)^3\dd x }$. Therefore
    \begin{equation}
        \int\left(1-\connf(x)\right)\left(\connf\star\tlamC(x)\right)^3\dd x = \LandauBigO{\int \connf^{\star 2}(x)^3\dd x + \connf^{\star 6}\left(\orig\right)}.
    \end{equation}

    In summary, these bounds give us
    \begin{multline}
        \lambda_c\fLacelamC^{(0)}(0) \leq \frac{1}{2}\lambda_c^3\int \connf^{\star 2}(x)^2\dd x - \frac{1}{2}\lambda_c^3\int\connf(x) \connf^{\star 2}(x)^2\dd x + \lambda_c^4\int \connf^{\star 2}(x)\connf^{\star 3}(x)\dd x\\ + \LandauBigO{\int \connf^{\star 2}(x)^3\dd x + \int \connf(x)\connf^{\star 2}(x)\connf^{\star 3}(x)\dd x + \connf^{\star 6}\left(\orig\right)}
    \end{multline}
    as required.
\end{proof}

\paragraph{Lower Bound on $\fLacelamC^{(0)}(0)$}

\begin{lemma}
    \begin{equation}
        \lambda_c\fLacelamC^{(0)}(0) \geq \frac{1}{2}\lambda_c^3\loopfour - \frac{1}{2}\lambda_c^3\fourcrossone + \lambda_c^4\loopfive +\LandauBigO{\phiThreeTwoOne+\phiTwoTwoTwo}
    \end{equation}
\end{lemma}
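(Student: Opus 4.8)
The plan is to bound $\LacelamC^{(0)}(x)=(1-\connf(x))\,\pla\!\big(E_x\big)$ from below, where (exactly as in the upper bound) $E_x$ is the event that there exist $u\neq v\in\eta$ with $\orig\sim u$, $\orig\sim v$ and $\{\conn{u}{x}{\xi^{x}}\}\circ\{\conn{v}{x}{\xi^{x}}\}$; we work at $\lambda=\lambda_c$ throughout and write $p_x:=\lambda_c\connf^{\star 2}(x)$. I would exhibit two \emph{disjoint} sub-events of $E_x$. Let $V_2(x):=\#\{u\in\eta:\orig\sim u,\ u\sim x\}$ count the length-$2$ paths $\orig$–$u$–$x$; since the two edges $\{\orig,u\}$ and $\{u,x\}$ are present independently over $u\in\eta$, the marking/thinning theorem makes $V_2(x)$ a Poisson variable of mean $p_x$, so $\pla(V_2(x)\geq 2)=1-\e^{-p_x}(1+p_x)\geq\tfrac12 p_x^2-\tfrac13 p_x^3$, and $\{V_2(x)\geq 2\}\subseteq E_x$ because two distinct intermediate vertices furnish vertex-disjoint connections. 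Writing $u^\ast$ for the unique length-$2$ intermediate vertex on $\{V_2(x)=1\}$, let $H_x$ be the event that there are $a\neq b$ in $\eta\setminus\{u^\ast\}$ with $\orig\sim a$, $a\sim b$, $b\sim x$; then $\{V_2(x)=1\}\cap H_x$ is disjoint from $\{V_2(x)\geq 2\}$ and is contained in $E_x$ via the interior-disjoint paths $\orig$–$u^\ast$–$x$ and $\orig$–$a$–$b$–$x$. Hence
\[
    \LacelamC^{(0)}(x)\ \geq\ (1-\connf(x))\Big(\pla\big(V_2(x)\geq 2\big)+\pla\big(V_2(x)=1\big)\,\pla\big(H_x\mid V_2(x)=1\big)\Big).
\]

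The first term is routine: using $1-\e^{-t}(1+t)\geq\tfrac12 t^2-\tfrac13 t^3$ for $t\geq0$, integrating $(1-\connf(x))\big(\tfrac12 p_x^2-\tfrac13 p_x^3\big)$ and multiplying by $\lambda_c$ gives $\tfrac12\lambda_c^3\loopfour-\tfrac12\lambda_c^3\fourcrossone$ together with $\LandauBigO{\lambda_c^4\int(\connf^{\star 2}(x))^3\,\dd x}=\LandauBigO{\phiTwoTwoTwo}$ from the cubic remainder. For the second term the key structural point is that, conditionally on $\{V_2(x)=1\}$, the process $\eta'$ of those $v\in\eta$ for which not both $\orig\sim v$ and $v\sim x$ hold is — together with its edge marks — an \emph{independent} Poisson process; by the thinning theorem its sub-processes $\eta'_A=\{a\in\eta':\orig\sim a\}$ and $\eta'_B=\{b\in\eta':b\sim x\}$ are independent Poisson with intensities $\lambda_c\connf(a)(1-\connf(x-a))\,\dd a$ and $\lambda_c(1-\connf(b))\connf(x-b)\,\dd b$, with $a\sim b$ still an independent $\connf(a-b)$-coin. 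Thus $\pla(H_x\mid V_2(x)=1)=\pla\big(\exists\,a\in\eta'_A,\ b\in\eta'_B:\ a\sim b\big)$, which by the Bonferroni bound is at least $\E[N]-\tfrac12\E[N(N-1)]$ with $N:=\#\{(a,b)\in\eta'_A\times\eta'_B:a\sim b\}$, where $\E[N]=\lambda_c^2\int\int\connf(a-b)\connf(a)(1-\connf(x-a))(1-\connf(b))\connf(x-b)\,\dd a\,\dd b\geq\lambda_c^2\connf^{\star 3}(x)-(\text{corrections from the }(1-\connf)\text{ factors})$, while $\pla(V_2(x)=1)=p_x\e^{-p_x}\geq p_x-p_x^2$. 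Multiplying out, the second term is at least $(1-\connf(x))\,\lambda_c^3\connf^{\star 2}(x)\connf^{\star 3}(x)$ minus a sum of error terms; integrating and multiplying by $\lambda_c$ turns the main part into $\lambda_c^4\loopfive-\lambda_c^4\phiThreeTwoOne$, and $\phiThreeTwoOne=\connf^{\star 1\star 2\cdot 3}(\orig)$ is one of the permitted error terms.

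The main obstacle is to verify that every error produced above integrates, after multiplication by $\lambda_c$, to $\LandauBigO{\phiThreeTwoOne+\phiTwoTwoTwo}$ — and, crucially, never to $\LandauBigO{\loopsix}$ (which is \emph{not} permitted in this lower bound, although it does appear in the companion upper bound). The errors to control are the cubic remainder $\tfrac13 p_x^3$, the $p_x^2$ term in $p_x\e^{-p_x}\geq p_x-p_x^2$, the $(1-\connf)$-corrections to $\E[N]$, and — most delicately — the second-moment term $\tfrac12\E[N(N-1)]$, which encodes the over-dispersion of the path count (e.g. two length-$3$ paths sharing the vertex adjacent to $\orig$, or sharing no interior vertex at all). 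The point is that each such error is multiplied by the surviving factor $(1-\connf(x))\,\pla(V_2(x)=1)$, i.e.\ by a $\connf^{\star 2}(x)$ coming from the length-$2$ structure, so that the resulting convolution integral is a genuine three-arc diagram $\connf^{\star n_1\star n_2\cdot n_3}(\orig)$ with $n_1+n_2+n_3\geq 7$ — precisely the regime in which the last lemma of Section~2 bounds it by $\phiint\cdot\phiTwoTwoTwo=\phiTwoTwoTwo$ (it cannot collapse to $\connf^{\star 6}(\orig)$ exactly because this extra $\connf^{\star 2}(x)$ is present). Concretely one expands each $(1-\connf)$, uses $\connf\leq 1$ (equivalently $\connf^{\star n}\leq 1$) only on the ``free'' half-edges, and then recognises the remaining integrals as $\connf^{\star n_1\star n_2\cdot n_3}(\orig)$ diagrams via the convolution identities; carrying this bookkeeping through for all the error terms is the technical heart of the argument, though it is conceptually straightforward once the three-arc structure has been isolated.
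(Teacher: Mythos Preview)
Your approach is correct and is essentially a repackaging of the paper's proof. Both arguments split the double-connection event into the same two pieces: the event $\{V_2(x)\ge 2\}$ (the paper's $\Fcal_2$), and on $\{V_2(x)=1\}$ a ``one $2$-path plus one $3$-path'' sub-event (the paper's $\Fcal_3$, your $\{V_2(x)=1\}\cap H_x$). The difference is purely in how the length-$3$ contribution is extracted. The paper invokes the exact recursive formula for $\connf^{[3]}(x)$ (with its nested $1-\e^{-(\cdot)}$ structure) and then lower-bounds each exponential by $t-\tfrac12 t^2$; you instead condition on $V_2(x)=1$, use Poisson thinning to identify the independent processes $\eta'_A,\eta'_B$, and apply the second-order Bonferroni bound $\pla(N\ge 1)\ge \E[N]-\tfrac12\E[N(N-1)]$. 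These are two ways of writing the same Poisson computation --- the paper's inner and outer integrals in $\connf^{[3]}$ are precisely your intensities for $\eta'_A$ and $\eta'_B$, and the $\tfrac12 t^2$ correction in $1-\e^{-t}\ge t-\tfrac12 t^2$ is exactly your second moment. Your presentation is arguably cleaner because the probabilistic structure is explicit rather than hidden inside $\connf^{[3]}$.

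Your error analysis is sketchy but sound. The key observation that no $\connf^{\star 6}(\orig)$ term can appear is correct: every error is multiplied by the $\connf^{\star 2}(x)$ coming from $\{V_2(x)=1\}$, forcing a three-arc structure. For the record, the corrections from $(1-\connf(x-a))$ and $(1-\connf(b))$ in $\E[N]$ each integrate (against $\connf^{\star 2}(x)$) to exactly $\connf^{\star 1\star 2\cdot 3}(\orig)$ after one convolution step, and the three cases in $\E[N(N-1)]$ (both pairs distinct, shared $a$, shared $b$) reduce to $\connf^{\star 2\star 3\cdot 3}(\orig)$ and $\connf^{\star 2\star 2\cdot 3}(\orig)$, all of which are $\LandauBigO{\phiTwoTwoTwo}$ by the last lemma of Section~2.
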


\begin{proof}
    We lower bound $\Lacelam^{(0)}(x)$ by identifying an appropriate subset of $\left\{\dconn{\orig}{x}{\xi^{\orig,x}}\right\}$. Consider $\Fcal := \Fcal_1 \cup \Fcal_2 \cup \Fcal_3$, where
    \begin{align}
        \Fcal_1 :=& \left\{\orig\sim x\right\} \\
        \Fcal_2 :=& \left\{\orig\not\sim x\right\}
        \cap\left\{\#\left\{u\in\eta\colon \orig\sim u\sim x\right\}\geq 2\right\}\\
        \Fcal_3 :=& \left\{\orig\not\sim x\right\}
        \cap\left\{\#\left\{u\in\eta\colon \orig\sim u\sim x\right\}= 1\right\}
        \cap \left\{\#\left\{v\in\eta\colon \adja{\orig}{v}{\xi^{\orig}}, \xconn{v}{x}{\xi^{v,x}_{\thinn{\orig}}}{=2}\right\}\geq 1\right\}.
    \end{align}
    In each, either $\orig$ is adjacent to $x$ or there exist two vertex disjoint paths from $\orig$ to $x$. Therefore $\Fcal\subset \left\{\dconn{\orig}{x}{\xi^{\orig,x}}\right\}$. The components $\Fcal_1$, $\Fcal_2$, and $\Fcal_3$ are also all disjoint by construction, so
    \begin{equation}
        \pla\left(\dconn{\orig}{x}{\xi^{\orig,x}}\right) \geq \pla\left(\Fcal_1\right) + \pla\left(\Fcal_2\right) + \pla\left(\Fcal_3\right).
    \end{equation}
    Since $\eta$ is distributed as a Poisson point process on $\Rd$ with intensity $\lambda$,
    \begin{align}
        \pla\left(\Fcal_1\right) =& \connf(x)\\
        \pla\left(\Fcal_2\right) =& \left(1-\connf(x)\right)\left(1-\exp\left(-\lambda\connf^{\star 2}(x)\right) - \lambda\connf^{\star 2}(x)\exp\left(-\lambda\connf^{\star 2}(x)\right)\right)\\
        \pla\left(\Fcal_3\right) =& \lambda\connf^{\star 2}(x)\exp\left(-\lambda\connf^{\star 2}(x)\right)\connf^{[3]}(x)\nonumber\\
        =& \lambda\connf^{\star 2}(x)\exp\left(-\lambda\connf^{\star 2}(x)\right)\left(1-\connf(x)\right)\nonumber\\
        &\times\left(1 - \exp\left(-\lambda\int \connf(v)\left(1-\connf(x-v)\right)\left(1 - \exp\left(-\lambda\int \connf(x-w)\left(1-\connf(w)\right)\connf(w-v)\dd w\right)\right)\dd v\right)\right).
    \end{align}
    Therefore
    \begin{equation}
        \fLacelamC^{(0)}(0) \geq \int \left(\mathbb{P}_{\lambda_c}\left(\Fcal_2\right) + \mathbb{P}_{\lambda_c}\left(\Fcal_3\right)\right)\dd x,
    \end{equation}
    and we now want to lower bound the integrals of $\mathbb{P}_{\lambda_c}\left(\Fcal_2\right)$ and $\mathbb{P}_{\lambda_c}\left(\Fcal_3\right)$.
    
    By using $1 - \e^{-x} - x\e^{-x} \geq \frac{1}{2}x^2 - \frac{1}{2}x^3$ for all $x\in\R$,
    \begin{equation}
        \lambda\int\pla\left(\Fcal_2\right)\dd x \geq \frac{1}{2}\lambda^3\FtwoPtOne - \frac{1}{2}\lambda^4\FtwoPtTwo = \frac{1}{2}\lambda^3\loopfour - \frac{1}{2}\lambda^3\fourcrosstwo + \LandauBigO{\phiTwoTwoTwo}
    \end{equation}
    We find our lower bound on $\lambda\int\pla\left(\Fcal_3\right)\dd x$ in a few more steps. Since we have $x\e^{-x} \geq x - x^2$ for all $x\in\R$,
    \begin{equation}
    \label{eqn:F3Pt1}
        \lambda\connf^{\star 2}(x)\exp\left(-\lambda\connf^{\star 2}(x)\right)\left(1-\connf(x)\right) \geq \lambda\FthreePtOne{\orig}{x} - \lambda^2\FthreePtTwo{\orig}{x}.
    \end{equation}
    Since we have $1-\e^{-x} \geq x - \frac{1}{2}x^2$ for all $x\in\R$,
    \begin{equation}
        1 - \exp\left(-\lambda\int \connf(x-w)\left(1-\connf(w)\right)\connf(w-v)\dd w\right) \geq \lambda\FthreePtThree{\orig}{x}{v} -\frac{1}{2}\lambda^2\FthreePtFour{\orig}{x}{v},
    \end{equation}
    and
    \begin{multline}
        \lambda\int \connf(v)\left(1-\connf(x-v)\right)\left(1 - \exp\left(-\lambda\int \connf(x-w)\left(1-\connf(w)\right)\connf(w-v)\dd w\right)\right)\dd v \\
        \geq \lambda^2\FthreePtFive{\orig}{x} - \frac{1}{2}\lambda^3\FthreePtSix{\orig}{x}.
    \end{multline}
    Since $x\mapsto 1-\e^{-x}$ is monotone increasing and $1-\e^{-x} \geq x - \frac{1}{2}x^2$ for all $x\in\R$,
    \begin{multline}
    \label{eqn:F3Pt2}
        1 - \exp\left(-\lambda\int \connf(v)\left(1-\connf(x-v)\right)\left(1 - \exp\left(-\lambda\int \connf(x-w)\left(1-\connf(w)\right)\connf(w-v)\dd w\right)\right)\dd v\right)\\
        \geq \lambda^2\FthreePtFive{\orig}{x} - \frac{1}{2}\lambda^3\FthreePtSix{\orig}{x} - \frac{1}{2}\lambda^4\FthreePtSeven{\orig}{x} + \frac{1}{2}\lambda^5\FthreePtEight{\orig}{x} - \frac{1}{8}\lambda^6\FthreePtNine{\orig}{x}.
    \end{multline}
    When we combine \eqref{eqn:F3Pt1} and \eqref{eqn:F3Pt2} and integrate over $x$, we see that many integrals can be bounded by integrals of two loops. These terms will be $\LandauBigO{\phiThreeTwoOne+\phiTwoTwoTwo}$. Therefore
    \begin{multline}
        \lambda\int\pla\left(\Fcal_3\right)\dd x \geq \lambda^4\FthreePtTen + \LandauBigO{\phiThreeTwoOne+\phiTwoTwoTwo} \\= \lambda^4\loopfive + \LandauBigO{\phiThreeTwoOne+\phiTwoTwoTwo}.
    \end{multline}
    We then have a lower bound on $\fLacelam^{(0)}(0)$ for any $\lambda>0$, and this gives the required result.
\end{proof}

\subsection{Bounds on the First Lace Expansion Coefficient}
\label{sec:Pi1bd}
In this subsection we prove \eqref{eqPi1bd}. 

\paragraph{Upper Bound on $\fLacelamC^{(1)}(0)$}

\begin{lemma}
\label{lem:Pi1_UpperBound}
Suppose Assumption~\ref{Assumption} holds. Then as $d\to\infty$,
    \begin{multline}
    \lambda_c\fLacelamC^{(1)}(0) \leq\lambda_c^2\loopthree + 2\lambda_c^3\loopfour + 3\lambda_c^4\loopfive - 2\lambda_c^3\fourcrossone \\ + \LandauBigO{\loopsix + \phiTwoTwoTwo + \phiThreeTwoOne}
\end{multline}
\end{lemma}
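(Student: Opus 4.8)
The plan is to mirror the argument for $\fLacelamC^{(0)}(0)$ given above, now accounting for the extra ingredient in $\Pi_\lambda^{(1)}$: the thinning of the $u_0$-to-$x$ connection (carried out in the independent copy $\xi_1$) by the cluster $\C_0=\C(\orig,\xi_0^{\orig})$ of the origin in the copy $\xi_0$. Writing $u$ for $u_0$ and starting from \eqref{eq:LE:Pin_def}, the first step is to produce an explicit upper bound for $\pla\big(\{\dconn{\orig}{u}{\xi_0^{\orig,u}}\}\cap E(u,x;\C_0,\xi_1^{u,x})\big)$. For the double connection I would split on whether $\orig\sim u$ (factor $\connf(u)$) or there are two vertex-disjoint non-edge $\orig$–$u$ paths; in the latter case a Poisson-count estimate as in the $\Pi^{(0)}$ proof (using $1-\e^{-t}-t\e^{-t}\le\tfrac12 t^2+\tfrac16 t^3$) bounds the contribution by $\LandauBigO{\lambda^2(\connf\star\tlam(u))^2}$, which after integrating $x$ out is of order $\phiTwoTwoTwo$ and goes into the error. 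For the event $E(u,x;\C_0,\xi_1^{u,x})$ the $u\leftrightarrow x$ connection in $\xi_1$ is bounded by $\tlam(u-x)$, and the requirement that a $\C_0$-thinning destroys it only after the last pivotal point is bounded by requiring a vertex of the last doubly-connected segment — in the dominant case simply $x$ itself — to be adjacent to $\C_0$. Since $\orig\in\C_0$, the probability of this last event, call it $p(x)$, satisfies $p(x)\le\connf(x)+(1-\connf(x))\,\lambda\connf\star\tlam(x)$, obtained by conditioning the thinning coin of $x$ on the $\orig$-factor and using a tree-graph bound (via Mecke and BK) on $\C_0\setminus\{\orig\}$; it is crucial to retain the $(1-\connf(x))$ factor. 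The outcome is an explicit bound $\Pi^{(1)}_\lambda(x)\le\lambda\int(\text{finitely many terms})$ with dominant term $\connf(u)\,\tlam(u-x)\,p(x)$.

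Second, I would expand this explicit bound to the stated precision exactly as in the $\fLacelamC^{(0)}(0)$ computation. Each $\tlam$-factor is replaced by $\connf+\connf^{[2]}+\connf^{[3]}+\lambda^3\connf^{\star4}+\lambda^4\connf^{\star4}\star\tlam$ (Lemma~\ref{lem:tauUpperbound}), with the explicit form \eqref{eqn:ExConnf2} of $\connf^{[2]}$ substituted so that its $-\connf(\cdot)\connf^{\star2}(\cdot)$ part is tracked and $\connf^{[n]}\le\lambda^{n-1}\connf^{\star n}$ (Lemma~\ref{lem:ExconnfBound}) used otherwise; every $(1-\connf)$-factor — from $p(x)$ and from the $\connf^{[k]}$'s — is expanded, its ``$1$''-part kept and its ``$-\connf$''-part matched to the $\fourcrossone$ term or shown to be of error order; and every remainder $\connf^{\star m}\star\tlam^{\star s}$ with $m\ge 6$ is $\LandauBigO{\connf^{\star 6}(\orig)}$ by Lemma~\ref{lem:tailbound}. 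Collecting: replacing every line by $\connf$ in the dominant term gives $\lambda_c\connf^{\star3}(\orig)$, i.e.\ $\lambda_c^2\loopthree$ after multiplying by $\lambda_c$; upgrading exactly one of the two upgradable lines — the $u$-$x$ connection, or the thinning step $p(x)$ — to its next-order piece gives $\connf^{\star4}(\orig)$ with multiplicity $2$, together with the two copies of $-\fourcrossone$ coming from the $-\connf$ parts of those upgrades; upgrading to the next level ($\connf^{[3]}$, the third cluster level in $p(x)$, or both lines once each) gives $\connf^{\star5}(\orig)$ with multiplicity $3$; and everything else — the two-disjoint-path term, deeper cluster levels, the $\connf^{[3]}$ and the $\connf^{\star m}\star\tlam^{\star s}$ remainders, and all remaining $(1-\connf)$ cross terms — is $\LandauBigO{\loopsix+\phiThreeTwoOne+\phiTwoTwoTwo}$, which dominates those discarded terms by Assumption~\ref{Assumption}.

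The main obstacle is making the first step lossless up to the error order, so that the coefficients $2$, $3$, $-2$ come out exactly. This needs: (a) bounds on the double connection and on $p(x)$ that are tight to relative error $o(1)$ — in particular the tree-graph bound for the connections out of $\C_0$ must degenerate correctly at leading order (branch point $=\orig$); (b) correctly identifying that only two of the naively three ways to produce a $\connf^{\star4}(\orig)$ actually contribute (the double-connection upgrade is of order $\phiTwoTwoTwo$, not $\connf^{\star4}(\orig)$); and (c) checking that the assorted slack — higher cluster levels, $(1-\connf)$ cross terms, the cubic Poisson-tail remainder, and the convolution remainders — all lands in $\LandauBigO{\loopsix+\phiThreeTwoOne+\phiTwoTwoTwo}$. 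Beyond this the argument is the same convolution bookkeeping already carried out for $\fLacelamC^{(0)}(0)$, just with more terms because of the two independent graphs $\xi_0,\xi_1$ and the thinning vertex.
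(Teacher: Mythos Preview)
Your dominant-term analysis is correct and matches the paper's: the edge case $\orig\sim u$ combined with $x$ itself being thinned by $\C_0$ yields the integrand $\connf(u)\,\tlam(u-x)\,\tlam(x)$ (your $p(x)$ is a valid upper bound for the thinning probability, and in fact for $\tlam(x)$ itself), and your expansion of the two $\tlam$-factors correctly produces the coefficients $1,2,3,-2$. The paper arrives at exactly the same dominant integrand $\lambda_c^2\int\connf(u)\tlam(x)\tlam(u-x)\,du\,dx$ (its Lemma~\ref{lem:Pi1UpperStep1}) but via a more systematic route: an indicator decomposition into events $F_0,F_1$ and the diagrammatic $\psi$-functions of Definition~\ref{def:DB:psi_functions}, imported from \cite{HeyHofLasMat19}.

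The gap is in your treatment of the subdominant cases. In the non-edge double-connection case, the two disjoint $\orig$--$u$ paths and the thinning path $\orig\to z'$ (with $z'$ thinning $x$) all live in the \emph{same} copy $\xi_0$ and are positively correlated; you cannot bound the joint probability by the product of the Poisson-count estimate for the double connection with $p(x)\,\tlam(u-x)$ as your sketch implicitly does. Relaxing the other way---keeping only the double-connection event and dropping the thinning constraint---loses all $x$-dependence, and $\int\tlam(u-x)\,dx=\ftlam(0)$ diverges at $\lambda_c$. The paper's remedy is to track the branch vertex $w$ at which the thinning path leaves the $\orig$--$u$ double-connection structure: this is exactly the $F_0^{(1)}$ event, which after BK yields the triangle $\tlam(w)\tlam(u-w)\tlam^{(\ge2)}(u)$ in $\psi_0^{(1)}$ (and its variant $\psi_0^{(2)}$). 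The $\tlam^{(\ge2)}$ factor carries an extra $\connf^{\star2}$, and that is what pushes the contribution into the error; five of the six $\psi_0\psi_1$ diagrams are then shown to be $\LandauBigO{\loopsix+\phiThreeTwoOne+\phiTwoTwoTwo}$ by the convolution bookkeeping you describe. The same branch-point device (via $F_1^{(1)}$ and $\psi_n^{(1)}$) handles your case where an interior vertex, rather than $x$, is the one thinned. Your identification of both cases as error-level is correct, but the argument you sketch does not establish it.
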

    
We borrow from \cite{HeyHofLasMat19} in bounding $\pla \left( \{\dconn{\orig}{u}{\xi^{\orig, u}_{0}}\} \cap E\left(u,x; \C_{0}, \xi^{u, x}_{1}\right) \right)$, but we need to make refinements so that our lower bound will match the upper bound at the precision we are interested in. We begin by bounding $\{\dconn{\orig}{u}{\xi^{\orig, u}_{0}}\} \cap E\left(u,x; \C_{0}, \xi^{u, x}_{1}\right)$ by a slightly different event.

\begin{definition}
    Let $\xi_0,\xi_1$ be independent instances of the random graph with locally finite vertex sets $\eta_0$ and $\eta_1$.
    \begin{itemize}
    \item Let $\left\{\sqconn{u}{x}{(\xi_0, \xi_1)}\right\}$ denote the event that $u\in\eta_0$ and $x\in\eta_1$, but that $x$ does not survive a $\C\left(u, \xi^{u}_0\right)$-thinning of $\eta_1$. 
    \item Let $m\in\N$ and $\vec x, \vec y \in \left(\Rd\right)^m$. We define $\bigcirc_m^\leftrightarrow((x_j, y_j)_{1 \leq j \leq m}; \xi)$ as the event that $\{\conn{x_j}{y_j}{\xi}\}$ occurs for every $1 \leq j \leq m$ with the additional requirement that every point in $\eta$ is the interior vertex of at most one of the $m$ paths, and none of the $m$ paths contains an interior vertex in the set $\left\{x_j\colon j\in[m]\right\} \cup \left\{y_j: j\in [m]\right\}$.
    \item Let $\bigcirc_m^\sqarrow\left( (x_j,y_j)_{1 \leq j \leq m}; (\xi_0,\xi_1)\right)$ be the intersection of the following two events. Firstly, that $\bigcirc_{m-1}^\leftrightarrow\left((x_j,y_j)_{1 \leq j <m};\xi_0\right)$ occurs but no path uses $x_{m}$ or $y_{m}$ as an interior vertex. Secondly, that $\{\sqconn{x_{m}}{y_{m}}{(\xi_0[\eta_0\setminus  \{x_i, y_i\}_{1 \leq i <m}],\xi_1)}\}$ occurs in such a way that at least one point $z$ in $\xi_0$ that is responsible for thinning out $y_m$ is connected to $x_m$ by a path $\gamma$ so that $z$ as well as all interior vertices of $\gamma$ are not contained in any path of the $\bigcirc_{m-1}^\leftrightarrow((x_j,y_j)_{1 \leq j <m};\xi_0)$ event.
\end{itemize}
    Now let $t,u,w,x,z\in\Rd$. Then define
    \begin{align}
        F^{(1)}_0\left(w,u,z;\xi_0,\xi_1\right) &:= \left\{\notadja{\orig}{u}{\xi_0}\right\}\cap\bigcirc_4^\sqarrow\left(\left(\orig,u\right),\left(\orig,w\right),\left(u,w\right),\left(w,z\right);\left(\xi_0,\xi_1\right)\right)\\
        F^{(2)}_0\left(w,u,z;\xi_0,\xi_1\right) &:= \left\{w=\orig\right\}\cap \left\{\adja{\orig}{u}{\xi_1}\right\}\cap\left\{\sqconn{w}{z}{\left(\xi_0\setminus\{u\},\xi_1\right)}\right\}\\
        F_1^{(1)}\left(u,t,z,x;\xi_1\right) &:= \left\{\#\left\{t,z,x\right\}=3\right\}\cap \bigcirc_4^\leftrightarrow\left(\left(u,t\right),\left(t,z\right),\left(t,x\right),\left(z,x\right);\xi_1\right)\cap\left\{\notadja{t}{x}{\xi_1}\right\}\\
        F_1^{(2)}\left(u,t,z,x;\xi_1\right) &:=\left\{t=z=x\right\}\cap\left\{\conn{u}{x}{\xi_1}\right\}.
    \end{align}
    Also let $F_0 := F^{(1)}_0 \cup F^{(2)}_0$ and $F_1 := F^{(1)}_1 \cup F^{(2)}_1$.
\end{definition}

\begin{lemma}
    Let $x,u\in\Rd$ be distinct points. Then
    \begin{equation}
        \Id_{\left\{\dconn{\orig}{u}{\xi^{\orig, u}_{0}}\right\}}\Id_{E\left(u,x; \C_{0}, \xi^{u, x}_{1}\right)} \leq \sum_{z\in\eta_1^x}\left(\sum_{w\in \eta_0^\orig}\Id_{F_0\left(w,u,z;\xi_0^{\orig,u},\xi_1^{u,x}\right)}\right)\left(\sum_{t\in\eta_1^{u,x}}\Id_{F_1\left(u,t,z,x;\xi_1^{u,x}\right)}\right).
    \end{equation}
\end{lemma}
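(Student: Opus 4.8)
The plan is to prove the bound pointwise, at the level of indicators: I will show that whenever the event $\mathcal{A}:=\{\dconn{\orig}{u}{\xi_0^{\orig,u}}\}\cap E(u,x;\C_0,\xi_1^{u,x})$ occurs for a fixed realisation of the two independent graphs together with the thinning randomness, one can exhibit vertices $w^\ast\in\eta_0^\orig$, $z^\ast\in\eta_1^x$ and $t^\ast\in\eta_1^{u,x}$ for which $F_0(w^\ast,u,z^\ast;\xi_0^{\orig,u},\xi_1^{u,x})$ and $F_1(u,t^\ast,z^\ast,x;\xi_1^{u,x})$ both occur. Since every indicator on the right-hand side is non-negative, the summand indexed by $z=z^\ast$ is then at least $\Id_{F_0(w^\ast,u,z^\ast)}\Id_{F_1(u,t^\ast,z^\ast,x)}=1$, which gives the inequality.

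The three vertices are chosen by the standard lace-expansion bookkeeping. First I unpack $E(u,x;\C_0,\xi_1^{u,x})$: the connection $\conn{u}{x}{\xi_1^{u,x}}$ holds, is destroyed by the $\C_0$-thinning of $\eta_1^{u,x}\setminus\{u\}$, and survives up to the last pivotal point. Let $t^\ast$ be that last pivotal point, with the convention $t^\ast:=u$ if there is none. As every $u$--$x$ path passes through $t^\ast$, the defining property of $E$ forces the connection from $u$ to $t^\ast$ to survive the thinning, while the pivotal-free (hence doubly connected) piece from $t^\ast$ to $x$ does not; in particular $t^\ast$ is itself not thinned out. Fix two interior-disjoint $t^\ast$--$x$ paths inside that piece. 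The thinned-out vertices separate $t^\ast$ from $x$ in their union, so either the target $x$ is thinned (in which case put $z^\ast:=x$ and $t^\ast:=x$) or one of these two paths has a thinned interior vertex, which I take to be $z^\ast$. In either case $z^\ast$ fails the $\C_0$-thinning, so there is a vertex $w'\in\C_0=\C(\orig,\xi_0^\orig)$ whose thinning edge to $z^\ast$ realises this; fix a path $\pi$ from $\orig$ to $w'$ inside $\C_0$. Finally, using $\dconn{\orig}{u}{\xi_0^{\orig,u}}$, fix two interior-disjoint $\orig$--$u$ paths $\delta_1,\delta_2$ and let $w^\ast$ be the last vertex of $\pi$ (traversed from $\orig$) that lies on $\delta_1\cup\delta_2\cup\{u\}$, so that the portion of $\pi$ strictly beyond $w^\ast$ avoids $\delta_1\cup\delta_2$.

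It then remains to read off the required incidences and disjointness. In the generic situation, where $\orig\not\sim u$ in $\xi_0$, the vertex $w^\ast$ is a genuine intermediate vertex, and $z^\ast\ne x$: the two sub-paths of the $\delta_i$ meeting at $w^\ast$ together with the other $\delta_j$ realise $\bigcirc_3^\leftrightarrow((\orig,u),(\orig,w^\ast),(u,w^\ast);\xi_0)$, and the tail of $\pi$ from $w^\ast$ to $w'$ followed by the thinning edge from $w'$ to $z^\ast$ realises the $\sqarrow$-component of $\bigcirc_4^{\sqarrow}$, so that $F_0^{(1)}(w^\ast,u,z^\ast)$ holds; on the $\xi_1$-side, the $u$--$t^\ast$ segment together with the two interior-disjoint $t^\ast$--$x$ paths, one of them split at $z^\ast$, realise $\bigcirc_4^\leftrightarrow((u,t^\ast),(t^\ast,z^\ast),(t^\ast,x),(z^\ast,x);\xi_1)$, and $t^\ast\not\sim x$ holds automatically because a surviving direct edge could not be cut by the thinning, so that $F_1^{(1)}(u,t^\ast,z^\ast,x)$ holds. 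The degenerate configurations --- a direct edge $\orig\sim u$ or the point $w^\ast$ coinciding with $u$ (so the $\orig$--$u$ skeleton collapses), and $z^\ast=x$ (so the $t^\ast$--$z^\ast$--$x$ theta collapses) --- are exactly what $F_0^{(2)}$ and $F_1^{(2)}$ are designed to absorb, by rerouting to $w^\ast=\orig$ and $t^\ast=z^\ast=x$ respectively. I expect this case analysis to be the main obstacle: one must check that each realisation of $\mathcal{A}$ falls into precisely one admissible combination $F_0^{(i)}\cap F_1^{(j)}$ and, more delicately, that the interior-vertex-disjointness and path-avoidance requirements hard-wired into the $\bigcirc^{\leftrightarrow}$ and $\bigcirc^{\sqarrow}$ events are genuinely met by the segments extracted above --- for instance that the $u$--$t^\ast$ part shares no interior vertex with the $t^\ast$--$x$ piece, which is where pivotality of $t^\ast$ is used, and that the tail of $\pi$ avoids the $\orig$--$u$ double connection, which is the reason $w^\ast$ is taken to be a last intersection point.
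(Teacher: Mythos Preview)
Your proof sketch is correct and takes essentially the same approach as the paper: choose $t^\ast$ as the last pivotal point (or $u$), $z^\ast$ as a thinned vertex on the $t^\ast$--$x$ piece (or $x$ itself if $x$ is thinned), and $w^\ast$ via the last-intersection trick along a path in $\C_0$ to the thinning vertex, with the degenerate cases absorbed by $F_0^{(2)}$ and $F_1^{(2)}$ exactly as you describe. One minor remark: you do not need each realisation to fall into \emph{precisely one} combination $F_0^{(i)}\cap F_1^{(j)}$ --- since every summand on the right-hand side is non-negative, exhibiting a single witness triple $(w^\ast,z^\ast,t^\ast)$ suffices.
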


\begin{proof}
    We first prove that
    \begin{equation}
    \label{eqn:F-bounds_FirstStep}
        \Id_{E\left(u,x; \C_{0}, \xi^{u, x}_{1}\right)} \leq \sum_{z\in\eta_1^x}\sum_{t\in\eta_1^{u,x}}\Id_{F_1\left(u,t,z,x;\xi_1^{u,x}\right)}\Id_{\left\{\sqconn{\orig}{z}{\left(\xi_0^{\orig},\xi_1^{u,x}\right)}\right\}}.
    \end{equation}
    Note that the event $E\left(u,x; \C_{0}, \xi^{u, x}_{1}\right)$ is contained in the event that $u$ is connected to $x$ and that this connection fails after a $\C_0$-thinning of $\eta_1^x$. There are two cases under which this can happen.
    
    Case (a): The point $x$ itself is thinned out. In this case
    \begin{multline}
        E\left(u,x; \C_{0}, \xi^{u, x}_{1}\right) \subset \left\{\conn{u}{x}{\xi_1^{u,x}}\right\}\cap\left\{\sqconn{\orig}{x}{\left(\xi_0^{\orig},\xi_1^{u,x}\right)}\right\} \\= F^{(2)}_1\left(u,x,x,x;\xi_1^{u,x}\right) \cap\left\{\sqconn{\orig}{x}{\left(\xi_0^{\orig},\xi_1^{u,x}\right)}\right\}.
    \end{multline}

    Case (b): The point $x$ is not thinned out. This implies that there is at least one interior point on the path between $u$ and $x$, and that at least one of these interior points is thinned out by $\C_0$. Let $t$ be the last pivotal point in $\piv{u,x;\xi_1^{u,x}}$, and set $t=u$ if $\piv{u,x;\xi_1^{u,x}}=\emptyset$. Since $t$ is the last pivotal point (or there are no pivotal points), we have $\left\{\dconn{t}{x}{\xi_1^x}\right\}$. The event $E\left(u,x; \C_{0}, \xi^{u, x}_{1}\right)$ implies that all the paths from $t$ to $x$ fail after a $\C_0$-thinning, but that $t$ is not thinned out. We can pick any thinned out point on a path from $t$ to $x$ to be our $z$, while noting that $t$ and $x$ cannot be adjacent. Therefore this case corresponds to the possible occurrences of $F^{(1)}_1$, and we have proven \eqref{eqn:F-bounds_FirstStep}.

    Now it only remains to prove that
    \begin{equation}
        \Id_{\left\{\dconn{\orig}{u}{\xi^{\orig, u}_{0}}\right\}}\Id_{\left\{\sqconn{\orig}{z}{\left(\xi_0^{\orig},\xi_1^{u,x}\right)}\right\}} \leq \sum_{w\in \eta_0^\orig}\Id_{F_0\left(w,u,z;\xi_0^{\orig,u},\xi_1^{u,x}\right)}.
    \end{equation}
    The event $\left\{\sqconn{\orig}{z}{\left(\xi_0^{\orig},\xi_1^{u,x}\right)}\right\}$ implies that there exists at least one point in $\C_0$ that is responsible for thinning out $z$. Let $\gamma$ denote a the path from $\orig$ to this point in $\C_0$. We once again now have two cases to consider.
    
    Case (a): $\notadja{\orig}{u}{\xi^{\orig,u}_0}$. Then $\left\{\dconn{\orig}{u}{\xi^{\orig, u}_{0}}\right\}$ implies that there exist two disjoint paths (denoted $\gamma'$ and $\gamma''$) from $\orig$ to $u$. Both of these paths are necessarily of length greater than or equal to $2$. Let $w$ denote the last vertex $\gamma$ shares with either $\gamma'$ or $\gamma''$ (allowing for the possibility that $w=\orig$). Requiring that $\gamma$, $\gamma'$, and $\gamma''$ exist results precisely in the event $F_0^{(1)}$.

    Case (b): $\adja{\orig}{u}{\xi^{\orig,u}_0}$. Now we fix $w=\orig$ immediately. The existence of the path from $\orig$ to the thinning point implies the event $F_0^{(2)}$.
\end{proof}

\begin{definition}[The $\psi$ functions] \label{def:DB:psi_functions}
Let $r,s,u,w,x,y\in\Rd$ and $n\geq 1$. We first set $\tlamo(x) := \lambda^{-1}\delta_{x,\orig} + \tlam(x)$ and $\tlam^{(\geq 2)}(x) := \pla\left(\xconn{\orig}{x}{\xi^{\orig,x}}{\geq 2}\right) = \tlam(x) - \connf(x)$. Also define
\begin{align*}
    \psi_0^{(1)}(w,u) &:= \lambda^2\tlam^{(\geq 2)}(u)\tlam(u-w)\tlam(w),\\
    \psi_0^{(2)}(w,u) &:=\lambda^2 \delta_{w,\orig} \tlam^{(\geq 2)}(u)\int \tlam(u-t)\tlam(t) \dd t,\\
    \psi_0^{(3)}(w,u) &:= \lambda\connf(u) \delta_{w,\orig},\\
    \psi^{(1)}(w,u,r,s) &:= \lambda^4\tlam(w-u)\int \tlamo(t-s) \tlam(t-w)\tlam(u-z)\tlam(z-t)\tlam(z-r)\dd z \dd t, \\
    \psi^{(2)}(w,u,r,s) &:= \lambda^4\tlamo(w-s)\int \tlam(t-z)\tlam(z-u)\tlam(u-t)\tlamo(t-w)\tlam(z-r)\dd z \dd t, \\
	\psi^{(3)}(w,u,r,s) &:= \lambda^2\tlam(u-w)\tlam(w-s)\tlam(u-r),\\
	\psi^{(4)}(w,u,r,s) &:= \lambda\delta_{w,s}\tlam(u-w)\tlam(u-r),\\
	\psi_n^{(1)} (x,r,s) &:= \lambda^3\int \tlamo(t-s)\tlam(z-r)\tlam(t-z)\tlam(z-x)\tlam^{(\geq 2)}(x-t)\dd z\dd t,\\
    \psi_n^{(2)}(x,r,s) &:=\lambda \tlam(x-s)\tlam(x-r),
\end{align*}
and set $\psi_0 := \psi_0^{(1)}+\psi_0^{(2)}+\psi_0^{(3)}$, $\psi_n := \psi_n^{(1)} + \psi_n^{(2)}$, and $\psi:= \psi^{(1)}+\psi^{(2)}+\psi^{(3)} + \psi^{(4)}$.
\end{definition}

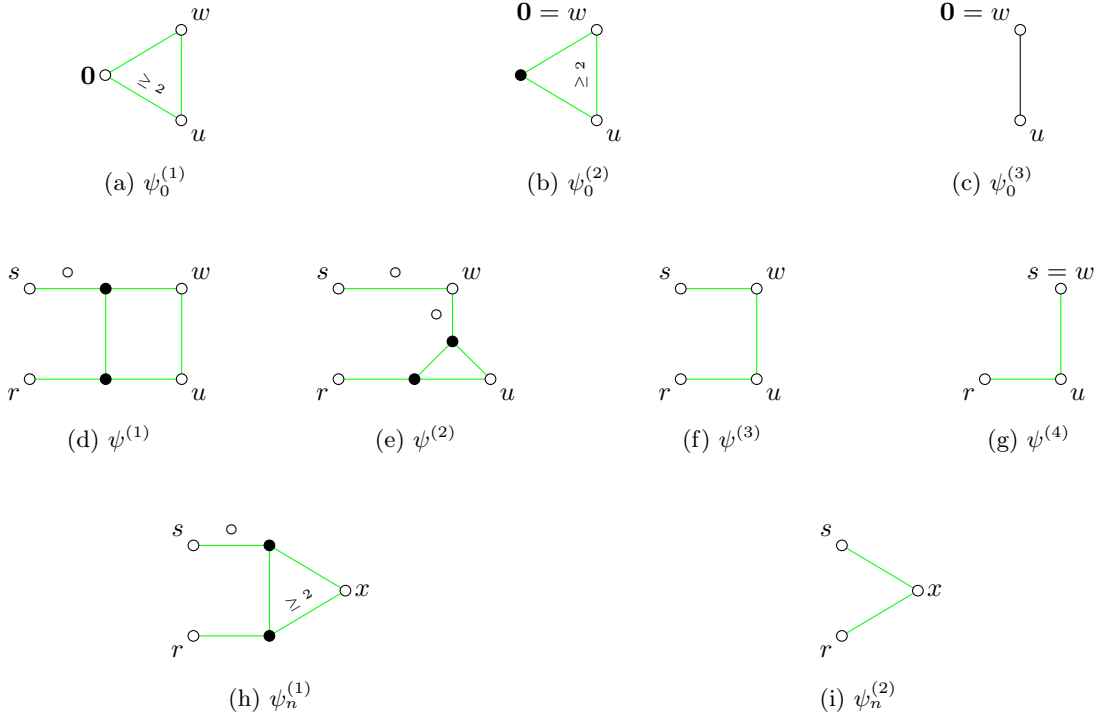
\begin{figure}
    \centering
    \begin{subfigure}[b]{0.3\textwidth}
    \centering
        \begin{tikzpicture}
        \draw[green] (0,0) -- (1,0.6) -- (1,-0.6) -- cycle;
        \draw (0.5,-0.3) circle (0pt) node[rotate=-31,above]{\tiny $\geq 2$};
        \filldraw[fill=white] (0,0) circle (2pt) node[left]{$\orig$};
        \filldraw[fill=white] (1,0.6) circle (2pt) node[above right]{$w$};
        \filldraw[fill=white] (1,-0.6) circle (2pt) node[below right]{$u$};
        \end{tikzpicture}
    \caption{$\psi^{(1)}_0$}
    \end{subfigure}
    \hfill
    \begin{subfigure}[b]{0.3\textwidth}
    \centering
        \begin{tikzpicture}
        \draw[green] (0,0) -- (1,0.6) -- (1,-0.6) -- cycle;
        \draw (1,0) circle (0pt) node[rotate=90,above]{\tiny $\geq 2$};
        \filldraw (0,0) circle (2pt);
        \filldraw[fill=white] (1,0.6) circle (2pt) node[above left]{$\orig=w$};
        \filldraw[fill=white] (1,-0.6) circle (2pt) node[below right]{$u$};
       \end{tikzpicture}
    \caption{$\psi^{(2)}_0$}
    \end{subfigure}
    \hfill
    \begin{subfigure}[b]{0.3\textwidth}
    \centering
        \begin{tikzpicture}
        \draw (0,0.6) -- (0,-0.6);
        \filldraw[fill=white] (0,0.6) circle (2pt) node[above left]{$\orig=w$};
        \filldraw[fill=white] (0,-0.6) circle (2pt) node[below right]{$u$};
        \end{tikzpicture}
    \caption{$\psi^{(3)}_0$}
    \end{subfigure}
    \begin{subfigure}[b]{0.24\textwidth}
    \centering
        \begin{tikzpicture}
        \draw[green] (0,0.6) -- (1,0.6) -- (1,-0.6) -- (0,-0.6);
        \draw (0.5,0.6) circle (0pt) node[above]{$\circ$};
        \draw[green] (1,0.6) -- (2,0.6) -- (2,-0.6) -- (1,-0.6);
        \draw (0,1.75) circle (0pt);
        \filldraw[fill=white] (0,0.6) circle (2pt) node[above left]{$s$};
        \filldraw[fill=white] (0,-0.6) circle (2pt) node[below left]{$r$};
        \filldraw (1,0.6) circle (2pt);
        \filldraw (1,-0.6) circle (2pt);
        \filldraw[fill=white] (2,0.6) circle (2pt) node[above right]{$w$};
        \filldraw[fill=white] (2,-0.6) circle (2pt) node[below right]{$u$};
        \end{tikzpicture}
    \caption{$\psi^{(1)}$}
    \end{subfigure}
    \hfill
    \begin{subfigure}[b]{0.24\textwidth}
    \centering
        \begin{tikzpicture}
        \draw[green] (0,0.6) -- (1.5,0.6) -- (1.5,-0.1) -- (1,-0.6);
        \draw (0.75,0.6) circle (0pt) node[above]{$\circ$};
        \draw (1.5,0.25) circle (0pt) node[left]{$\circ$};
        \draw[green] (0,-0.6) -- (1,-0.6) -- (2,-0.6) -- (1.5,-0.1);
        \filldraw[fill=white] (0,0.6) circle (2pt) node[above left]{$s$};
        \filldraw[fill=white] (0,-0.6) circle (2pt) node[below left]{$r$};
        \filldraw[fill=white] (1.5,0.6) circle (2pt) node[above right]{$w$};
        \filldraw (1,-0.6) circle (2pt);
        \filldraw (1.5,-0.1) circle (2pt);
        \filldraw[fill=white] (2,-0.6) circle (2pt) node[below right]{$u$};
        \end{tikzpicture}
    \caption{$\psi^{(2)}$}
    \end{subfigure}
    \hfill
    \begin{subfigure}[b]{0.24\textwidth}
    \centering
        \begin{tikzpicture}
        \draw[green] (0,0.6) -- (1,0.6) -- (1,-0.6) -- (0,-0.6);
        \filldraw[fill=white] (0,0.6) circle (2pt) node[above left]{$s$};
        \filldraw[fill=white] (0,-0.6) circle (2pt) node[below left]{$r$};
        \filldraw[fill=white] (1,0.6) circle (2pt) node[above right]{$w$};
        \filldraw[fill=white] (1,-0.6) circle (2pt) node[below right]{$u$};
        \end{tikzpicture}
    \caption{$\psi^{(3)}$}
    \end{subfigure}
    \hfill
    \begin{subfigure}[b]{0.24\textwidth}
    \centering
        \begin{tikzpicture}
        \draw[green] (1,0.6) -- (1,-0.6) -- (0,-0.6);
        \filldraw[fill=white] (0,-0.6) circle (2pt) node[below left]{$r$};
        \filldraw[fill=white] (1,0.6) circle (2pt) node[above]{$s=w$};
        \filldraw[fill=white] (1,-0.6) circle (2pt) node[below right]{$u$};
        \end{tikzpicture}
    \caption{$\psi^{(4)}$}
    \end{subfigure}
    \hspace*{\fill}%
    \begin{subfigure}[b]{0.45\textwidth}
    \centering
        \begin{tikzpicture}
        \draw[green] (0,0.6) -- (1,0.6) -- (1,-0.6);
        \draw (0.5,0.6) circle (0pt) node[above]{$\circ$};
        \draw[green] (1,0.6) -- (2,0) -- (1,-0.6) -- (0,-0.6);
        \draw (1.5,-0.3) circle (0pt) node[rotate=31,above]{\tiny $\geq 2$};
        \filldraw[fill=white] (0,0.6) circle (2pt) node[above left]{$s$};
        \filldraw[fill=white] (0,-0.6) circle (2pt) node[below left]{$r$};
        \filldraw (1,0.6) circle (2pt);
        \filldraw (1,-0.6) circle (2pt);
        \filldraw[fill=white] (2,0) circle (2pt) node[right]{$x$};
        \end{tikzpicture}
    \caption{$\psi^{(1)}_n$}
    \end{subfigure}
    \hfill
    \begin{subfigure}[b]{0.45\textwidth}
    \centering
        \begin{tikzpicture}
        \draw[green] (1,0.6) -- (2,0) -- (1,-0.6);
        \draw (0,1.75) circle (0pt);
        \filldraw[fill=white] (1,0.6) circle (2pt) node[above left]{$s$};
        \filldraw[fill=white] (1,-0.6) circle (2pt) node[below left]{$r$};
        \filldraw[fill=white] (2,0) circle (2pt) node[right]{$x$};
        \end{tikzpicture}
    \caption{$\psi^{(2)}_n$}
    \end{subfigure}
    \hspace*{\fill}%
    \caption{Diagrams of the $\psi_0$, $\psi$, and $\psi_n$ functions.}
    \label{fig:psiFunctions}
\end{figure}

For our bounds on $\fLacelamC^{(1)}(0)$ we will only require $\psi_0$ and $\psi_1$. Later we will also use $\psi$ to bound $\fLacelamC^{(n)}(0)$ for $n\geq 2$.

\begin{lemma}
\label{lem:Pi1UpperStep1}
\begin{multline}
    \lambda_c\fLacelamC^{(1)}(0) \leq \int \psi_0\left(w,u\right)\psi_1\left(x,w,u\right)\dd u\dd w\dd x \\= \lambda_c^2\int \connf(u)\tlamC(x)\tlamC(u-x)\dd u\dd x + \LandauBigO{\loopsix  + \phiThreeTwoOne + \phiTwoTwoTwo}.
\end{multline}
\end{lemma}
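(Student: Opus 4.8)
The plan is to prove the displayed inequality and then to evaluate the integral on its right, isolating the single leading term and absorbing everything else into the stated error. Throughout we work at $\lambda=\lambda_c$, which is legitimate by the limiting identities recorded in Definition~\ref{def:Pi} (alternatively one argues for $\lambda<\lambda_c$ and passes to the limit by monotone convergence, as in the bounds on $\fLacelamC^{(0)}(0)$). Starting from $\fLacelamC^{(1)}(0)=\lambda_c\iint \mathbb{P}_{\lambda_c}\bigl(\{\dconn{\orig}{u}{\xi^{\orig,u}_0}\}\cap E(u,x;\C_0,\xi^{u,x}_1)\bigr)\,\dd u\,\dd x$, I would insert the indicator bound of the preceding lemma, take expectations, and apply the multivariate Mecke equation~\eqref{eq:prelim:mecke_n} to the sums over the Poisson processes $\eta^\orig_0$, $\eta^x_1$ and $\eta^{u,x}_1$ (carrying the new vertices $w$, $z$, $t$). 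Each Mecke step produces a factor $\lambda_c$ and an integration variable; the degenerate cases where a Mecke vertex coincides with an already present one ($w\in\{\orig,u\}$, $z=x$, $t\in\{u,x\}$) produce the Dirac-delta summands $\psi_0^{(2)},\psi_0^{(3)},\psi_1^{(2)}$, while the generic cases give $\psi_0^{(1)}$ and $\psi_1^{(1)}$. The remaining probabilities of the events $F_0$ and $F_1$ are bounded by splitting the (double-)connections into vertex-disjoint strands via repeated use of the BK inequality, estimating each strand by $\tlamC$ (and by $\tlamC^{(\geq 2)}=\tlamC-\connf$ whenever the event forces at least two steps, e.g.\ in the $\dconn{\orig}{u}{}$ part that is not a single edge), and using the independence of $\xi_0$ and $\xi_1$ to separate the $\xi_0$-carried part of $F_0$ from its $\xi_1$-carried part. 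This is the lace-expansion bookkeeping of \cite{HeyHofLasMat19}, refined only by retaining the $\tlamC^{(\geq 2)}$ factors, and it yields $\lambda_c\fLacelamC^{(1)}(0)\leq\int\psi_0(w,u)\psi_1(x,w,u)\,\dd u\,\dd w\,\dd x$ with the $\psi$'s taken at $\lambda_c$.

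For the evaluation, expand $\psi_0=\psi_0^{(1)}+\psi_0^{(2)}+\psi_0^{(3)}$ and $\psi_1=\psi_1^{(1)}+\psi_1^{(2)}$, so the integral splits into six products. The product $\psi_0^{(3)}\cdot\psi_1^{(2)}$ is the main term: $\psi_0^{(3)}(w,u)=\lambda_c\connf(u)\delta_{w,\orig}$ collapses the $w$-integral to $w=\orig$, and $\psi_1^{(2)}(x,\orig,u)=\lambda_c\tlamC(x)\tlamC(x-u)$, so this product integrates to $\lambda_c^2\int\connf(u)\tlamC(x)\tlamC(u-x)\,\dd u\,\dd x$ by the symmetry of $\tlamC$. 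For each of the five remaining products I would integrate out the internal vertices and count $\connf$-factors, using that every $\tlamC$-line carries at least one convolution factor of $\connf$ (via $\tlamC\le\connf+\lambda_c\connf\star\tlamC$, iterated as in Lemma~\ref{lem:tauUpperbound}), every $\tlamC^{(\geq 2)}$-line at least two, and $\tlamC^\circ$ contributes its delta term or a $\tlamC$-line. One checks that each of the five reduces to a spatial diagram with at least six $\connf$-edges; for instance $\psi_0^{(1)}\cdot\psi_1^{(2)}$ becomes, after the $x$-integration, a theta diagram on $\{\orig,w,u\}$ whose four arcs carry $\ge 1,\ge2,\ge1,\ge2$ factors (minimal term exactly $\phiThreeTwoOne$), $\psi_0^{(2)}\cdot\psi_1^{(2)}$ becomes $\lambda_c^3\int\tlamC^{(\geq 2)}(u)\bigl(\tlamC^{\star2}(u)\bigr)^2\,\dd u$ (minimal term $\phiTwoTwoTwo$), and the four products containing $\psi_1^{(1)}$ reduce to triangle-plus-three-path diagrams or larger. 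Replacing the $\tlamC$-remainders by convolutions of $\connf$, controlling any $\connf^{\star m}\star\tlamC^{\star s}$ that appears via Lemma~\ref{lem:tailbound}, and collapsing surplus edges with the convolution inequalities of this section, each of the five contributions is $\LandauBigO{\loopsix+\phiThreeTwoOne+\phiTwoTwoTwo}$, which completes the proof.

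The substantive work is the second step. The first step is the standard diagrammatic lace-expansion bound of \cite{HeyHofLasMat19}; the only new ingredient there is keeping $\tlamC^{(\geq 2)}$ in place of $\tlamC$ wherever the event permits, which is what will make the matching lower bound possible later. In the second step the main obstacle is not any single estimate but the systematic bookkeeping: one must run the six-way case split while tracking $\connf$-factors correctly through the $\tlamC^\circ$ and $\tlamC^{(\geq 2)}$ symbols, and then uniformly dominate every diagram that appears --- including those with strictly more than six edges --- by one of the three named six-edge diagrams, which is exactly where Lemma~\ref{lem:tailbound} and the convolution inequalities of this section are used.
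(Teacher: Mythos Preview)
Your proposal is correct and follows essentially the same approach as the paper. Both derive the first inequality by citing the diagrammatic bound of \cite{HeyHofLasMat19} (Proposition~7.2 there) while noting the refinement of keeping $\tlamC^{(\geq 2)}$ in place of $\tlamC$ in the two spots where the events force it, and both evaluate the right-hand side by splitting $\psi_0\psi_1$ into its six summands, identifying $\psi_0^{(3)}\psi_1^{(2)}$ as the main term, and bounding the remaining five by iteratively expanding the $\tlamC$-lines and invoking Lemma~\ref{lem:tailbound}. Two small slips: there are three (not four) products containing $\psi_1^{(1)}$, and your description of the $\psi_0^{(1)}\psi_1^{(2)}$ diagram as a theta with ``four arcs'' is off---it is a theta on $\{w,u\}$ with three arcs of minimal lengths $1,2,3$ (the third arc passing through $\orig$), whose minimal contribution is indeed $\phiThreeTwoOne$. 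The paper treats one of the five error diagrams in full detail as a template and then records the outcome for the others; your sketch compresses this but captures the same mechanism.
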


\begin{proof}
    The first inequality is proven in very nearly exactly the same way as \cite[Proposition~7.2]{HeyHofLasMat19}. The first difference is that our event $F^{(1)}_1$ has the intersection with $\left\{\notadja{t}{x}{\xi_1}\right\}$. Since the event $\bigcirc_4^\leftrightarrow\left(\left(u,t\right),\left(t,z\right),\left(t,x\right),\left(z,x\right);\xi_1\right)$ ensures that $t$ and $x$ are connected in $\xi_1$, this means that the event $\xconn{t}{x}{\xi_1}{\geq 2}$ occurs. This then manifests in the end result as the occurrence of a $\tlam^{(\geq 2)}$ function rather than a $\tlam$ function in the integral in $\psi^{(1)}_1$. Similarly, the event $F^{(1)}_0$ implies that $\xconn{\orig}{u}{\xi_0}{\geq 2}$, and this results in the $\tlam^{(\geq 2)}(u)$ appearing rather than $\tlam(u)$ in $\psi^{(1)}_0$ and $\psi^{(2)}_0$.

    For the equality, we first note that 
    \begin{equation}
        \int \psi^{(3)}_0\left(w,u\right)\psi^{(2)}_1\left(x,w,u\right)\dd u\dd w\dd x = \lambda^2\int \connf(u)\tlam(x)\tlam(u-x)\dd u\dd x.
    \end{equation}
    Our task in then to show that all the other terms $\int \psi^{(j_0)}_0\left(w,u\right)\psi^{(j_1)}_1\left(x,w,u\right)\dd u\dd w\dd x$ are error terms. To make it clearer what we are trying to bound, we present the integral $\int \psi_0\left(w,u\right)\psi_1\left(x,w,u\right)\dd u\dd w\dd x$ diagrammatically:
    \begin{multline}
        \lambda\fLacelam^{(1)}(0) \leq \lambda^5\PiOnePtOne + \lambda^3\PiOnePtTwo + \lambda^5\PiOnePtThree \\
        +\lambda^3\PiOnePtFour + \lambda^4\PiOnePtFive + \lambda^2\PiOnePtSix.
    \end{multline}
    The last of these six diagrams will be the only relevant one for our level of precision. To demonstrate how we bound these other five, we examine the second:
    \begin{equation}
        \lambda^3\PiOnePtTwo = \lambda^3\int\tlam^{(\geq 2)}(u)\tlam(w)\tlam(w-u)\tlam(x-w)\tlam(x-u)\dd u\dd w\dd x.
    \end{equation}
    First we expand $\tlam^{(\geq 2)}(u)\leq \lambda\connf^{\star 2}(u) + \lambda^2\connf^{\star 3}(u) + \lambda^3\connf^{\star 4}(u) + \lambda^4\connf^{\star 5}(u) + \lambda^5\connf^{\star 6}(u) + \lambda^6\connf^{\star 6}\star \tlam(u)$. For the two diagrams that result from the last two terms in this expansion, we can bound $\tlam(w-u)\leq 1$ to get terms of the form 
    $\lambda^{j+5}\connf^{\star 6}\star\tlam^{\star j}\left(\orig\right)$ for $j\in\left\{3,4\right\}$. From Lemma~\ref{lem:tailbound}, both of these are $\LandauBigO{\connf^{\star 6}\left(\orig\right)}$ when $\lambda\leq \lambda_c$. For the remaining diagrams we then bound $\tlam(w) \leq \connf(w) + \lambda\connf^{\star 2}(w) + \lambda^2\connf^{\star 3}(w) + \lambda^3\connf^{\star 4}(w) + \lambda^4\connf^{\star 4}\star \tlam(w)$ and if the diagrams contain a loop of at least six $\connf$ functions and maybe some $\tlam$ functions, we once again bound $\tlam(w-u)\leq 1$ and use Lemma~\ref{lem:tailbound} to show that they are $\LandauBigO{\connf^{\star 6}\left(\orig\right)}$. For the remaining diagrams we bound $\tlam(x-w) \leq \connf(x-w) + \lambda\connf^{\star 2}(x-w) + \lambda^2\connf^{\star 3}(x-w) + \lambda^3\connf^{\star 3}\star \tlam(x-w)$. Again bounding $\tlam(w-u)\leq 1$ allows us to use Lemma~\ref{lem:tailbound} to show that some of these diagrams are $\LandauBigO{\connf^{\star 6}\left(\orig\right)}$. After bounding $\tlam(x-u) \leq \connf(x-u) + \lambda\connf^{\star 2}(x-u) + \lambda^2\connf^{\star 2}\star\tlam(x-u)$ and showing that some terms are $\LandauBigO{\connf^{\star 6}\left(\orig\right)}$, we arrive at
    \begin{equation}
        \lambda_c^3\PiOnePtTwo \leq \lambda_c^4\int \connf^{\star 2}(u)\connf(w)\tlamC(w-u)\connf(x-w)\connf(x-u)\dd u\dd w\dd x + \LandauBigO{\connf^{\star 6}\left(\orig\right)}.
    \end{equation}
    Then we bound $\tlam(w-u)\leq \connf(w-u) + \lambda\connf^{\star 2}(w-u) + \lambda^2\connf^{\star 3}(w-u) + \lambda^3\connf^{\star 3}\star\tlam(w-u)$ to get
    \begin{align}
        &\lambda^4\int \connf^{\star 2}(u)\connf(w)\tlam(w-u)\connf(x-w)\connf(x-u)\dd u\dd w\dd x \nonumber\\
        &\hspace{4cm}\leq \lambda^4\int \connf^{\star 2}(u)\connf(w)\connf(w-u)\connf(x-w)\connf(x-u)\dd u\dd w\dd x \nonumber\\
        &\hspace{5cm}+ \lambda^5\int \connf^{\star 2}(u)\connf(w)\connf^{\star 2}(w-u)\connf(x-w)\connf(x-u)\dd u\dd w\dd x \nonumber\\
        &\hspace{5cm}+ \lambda^6\int \connf^{\star 2}(u)\connf(w)\connf^{\star 3}(w-u)\connf(x-w)\connf(x-u)\dd u\dd w\dd x\nonumber\\
        &\hspace{5cm}+ \lambda^7\int \connf^{\star 2}(u)\connf(w)\connf^{\star 3}\star\tlam(w-u)\connf(x-w)\connf(x-u)\dd u\dd w\dd x.
    \end{align}
    From the commutativity of convolution, observe that the first two terms are $\LandauBigO{\int\connf(x)\connf^{\star 2}(x)\connf^{\star 3}(x)\dd x}$. For the last two terms we bound $\int\connf(x-w)\connf(x-u)\dd x \leq \int\connf(x-w)\dd x = 1$ for all $u,w\in\Rd$. Therefore we can apply Lemma~\ref{lem:tailbound} to show that these diagrams are $\LandauBigO{\connf^{\star 6}\left(\orig\right)}$ when $\lambda\leq \lambda_c$. In summary, we have
    \begin{equation}
        \lambda_c^3\PiOnePtTwo = \LandauBigO{\phiThreeTwoOne + \loopsix}.
    \end{equation}
    Repeating these ideas for the other diagrams produces
    \begin{align}
        \lambda_c^5\PiOnePtOne &= \LandauBigO{\loopsix},\\
        \lambda_c^5\PiOnePtThree &= \LandauBigO{\loopsix},\\
        \lambda_c^3\PiOnePtFour &= \LandauBigO{\phiTwoTwoTwo + \loopsix},\\
        \lambda_c^4\PiOnePtFive &= \LandauBigO{\phiThreeTwoOne + \loopsix}.
    \end{align}
\end{proof}

\begin{lemma}
    Let $x,u\in\Rd$. Then
    \begin{align}
    \label{eqn:TauTauExpansion}
        \tlam(x)\tlam(u-x)&\leq \connf(x)\connf(u-x) + \connf(x)\Exconnf{2}(u-x) + \Exconnf{2}(x)\connf(u-x) + \connf(x)\Exconnf{3}(u-x) + \Exconnf{3}(x)\connf(u-x)\nonumber\\
        &\qquad+ \Exconnf{2}(x)\Exconnf{2}(u-x) + \lambda^3\connf(x)\connf^{\star 4}(u-x) + \lambda^5\connf(x)\connf^{\star 4}\star\tlam(u-x)\nonumber\\
        &\qquad + \lambda^3\connf^{\star 2}(x)\connf^{\star 3}(u-x) + \lambda^4\connf^{\star 2}(x)\connf^{\star 3}\star\tlam(u-x) + \lambda^4\connf^{\star 3}(x)\connf^{\star 2}(u-x)  \nonumber\\
        &\qquad+ \lambda^4\connf^{\star 3}(x)\connf^{\star 2}\star\tlam(u-x) + \lambda^3\connf^{\star 4}(x)\connf(u-x) + \lambda^4\connf^{\star 4}(x)\connf\star\tlam(u-x)\nonumber\\
        &\qquad + \lambda^4\connf^{\star 4}\star\tlam(x)\connf(u-x)+ \lambda^5\connf^{\star 4}\star\tlam(x)\connf\star\tlam(u-x).
    \end{align}
    Therefore
    \begin{align}
        \lambda_c^2\connf\star\tlamC^{\star 2}\left(\orig\right) &\leq \lambda_c^2\connf^{\star 3}\left(\orig\right) + 2\lambda_c^2\connf^{\star 2}\star\Exconnf{2}\left(\orig\right) + 2\lambda_c^2\connf^{\star 2}\star\Exconnf{3}\left(\orig\right) + \lambda_c^2\connf\star\Exconnf{2}\star\Exconnf{2}\left(\orig\right) + \LandauBigO{\connf^{\star 6}\left(\orig\right)}\nonumber\\
        &\leq \lambda_c^2\loopthree + 2\lambda_c^3\loopfour + 3\lambda_c^4\loopfive - 2\lambda_c^3\fourcrossone + \LandauBigO{\loopsix + \phiTwoTwoTwo}.
    \end{align}
\end{lemma}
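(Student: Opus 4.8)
The plan is to derive the pointwise inequality by substituting the length-truncated bound of Lemma~\ref{lem:tauUpperbound} into each of the two factors $\tlam(x)$ and $\tlam(u-x)$, and then to obtain the second assertion by integrating the pointwise bound against $\connf(u)$, reducing every resulting term either to an explicit convolution $\connf^{\star m}\left(\orig\right)$ or to a $\tlam$-tail controlled by Lemma~\ref{lem:tailbound}. Throughout I use the scaling $\phiint=1$ and the inequality $\phiint\lambda_c\ge 1$.

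\emph{The pointwise bound.} Apply Lemma~\ref{lem:tauUpperbound} with $n=3$ to get, for every $y\in\Rd$,
\[
    \tlam(y)\le \connf(y)+\Exconnf{2}(y)+\Exconnf{3}(y)+\lambda^3\connf^{\star 4}(y)+\lambda^4\connf^{\star 4}\star\tlam(y),
\]
and multiply the bounds for $y=x$ and $y=u-x$. The $25$ resulting products split into the six ``low-order'' products $\connf\cdot\connf$, $\connf\cdot\Exconnf{2}$ and $\Exconnf{2}\cdot\connf$, $\connf\cdot\Exconnf{3}$ and $\Exconnf{3}\cdot\connf$, and $\Exconnf{2}\cdot\Exconnf{2}$, which are kept verbatim as the first six terms on the right-hand side, plus the remaining nineteen. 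For each of the remaining products I use $\Exconnf{k}(y)\le\lambda^{k-1}\connf^{\star k}(y)$ from Lemma~\ref{lem:ExconnfBound}, the trivial bound $\connf^{\star m}(y)\le\connf^{\star(m-1)}\star\tlam(y)$ (valid since $\connf\le\tlam$), and $\lambda_c\ge 1$ to line up the powers of $\lambda$; a short finite check shows that each falls into one of the ten listed error-sized terms. This establishes the first displayed inequality.

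\emph{Extracting the main terms.} Since $\int\connf(u)\tlamC(x)\tlamC(u-x)\,\dd u\,\dd x=\connf\star\tlamC^{\star 2}\left(\orig\right)$, I integrate the pointwise bound against $\connf(u)$. The term $\connf\cdot\connf$ gives $\connf^{\star 3}\left(\orig\right)=\loopthree$. For the two $\connf\cdot\Exconnf{2}$ terms I must keep the $(1-\connf)$ factor rather than bound $\Exconnf{2}\le\lambda\connf^{\star 2}$: writing $\Exconnf{2}(y)=\left(1-\connf(y)\right)\bigl(1-\e^{-\lambda\connf^{\star 2}(y)}\bigr)\le\lambda\left(1-\connf(y)\right)\connf^{\star 2}(y)$ yields $\connf^{\star 2}\star\Exconnf{2}\left(\orig\right)=\int\connf^{\star 2}(y)\Exconnf{2}(y)\,\dd y\le\lambda\connf^{\star 4}\left(\orig\right)-\lambda\fourcrossone$, so these two terms contribute $2\lambda_c^3\loopfour-2\lambda_c^3\fourcrossone$. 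For the two $\connf\cdot\Exconnf{3}$ terms, $\Exconnf{3}\le\lambda^2\connf^{\star 3}$ gives $\connf^{\star 2}\star\Exconnf{3}\left(\orig\right)\le\lambda^2\connf^{\star 5}\left(\orig\right)$ and hence $2\lambda_c^4\loopfive$, and $\Exconnf{2}\cdot\Exconnf{2}\le\lambda^2\connf^{\star 2}(x)\connf^{\star 2}(u-x)$ gives a further $\lambda_c^4\loopfive$, so the coefficient of $\loopfive$ is $3$. This produces the intermediate inequality $\lambda_c^2\connf\star\tlamC^{\star 2}\left(\orig\right)\le\lambda_c^2\connf^{\star 3}\left(\orig\right)+2\lambda_c^2\connf^{\star 2}\star\Exconnf{2}\left(\orig\right)+2\lambda_c^2\connf^{\star 2}\star\Exconnf{3}\left(\orig\right)+\lambda_c^2\connf\star\Exconnf{2}\star\Exconnf{2}\left(\orig\right)+\LandauBigO{\connf^{\star 6}\left(\orig\right)}$ stated in the lemma.

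\emph{The error terms and conclusion.} Each of the remaining error-sized terms, after integrating against $\connf(u)$ and using identities such as $\int\connf(u)\connf(u-x)\,\dd u=\connf^{\star 2}(x)$ and $\int\connf(u)\connf\star\tlam(u-x)\,\dd u=\connf^{\star 2}\star\tlam(x)$, collapses to one of $\connf^{\star 6}\left(\orig\right)$, $\connf^{\star 6}\star\tlam\left(\orig\right)$, or $\connf^{\star 6}\star\tlam^{\star 2}\left(\orig\right)$; by Lemma~\ref{lem:tailbound} with $m=6$ (which invokes Assumption~\ref{Assump:ExponentialDecay}, available in the ambient setting of Proposition~\ref{prop:PiBounds}) all three are $\LandauBigO{\connf^{\star 6}\left(\orig\right)}=\LandauBigO{\loopsix}$. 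Assembling the three steps yields $\lambda_c^2\connf\star\tlamC^{\star 2}\left(\orig\right)\le\lambda_c^2\loopthree+2\lambda_c^3\loopfour+3\lambda_c^4\loopfive-2\lambda_c^3\fourcrossone+\LandauBigO{\loopsix}$, which is the claim (the $\phiTwoTwoTwo$ in the stated error being absorbed trivially). The only real obstacle is organisational: tracking the products in the pointwise step so that each ``extra'' one is routed to a term whose $\connf(u)$-integral is genuinely $\connf^{\star 6}$-sized, together with the one delicate point that the $\connf\cdot\Exconnf{2}$ contributions must be estimated with the $(1-\connf)$ factor retained — the target inequality carries the negative term $-2\lambda_c^3\fourcrossone$ and would fail under a cruder bound.
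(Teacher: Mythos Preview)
Your approach is essentially the paper's: apply Lemma~\ref{lem:tauUpperbound} to each factor, keep the six low-order products, and show the remainder integrates to $\LandauBigO{\connf^{\star 6}(\orig)}$ via Lemma~\ref{lem:tailbound}. Two small points. First, the paper applies Lemma~\ref{lem:tauUpperbound} with $n=4$ rather than $n=3$; either works for the second display, but your ``short finite check'' that the nineteen extra products from $n=3$ route \emph{pointwise} into exactly the ten stated error terms leans on ``$\lambda_c\ge 1$ to line up the powers of $\lambda$'', which is illegitimate in \eqref{eqn:TauTauExpansion} since that inequality is stated for general $\lambda$---you should either take $n=4$ as the paper does or restrict the pointwise claim to $\lambda=\lambda_c$ (harmless, since only $\lambda_c$ is used downstream). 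Second, for the $\connf^{\star 2}\star\Exconnf{2}(\orig)$ term the paper uses the sharper $1-\e^{-x}\le x-\tfrac12x^2+\tfrac16x^3$, picking up an $\LandauBigO{\phiTwoTwoTwo}$ contribution from the cubic piece; your simpler $1-\e^{-x}\le x$ already gives $\connf^{\star 2}\star\Exconnf{2}(\orig)\le\lambda\loopfour-\lambda\fourcrossone$ without that extra error, which is a small but genuine simplification.
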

\begin{proof}
    Equation \eqref{eqn:TauTauExpansion} follows from applying Lemma~\ref{lem:tauUpperbound} to both $\tau(x)$ and $\tau(u-x)$ with $n= 4$, and then bounding $\Exconnf{m} \leq \lambda^{m-1}\connf^{\star m}$ in some places.

    For the second part, we use \eqref{eqn:TauTauExpansion} in conjunction with Lemma~\ref{lem:tailbound} to show that many of the terms are $\LandauBigO{\connf^{\star 6}\left(\orig\right)}$ and produce the first inequality. We then immediately have $\lambda^2\connf^{\star 3}\left(\orig\right) = \lambda^2 \loopthree$, and simply bounding $\Exconnf{3}\leq \lambda^2\connf^{\star 3}$ and $\Exconnf{2}\leq \lambda\connf^{\star 2}$ gives
    \begin{equation}
        2\lambda^2\connf^{\star 2}\star\Exconnf{3}\left(\orig\right) + \lambda^2\connf\star\Exconnf{2}\star\Exconnf{2} \left(\orig\right) \leq 3\lambda^4\loopfive.
    \end{equation}
    To bound $2\lambda^2\connf^{\star 2}\star\Exconnf{2}\left(\orig\right)$ appropriately requires a little more care though. Recall from \eqref{eqn:ExConnf2} that $\connf^{[2]}(x) = \left(1-\connf(x)\right)\left(1-\exp\left(-\lambda\connf^{\star 2}(x)\right)\right)$. Using $1-\e^{-x} \leq x - \frac{1}{2}x^2 + \frac{1}{6}x^3$ for all $x\in\R$ then gives
    \begin{multline}
        \lambda^2\connf^{\star 2}\star\Exconnf{2}\left(\orig\right) \leq \lambda^3\int\left(1-\connf(x)\right)\connf^{\star 2}(x)^2\dd x \\-\frac{1}{2}\lambda^4\int\left(1-\connf(x)\right)\connf^{\star 2}(x)^3\dd x + \frac{1}{6}\lambda^5\int\left(1-\connf(x)\right)\connf^{\star 2}(x)^4\dd x.
    \end{multline}
    The second term we can safely neglect, and for the third term we use $1-\connf(x)\leq 1$ and $\connf^{\star 2}(x)\leq \int\connf(x)\dd x=1$ for all $x\in\Rd$. This produces
    \begin{multline}
        \lambda^2\connf^{\star 2}\star\Exconnf{2}\left(\orig\right) \leq \lambda^3\int\left(1-\connf(x)\right)\connf^{\star 2}(x)^2\dd x + \frac{1}{6}\lambda^5\int\connf^{\star 2}(x)^3\dd x \\= \lambda^3\loopfour - \lambda^3\fourcrossone + \LandauBigO{\phiTwoTwoTwo},
    \end{multline}
    as required.
\end{proof}

This concludes the proof of Lemma~\ref{lem:Pi1_UpperBound}.

\paragraph{Lower Bound on $\fLacelamC^{(1)}(0)$}

\begin{lemma}
    \begin{equation}
        \lambda_c\fLacelamC^{(1)}(0) \geq \lambda_c^2\loopthree + 2\lambda_c^3\loopfour + 3\lambda_c^4\loopfive - 2\lambda_c^3\fourcrossone + \LandauBigO{\phiTwoTwoTwo + \phiThreeTwoOne}
    \end{equation}
\end{lemma}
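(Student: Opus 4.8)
The plan is to mirror the lower-bound argument for $\fLacelamC^{(0)}(0)$. First I would isolate an explicit subevent of the event defining $\LacelamC^{(1)}(x)$ in \eqref{eq:LE:Pin_def}, rich enough to reproduce the main diagram $\int \psi_0^{(3)}(w,u)\psi_1^{(2)}(x,w,u)\,\dd u\,\dd w\,\dd x = \lambda_c^2\,\connf\star\tlamC^{\star 2}(\orig)$ of Lemma~\ref{lem:Pi1UpperStep1} up to corrections of size $\LandauBigO{\phiThreeTwoOne+\phiTwoTwoTwo}$. Concretely I would restrict $\{\dconn{\orig}{u}{\xi_0^{\orig,u}}\}\cap E(u,x;\C_0,\xi_1^{u,x})$ to the configurations in which (i) there is a single edge $\adja{\orig}{u}{\xi_0}$ (the simplest realization of $\dconn{\orig}{u}{\xi_0^{\orig,u}}$; keeping the genuinely doubly connected realizations only adds nonnegative higher-order mass, which I discard); (ii) there is a genuine connection $\conn{u}{x}{\xi_1^{u,x}}$; and (iii) $x$ is thinned out by the cluster $\C_0$ of $\orig$, realized either directly (with $\orig$ within connection range of $x$) or through a short arm emanating from $\orig$ whose interior is disjoint from the $u\leftrightarrow x$ path. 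As in the decomposition $\Fcal=\Fcal_1\cup\Fcal_2\cup\Fcal_3$ used for $\fLacelamC^{(0)}(0)$, I would split this subevent into pairwise disjoint pieces indexed by the lengths of the three connections and the multiplicities of competing vertices, so that its probability becomes a sum of explicit Poisson expressions.

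In the first stage I would apply Mecke's equation to integrate out $u$, and use the independence of $\xi_0$ and $\xi_1$ to factorize each piece into a contribution from the edge $\orig$--$u$ together with the arm of $\C_0$ (living in $\xi_0$) and one from the $u\leftrightarrow x$ connection together with the thinning (living in $\xi_1$). Evaluating the Poisson probabilities through $\pla(X\geq 1)=1-\e^{-M}$, $\pla(X=1)=M\e^{-M}$, and applying the elementary inequalities $1-\e^{-x}\geq x-\tfrac{1}{2}x^2$, $x\e^{-x}\geq x-x^2$, and $1-\e^{-x}-x\e^{-x}\geq\tfrac{1}{2}x^2-\tfrac{1}{2}x^3$, carried to enough orders that no contribution of size comparable to $\connf^{\star 4}(\orig)$ is lost, should produce
\begin{equation*}
    \lambda_c\fLacelamC^{(1)}(0)\;\geq\;\lambda_c^2\int\connf(u)\tlamC(x)\tlamC(u-x)\,\dd u\,\dd x+\LandauBigO{\phiThreeTwoOne+\phiTwoTwoTwo},
\end{equation*}
the arm of $\C_0$ reaching $x$ supplying the $\tlamC(x)$ leg (this is where the device $\tlamo=\lambda^{-1}\delta_{x,\orig}+\tlam$ of Definition~\ref{def:DB:psi_functions} enters) and the $\xi_1$-connection supplying the $\tlamC(u-x)$ leg; the discarded cubic-and-higher remainders integrate to convolution diagrams dominated by $\phiTwoTwoTwo$ and $\phiThreeTwoOne$ via the convolution comparison estimates. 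The argument applies directly at $\lambda=\lambda_c$; since no tail estimate is needed, no error term of order $\connf^{\star 6}(\orig)$ is generated here.

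In the second stage I would bound $\connf\star\tlamC^{\star 2}(\orig)$ from below, which is the exact analogue of the expansion \eqref{eqn:TauTauExpansion}. Using $\tlamC\geq\connf+\Exconnf{2}+\Exconnf{3}$ from Lemma~\ref{lem:tauUpperbound}, expanding the product $\tlamC(x)\tlamC(u-x)$, and discarding the nonnegative cross terms $\Exconnf{2}\cdot\Exconnf{3}$ and $(\Exconnf{3})^2$, one keeps
\begin{equation*}
    \connf\star\tlamC^{\star 2}(\orig)\;\geq\;\connf^{\star 3}(\orig)+2\,\connf^{\star 2}\star\Exconnf{2}(\orig)+2\,\connf^{\star 2}\star\Exconnf{3}(\orig)+\connf\star\Exconnf{2}\star\Exconnf{2}(\orig).
\end{equation*}
Inserting the explicit formulas \eqref{eqn:ExConnf2}--\eqref{eqn:ExConnf3}, bounding the exponentials below by $1-\e^{-y}\geq y-\tfrac{1}{2}y^2$, and keeping $1-\connf\leq 1$ in the error terms while retaining the leading $-\lambda_c\,\connf\cdot\connf^{\star 2}$ correction from the $(1-\connf)$ factor in $\Exconnf{2}$, the surviving leading contributions are $\lambda_c^2\loopthree$ (from $\connf^{\star 3}(\orig)$), $2\lambda_c^3\loopfour$ together with $-2\lambda_c^3\fourcrossone$ (from $2\,\connf^{\star 2}\star\Exconnf{2}(\orig)$), and $3\lambda_c^4\loopfive$ (from $2\,\connf^{\star 2}\star\Exconnf{3}(\orig)$ and $\connf\star\Exconnf{2}\star\Exconnf{2}(\orig)$ jointly), every remaining term being $\LandauBigO{\phiThreeTwoOne+\phiTwoTwoTwo}$ by the convolution comparison estimates. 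This yields the claimed inequality.

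The main obstacle is the first stage: the subevent must be chosen so that the Poisson/Mecke computation recovers the \emph{full} two-point legs $\tlamC(x)$ and $\tlamC(u-x)$, not merely their length-one parts, while every discarded contribution stays within $\LandauBigO{\phiThreeTwoOne+\phiTwoTwoTwo}$; in particular the thinning-by-cluster mechanism built into $E(u,x;\C_0,\xi_1^{u,x})$ and the vertex-disjointness it imposes (the $\bigcirc^{\sqarrow}$ structure of the $\psi$ functions) must be respected without dropping a leading diagram, and one must avoid linearizing a contribution of size $\connf^{\star 4}(\orig)$ into the remainder. Once the right decomposition is in place, matching the precise coefficients $2$, $3$, and $-2$ against the upper bound of Lemma~\ref{lem:Pi1_UpperBound} is bookkeeping rather than a new difficulty.
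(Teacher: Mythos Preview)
Your overall instinct---build explicit disjoint subevents of $\{\dconn{\orig}{u}{\xi_0^{\orig,u}}\}\cap E(u,x;\C_0,\xi_1^{u,x})$, compute their probabilities via Poisson/Mecke identities, and sum---is exactly what the paper does. But the two-stage architecture you propose introduces an intermediate goal that does not hold and is not needed.

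In your Stage~1 you want the finite family of ``short arm'' subevents to produce the bound
\[
\lambda_c\fLacelamC^{(1)}(0)\;\ge\;\lambda_c^2\int\connf(u)\,\tlamC(x)\,\tlamC(u-x)\,\dd u\,\dd x\;+\;\LandauBigO{\phiThreeTwoOne+\phiTwoTwoTwo}.
\]
This cannot come out of finitely many events with bounded arm lengths: if the thinning arm from $\orig$ to $x$ has length at most $3$, you recover at best $\connf(x)+\Exconnf{2}(x)+\Exconnf{3}(x)$ on that leg, not the full $\tlamC(x)$. The factorisation you sketch is also not clean, since the thinning event couples $\C_0$ (determined by $\xi_0$) with thinning variables attached to $\xi_1$; neither leg lives in a single copy of the RCM. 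Finally, the device $\tlamo=\lambda^{-1}\delta+\tlam$ belongs to the \emph{upper} bound machinery (the $\psi$-functions); it plays no role in the lower bound.

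The paper skips your Stage~1 entirely. The key observation you are circling around but not using is that, because the thinning probability of $x$ by a fixed vertex $z$ equals the edge probability $\connf(x-z)$, the event ``$x$ is thinned by $\orig$ directly, or by a neighbour of $\orig$ but not $\orig$ itself, or by a vertex at graph distance $2$ from $\orig$ but not closer'' has probability \emph{exactly} $\connf(x)$, $\Exconnf{2}(x)$, $\Exconnf{3}(x)$ respectively. Hence the six disjoint events $\Gcal_1,\ldots,\Gcal_6$ (indexed by the pair $(j,k)$ of lengths of the $u\leftrightarrow x$ connection in $\xi_1$ and the thinning arm in $\xi_0$, with $j+k\le 4$) have probabilities exactly $\connf(u)\Exconnf{j}(x-u)\Exconnf{k}(x)$. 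Integrating and applying the lower bounds on $\Exconnf{2}$ and $\Exconnf{3}$---which is precisely your Stage~2 computation---already gives the result, with no detour through $\connf\star\tlamC^{\star 2}(\orig)$. In short: your Stages~1 and~2 collapse into a single direct step once you recognise that the subevents produce the $\Exconnf{j}\Exconnf{k}$ terms \emph{exactly}, not merely as approximations to $\tlamC\cdot\tlamC$.
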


\begin{proof}
Our strategy is to identify disjoint events contained in $\{\dconn{\orig}{u}{\xi^{\orig, u}_{0}}\} \cap E\left(u,x; \C_{0}, \xi^{u, x}_{1}\right)$ for each $u,x\in\Rd$, and show that the integral of their probabilities is equal to our upper bound to the required precision. Our disjoint events are the following:
\begin{align}
    \Gcal_1 &:= \left\{\adja{\orig}{u}{\xi^{\orig,u}_0}\right\}\cap\left\{\adja{u}{x}{\xi^{u,x}_1} \right\}\cap\left\{x\notin \eta^{u,x}_{1,\thinn{\orig}}\right\} \\
    \Gcal_2 &:=  \left\{\adja{\orig}{u}{\xi^{\orig,u}_0}\right\}\cap\left\{\xconn{u}{x}{\xi^{u,x}_1}{2} \right\}\cap\left\{x\notin \eta^{u,x}_{1,\thinn{\orig}}\right\}\\
    \Gcal_3 &:=  \left\{\adja{\orig}{u}{\xi^{\orig,u}_0}\right\}\cap\left\{\adja{u}{x}{\xi^{u,x}_1} \right\}\cap\left\{x\in \eta^{u,x}_{1,\thinn{\orig}}\right\}\cap\left\{\exists v\in\eta_0\colon \adja{\orig}{v}{\xi^\orig_0}, x\notin \eta^{u_0,x}_{1,\thinn{v}}\right\} \\
    \Gcal_4 &:=  \left\{\adja{\orig}{u}{\xi^{\orig,u}_0}\right\}\cap\left\{\xconn{u}{x}{\xi^{u,x}_1}{3} \right\}\cap\left\{x\notin \eta^{u,x}_{1,\thinn{\orig}}\right\}\\
    \Gcal_5 &:=  \left\{\adja{\orig}{u}{\xi^{\orig,u}_0}\right\}\cap\left\{\xconn{u}{x}{\xi^{u,x}_1}{2} \right\}\cap\left\{x\in \eta^{u,x}_{1,\thinn{\orig}}\right\}\cap \left\{\exists v\in\eta_0\colon \adja{\orig}{v}{\xi^\orig_0}, x\notin \eta^{u_0,x}_{1,\thinn{v}}\right\}\\
    \Gcal_6 &:=  \left\{\adja{\orig}{u}{\xi^{\orig,u}_0}\right\}\cap\left\{\adja{u}{x}{\xi^{u,x}_1} \right\}\cap\left\{x\in \eta^{u,x}_{1,\thinn{\orig}}\right\}
    \cap\left\{\not\exists v\in\eta_0\colon \adja{\orig}{v}{\xi^\orig_0}, x\in\eta^{u_0,x}_{1,\thinn{v}}\right\}\nonumber\\
    &\hspace{5cm}\cap \left\{\exists w\in\eta_0\colon \xconn{\orig}{w}{\xi^\orig_0}{2}, x\notin \eta^{u_0,x}_{1,\thinn{w}}\right\}
\end{align}
Observe that these events are indeed disjoint, and all are subsets of $\{\dconn{\orig}{u}{\xi^{\orig, u}_{0}}\} \cap E\left(u,x; \C_{0}, \xi^{u, x}_{1}\right)$. We now want to bound their probabilities from below. For $\Gcal_1$, the events $\big\{\adja{\orig}{u}{\xi^{\orig,u}_0}\big\}$ and $\left\{\adja{u}{x}{\xi^{u,x}_1} \right\}$ are clearly independent. The event $\big\{x\notin \eta^{u,x}_{1,\thinn{\orig}}\big\}$ is also independent of these previous two, because it uses a thinning random variable from $\eta_1^{u,x}$. The probability that $x$ is thinned out by the single vertex $\orig$ is also equal to the probability that an edge forms between these vertices. Therefore $\pla\left(\Gcal_1\right) = \connf(u)\connf(x-u)\connf(x)$. The other events proceed similarly with a few points to note. All the events that are intersected to compose the $\Gcal_i$ are independent because they use different (independent) edge random variables and thinning random variables. Also, the events like $\big\{\xconn{u}{x}{\xi^{u,x}_1}{n} \big\}$ have probability given exactly by $\Exconnf{n}(x-u)$ by definition of that function. The event $\big\{x\in \eta^{u,x}_{1,\thinn{\orig}}\big\}\cap\big\{\exists v\in\eta_0\colon \adja{\orig}{v}{\xi^\orig_0}, x\notin \eta^{u_0,x}_{1,\thinn{v}}\big\}$ says that $x$ \emph{is not} thinned out by $\orig$, and that there exists a $v$ that forms an edge with $\orig$ and thins out $x$. 
This has probability equal to that of the event that no edge forms between $\orig$ and $x$, and that they have at least one mutual neighbour. This is precisely the probability given by $\Exconnf{2}(x)$. Similar considerations allow us to find factors of $\Exconnf{2}$ and $\Exconnf{3}$ in the probability of the remaining events. The lower bounds we use are summarised here:
\begin{align}
    \pla\left(\Gcal_1\right) &= \connf(u)\connf(x-u)\connf(x),\\
    \pla\left(\Gcal_2\right) &= \connf(u)\Exconnf{2}(x-u)\connf(x)\nonumber\\
    &\geq \connf(u)\connf(x)\left(1-\connf(x-u)\right)\left(\lambda\connf^{\star 2}(x-u) - \frac{1}{2}\lambda^2\connf^{\star 2}(x-u)^2\right),\\
    \pla\left(\Gcal_3\right) &= \connf(u)\connf(x-u)\Exconnf{2}(x)\nonumber\\
    &\geq \connf(u)\connf(x-u)\left(1-\connf(x)\right)\left(\lambda\connf^{\star 2}(x) - \frac{1}{2}\lambda^2\connf^{\star 2}(x)^2\right),\\
    \pla\left(\Gcal_4\right) &= \connf(u)\Exconnf{3}(x-u)\connf(x),\\
    \pla\left(\Gcal_5\right) &= \connf(u)\Exconnf{2}(x-u)\Exconnf{2}(x)\nonumber\\
    &\geq \connf(u)\left(1-\connf(x-u)\right)\left(1-\connf(x)\right)\left(\lambda^2\connf^{\star 2}(x-u)\connf^{\star 2}(x) - \frac{1}{2}\lambda^3\connf^{\star 2}(x-u)^2\connf^{\star 2}(x)\right.\nonumber\\
    &\hspace{5cm}\left.- \frac{1}{2}\lambda^3\connf^{\star 2}(x-u)\connf^{\star 2}(x)^2 + \frac{1}{4}\lambda^4\connf^{\star 2}(x-u)^2\connf^{\star 2}(x)^2\right),\\
    \pla\left(\Gcal_6\right) &= \connf(u)\connf(x-u)\Exconnf{3}(x).
\end{align}
For $\pla\left(\Gcal_2\right)$, $\pla\left(\Gcal_3\right)$, and $\pla\left(\Gcal_5\right)$ we have used a lower bound on $\Exconnf{2}$ by observing $1-\e^{-x}\geq x-\frac{1}{2}x^2$ for all $x\in\R$ and using this with the expression for $\Exconnf{2}$ in \eqref{eqn:ExConnf2}.

From these we can bound
\begin{align}
    \lambda^2\int\pla\left(\Gcal_1\right)\dd u\dd x &= \lambda^2\loopthree,\\
    \lambda^2\int\pla\left(\Gcal_2\right)\dd u\dd x &\geq \lambda^3\GtwoPtOne - \frac{1}{2}\lambda^4\GtwoPtTwo\nonumber\\
    &= \lambda^3\loopfour - \lambda^3\fourcrossone + \LandauBigO{\phiTwoTwoTwo},\\
    \lambda^2\int\pla\left(\Gcal_3\right)\dd u\dd x &\geq \lambda^3\GthreePtOne - \frac{1}{2}\lambda^4\GthreePtTwo\nonumber\\
    &= \lambda^3\loopfour - \lambda^3\fourcrosstwo + \LandauBigO{\phiTwoTwoTwo},\\
    \lambda^2\int\pla\left(\Gcal_5\right)\dd u\dd x &\geq \lambda^4\GfivePtOne -\frac{1}{2}\lambda^5\GfivePtTwo -\frac{1}{2}\lambda^5\GfivePtThree + \frac{1}{4}\lambda^6\GfivePtFour\nonumber\\
    &= \lambda^4\loopfive + \LandauBigO{\phiThreeTwoOne}.
\end{align}

To bound the integrals of $\pla\left(\Gcal_4\right)$ and $\pla\left(\Gcal_6\right)$, recall 
\begin{equation}
    \Exconnf{3}(x) \geq \left(1-\connf(x)\right)\times\left(\lambda^2\FthreePtFive{\orig}{x} - \frac{1}{2}\lambda^3\FthreePtSix{\orig}{x} - \frac{1}{2}\lambda^4\FthreePtSeven{\orig}{x} + \frac{1}{2}\lambda^5\FthreePtEight{\orig}{x} - \frac{1}{8}\lambda^6\FthreePtNine{\orig}{x}\right).
\end{equation}
Then
\begin{align}
    \lambda^2\int\pla\left(\Gcal_4\right) & \geq \lambda^4\GfourPtOne -\frac{1}{2} \lambda^5\GfourPtTwo -\frac{1}{2} \lambda^6\GfourPtThree + \frac{1}{2} \lambda^7\GfourPtFour - \frac{1}{8}\lambda^8\GfourPtFive\nonumber\\
    & = \lambda^4\loopfive + \LandauBigO{\phiThreeTwoOne},\\
    \lambda^2\int\pla\left(\Gcal_6\right) & \geq \lambda^4\GsixPtOne -\frac{1}{2} \lambda^5\GsixPtTwo -\frac{1}{2} \lambda^6\GsixPtThree + \frac{1}{2}\lambda^7\GsixPtFour -\frac{1}{8}\lambda^8 \GsixPtFive\nonumber\\
    & = \lambda^4\loopfive + \LandauBigO{\phiThreeTwoOne}.
\end{align}
Summing the integrals of the probabilities of the $\Gcal_i$ events gives us our desired lower bound for any $\lambda>0$, and in particular $\lambda=\lambda_c$.
\end{proof}

\subsection{Bounds on the Second Lace Expansion Coefficient}
\label{sec:Pi2bd}

We start with an upper bound, which we need both for $n=1$ and also for $n\ge3$ in the next subsection. 

\begin{prop}
\label{prop:LaceBoundwithDiagram}
    Let $n \geq 1$, $x\in\Rd$, and $\lambda\in \left[0,\lambda_c\right]$. Then
    \begin{equation}
        \label{eqn:Lacefunction_bound}
        \lambda\Lacelam^{(n)}(x) \leq \int  \psi_n(x,w_{n-1},u_{n-1}) \left( \prod_{i=1}^{n-1} \psi(w_i,u_i,w_{i-1},u_{i-1}) \right) \psi_0(w_0,u_0) \dd\left( \left(\vec w, \vec u\right)_{[0,n-1]} \right).
    \end{equation}
    Furthermore, for $\lambda\in \left[0,\lambda_c\right]$ there exists $c>0$ (independent of $\lambda$ and $d$) such that
    \begin{equation}
        \sum_{n=N}^\infty \fLacelam^{(n)}(0) \leq c\beta^N.
    \end{equation}
\end{prop}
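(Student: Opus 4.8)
The plan is to establish the two assertions in turn, following and refining the lace-expansion bounds of \cite{HeyHofLasMat19}.

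\emph{The diagrammatic bound \eqref{eqn:Lacefunction_bound}.} By \eqref{eq:LE:Pin_def}, $\lambda\Lacelam^{(n)}(x)$ is $\lambda^{n+1}$ times the integral over $\vec u_{[0,n-1]}$ of the probability of the nested intersection $\{\dconn{\orig}{u_0}{\xi^{\orig,u_0}_0}\}\cap\bigcap_{i=1}^{n}E\left(u_{i-1},u_i;\C_{i-1},\xi^{u_{i-1},u_i}_i\right)$. I would process this intersection sausage by sausage, from the inside out, iterating the indicator decomposition proved above for the case $n=1$: each event $E\left(u_{i-1},u_i;\C_{i-1},\xi^{u_{i-1},u_i}_i\right)$, together with the connection event feeding into it, is replaced by an ``$F_1$-type'' piece at the terminal end of the $i$-th sausage and an ``$F_0$-type'' piece that reattaches to the $(i-1)$-th sausage, at the cost of summing two new vertices $w_i$ and $t_i$. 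Applying the BK inequality to split every disjoint-occurrence event into a product, and then Mecke's equation \eqref{eq:prelim:mecke_n} to turn each vertex sum into a $\lambda$-weighted integral, the probability of each sausage factorizes into a product of $\tlam$, $\tlamo$ and $\tlam^{(\geq 2)}$ two-point functions — precisely the building blocks $\psi_0$, $\psi$ and $\psi_n$ of Definition~\ref{def:DB:psi_functions} (cf.\ Figure~\ref{fig:psiFunctions}). Collecting the factors in the order in which the sausages are chained produces \eqref{eqn:Lacefunction_bound}. The only bookkeeping beyond \cite{HeyHofLasMat19} is tracking which vertices are forced to be distinct interior vertices, which is what yields a $\tlam^{(\geq 2)}$ rather than a $\tlam$ in $\psi^{(1)}_0$, $\psi^{(2)}_0$ and $\psi^{(1)}_n$; since $\tlam^{(\geq 2)}\leq\tlam$ this only strengthens the inequality.

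\emph{The tail bound.} Integrating \eqref{eqn:Lacefunction_bound} over $x\in\Rd$ bounds $\fLacelam^{(n)}(0)$ by $\int\psi_n(x,w_{n-1},u_{n-1})\big(\prod_{i=1}^{n-1}\psi(w_i,u_i,w_{i-1},u_{i-1})\big)\psi_0(w_0,u_0)$, integrated over $x$ and all the $(w_i,u_i)$. I would interpret $\psi$ as the kernel of an integral operator acting on functions of the pair $(w,u)$, with $\psi_0$ and $\psi_n$ (the latter integrated in $x$) as boundary vectors, and reduce to three uniform estimates: (i) $\int\psi_0(w_0,u_0)\,h(w_0,u_0)\,\dd w_0\,\dd u_0\leq C_0\|h\|$ for a suitable norm $\|\cdot\|$; (ii) $\int\psi_n(x,w,u)\,\dd x\leq C'$ in the dual norm; and, crucially, (iii) that the operator with kernel $\psi$ has norm at most $C\beta(d)$, where $C_0$, $C'$ and $C$ are independent of $n$ and $d$. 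Granting these, \eqref{eqn:Lacefunction_bound} gives $\fLacelam^{(n)}(0)\leq C_0C'\left(C\beta(d)\right)^{n-1}$; since $\beta(d)\to 0$, for $d$ large we have $C\beta(d)\leq\tfrac12$, and hence, using $\beta(d)<1$, $\sum_{n=N}^\infty\fLacelam^{(n)}(0)\leq C_0C'\left(C\beta(d)\right)^{N-1}\sum_{j\geq0}2^{-j}\leq c\,\beta(d)^N$ after absorbing the constants, which is the asserted bound.

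The main obstacle is step (iii): each $\psi$ building block is, diagrammatically, a pentagon (double triangle) built from $\tlam$-lines, whose operator norm must be controlled by $\beta(d)$. Here I would bound each $\tlam$-line by $\connf$ plus higher convolutions (as in Lemma~\ref{lem:tauUpperbound}), pass to the Fourier side, and estimate the supremum and the integral of the resulting triangle-type diagrams using Assumption~\ref{Assumption} — the decay bound \ref{Assump:DecayBound}, and via Lemma~\ref{lem:tailbound} the Fourier condition \ref{Assump:QuadraticBound} — together with the bootstrap estimate $f(\lambda)\leq 2$ of Proposition~\ref{prop:bootstrapbound} to control $\abs*{\ftlam(k)}$. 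Each such diagram is $\LandauBigO{\beta(d)}$ by the definition \eqref{eqn:betafromgfunction} of $\beta(d)$. This is a refinement of the convergence-of-the-lace-expansion estimates already present in \cite{HeyHofLasMat19,DicHey2022triangle}, which I would invoke for the detailed norm computations, noting that the appearances of $\tlam^{(\geq 2)}$ in our $\psi$-functions only improve on the bounds used there.
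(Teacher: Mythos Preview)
Your outline is correct and mirrors the paper's proof, which simply cites \cite{HeyHofLasMat19}: the diagrammatic bound \eqref{eqn:Lacefunction_bound} is their Proposition~7.2 with the refinement (already noted in Lemma~\ref{lem:Pi1UpperStep1}) that certain $\tlam$'s become $\tlam^{(\geq 2)}$'s, and the tail bound is their Corollary~5.3. The paper also notes that the extension to $\lambda=\lambda_c$ uses the dominated-convergence argument of \cite[Corollary~6.1]{HeyHofLasMat19}, a point your sketch leaves implicit.

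One genuine slip in your tail argument: from $\fLacelam^{(n)}(0)\leq C_0C'(C\beta)^{n-1}$ you obtain after summing only a constant multiple of $\beta^{N-1}$, and the final step ``$\leq c\,\beta^N$ after absorbing the constants'' does not go through, because the missing factor $\beta(d)^{-1}\to\infty$ cannot be absorbed into a $d$-independent $c$. In the references this off-by-one is avoided by extracting one further factor of $\beta$ from an endpoint block --- the non-degenerate components of $\psi_0$ and of $\int\psi_n\,\dd x$ each close a triangle --- so that in fact $\fLacelam^{(n)}(0)\leq c_0(C\beta)^{n}$ and the geometric sum yields the stated $\LandauBigO{\beta^N}$.
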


\begin{proof}
    As in Lemma~\ref{lem:Pi1UpperStep1}, the first inequality is proven in very nearly exactly the same way as \cite[Proposition~7.2]{HeyHofLasMat19}. The proof of \cite[Corollary~5.3]{HeyHofLasMat19} only needs to be slightly adjusted to get the second part of our result for $\lambda<\lambda_c$, and a dominated convergence argument like that appearing in the proof of \cite[Corollary~6.1]{HeyHofLasMat19} allows us to extend the result to $\lambda=\lambda_c$.
\end{proof}

\paragraph{Upper Bound on $\fLacelamC^{(2)}(0)$}

\begin{lemma}
\label{lem:Pi2UpperBound}
Suppose Assumption~\ref{Assumption} holds. Then as $d\to\infty$,
    \begin{multline}
        \lambda_c\fLacelamC^{(2)}(0) \leq \int\psi_0(w_0,u_0)\psi(w_1,u_1,w_0,u_0)\psi_2(x,w_1,u_1)\dd w_0\dd u_0\dd w_1 \dd u_1 \dd x \\
        = \lambda_c^3\fourcrossone + \LandauBigO{\loopsix + \phiThreeTwoOne + \phiTwoTwoTwo}.
    \end{multline}
\end{lemma}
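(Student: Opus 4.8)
The argument has two parts: the inequality is read off from an earlier bound, and the equality is a bookkeeping computation. For the inequality I would apply Proposition~\ref{prop:LaceBoundwithDiagram}, equation~\eqref{eqn:Lacefunction_bound}, with $n=2$ and $\lambda=\lambda_c$, and integrate both sides over $x\in\Rd$; since integrating the left side produces $\lambda_c\int\LacelamC^{(2)}(x)\,\dd x=\lambda_c\fLacelamC^{(2)}(0)$, this gives
\[
\lambda_c\fLacelamC^{(2)}(0)\le \int\psi_0(w_0,u_0)\,\psi(w_1,u_1,w_0,u_0)\,\psi_2(x,w_1,u_1)\,\dd w_0\,\dd u_0\,\dd w_1\,\dd u_1\,\dd x .
\]
The remainder estimates this integral by expanding $\psi_0=\psi_0^{(1)}+\psi_0^{(2)}+\psi_0^{(3)}$, $\psi=\psi^{(1)}+\psi^{(2)}+\psi^{(3)}+\psi^{(4)}$ and $\psi_2=\psi_2^{(1)}+\psi_2^{(2)}$ into $3\cdot4\cdot2=24$ integrals, isolating the unique leading one, and bounding the other $23$ by error diagrams — exactly as in the proof of Lemma~\ref{lem:Pi1UpperStep1}, only with one additional convolution factor.

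The single leading contribution comes from $\psi_0^{(3)}\,\psi^{(4)}\,\psi_2^{(2)}$. Since $\psi_0^{(3)}(w_0,u_0)=\lambda\connf(u_0)\delta_{w_0,\orig}$ and $\psi^{(4)}(w_1,u_1,w_0,u_0)=\lambda\delta_{w_1,u_0}\tlam(u_1-w_1)\tlam(u_1-w_0)$, the $w_0$- and $w_1$-integrations collapse ($w_0=\orig$, $w_1=u_0$) and this product equals
\[
\lambda_c^3\int\connf(u_0)\,\tlam(u_1-u_0)\,\tlam(u_1)\,\tlam(x-u_0)\,\tlam(x-u_1)\,\dd u_0\,\dd u_1\,\dd x .
\]
Writing $\tlam=\connf+\tlam^{(\ge 2)}$ in each of the four $\tlam$-factors, the all-$\connf$ term equals $\int\connf(a)\connf(z)\connf(z-a)\connf(x-z)\connf(x-a)\,\dd a\,\dd z\,\dd x=\connf^{\star 1\star 2\cdot 2}(\orig)$, i.e.\ $\fourcrossone$, producing the main term $\lambda_c^3\fourcrossone$. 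In each of the other $15$ terms at least one factor is a $\tlam^{(\ge 2)}$; bounding it via Lemma~\ref{lem:tauUpperbound} and Lemma~\ref{lem:ExconnfBound} by $\sum_{i=2}^{5}\lambda^{i-1}\connf^{\star i}+\lambda^{5}\connf^{\star 6}+\lambda^{6}\connf^{\star 6}\star\tlam$, carrying out the resulting convolutions, using the scaling $\int\connf=1$ to discard loose integrations, and applying Lemma~\ref{lem:tailbound} to any surviving $\connf^{\star 6}\star\tlam^{\star s}$, each such term is $\LandauBigO{\loopsix+\phiThreeTwoOne+\phiTwoTwoTwo}$; for instance replacing $\tlam(z-a)$ gives $\lambda_c^4\int\connf(a)\connf(z)\bigl(\connf^{\star 2}(z-a)\bigr)^2\,\dd a\,\dd z=\lambda_c^4\phiTwoTwoTwo$, while replacing any of $\tlam(z)$, $\tlam(x-z)$, $\tlam(x-a)$ gives $\lambda_c^4\phiThreeTwoOne$.

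For the remaining $23$ products I would argue by a line count. After expanding every $\tlam$ as a finite sum $\sum_i\lambda^{i-1}\connf^{\star i}+\lambda^{k}\connf^{\star(k+1)}\star\tlam$, each product is dominated by a rooted diagram of $\connf$-lines. Among the nine $\psi$-pieces, $\psi_0^{(3)}$, $\psi^{(4)}$ and $\psi_2^{(2)}$ are the uniquely minimal ones, contributing $1$, $2$ and $2$ lines respectively, so the five-line diagram $\fourcrossone$ can only be produced by their product; every other of the $24$ products is dominated by a diagram with at least six $\connf$-lines (the $\tlam^{(\ge 2)}$ inside $\psi_0^{(1)}$, $\psi_0^{(2)}$ already contributing a $\connf^{\star 2}$, and $\psi^{(1)},\psi^{(2)},\psi^{(3)},\psi_2^{(1)}$ each contributing at least one extra line even after a $\tlamo$-factor is split against its $\lambda^{-1}\delta$ part). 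For a diagram with at least six lines one isolates a six-line core of type $\loopsix$, $\phiThreeTwoOne$ or $\phiTwoTwoTwo$ — using the elementary bounds $\int\connf^{\star n_1}(x)\connf^{\star n_2}(x)\connf^{\star n_3}(x)\,\dd x\le\int\bigl(\connf^{\star 2}(x)\bigr)^3\,\dd x$ for $n_i\ge 2$ and $\int\connf^{\star n_1}(x)\connf^{\star n_2}(x)\connf(x)\,\dd x\le\connf^{\star 6}(\orig)$ for $n_1,n_2\ge 2$, $n_1+n_2\ge 6$ (valid since $\int\connf=1$) — bounds the leftover integrations by $1$, and uses Lemma~\ref{lem:tailbound} for any trailing $\connf^{\star 6}\star\tlam^{\star s}$. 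Summing the $24$ bounds yields the claimed equality.

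The main obstacle is the bookkeeping for the products containing $\psi^{(1)}$ or $\psi^{(2)}$: these carry internal $z,t$-integrations that already form $4$-cycles of $\tlam$'s, and for each of the ways a $\tlamo$-factor can be split one must check that, after expanding the $\tlam$'s, the extracted six-line core genuinely is one of $\loopsix$, $\phiThreeTwoOne$, $\phiTwoTwoTwo$, and not an intermediate object — such as a $\connf^{\star 4}(\orig)$- or $\connf^{\star 5}(\orig)$-loop, or a $\bigl(\connf^{\star 3}(\orig)\bigr)^2$ product — that sits strictly between the leading and the error order and would break the estimate. Verifying that the backbone structure $\psi_0\to\psi\to\psi_2$ always forces at least two lines beyond the $\fourcrossone$ core (so no such intermediate term can appear) is the one place where real care is needed; the remaining estimates are routine and mirror those in Lemma~\ref{lem:Pi1UpperStep1}.
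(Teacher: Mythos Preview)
Your approach is the same as the paper's: both apply Proposition~\ref{prop:LaceBoundwithDiagram} at $n=2$, expand into the $24$ products $(j_0,j_1,j_2)$, and isolate $(3,4,2)$ as the source of the leading $\lambda_c^3\,\fourcrossone$. Your computation of that leading term matches the paper's exactly.

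The difference is in how the $23$ error products are dispatched. You argue by a uniform line count (``every other product has at least six $\connf$-lines, so extract a six-line core''), whereas the paper partitions them differently: for the $21$ products with $(j_0,j_1,j_2)\notin\{(3,2,2),(3,3,2),(3,4,2)\}$ it finds a genuine cycle of length $\ge 6$ visiting every vertex, bounds the off-cycle $\tlam$'s by $1$, and uses Lemma~\ref{lem:tailbound}; but for $(3,2,2)$ and $(3,3,2)$ no such Hamiltonian $6$-cycle exists (the underlying graphs have only $5$ vertices with a chord), so a tailored splitting of one particular $\tlam$-factor is needed --- e.g.\ for $(3,3,2)$ one writes $\tlam(u_1-z)\le \connf(u_1-z)+\lambda\int\connf$, and the $\connf$-branch produces exactly $\phiThreeTwoOne$. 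Your line-count heuristic gets the right answer in these two cases too (the all-$\connf$ version of $(3,3,2)$ is precisely $\phiThreeTwoOne$), but it does not by itself rule out that a $5$-vertex diagram with $6$ edges could collapse to a $\connf^{\star 5}(\orig)$-loop after one ``loose'' integration --- the obstacle you yourself flag at the end. The paper resolves that obstacle by doing these two cases by hand rather than by a general structural lemma; your proof would be complete once you do the same.
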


\begin{proof}
    The first inequality is an application of \eqref{eqn:Lacefunction_bound}. After expanding $\psi_0$, $\psi$, and $\psi_2$, we get 
    \begin{multline}
        \int\psi_0(w_0,u_0)\psi(w_1,u_1,w_0,u_0)\psi_2(x,w_1,u_1)\dd w_0\dd u_0\dd w_1 \dd u_1 \dd x \\= \sum_{j_0=1}^3\sum_{j_1=1}^4\sum_{j_2=1}^2\int\psi^{(j_0)}_0(w_0,u_0)\psi^{(j_1)}(w_1,u_1,w_0,u_0)\psi^{(j_2)}_2(x,w_1,u_1)\dd w_0\dd u_0\dd w_1 \dd u_1 \dd x.
    \end{multline}
    We can index the $3\times4\times2=24$ resulting diagrams by $\left(j_0,j_1,j_2\right)$. For $\left(j_0,j_1,j_2\right)\notin\left\{\left(3,2,2\right),\left(3,3,2\right),\left(3,4,2\right)\right\}$, we can identify a cycle of length $6$ or longer that visits each vertex. For each factor of $\tlam$ that is not part of this cycle we can then bound by $1$. For each factor of $\tlam$ that is part of the cycle, we bound $\tlam\leq \connf + \lambda\connf\star\tlam$. For $\lambda\leq\lambda_c$, Lemma~\ref{lem:tailbound} then lets us bound each of these diagrams by $\LandauBigO{\connf^{\star 6}\left(\orig\right)}$.

    For the $\left(3,2,2\right)$ diagram, we first expand out the $\tlamo$ edges. In many of the resulting diagrams we can apply the above strategy of finding a cycle and bounding the excess edges to bound the diagram by $\LandauBigO{\connf^{\star 6}\left(\orig\right)}$. The result is that for $\lambda\leq \lambda_c$ we have
    \begin{multline}
        \int\psi^{(3)}_0(w_0,u_0)\psi^{(2)}(w_1,u_1,w_0,u_0)\psi^{(2)}_2(x,w_1,u_1)\dd w_0\dd u_0\dd w_1 \dd u_1 \dd x \\= \lambda^4\int\connf(u_0)\tlam(z)\tlam(z-u_0)\tlam(u_1-z)\tlam(u_1-u_0)\tlam(x-u_1)\tlam(x-u_0)\dd u_0 \dd u_1 \dd z \dd x +\LandauBigO{\connf^{\star 6}\left(\orig\right)}.
    \end{multline}
    In this first integral we can bound $\tlam(u_1-u_0)\leq 1$, $\tlam(z-u_0)\leq \connf(z-u_0) + \lambda\int\connf(x)\dd x$, and the other $\tlam\leq \connf + \lambda\connf\star\tlam$ to find
    \begin{multline}
        \lambda_c^4\int\connf(u_0)\tlamC(z)\tlamC(z-u_0)\tlamC(u_1-z)\tlamC(u_1-u_0)\tlamC(x-u_1)\tlamC(x-u_0)\dd u_0 \dd u_1 \dd z \dd x \\= \LandauBigO{\loopsix + \phiThreeTwoOne}.
    \end{multline}

    For the $\left(3,3,2\right)$ diagram we bound $\tlam(u_1-z)\leq \connf(u_1-z) + \lambda\int\connf(x)\dd x$, and the other $\tlam\leq \connf + \lambda\connf\star\tlam$ to find
    \begin{multline}
        \lambda_c^4\int \connf(u_0)\tlamC(z-u_0)\tlamC(u_1)\tlamC(z-u_1)\tlamC(x-u_1)\tlamC(x-z)\dd u_0 \dd u_1 \dd z \dd x \\= \LandauBigO{\loopsix + \phiThreeTwoOne}.
    \end{multline}

    For the $\left(3,4,2\right)$ diagram we bound $\tlam(u_1-u_0)\leq \connf(u_1-u_0) + \lambda\connf^{\star 2}(u_1-u_0) + \lambda^2\left(\int\connf(x)\dd x\right)^2$, and the other $\tlam\leq \connf + \lambda\connf\star\tlam$ to find
    \begin{multline}
        \lambda_c^3\int\connf(u_0)\tlamC(u_1)\tlamC(u_1-u_0)\tlamC(x-u_0)\tlamC(x-u_1)\dd u_0 \dd u_1 \dd x \\\leq \lambda_c^3\fourcrossone + \LandauBigO{\loopsix + \phiThreeTwoOne + \phiTwoTwoTwo}.
    \end{multline}
\end{proof}

\paragraph{Lower Bound on $\fLacelamC^{(2)}(0)$}

\begin{lemma}
    \begin{equation}
        \lambda_c\fLacelamC^{(2)}(0) \geq \lambda_c^3\fourcrossone.
    \end{equation}
\end{lemma}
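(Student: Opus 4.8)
The plan is to establish this lower bound in the same spirit as the lower bounds on $\fLacelamC^{(0)}(0)$ and $\fLacelamC^{(1)}(0)$: identify an explicit sub-event of the event appearing in Definition~\ref{def:Pi} for $\Lacelam^{(2)}$, evaluate its probability, and integrate. Unwinding the definition, for $\lambda<\lambda_c$ we have
\[
  \lambda\fLacelam^{(2)}(0) = \lambda^3\int \pla\Big( \{\dconn{\orig}{u_0}{\xi_0^{\orig,u_0}}\} \cap E(u_0,u_1;\C_0,\xi_1^{u_0,u_1}) \cap E(u_1,x;\C_1,\xi_2^{u_1,x}) \Big) \dd u_0\,\dd u_1\,\dd x,
\]
where $\C_0 = \C(\orig,\xi_0^\orig)$ and $\C_1 = \C(u_0,\xi_1^{u_0})$. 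On the other hand $\fourcrossone$, being two triangles glued along the edge $u_0u_1$, equals $\int \connf(u_0)\connf(u_1-u_0)\connf(u_1)\connf(x-u_0)\connf(x-u_1)\,\dd u_0\,\dd u_1\,\dd x$. So it suffices to prove that the probability in the integrand is at least $\connf(u_0)\connf(u_1-u_0)\connf(u_1)\connf(x-u_0)\connf(x-u_1)$ for a.e.\ distinct $\orig,u_0,u_1,x$.

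The sub-event $\Acal$ I would use is the intersection of the following five events: (i) $\adja{\orig}{u_0}{\xi_0^{\orig,u_0}}$; (ii) $\adja{u_0}{u_1}{\xi_1^{u_0,u_1}}$; (iii) $\adja{u_1}{x}{\xi_2^{u_1,x}}$; (iv) the removal indicator for the pair $(u_1,\orig)$ in the $\C_0$-thinning of $\eta_1^{u_0,u_1}\setminus\{u_0\}$ equals $1$; and (v) the removal indicator for the pair $(x,u_0)$ in the $\C_1$-thinning of $\eta_2^{u_1,x}\setminus\{u_1\}$ equals $1$. These events have probabilities $\connf(u_0)$, $\connf(u_1-u_0)$, $\connf(x-u_1)$, $\connf(u_1)$, and $\connf(x-u_0)$, and they are independent: (i)--(iii) are edge events in the three independent graphs $\xi_0,\xi_1,\xi_2$, the augmenting edge $\{\orig,u_0\}$ belongs to $\xi_0^{\orig,u_0}$ but not to $\xi_0^\orig$ so it does not affect $\C_0$ (and similarly $\{u_0,u_1\}$ does not affect $\C_1$), and the two removal indicators in (iv)--(v) come from the independent auxiliary randomness used to build the two thinnings.

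It then remains to check that $\Acal$ is contained in the event in the display. The direct edge in (i) makes $\orig$ and $u_0$ adjacent, hence $\{\dconn{\orig}{u_0}{\xi_0^{\orig,u_0}}\}$ holds. For $E(u_0,u_1;\C_0,\xi_1^{u_0,u_1})$: the direct edge in (ii) forces $\piv{u_0,u_1;\xi_1^{u_0,u_1}}=\emptyset$, so the clause requiring that no pivotal point be disconnected by the thinning is vacuous; and since $\orig\in\C_0$ always, (iv) guarantees the endpoint $u_1$ is thinned out, which by the definitional convention makes the connection fail regardless of any competing paths --- hence (ii) and (iv) together give $E(u_0,u_1;\C_0,\xi_1^{u_0,u_1})$. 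The identical argument, using $u_0\in\C_1$, shows (iii) and (v) together give $E(u_1,x;\C_1,\xi_2^{u_1,x})$. Therefore the integrand probability is at least $\pla(\Acal) = \connf(u_0)\connf(u_1-u_0)\connf(u_1)\connf(x-u_0)\connf(x-u_1)$, so $\lambda\fLacelam^{(2)}(0)\geq\lambda^3\fourcrossone$ for every $\lambda<\lambda_c$; letting $\lambda\nearrow\lambda_c$ and using $\fLacelamC^{(2)}(0)=\lim_{\lambda\nearrow\lambda_c}\fLacelam^{(2)}(0)$ yields the claim.

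The only genuinely delicate point --- and hence where I expect the main difficulty --- is the containment of $\Acal$ in the two $E$-events. One must use that a connection $\xconn{a}{b}{\xi}{A}$ is automatically destroyed when its endpoint $b$ is thinned out (so that competing paths need not be excluded from $\Acal$), and one must be sure that the removal indicators invoked in (iv) and (v) are actually consulted, which holds precisely because $\orig\in\C(\orig,\xi_0^\orig)$ and $u_0\in\C(u_0,\xi_1^{u_0})$ trivially; this is what allows the argument to proceed without any control on the random clusters $\C_0$ and $\C_1$. Everything else --- independence of the five events, evaluation of their probabilities, and matching the resulting integral to $\fourcrossone$ --- is routine.
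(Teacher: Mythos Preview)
Your proposal is correct and follows essentially the same approach as the paper: the paper's sub-event $\Hcal_1$ is exactly your $\Acal$, built from the same five independent pieces (three direct edges and two singleton-thinning indicators), and the paper likewise computes $\pla(\Hcal_1)=\connf(u_0)\connf(u_1-u_0)\connf(u_1)\connf(x-u_1)\connf(x-u_0)$ and integrates to get $\lambda^3\fourcrossone$. Your discussion of why the removal indicators are ``actually consulted'' (because $\orig\in\C_0$ and $u_0\in\C_1$ deterministically) and why the augmenting edges do not affect $\C_0,\C_1$ makes explicit the independence and containment steps that the paper states without elaboration.
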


\begin{proof}
    We begin by identifying a suitable event for each $u_0,u_1,x\in\Rd$ that is contained in $\left\{\dconn{\orig}{u}{\xi_0^{\orig,u_0}}\right\}\cap E\left(u_0,u_1,\C_0,\xi^{u_0,u_1}_1\right)\cap E\left(u_1,x,\C_1,\xi_2^{u_1,x}\right)$. We choose the following event $\Hcal_1$:
\begin{equation}
    \Hcal_1 = \left\{\adja{\orig}{u_0}{\xi^{\orig,u_0}_0}\right\}\cap\left\{\adja{u_0}{u_1}{\xi^{u_0,u_1}_1}\right\}\cap\left\{u_1\notin \eta^{u_0,u_1}_{1,\thinn{\orig}}\right\}\cap \left\{\adja{u_1}{x}{\xi^{u_1,x}_2}\right\}\cap\left\{x\notin \eta^{u_1,x}_{2,\thinn{u_0}}\right\}
\end{equation}
and note that $\lambda\fLacelam^{(2)}(0)\geq \lambda^3\int \mathbb{P}_{\lambda}\left(\Hcal_1\right)\dd u_0 \dd u_1 \dd x$.

This event is constructed so that all the intersecting events are independent, and the probability of each is easily calculated  - once we recall that the probability that a singleton thins out a vertex is equal to the probability that an edge forms between the singleton and the vertex. Therefore
\begin{equation}
    \pla\left(\Hcal_1\right) := \connf(u_0)\connf(u_1-u_0)\connf(u_1)\connf(x-u_1)\connf(x-u_0).
\end{equation}
Integrating $\pla\left(\Hcal_1\right)$ then gives a lower bound for $\lambda_c\fLacelamC^{(2)}(0)$. This lower bound is then
\begin{equation}
    \lambda^3\int \pla\left(\Hcal_1\right)\dd u_0 \dd u_1 \dd x = \lambda^3\fourcrossone.
\end{equation}
This gives us our desired lower bound for any $\lambda>0$, and in particular $\lambda=\lambda_c$. 
\end{proof}

\subsection{Bounds on Later Lace Expansion Coefficients}
We first prove \ref{eqPi3bd} and then \ref{eqPinbd}.

\paragraph{Upper Bound on $\fLacelamC^{(3)}(0)$}
\label{sec:Pinbd}
We are first dealing with the case $n=3$, which required a special treatment, and subsequently with the general case $n\ge4$. 

\begin{lemma}
Suppose Assumption~\ref{Assumption} holds. Then as $d\to\infty$,
    \begin{equation}
    \lambda_c\fLacelamC^{(3)}(0) \leq \LandauBigO{\loopsix + \phiThreeTwoOne}.
\end{equation}
\end{lemma}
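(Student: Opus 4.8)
The plan is to bound $\fLacelamC^{(3)}(0)$ using the diagrammatic upper bound from Proposition~\ref{prop:LaceBoundwithDiagram}, namely
\[
    \lambda_c\Lacelam^{(3)}(x) \leq \int \psi_3(x,w_2,u_2)\,\psi(w_2,u_2,w_1,u_1)\,\psi(w_1,u_1,w_0,u_0)\,\psi_0(w_0,u_0)\,\dd\big((\vec w,\vec u)_{[0,2]}\big),
\]
and then integrate over $x$. Expanding each of $\psi_0$, $\psi$ (twice) and $\psi_3$ into their constituent pieces gives a finite sum of diagrams, each indexed by the choice $(j_0,j_1,j_2,j_3)$ of which sub-function is taken from each factor. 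The key structural observation is that every one of these diagrams, after setting the external vertex to be integrated, contains a cycle through its vertices that uses at least six $\connf$- or $\tlam$-edges; this is essentially because $\Pi^{(3)}$ has ``three levels'' of the recursive structure, each contributing at least two edges to a common loop, and the minimal diagrams already seen in $\Pi^{(0)}$, $\Pi^{(1)}$, $\Pi^{(2)}$ were loops of length $4$ or $5$, so adding a further level pushes the minimum past $6$.

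The execution mirrors the proof of Lemma~\ref{lem:Pi2UpperBound}: for each diagram, first expand any $\tlamo$ edges (the $\lambda^{-1}\delta$ term typically produces a lower-order diagram or collapses two vertices, and one checks these reduced diagrams still have a length-$\geq 6$ loop or are handled by the $\phiThreeTwoOne$ bound). Then, having identified a cycle of length $\geq 6$ running through all the relevant vertices, bound every $\tlam$ factor that lies \emph{off} this cycle by $1$, and for every $\tlam$ factor \emph{on} the cycle use $\tlam \leq \connf + \lambda\connf\star\tlam$ to peel it into a $\connf$-edge plus a $\connf\star\tlam$-edge. After this the whole diagram is dominated by a convolution of the form $\connf^{\star m}\star\tlam^{\star s}(\orig)$ with $m\geq 6$ and $s\geq 0$, and Lemma~\ref{lem:tailbound} (using Assumption~\ref{Assump:ExponentialDecay} to guarantee the hypothesis $\liminf \rho^{-d}\connf^{\star m}(\orig)>0$, via the second inequality of the last lemma in Section~2 reducing $\connf^{\star m}(\orig)$ to $\connf^{\star 6}(\orig)$) gives that each such term is $\LandauBigO{\connf^{\star 6}(\orig)}=\LandauBigO{\loopsix}$. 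The only diagrams that do not immediately yield a loop of six $\connf$-edges are those analogous to the $(3,4,2)$-type diagram in $\Pi^{(2)}$, where one edge must instead be expanded only to second order, $\tlam \leq \connf + \lambda\connf^{\star 2} + \lambda^2(\int\connf)^2$, producing a contribution bounded by $\phiThreeTwoOne$ (and a leftover $\loopsix$ remainder); crucially, because we are now at level $3$ rather than level $2$, there is no surviving ``leading'' term of the $\fourcrossone$ type — the extra $\psi$ factor adds at least one more edge, so what was the main term for $\Pi^{(2)}$ becomes an error term here.

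The main obstacle is the bookkeeping: there are many diagrams (on the order of $3\times 4\times 4\times 2$), and for each one must correctly identify the long cycle and verify that the off-cycle edges really can be bounded by $1$ without destroying the loop, and that no diagram secretly produces a term of order $\loopfour$, $\loopfive$ or $\fourcrossone$ that would have to appear on the right-hand side. The conceptual content is light — it is the same ``find a long cycle, peel, apply Lemma~\ref{lem:tailbound}'' routine used for $\Pi^{(2)}$ — but checking exhaustively that $n=3$ contributes \emph{nothing} at the precision $\loopfour,\loopfive,\fourcrossone$ (so that $\fLacelamC^{(3)}(0)$ is a pure error term) is where care is needed. I would organise the write-up by grouping the $(j_0,j_1,j_2,j_3)$ tuples: those with a manifest length-$\geq 6$ loop (the large majority, all $\LandauBigO{\loopsix}$), and the handful of exceptional tuples requiring the second-order peel (all $\LandauBigO{\phiThreeTwoOne + \loopsix}$), concluding $\lambda_c\fLacelamC^{(3)}(0) \leq \LandauBigO{\loopsix + \phiThreeTwoOne}$ as claimed.
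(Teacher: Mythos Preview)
Your proposal is correct and follows essentially the same approach as the paper: apply Proposition~\ref{prop:LaceBoundwithDiagram}, expand into the $(j_0,j_1,j_2,j_3)$ sub-diagrams, identify a cycle of length $\geq 6$ in each to obtain $\LandauBigO{\loopsix}$ via Lemma~\ref{lem:tailbound}, and treat the exceptional tuples separately by a higher-order peel to pick up the $\phiThreeTwoOne$ term. The only refinement the paper makes over your outline is that it pins down \emph{exactly one} exceptional tuple, namely $(3,4,4,2)$, where the expansion $\tlamC(u_2-u_1)\leq \connf + \lambda_c\connf^{\star 2} + \lambda_c^2\connf^{\star 3} + \lambda_c^3$ is used on the diagonal edge; you describe this as ``a handful'' but in fact it is a single diagram.
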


\begin{proof}
    From \eqref{eqn:Lacefunction_bound} we have
    \begin{equation}
        \lambda_c\fLacelamC^{(3)}(0) \leq \int\psi_0(w_0,u_0)\psi(w_1,u_1,w_0,u_0)\psi(w_2,u_2,w_1,u_1)\psi_3(x,w_2,u_2)\dd w_0\dd u_0\dd w_1 \dd u_1 \dd w_2 \dd u_2 \dd x.
    \end{equation}
    Then as in the proof of Lemma~\ref{lem:Pi2UpperBound} we expand out the $\psi_0$, $\psi$, and $\psi_3$ functions. Then for each integral we aim to identify a cycle of length $6$ or longer that visits each vertex. For each factor of $\tlamC$ that is not part of this cycle we can then bound by $1$. For each factor of $\tlamC$ that is part of the cycle, we bound $\tlamC\leq \connf + \lambda_c\connf\star\tlamC$. Lemma~\ref{lem:tailbound} then lets us bound each of these diagrams by $\LandauBigO{\connf^{\star 6}\left(\orig\right)}$.

    The only integral that we cannot perform this strategy for corresponds to the integral
    \begin{multline}
        \int\psi^{(3)}_0(w_0,u_0)\psi^{(4)}(w_1,u_1,w_0,u_0)\psi^{(4)}(w_2,u_2,w_1,u_1)\psi^{(2)}_3(x,w_2,u_2)\dd w_0\dd u_0\dd w_1 \dd u_1 \dd w_2 \dd u_2 \dd x \\=\lambda_c^4\int \connf(u_0)\tlamC(u_1)\tlamC(u_1-u_0)\tlamC(u_2-u_0)\tlamC(u_2-u_1)\tlamC(x-u_1)\tlamC(x-u_2)\dd u_0 \dd u_1 \dd u_2 \dd x. 
    \end{multline}
    If we bound $\tlamC(u_2-u_1)\leq \connf(u_2-u_1) + \lambda_c\connf^{\star 2}(u_2-u_1) + \lambda_c^2\connf^{\star 3}(u_2-u_1) + \lambda_c^3\left(\int\connf(x)\dd x\right)^3$, and the other $\tlamC\leq \connf + \lambda_c\connf\star\tlamC$ we find that 
    \begin{multline}
        \lambda_c^4\int \connf(u_0)\tlamC(u_1)\tlamC(u_1-u_0)\tlamC(u_2-u_0)\tlamC(u_2-u_1)\tlamC(x-u_1)\tlamC(x-u_2)\dd u_0 \dd u_1 \dd u_2 \dd x \\
        = \LandauBigO{\loopsix + \phiThreeTwoOne}.
    \end{multline}
\end{proof}

\paragraph{Upper Bound on $\fLacelamC^{(n)}(0)$ for $n\geq 4$}

\begin{lemma}
Suppose Assumption~\ref{Assumption} holds and $n\geq 1$. Then as $d\to\infty$,
    \begin{equation}
    \lambda_c\fLacelamC^{(n)}(0) =
    \begin{cases}
        \LandauBigO{\connf^{\star (n+2)}\left(\orig\right)}&\colon n \text{ is even,}\\
        \LandauBigO{\connf^{\star (n+1)}\left(\orig\right)}&\colon n \text{ is odd.}\\
    \end{cases}
\end{equation}
\end{lemma}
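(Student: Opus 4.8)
The plan is to run the same machine used for $n\le 3$ earlier in this section, but organised so that it tracks only the total ``convolution weight'' of each diagram rather than its precise shape. First I would take the bound \eqref{eqn:Lacefunction_bound} of Proposition~\ref{prop:LaceBoundwithDiagram}, integrate it over $x$ (so $\lambda_c\fLacelamC^{(n)}(0)$ is bounded by the full integral of $\psi_n\cdot\prod_{i=1}^{n-1}\psi\cdot\psi_0$ over $\vec w,\vec u$ and $x$), and then expand everything: each of $\psi_0,\psi,\psi_n$ is the sum of its components $\psi_0^{(j)},\psi^{(j)},\psi_n^{(j)}$ (see Figure~\ref{fig:psiFunctions}), and each $\tlamo$ edge is $\lambda^{-1}\delta_{\cdot,\orig}+\tlam$. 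For each fixed $n$ this produces finitely many ``diagrams'': integrals of products of $\connf$-, $\tlam$- and $\tlam^{(\geq 2)}$-edges over a rooted graph on a finite set of integration vertices.

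The structural heart of the argument is the claim that every such diagram is a lace-expansion ``necklace'' for $\Pi^{(n)}$ and, as such, contains a cycle through all of its non-root vertices of length at least $n+2$. Each middle block $\psi$ links its incoming terminals $(w_{i-1},u_{i-1})$ to its outgoing terminals $(w_i,u_i)$ by two vertex-disjoint connections ($\psi^{(3)},\psi^{(4)}$ are paths through the block, $\psi^{(1)},\psi^{(2)}$ are theta-shaped), and $\psi_0,\psi_n$ play the same role at the two ends; hence one may traverse the whole diagram down one family of disjoint connections and back up the other, visiting every vertex. The extremal case is the fan of $n$ triangles from $\orig$ to $x$ coming from the choice $\psi_0^{(3)},\psi^{(4)},\dots,\psi^{(4)},\psi_n^{(2)}$, which has exactly $n+2$ vertices; every other component choice or $\tlamo$-expansion only inserts vertices, and every $\delta$-contraction that removes a vertex also removes a matching $\lambda$-factor while leaving the two-disjoint-connections property of its block intact, so the cycle length never drops below what the weaker bound requires.

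Given such a Hamiltonian cycle, I would bound every diagram edge \emph{not} on the cycle by $1$ (legitimate since $\connf,\tlam,\tlam^{(\geq 2)}\le 1$, and the cycle already consumes all integration variables), and on the cycle bound each $\tlam$ by $\connf+\lambda\connf\star\tlam$ and each $\tlam^{(\geq 2)}$ by $\lambda\connf^{\star2}+\lambda^2\connf^{\star2}\star\tlam$ (Lemma~\ref{lem:tauUpperbound}). Expanding, by commutativity of convolution the leading sub-diagram is a single closed $\connf$-loop $\connf^{\star L}(\orig)$ with $L\ge n+2$, and any sub-diagram that retains a factor of $\tlam$ is of the form $\connf^{\star a}\star\tlam^{\star s}(\orig)$ with $a$ large; using Proposition~\ref{prop:bootstrapbound} and Lemma~\ref{lem:tailbound} (which needs $a$ even, forcing at most one cycle edge to be ``spent'' converting a surviving $\tlam$ into a $\connf$) these are $\LandauBigO{\connf^{\star a}(\orig)}$ with $a\ge n+1$. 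Finally, the elementary monotonicity facts $\connf^{\star m}(\orig)=\sup_x\connf^{\star m}(x)$ for even $m$ (Cauchy--Schwarz), hence $\connf^{\star(m+1)}(\orig)\le\connf^{\star m}(\orig)$ and $\connf^{\star(m+2)}(\orig)\le\connf^{\star m}(\orig)$ for even $m$, collapse all these quantities to $\connf^{\star(n+2)}(\orig)$ when $n$ is even and to $\connf^{\star(n+1)}(\orig)$ when $n$ is odd — the parity shift in the odd case being precisely the one cycle edge sacrificed to apply Lemma~\ref{lem:tailbound}.

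The hard part will be the structural claim of the second paragraph: verifying, uniformly over all component selections and all patterns of $\delta$-contractions arising from $\tlamo$ and from $\psi^{(4)},\psi_0^{(2)},\psi_0^{(3)},\psi_n^{(2)}$, that the resulting rooted graph always has a Hamiltonian cycle of length $\geq n+2$, and that the parity bookkeeping in the odd case cannot be improved. Everything downstream of that claim is the routine iteration of $\tlam\le\connf+\lambda\connf\star\tlam$ together with Lemma~\ref{lem:tailbound}, carried out exactly as in the proofs of \eqref{eqPi0bd}--\eqref{eqPi3bd} above, and I would present it in the condensed ``find a cycle of length $\ge 6$, bound the excess edges, apply Lemma~\ref{lem:tailbound}'' style already used for $\fLacelamC^{(2)}(0)$ and $\fLacelamC^{(3)}(0)$.
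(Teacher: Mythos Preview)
Your proposal is correct and follows essentially the same route as the paper: start from \eqref{eqn:Lacefunction_bound}, strip each diagram down to a single cycle of $\tlam$-edges of length at least $n+2$, replace $\tlam$ by $\connf+\lambda\connf\star\tlam$, and finish with Lemma~\ref{lem:tailbound} together with the parity bookkeeping you describe.

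The one organisational difference worth noting is that the paper dissolves what you call ``the hard part'' entirely. Rather than assembling each full diagram and then hunting for a Hamiltonian cycle through it, the paper bounds each building block \emph{before} assembly: it introduces functions $\psibar_0^{(j)}$, $\psibar^{(j)}$, $\psibar_n^{(j)}$ obtained from the corresponding $\psi$'s by dropping every ``diagonal'' $\tlam$-edge (e.g.\ $\tlam(u-w)$ in $\psi^{(1)}$, $\tlam(z-t)$ in $\psi_n^{(1)}$) and by splitting the awkward $\tlamo(t-w)$ in $\psi^{(2)}$ via $\tlamo=\lambda^{-1}\delta+\tlam$. After this pre-processing each block is literally a pair of disjoint paths from its input terminals to its output terminals, so chaining $n-1$ copies of $\psibar$ between $\psibar_0$ and $\psibar_n$ produces nothing but a single closed $\tlam$-loop $\tlam^{\star m}(\orig)$ with $n+2\le m\le 4n+2$. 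No structural verification over all component selections and $\delta$-contractions is needed; the cycle is automatic by construction. Everything downstream---the expansion $\tlam\le\connf+\lambda\connf\star\tlam$, the sacrifice of one $\connf$ when $m$ is odd to meet the even-$m$ hypothesis of Lemma~\ref{lem:tailbound}, and the final reduction $\connf^{\star m}(\orig)\le\connf^{\star(n+2)}(\orig)$ (resp.\ $\connf^{\star(n+1)}(\orig)$)---is exactly as you outline.
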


\begin{proof}
    We begin this proof by using Proposition~\ref{prop:LaceBoundwithDiagram} to get an upper bound for $\lambda_c\fLacelamC^{(n)}(0)$ in terms of a sum of integrals of $\psi^{(j_0)}_0$, $\psi^{(j)}$, and $\psi^{(j_n)}_n$. Our strategy to bound each of these diagrams is to identify a loop of length at least $n+2$ around each of these diagrams.

    For each $\psi$-function we provide an upper bound in a $\psibar$-function, so that when they are applied to our integral bounds we get terms of the form  $\lambda^{m-1}\tlam^{\star m}\left(\orig\right)$ for some $m\geq n+1$. We have
\begin{align*}
    \psi_0^{(1)}(w,u) &\leq \psibar_0^{(1)}(w,u) := \lambda^2\tlam(u)\tlam(w),\\
    \psi_0^{(2)}(w,u) &\leq \psibar_0^{(2)}(w,u) := \lambda^2 \delta_{w,\orig}\int \tlam(u-t)\tlam(t) \dd t,\\
    \psi_0^{(3)}(w,u) & \leq \psibar_0^{(3)}(w,u) := \lambda\tlam(u) \delta_{w,\orig},\\
    \psi^{(1)}(w,u,r,s) &\leq \psibar^{(1)}(w,u,r,s) := \lambda^4\int \tlamo(t-s)\tlam(w-t)\dd t \int \tlam(u-z)\tlam(z-r)\dd z, \\
    \psi^{(2)}(w,u,r,s) &\leq \psibar^{(2)}(w,u,r,s) := \lambda^4\tlamo(w-s)\int \tlam(z-r)\tlam(t-z)\tlam(u-t)\dd z \dd t \\ &\hspace{7cm}+ \lambda^3\tlamo(w-s)\int \tlam(z-r)\tlam(u-z)\dd z, \\
	\psi^{(3)}(w,u,r,s) &\leq \psibar^{(3)}(w,u,r,s) := \lambda^2\tlam(w-s)\tlam(u-r),\\
	\psi^{(4)}(w,u,r,s) &\leq \psibar^{(4)}(w,u,r,s) := \lambda\delta_{w,s}\tlam(u-r),\\
	\psi_n^{(1)} (x,r,s) &\leq \psibar_n^{(1)}(w,r,s) := \lambda^3\int \tlamo(t-s)\tlam(z-r)\tlam(z-x)\tlam(x-t)\dd z\dd t,\\
    \psi_n^{(2)}(x,r,s) &= \psibar_n^{(2)}(x,r,s) := \lambda \tlam(x-s)\tlam(x-r).
\end{align*}
We can also define $\psibar_0$, $\psibar$, and $\psibar_n$ analogously to how we defined $\psi_0$, $\psi$, and $\psi_n$.

\begin{figure}
    \centering
    \begin{subfigure}[b]{0.3\textwidth}
    \centering
        \begin{tikzpicture}
        \draw[green] (0,0) -- (1,0.6);
        \draw[green] (1,-0.6) -- (0,0);
        \filldraw[fill=white] (0,0) circle (2pt) node[left]{$\orig$};
        \filldraw[fill=white] (1,0.6) circle (2pt) node[above right]{$w$};
        \filldraw[fill=white] (1,-0.6) circle (2pt) node[below right]{$u$};
        \end{tikzpicture}
    \caption{$\psibar^{(1)}_0$}
    \end{subfigure}
    \hfill
    \begin{subfigure}[b]{0.3\textwidth}
    \centering
        \begin{tikzpicture}
        \draw[green] (0,0) -- (1,0.6);
        \draw[green] (1,-0.6) -- (0,0);
        \filldraw (0,0) circle (2pt);
        \filldraw[fill=white] (1,0.6) circle (2pt) node[above left]{$\orig=w$};
        \filldraw[fill=white] (1,-0.6) circle (2pt) node[below right]{$u$};
       \end{tikzpicture}
    \caption{$\psibar^{(2)}_0$}
    \end{subfigure}
    \hfill
    \begin{subfigure}[b]{0.3\textwidth}
    \centering
        \begin{tikzpicture}
        \draw[green] (0,0.6) -- (0,-0.6);
        \filldraw[fill=white] (0,0.6) circle (2pt) node[above left]{$\orig=w$};
        \filldraw[fill=white] (0,-0.6) circle (2pt) node[below right]{$u$};
        \end{tikzpicture}
    \caption{$\psibar^{(3)}_0$}
    \end{subfigure}
    \begin{subfigure}[b]{0.2\textwidth}
    \centering
        \begin{tikzpicture}
        \draw[green] (0,0.6) -- (1,0.6);
        \draw[green] (1,-0.6) -- (0,-0.6);
        \draw (0.5,0.6) circle (0pt) node[above]{$\circ$};
        \draw[green] (1,0.6) -- (2,0.6);
        \draw[green] (2,-0.6) -- (1,-0.6);
        \draw (0,1.75) circle (0pt);
        \filldraw[fill=white] (0,0.6) circle (2pt) node[above left]{$s$};
        \filldraw[fill=white] (0,-0.6) circle (2pt) node[below left]{$r$};
        \filldraw (1,0.6) circle (2pt);
        \filldraw (1,-0.6) circle (2pt);
        \filldraw[fill=white] (2,0.6) circle (2pt) node[above right]{$w$};
        \filldraw[fill=white] (2,-0.6) circle (2pt) node[below right]{$u$};
        \end{tikzpicture}
    \caption{$\psibar^{(1)}$}
    \end{subfigure}
    \hfill
    \begin{subfigure}[b]{0.4\textwidth}
    \centering
        \begin{tikzpicture}
        \draw[green] (0,0.6) -- (1.5,0.6);
        \draw[green] (1.5,-0.1) -- (1,-0.6);
        \draw (0.75,0.6) circle (0pt) node[above]{$\circ$};
        \draw[green] (0,-0.6) -- (1,-0.6);
        \draw[green] (2,-0.6) -- (1.5,-0.1);
        \filldraw[fill=white] (0,0.6) circle (2pt) node[above left]{$s$};
        \filldraw[fill=white] (0,-0.6) circle (2pt) node[below left]{$r$};
        \filldraw[fill=white] (1.5,0.6) circle (2pt) node[above right]{$w$};
        \filldraw (1,-0.6) circle (2pt);
        \filldraw (1.5,-0.1) circle (2pt);
        \filldraw[fill=white] (2,-0.6) circle (2pt) node[below right]{$u$};
        \draw (2.4,0) circle (0pt) node {$+$};
        \draw[green] (3,0.6) -- (4.5,0.6);
        \draw (3.75,0.6) circle (0pt) node[above]{$\circ$};
        \draw[green] (3,-0.6) -- (5,-0.6);
        \filldraw[fill=white] (3,0.6) circle (2pt) node[above left]{$s$};
        \filldraw[fill=white] (3,-0.6) circle (2pt) node[below left]{$r$};
        \filldraw[fill=white] (4.5,0.6) circle (2pt) node[above right]{$w$};
        \filldraw (4,-0.6) circle (2pt);
        \filldraw[fill=white] (5,-0.6) circle (2pt) node[below right]{$u$};
        \end{tikzpicture}
    \caption{$\psibar^{(2)}$}
    \end{subfigure}
    \hfill
    \begin{subfigure}[b]{0.19\textwidth}
    \centering
        \begin{tikzpicture}
        \draw[green] (0,0.6) -- (1,0.6);
        \draw[green] (1,-0.6) -- (0,-0.6);
        \filldraw[fill=white] (0,0.6) circle (2pt) node[above left]{$s$};
        \filldraw[fill=white] (0,-0.6) circle (2pt) node[below left]{$r$};
        \filldraw[fill=white] (1,0.6) circle (2pt) node[above right]{$w$};
        \filldraw[fill=white] (1,-0.6) circle (2pt) node[below right]{$u$};
        \end{tikzpicture}
    \caption{$\psibar^{(3)}$}
    \end{subfigure}
    \hfill
    \begin{subfigure}[b]{0.19\textwidth}
    \centering
        \begin{tikzpicture}
        \draw[green] (1,-0.6) -- (0,-0.6);
        \filldraw[fill=white] (0,-0.6) circle (2pt) node[below left]{$r$};
        \filldraw[fill=white] (1,0.6) circle (2pt) node[above]{$s=w$};
        \filldraw[fill=white] (1,-0.6) circle (2pt) node[below right]{$u$};
        \end{tikzpicture}
    \caption{$\psibar^{(4)}$}
    \end{subfigure}
    \hspace*{\fill}%
    \begin{subfigure}[b]{0.45\textwidth}
    \centering
        \begin{tikzpicture}
        \draw[green] (0,0.6) -- (1,0.6);
        \draw (0.5,0.6) circle (0pt) node[above]{$\circ$};
        \draw[green] (1,0.6) -- (2,0) -- (1,-0.6) -- (0,-0.6);
        \filldraw[fill=white] (0,0.6) circle (2pt) node[above left]{$s$};
        \filldraw[fill=white] (0,-0.6) circle (2pt) node[below left]{$r$};
        \filldraw (1,0.6) circle (2pt);
        \filldraw (1,-0.6) circle (2pt);
        \filldraw[fill=white] (2,0) circle (2pt) node[right]{$x$};
        \end{tikzpicture}
    \caption{$\psibar^{(1)}_n$}
    \end{subfigure}
    \hfill
    \begin{subfigure}[b]{0.45\textwidth}
    \centering
        \begin{tikzpicture}
        \draw[green] (1,0.6) -- (2,0) -- (1,-0.6);
        \draw (0,1.75) circle (0pt);
        \filldraw[fill=white] (1,0.6) circle (2pt) node[above left]{$s$};
        \filldraw[fill=white] (1,-0.6) circle (2pt) node[below left]{$r$};
        \filldraw[fill=white] (2,0) circle (2pt) node[right]{$x$};
        \end{tikzpicture}
    \caption{$\psibar^{(2)}_n$}
    \end{subfigure}
    \hspace*{\fill}%
    \caption{Diagrams of the $\psibar_0$, $\psibar$, and $\psibar_n$ functions, which we use to bound the $\psi_0$, $\psi$, and $\psi_n$ functions.}
    \label{fig:psiOverFunctions}
\end{figure}
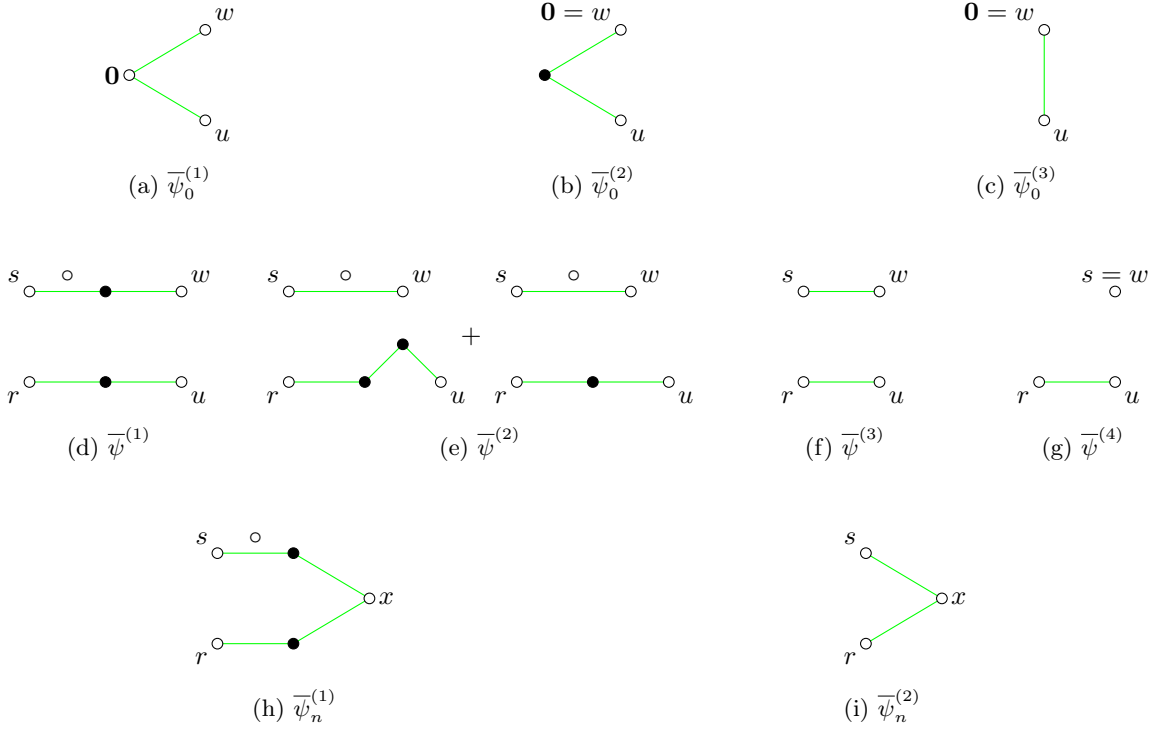

    First off, we leave $\psi_n^{(2)}$ alone. For $\psi_0^{(3)}$ we bound $\connf\leq \tlam$. For most of the others, the bound is achieved only by bounding $\tlam^{(\geq2)}\leq \tlam$ and $\tlam\leq 1$ in the appropriate places. The bound for $\psi^{(2)}$ deserves a little more explanation. Here we first expand $\tlamo(t-w) = \tlam(t-w) + \lambda^{-1}\delta_{t,w}$ to get two expressions. We then bound $\tlam\leq 1$ as for the others, but in different ways for each of the two expressions.

    We therefore find that $\psibar_0$ can contribute one or two factors of $\tlam$, $\psibar$ can contribute one, two, three, or four factors of $\tlam$, and $\psibar_n$ can contribute two, three, or four factors of $\tlam$. Our bound is therefore a sum of terms of $\tlam^{\star m}\left(\orig\right)$ where $m$ is at least $1+1\times(n-1)+2 = n+2$ and at most $2 + 4\times(n-1)+4 = 4n+2$. Therefore
    \begin{equation}
        \lambda_c\fLacelamC^{(n)}\left(0\right) \leq \LandauBigO{\sum^{4n+2}_{m=n+2}\tlam^{\star m}\left(\orig\right)}.
    \end{equation}
    
    For each factor of $\tlam$ here we now bound $\tlam\leq \connf + \lambda\connf\star\tlam$ to get
    \begin{equation}
        \lambda_c\fLacelamC^{(n)}(0) \leq \LandauBigO{\sum_{m=n+2}^{4n+2}\sum_{j=0}^{4n+2}\connf^{\star m}\star\tlam^{\star j}\left(\orig\right)}.
    \end{equation}
    If $m$ is odd and $j\geq 1$ then we bound $\connf^{\star m}\star\tlam^{\star j}\left(\orig\right) \leq \left(\int\connf(x)\dd x\right)\connf^{\star (m-1)}\star\tlam^{\star j}\left(\orig\right)$. Then Lemma~\ref{lem:tailbound} gives us
    \begin{equation}
        \lambda_c\fLacelamC^{(n)}(0) \leq 
        \begin{cases}
        \LandauBigO{\sum_{m=n+2}^{4n+2}\connf^{\star m}\left(\orig\right)} &\colon n\text{ is even,}\\
        \LandauBigO{\sum_{m=n+1}^{4n+2}\connf^{\star m}\left(\orig\right)} &\colon n\text{ is odd.}
        \end{cases}
    \end{equation}
    If $n$ is even we bound $\connf^{\star m}\left(\orig\right)\leq \left(\int\connf(x)\dd x\right)^{m-n-2}\connf^{\star (n+2)}\left(\orig\right)$ to get our result, and if $n$ is odd we bound $\connf^{\star m}\left(\orig\right)\leq \left(\int\connf(x)\dd x\right)^{m-n-1}\connf^{\star (n+1)}\left(\orig\right)$ to get our result.
\end{proof}

\begin{appendix}
\section{Calculations for Specific Models}
We now provide details for the specific percolation models in Section \ref{sec:applications}. 
To this end, we need to show that each of the four models satisfies Assumptions~\ref{Assumption} and \ref{AssumptionBeta} and find the specific values of the integrals of $\connf$ appearing in \eqref{eq:ExpansionMain}. 

\subsection{Hyper-Sphere Calculations}

Recall that for radius $R>0$, the Hyper-Sphere RCM is defined by having
        \begin{equation}
            \connf(x) = \Id_{\left\{\abs*{x}< R\right\}}.
        \end{equation}
Throughout this section we choose a scaling of $\Rd$ such that $R=R(d)$ is the radius of the unit $d$-volume ball. Therefore $R(d)=\pi^{-\frac{1}{2}}\Gamma\left(\frac{d}{2}+1\right)^{\frac{1}{d}} = \sqrt{\frac{d}{2\pi \e}}\left(1+o\left(1\right)\right)$ (by an application of Stirling's formula).

\begin{lemma}
    The Hyper-Sphere RCM satisfies Assumption~\ref{Assumption}.
\end{lemma}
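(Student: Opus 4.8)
The plan is to verify the two parts of Assumption~\ref{Assumption} separately. Throughout I use the explicit form $\connf=\Id_{\{\abs{x}<R\}}$ with $R=R(d)$ the unit-volume radius, so that $\phiint=1$ and $R^2=\tfrac{d}{2\pi\e}\bigl(1+o(1)\bigr)$ by Stirling (as recorded above).

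For condition~\ref{Assump:DecayBound} I would first reduce the supremum to the value at the origin: $\connf$ is radially non-increasing, hence so is each convolution power $\connf^{\star k}$ (a standard consequence of the Riesz rearrangement inequality), and writing $\connf^{\star m}(x)=\int_{B_R(x)}\connf^{\star(m-1)}(y)\dd y$ the bathtub principle gives $\sup_x\connf^{\star m}(x)=\connf^{\star m}(\orig)$; moreover $\connf^{\star(m+1)}(\orig)=\int\connf^{\star m}(y)\connf(y)\dd y\le\connf^{\star m}(\orig)$, so it suffices to control $\connf^{\star 3}(\orig)=\int_{B_R}\connf^{\star2}(u)\dd u$. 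Now $\connf^{\star2}(u)$ is the overlap volume of two unit-volume balls with centres at distance $\abs{u}$, which the classical spherical-cap computation writes as the regularised incomplete Beta function $\connf^{\star2}(u)=I_{1-(\abs{u}/2R)^2}\bigl(\tfrac{d+1}{2},\tfrac12\bigr)$; a Laplace evaluation of $\int_{B_R}\connf^{\star2}$ then yields $\connf^{\star3}(\orig)=\bigl(\tfrac34\bigr)^{d/2}d^{O(1)}$, which decays exponentially. I would therefore set $g(d):=\min\{1,\,c\rho^d\}$ with a fixed $\rho\in(2/\sqrt5,\,1)$ and a large constant $c$; then $g(d)\ge\connf^{\star3}(\orig)$ for large $d$ (since $\rho>\sqrt3/2$) and $g(d)\to0$. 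For the level-set condition~\eqref{eqn:2-convolutionAssumption}, monotonicity of $\connf^{\star2}$ makes $\{x:\connf^{\star2}(x)>g(d)\}$ a ball $B_{r^\ast}$, and inverting the incomplete-Beta asymptotic gives $r^\ast/R\to2\sqrt{1-\rho^2}<1$, so $\abs{B_{r^\ast}}=(r^\ast/R)^d=\bigl(4(1-\rho^2)\bigr)^{d/2}d^{O(1)}$; the choice $\rho^2>4/5$ forces $4(1-\rho^2)<\rho^2$, whence $\abs{B_{r^\ast}}\le g(d)$ for large $d$, as needed.

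For condition~\ref{Assump:QuadraticBound} I would write $1-\fconnf(k)=\int_{B_R}\bigl(1-\cos(k\cdot x)\bigr)\dd x\ge0$ (the odd part integrates to zero). The global inequality $1-\cos\theta\ge\tfrac12\theta^2-\tfrac1{24}\theta^4$ together with the moments $\int_{B_R}x_1^2\dd x=\tfrac{R^2}{d+2}$ and $\int_{B_R}x_1^4\dd x=\tfrac{3R^4}{(d+2)(d+4)}$ gives
\begin{equation*}
    1-\fconnf(k)\ \ge\ \frac{R^2}{2(d+2)}\abs{k}^2-\frac{R^4}{8(d+2)(d+4)}\abs{k}^4,
\end{equation*}
and since $R^2/d$ is bounded above and below, a small fixed $b$ makes the right-hand side $\ge c_1\abs{k}^2$ for $\abs{k}\le b$ and all large $d$. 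For $\abs{k}>b$ I would recall $\fconnf(k)=\Phi_d(R\abs{k})$ with $\Phi_d(\rho)=\Gamma(\tfrac d2+1)(2/\rho)^{d/2}J_{d/2}(\rho)$, and split the range: by the Bessel identity $\tfrac{\dd}{\dd\rho}\bigl(\rho^{-\nu}J_\nu(\rho)\bigr)=-\rho^{-\nu}J_{\nu+1}(\rho)$ the function $\Phi_d$ is strictly decreasing on $(0,j_{d/2+1,1})$, and $j_{d/2+1,1}>d/2$, so for $b<\abs{k}\le\kappa\sqrt d$ (with $\kappa$ a fixed constant small enough that $R\kappa\sqrt d<d/2$ for large $d$) one gets $\fconnf(k)\le\Phi_d(Rb)\le1-c_1b^2$; for $\abs{k}>\kappa\sqrt d$ the uniform bound $\abs{J_\nu(\rho)}\le C_J\nu^{-1/3}$ combined with Stirling gives $\abs{\Phi_d(R\abs{k})}\le d^{O(1)}\gamma^{d/2}\to0$ for some $\gamma<1$, uniformly (so $\kappa$ must also be taken large enough for this), hence $1-\fconnf(k)\ge\tfrac12$ there. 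Taking $c_2:=\min\{c_1b^2,\tfrac12\}$ finishes the verification.

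The step I expect to be the main obstacle is the level-set bound~\eqref{eqn:2-convolutionAssumption}: it compares two exponentially small quantities and demands that the exponential rate of $\abs{\{\connf^{\star2}>g\}}$ beat that of $g$ itself, so one must handle the incomplete-Beta asymptotics (and the implied polynomial factors) uniformly in $d$, which is precisely what fixes the admissible window $\rho\in(2/\sqrt5,1)$. A secondary technical point is securing the uniform-in-$d$ lower bound on $1-\fconnf(k)$ for \emph{all} $\abs{k}>b$, including $\abs{k}\to\infty$, for which the uniform Bessel bound past the turning point is needed; the small-$\abs{k}$ estimate, by contrast, is elementary once the moment formulas are available.
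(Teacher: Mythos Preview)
Your proposal is correct but takes a genuinely different route from the paper. The paper's ``proof'' is a one-line citation: it simply invokes \cite[Proposition~1.1]{HeyHofLasMat19}, where Assumption~\ref{Assumption} is already verified for the Hyper-Sphere model with $g(d)=\varrho^d$ for some $\varrho\in(0,1)$. You instead supply a direct, self-contained verification of both \ref{Assump:DecayBound} and \ref{Assump:QuadraticBound}.

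Your argument for \ref{Assump:DecayBound} is sound: the reduction $\sup_x\connf^{\star m}(x)=\connf^{\star m}(\orig)$ via radial monotonicity, the monotonicity in $m$, and the incomplete-Beta analysis of the level set all go through, and the window $\rho\in(2/\sqrt5,1)$ is exactly what is needed to make the convolution bound and the level-set bound hold simultaneously. For \ref{Assump:QuadraticBound}, your moment expansion for small $\abs{k}$ is fine. For large $\abs{k}$ you impose two constraints on the splitting parameter $\kappa$: small enough that $R\kappa\sqrt d<d/2$ (so you stay in the monotonicity region of $\Phi_d$), and large enough that $\Gamma(\tfrac d2+1)\bigl(2/(R\kappa\sqrt d)\bigr)^{d/2}\to0$. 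Using $R^2\sim d/(2\pi\e)$ these translate to $\kappa<\sqrt{\pi\e/2}\approx2.07$ and $\kappa>\sqrt{2\pi/\e}\approx1.52$, so the window is indeed non-empty; you should state this compatibility explicitly, since your text only says ``small enough'' and later ``large enough'' without confirming the two can coexist. What your direct approach buys is self-containment and an explicit admissible range for $g(d)$; what the paper's citation buys is brevity, since the referenced proposition already packages all of this.
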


\begin{proof}
    It is proven in \cite[Proposition~1.1]{HeyHofLasMat19} that the Hyper-Sphere RCM satisfies Assumption~\ref{Assumption} with $g(d) = \varrho^d$ for some $\varrho\in\left(0,1\right)$.
\end{proof}

\begin{lemma}
    The Hyper-Sphere RCM satisfies Assumption~\ref{AssumptionBeta}.
\end{lemma}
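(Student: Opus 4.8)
The plan is to verify the two clauses of Assumption~\ref{AssumptionBeta} separately, using only the compact support of the hyper-sphere kernel together with the bound $g(d)=\varrho^d$ (some $\varrho\in(0,1)$) supplied by the previous lemma; recall also that the chosen scaling makes $\phiint=\int\connf(x)\dd x=1$.

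For clause \ref{Assump:ExponentialDecay} I would argue as follows. Because $\connf=\Id_{\{\abs*{x}<R\}}$ is supported on a ball of volume $1$, the convolution $\connf^{\star 3}$ is supported on $\{\abs*{x}<3R\}$, a set of Lebesgue volume $3^d$. Since $\connf$ is symmetric, $\connf^{\star 6}(\orig)=\int(\connf^{\star 3}(x))^2\,\dd x$, and Cauchy--Schwarz on the support of $\connf^{\star 3}$ gives $\bigl(\int\connf^{\star 3}(x)\,\dd x\bigr)^2=\phiint^6=1\le 3^d\,\connf^{\star 6}(\orig)$, i.e.\ $\connf^{\star 6}(\orig)\ge 3^{-d}$. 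Hence $\liminf_{d\to\infty}3^d\phiint^{-5}\connf^{\star 6}(\orig)\ge1>0$, so \ref{Assump:ExponentialDecay} holds with $\rho=\tfrac13$. As observed after \eqref{eqn:betafromgfunction}, this then forces $\beta(d)=g(d)^{1/4}=\varrho^{d/4}$.

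For clause \ref{Assump:NumberBound} I would combine this with an elementary upper bound on $h(d)$. Each of the three terms defining $h(d)$ is at most $\norm*{\connf^{\star 3}}_\infty$: bound the slowest-decaying convolution factor in the integrand by its supremum and integrate the rest, using $\int\connf=\int\connf^{\star 2}=1$ and $\norm*{\connf^{\star 2}}_\infty\le1$ (for the third term first reduce $\int(\connf^{\star 2}(x))^3\,\dd x\le\connf^{\star4}(\orig)\le\norm*{\connf^{\star3}}_\infty$). Since \ref{Assump:DecayBound} with $m=3$ gives $\norm*{\connf^{\star3}}_\infty\le g(d)=\varrho^d$ for large $d$, we get $h(d)\le3\varrho^d<1$, while $h(d)\ge\connf^{\star6}(\orig)\ge3^{-d}$ from above. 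With $\beta(d)=\varrho^{d/4}$ and both $\log h(d),\log\beta(d)<0$ for large $d$, one obtains $N(d)=\ceil*{\log h(d)/\log\beta(d)}\le\ceil*{4\log 3/\log(1/\varrho)}$, a finite constant, so $\limsup_{d\to\infty}N(d)<\infty$ and Assumption~\ref{AssumptionBeta} holds.

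I expect the only real content to be the lower bound $\connf^{\star6}(\orig)\ge3^{-d}$ in clause \ref{Assump:ExponentialDecay}: a pointwise/supremum estimate would point the wrong way, so one must exploit the finite volume of $\operatorname{supp}\connf^{\star3}$ through Cauchy--Schwarz (equivalently, Jensen's inequality). Everything else is bookkeeping with bounds already available from Assumption~\ref{Assumption}. It is worth noting that this mechanism is specific to compactly supported kernels (hyper-sphere, hyper-cube); for the Gaussian and Cauchy kernels the analogous lower bound is not free, which is exactly why Corollaries~\ref{cor:Gaussian} and~\ref{cor:Cauchy} carry the extra hypothesis $\liminf_{d\to\infty}\connf(\orig)^{1/d}>0$.
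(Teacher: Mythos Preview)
Your proof is correct and takes a genuinely different route from the paper's for clause~\ref{Assump:ExponentialDecay}. The paper works in Fourier space: it writes $\connf^{\star 6}(\orig)=\int\fconnf(k)^6\,\dd k/(2\pi)^d$, expresses $\fconnf$ via Bessel functions, and uses asymptotics of the first stationary point $j'_{d/2,1}$ and the value $J_{d/2}(j'_{d/2,1})$ to bound $\abs{\fconnf(k)}$ from below on a large ball. This yields $\connf^{\star 6}(\orig)\ge c\,d^{-2}(4/\e^2)^d(1+o(1))$ and hence any $\rho<4/\e^2\approx 0.541$. Your Cauchy--Schwarz argument on the support of $\connf^{\star 3}$ gives the cruder bound $\connf^{\star 6}(\orig)\ge 3^{-d}$, so $\rho=1/3$. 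Since~\ref{Assump:ExponentialDecay} only asks for \emph{some} $\rho>0$, your bound suffices, and it is considerably more elementary: no Bessel asymptotics, no Stirling, just the volume of a ball of radius $3R$. Your closing remark that this mechanism is specific to compactly supported kernels is exactly right and explains the structural difference between the hyper-sphere/hyper-cube cases and the Gaussian/Cauchy cases.

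For clause~\ref{Assump:NumberBound} the two arguments are essentially identical: both combine an exponential lower bound on $\connf^{\star 6}(\orig)$ (hence on $h(d)$) with $\beta(d)=\varrho^{d/4}$ to bound the ratio $\log h(d)/\log\beta(d)$ by a dimension-free constant. You spell out the upper bound $h(d)\le 3\norm{\connf^{\star 3}}_\infty\le 3g(d)$ a bit more explicitly than the paper does, but this is bookkeeping.
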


\begin{proof}
    In order to prove that \ref{Assump:ExponentialDecay} holds, we need to get a lower bound on $\connf^{\star 6}\left(\orig\right)$. We begin using the Fourier inverse formula to get
    \begin{equation}
        \connf^{\star 6}\left(\orig\right) = \int_{\Rd}\fconnf(k)^6\frac{\dd k}{\left(2\pi\right)^d}.
    \end{equation}
    Since $\connf$ is symmetric, $\fconnf(k)$ is real, and therefore we can get a lower bound on $\connf
    ^{\star 6}\left(\orig\right)$ by getting a lower bound on $\fconnf(k)^6$.

    From \cite[Appendix~B.5]{grafakos2008classical}, we can find that
    \begin{equation}
    \label{eqn:FourierAdjacencyBooleanDisc}
        \fconnf(k) = \left(\frac{2\pi R(d)}{\abs*{k}}\right)^\frac{d}{2} J_{\frac{d}{2}}\left(\abs*{k}R(d)\right),
    \end{equation}
    where $J_{\frac{d}{2}}$ is the Bessel function of the first kind of order $\frac{d}{2}$, and $R(d)$ is the radius of the unit volume ball in $d$ dimensions. In Figure~\ref{fig:Sketch_fconnf_BooleanDisc} we highlight three important values of $\abs*{k}$ in the shape of $\fconnf(k)$. The Bessel function $J_{\frac{d}{2}}$ achieves its global maximum (in absolute value) at its first non-zero stationary point, $j'_{\frac{d}{2},1}$. From \cite[p.371]{abramowitz1970handbook}, we have $j'_{\frac{d}{2},1} = \frac{d}{2} + \gamma_1\left(\frac{d}{2}\right)^\frac{1}{3} + \LandauBigO{d^{-\frac{1}{3}}}$ for a given $\gamma_1 \approx 0.81$, and $J_{\frac{d}{2}}\left(j'_{\frac{d}{2},1}\right) = \Gamma_1 d^{-\frac{1}{3}} + \LandauBigO{d^{-1}}$, where $\Gamma_1\approx 0.54$. Then $J_{\frac{d}{2}}$ has its first zero at $j_{\frac{d}{2},1}>j'_{\frac{d}{2},1}$, where $j_{\frac{d}{2},1} = \frac{d}{2} + \gamma_2\left(\frac{d}{2}\right)^{\frac{1}{3}} + \LandauBigO{d^{-\frac{1}{3}}}$ and $\gamma_2\approx 1.86$ (again, see \cite{abramowitz1970handbook}). From differential inequalities relating Bessel functions (see \cite{grafakos2008classical}), we have
    \begin{equation}
    \frac{\dd }{\dd \abs*{k}} \fconnf(k) = -R(d)\left(\frac{2\pi R(d)}{\abs*{k}}\right)^\frac{d}{2}J_{\frac{d}{2}+1}\left(\abs*{k} R(d)\right).
\end{equation}
    Therefore $\fconnf(k)$ is decreasing in $\abs*{k}$ until $\abs*{k}R(d) = j_{\frac{d}{2}+1,1} = \frac{d}{2} + \gamma_2\left(\frac{d}{2}\right)^{\frac{1}{3}} + 1 +\LandauBigO{d^{-\frac{1}{3}}}$. In particular, $j_{\frac{d}{2}+1,1}> j_{\frac{d}{2},1}$.
\begin{figure}
    \centering
    \begin{tikzpicture}[yscale=1.5]
        \draw[->] (0,0) -- (8.1,0)node[right]{$\abs*{k}$};
        \draw[->] (0,-1) -- (0,2)node[above left]{$\fconnf\left(k\right)$};
        \draw[thick] (0,1.5)node[left]{$1$} to [out=0,in=150] (2,0.9) to [out=330,in=165] (4,0) to [out=-15,in=180] (6,-0.4) to [out=0,in=170] (8,0.25);
        \draw[dashed] (2,0.9) -- (0,0.9)node[left]{$\fconnf\left(\sfrac{j'_{\frac{d}{2},1}}{R(d)}\right)$};
        \draw[dashed] (4,0) -- (4,-1)node[below]{$j_{\frac{d}{2},1}/R(d)$};
        \draw[dashed] (6,0) -- (6,-1)node[below]{$j_{\frac{d}{2}+1,1}/R(d)$};
        \draw[dashed] (2,0.9) -- (2,-1)node[below]{$j'_{\frac{d}{2},1}/R(d)$};
    \end{tikzpicture}
    \caption{Sketch of $\fconnf\left(k\right)$ against $\abs*{k}$. It approaches its maximum quadratically as $\abs*{k}\to0$. The first local maximum of $J_{\frac{d}{2}}$ occurs at $j'_{\frac{d}{2},1}\sim \frac{d}{2}+\gamma_1\left(\frac{d}{2}\right)^\frac{1}{3}$. The first zero of $\fconnf\left(k\right)$ occurs at $\abs*{k}R(d) = j_{\frac{d}{2},1}\sim \frac{d}{2}+\gamma_2\left(\frac{d}{2}\right)^\frac{1}{3}$ where $\gamma_2>\gamma_1$. Furthermore, $\fconnf\left(k\right)$ is strictly decreasing until $\abs*{k}R(d) = j_{\frac{d}{2}+1,1}\sim \frac{d}{2}+\gamma_2\left(\frac{d}{2}\right)^\frac{1}{3} + 1$. }
    \label{fig:Sketch_fconnf_BooleanDisc}
\end{figure}
    The significance of these points is that they allow us to bound
    \begin{equation}
        \abs*{\fconnf(k)} \geq \fconnf\left(\sfrac{j'_{\frac{d}{2},1}}{R(d)}\right)\Id_{\left\{\abs*{k}\leq \sfrac{j'_{\frac{d}{2},1}}{R(d)}\right\}}.
    \end{equation}
    Since $R(d)$ is the radius of the unit volume ball in $d$ dimensions,
    \begin{equation}
        \int_{\Rd}\Id_{\left\{\abs*{k}\leq \sfrac{j'_{\frac{d}{2},1}}{R(d)}\right\}}\frac{\dd k}{\left(2\pi\right)^d} = \left(\frac{j'_{\frac{d}{2},1}}{2\pi R(d)^2}\right)^d.
    \end{equation}
    Therefore we can arrive at
    \begin{equation}
        \connf^{\star 6}\left(\orig\right) \geq \left(\frac{2\pi R(d)^2}{j'_{\frac{d}{2},1}}\right)^{2d} J_{\frac{d}{2}}\left(j'_{\frac{d}{2},1}\right)^6 = \Gamma_1^6 \frac{1}{d^2}\left(\frac{2}{\e}+o\left(1\right)\right)^{2d}\left(1+ o(1)\right).
    \end{equation}
    Here we have used the leading order asymptotics of $R(d)$, $j'_{\frac{d}{2},1}$, and $J_{\frac{d}{2}}\left(j'_{\frac{d}{2},1}\right)$ we described above. From this lower bound, we know that $\rho$ will satisfy the bound in \ref{Assump:ExponentialDecay} if $\rho < 4 \e^{-2}$.

    From the above argument we have an exponential lower bound on $\connf^{\star 6}\left(\orig\right)$ and therefore a linear lower bound on $h(d)$. It is proven in \cite[Proposition~1.1a]{HeyHofLasMat19} that $g(d)=\varrho^{d}$ for some $\varrho\in\left(0,1\right)$, and therefore $\beta(d) = \varrho^{\frac{d}{4}}$. We can then bound $N(d)$ to show \ref{Assump:NumberBound} holds.
\end{proof}

\begin{lemma}
\label{lem:BooleanCalcExpressions}
    For $n\geq 3$, 
    \begin{equation}
    \label{eqn:BooleanLoopGeneral}
        \connf^{\star n}\left(\orig\right) = d2^{d\left(\frac{n}{2}-1\right)}\Gamma\left(\frac{d}{2}+1\right)^{n-2}\int^\infty_0 x^{-1-d\left(\frac{n}{2}-1\right)}\left(J_{\frac{d}{2}}(x)\right)^n \dd x.
    \end{equation}
    In particular,
    \begin{align}
        \connf^{\star 3}\left(\orig\right) &= \frac{d\Gamma\left(\frac{d}{2}+1\right)}{\Gamma\left(\frac{1}{2}\right)\Gamma\left(\frac{d}{2}+\frac{1}{2}\right)}\int^1_0x^{d-1}B\left(1-\frac{x^2}{4};\frac{d}{2}+\frac{1}{2},\frac{1}{2}\right)\dd x = \frac{3}{2}\frac{\Gamma\left(\frac{d}{2}+1\right)}{\Gamma\left(\frac{1}{2}\right)\Gamma\left(\frac{d}{2}+\frac{1}{2}\right)}B\left(\frac{3}{4};\frac{d}{2}+\frac{1}{2},\frac{1}{2}\right),\label{eqn:BooleanLoop3}\\
        \connf^{\star 4}\left(\orig\right) &= \frac{d\Gamma\left(\frac{d}{2}+1\right)^2}{\Gamma\left(\frac{1}{2}\right)^2\Gamma\left(\frac{d}{2}+\frac{1}{2}\right)^2}\int^2_0x^{d-1}B\left(1-\frac{x^2}{4};\frac{d}{2}+\frac{1}{2},\frac{1}{2}\right)^2\dd x,\\
        \connf^{\star 5}\left(\orig\right) &= \frac{d2^d\Gamma\left(\frac{d}{2}+1\right)^3}{\Gamma\left(\frac{1}{2}\right)\Gamma\left(\frac{d}{2}+\frac{1}{2}\right)}\int^2_0x^{\frac{d}{2}}B\left(1-\frac{x^2}{4};\frac{d}{2}+\frac{1}{2},\frac{1}{2}\right)\left(\int^\infty_0 k^{-d}\left(J_{\frac{d}{2}}(k)\right)^3J_{\frac{d}{2}-1}(kx) \dd k\right)\dd x, \\
        \connf^{\star 6}\left(\orig\right) &= \frac{d2^\frac{3d}{2}\Gamma\left(\frac{d}{2}+1\right)^4}{\Gamma\left(\frac{1}{2}\right)\Gamma\left(\frac{d}{2}+\frac{1}{2}\right)}\int^2_0x^{\frac{d}{2}}B\left(1-\frac{x^2}{4};\frac{d}{2}+\frac{1}{2},\frac{1}{2}\right)\left(\int^\infty_0 k^{-\frac{3d}{2}}\left(J_{\frac{d}{2}}(k)\right)^4J_{\frac{d}{2}-1}(kx) \dd k\right)\dd x.
    \end{align}
    Furthermore,
    \begin{align}
        \connf^{\star 1\star 2 \cdot 2}\left(\orig\right) &= \frac{d\Gamma\left(\frac{d}{2}+1\right)^2}{\Gamma\left(\frac{1}{2}\right)^2\Gamma\left(\frac{d}{2}+\frac{1}{2}\right)^2}\int_0^1 x^{d-1}B\left(1-\frac{x^2}{4};\frac{d}{2}+\frac{1}{2},\frac{1}{2}\right)^2 \dd x,\\
        \connf^{\star 2\star 2 \cdot 2}\left(\orig\right) &= \frac{d\Gamma\left(\frac{d}{2}+1\right)^3}{\Gamma\left(\frac{1}{2}\right)^3\Gamma\left(\frac{d}{2}+\frac{1}{2}\right)^3}\int_0^2 x^{d-1}B\left(1-\frac{x^2}{4};\frac{d}{2}+\frac{1}{2},\frac{1}{2}\right)^3 \dd x,\\
        \connf^{\star 1\star 2 \cdot 3}\left(\orig\right) &= \frac{d2^d\Gamma\left(\frac{d}{2}+1\right)^3}{\Gamma\left(\frac{1}{2}\right)\Gamma\left(\frac{d}{2}+\frac{1}{2}\right)}\int^1_0x^{\frac{d}{2}}B\left(1-\frac{x^2}{4};\frac{d}{2}+\frac{1}{2},\frac{1}{2}\right)\left(\int^\infty_0 k^{-d}\left(J_{\frac{d}{2}}(k)\right)^3J_{\frac{d}{2}-1}(kx) \dd k\right)\dd x.
    \end{align}
\end{lemma}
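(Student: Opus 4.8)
\emph{Proof plan.}
The common backbone of all these identities is the Fourier representation
\[
\connf^{\star n}(\orig)=\int_{\Rd}\fconnf(k)^n\,\frac{\dd k}{(2\pi)^d},
\]
valid since $\connf$ is symmetric and integrable (so $\fconnf$ is real). For the general loop formula \eqref{eqn:BooleanLoopGeneral} I would substitute \eqref{eqn:FourierAdjacencyBooleanDisc}, pass to polar coordinates using $\mathfrak{S}_{d-1}=d\pi^{d/2}\Gamma(d/2+1)^{-1}$, substitute $x=|k|R(d)$, and collect the powers of $R(d)$, $2$ and $\pi$ with the help of $R(d)^d=\pi^{-d/2}\Gamma(d/2+1)$ (which is exactly the statement that $R(d)$ is the radius of the unit-volume ball, equivalently $\fconnf(0)=1$); all powers of $\pi$ cancel and one lands on \eqref{eqn:BooleanLoopGeneral}. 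This already proves the formula for all pure loops $\connf^{\star n}(\orig)$, although the low-order ones will be recast below.

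For everything else the key object is the closed form of $\connf^{\star2}$. Geometrically $\connf^{\star2}(y)$ is the Lebesgue volume of the intersection of two balls of radius $R:=R(d)$ whose centres are at distance $|y|$, i.e.\ twice the volume of a spherical cap of height $R-|y|/2$. Writing the cap volume as $\omega_{d-1}\int_{a}^{R}(R^2-h^2)^{(d-1)/2}\dd h$ with $\omega_{d-1}=\pi^{(d-1)/2}\Gamma(d/2+1/2)^{-1}$ the volume of the unit ball in $\R^{d-1}$, and substituting $h=Rt$ and then $u=t^2$, one gets the cap volume $\tfrac12\omega_{d-1}R^d\,B(1-a^2/R^2;\tfrac d2+\tfrac12,\tfrac12)$; with $a=|y|/2$ and $\omega_{d-1}R^d=\Gamma(d/2+1)\big(\Gamma(1/2)\Gamma(d/2+1/2)\big)^{-1}$ this yields, for $|y|\le 2R$,
\[
\connf^{\star2}(y)=\frac{\Gamma(d/2+1)}{\Gamma(1/2)\Gamma(d/2+1/2)}\,B\!\left(1-\frac{|y|^2}{4R^2};\frac d2+\frac12,\frac12\right).
\]
The identities for $\connf^{\star4}(\orig)=\int(\connf^{\star2})^2$, $\connf^{\star1\star2\cdot2}(\orig)=\int_{|y|<R}(\connf^{\star2})^2$ and $\connf^{\star2\star2\cdot2}(\orig)=\int(\connf^{\star2})^3$ then follow at once by passing to polar coordinates, substituting $x=|y|/R$, and using $\mathfrak{S}_{d-1}R^d=d$; the $x$-range is $[0,2]$ when the full support of $\connf^{\star2}$ contributes and $[0,1]$ when an extra factor $\connf=\Id_{\{|\cdot|<R\}}$ restricts it. Likewise $\connf^{\star3}(\orig)=\int_{|y|<R}\connf^{\star2}(y)\,\dd y$ gives the first equality in \eqref{eqn:BooleanLoop3}. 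For the second equality in \eqref{eqn:BooleanLoop3} I would prove the integration identity
\[
d\int_0^1 x^{d-1}B\!\left(1-\tfrac{x^2}4;\tfrac d2+\tfrac12,\tfrac12\right)\dd x=\tfrac32\,B\!\left(\tfrac34;\tfrac d2+\tfrac12,\tfrac12\right)
\]
by integrating by parts with $\dd v=x^{d-1}\dd x$ and $\tfrac{\dd}{\dd x}B(1-x^2/4;\tfrac d2+\tfrac12,\tfrac12)=-(1-x^2/4)^{(d-1)/2}$, reducing the left-hand side to $\tfrac1d B(\tfrac34;\tfrac d2+\tfrac12,\tfrac12)+\tfrac1d\int_0^1x^d(1-x^2/4)^{(d-1)/2}\dd x$, and then observing that the substitutions $x=2\sin\theta$ and $\psi=2\theta$ turn both $\int_0^1x^d(1-x^2/4)^{(d-1)/2}\dd x$ and $\tfrac12 B(\tfrac34;\tfrac d2+\tfrac12,\tfrac12)$ into $\int_0^{\pi/3}\sin^d\psi\,\dd\psi$.

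For the remaining three formulas I would mix the two viewpoints: write $\connf^{\star5}(\orig)=\int\connf^{\star3}(y)\connf^{\star2}(y)\,\dd y$, $\connf^{\star6}(\orig)=\int\connf^{\star4}(y)\connf^{\star2}(y)\,\dd y$, and $\connf^{\star1\star2\cdot3}(\orig)=\int_{|y|<R}\connf^{\star2}(y)\connf^{\star3}(y)\,\dd y$, keep the $\connf^{\star2}$-factor in the Beta form above, and substitute for $\connf^{\star n}(y)$ (with $n=3$ or $4$) its radial inverse Fourier transform: since $\widehat{\connf^{\star n}}(\kappa)=\fconnf(\kappa)^n=(2\pi R/\kappa)^{nd/2}J_{d/2}(\kappa R)^n$ and the inverse Fourier transform of a radial function $g$ on $\Rd$ is $(2\pi)^{-d/2}|y|^{1-d/2}\int_0^\infty\widehat g(\kappa)J_{d/2-1}(\kappa|y|)\kappa^{d/2}\,\dd\kappa$, one gets $\connf^{\star n}(y)$ as a constant times $|y|^{1-d/2}\int_0^\infty\kappa^{d/2-nd/2}J_{d/2}(\kappa R)^n J_{d/2-1}(\kappa|y|)\,\dd\kappa$. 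Plugging this in, passing to polar coordinates (the $|y|^{1-d/2}$ combines with the Jacobian $|y|^{d-1}$ to produce $|y|^{d/2}$, whence the $x^{d/2}$ in the answer), and substituting $x=|y|/R$, $k=\kappa R$ while bookkeeping the powers of $2$, $\pi$, $R$ via $R^d=\pi^{-d/2}\Gamma(d/2+1)$ and $\mathfrak{S}_{d-1}R^d=d$, produces exactly the stated expressions, with $x$-range $[0,2]$ for $\connf^{\star5}$ and $\connf^{\star6}$ (full support of $\connf^{\star2}$) and $[0,1]$ for $\connf^{\star1\star2\cdot3}$ (the extra $\connf$-factor). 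The main obstacle is purely computational: keeping every power of $2$, $\pi$ and $R(d)$ correct in the Fourier/Hankel manipulations — in particular pinning down the normalisation of the radial inverse Fourier transform — and executing the integration-by-parts identity for $\connf^{\star3}(\orig)$ cleanly; the geometric computation of $\connf^{\star2}$ and the polar-coordinate reductions are routine once $\mathfrak{S}_{d-1}R(d)^d=d$ is noted.
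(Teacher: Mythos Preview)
Your proposal is correct and follows essentially the same route as the paper: Fourier inversion plus $\mathfrak{S}_{d-1}R^d=d$ for the general loop formula, the spherical-cap/incomplete-Beta expression for $\connf^{\star2}$ (which the paper simply cites from \cite{li2011concise}) for the ``geometric'' identities, and the Hankel-type inverse transform of $\fconnf^n$ for $\connf^{\star3}(y)$ and $\connf^{\star4}(y)$ (the paper obtains the same thing via the integral representation $J_\nu(x)=\frac{x^\nu}{2^\nu\Gamma(1/2)\Gamma(\nu+1/2)}\int_0^\pi e^{ix\cos\theta}\sin^{2\nu}\theta\,\dd\theta$, which is equivalent). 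In fact you supply more than the paper does for the second equality in \eqref{eqn:BooleanLoop3}: the paper merely remarks that $\connf^{\star3}(\orig)=\tfrac32\connf^{\star2}(\tilde x)$ with $|\tilde x|=R$ was noted in \cite{Tor12} and ``omit[s] the details'', whereas your integration-by-parts argument reducing both sides to $\int_0^{\pi/3}\sin^d\psi\,\dd\psi$ is a clean self-contained proof.
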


\begin{proof}
    Let $R=R(d)$ denote the radius of the unit volume $d$-dimensional Euclidean ball, i.e. $R(d)=\frac{1}{\sqrt{\pi}}\Gamma\left(\frac{d}{2}+1\right)^{\frac{1}{d}}$. In particular, note the relation
    \begin{equation}
        1 = \mathfrak{S}_{d-1}\int^R_0r^{d-1}\dd r = \frac{\mathfrak{S}_{d-1}}{d}R^d,
    \end{equation}
    where $\mathfrak{S}_{d-1}=\frac{d\pi^{\frac{d}{2}}}{\Gamma\left(\frac{d}{2}+1\right)}$ is the surface area of the unit \emph{radius} $d$-dimensional Euclidean ball.

    The general formula \eqref{eqn:BooleanLoopGeneral} follows from a Fourier decomposition. By the Fourier inversion formula,
    \begin{equation}
        \connf^{\star n}(x) = \frac{1}{\left(2\pi\right)^d}\int \fconnf(k)^n\dd k.
    \end{equation}
    Recall the expression \eqref{eqn:FourierAdjacencyBooleanDisc} for the Fourier transform $\fconnf(k)$. Then
    \begin{multline}
        \connf^{\star n}\left(\orig\right) = \frac{1}{\left(2\pi\right)^d}\left(2\pi\right)^{\frac{d}{2}n}R^{\frac{d}{2}n}\mathfrak{S}_{d-1}\int^\infty_0 k^{d-1-\frac{d}{2}n}\left(J_{\frac{d}{2}}\left(Rk\right)\right)^n\dd k\\
        =\left(2\pi\right)^{d\left(\frac{n}{2}-1\right)}R^d\mathfrak{S}_{d-1}\int^\infty_0x^{-1-d\left(\frac{n}{2}-1\right)}\left(J_{\frac{d}{2}}\left(x\right)\right)^n\dd x.
    \end{multline}
    Then observing that $R^d\mathfrak{S}_{d-1}=d$ produces the result.

    In the cases $n=3,4$, a more geometric approach may be taken. First note that $\connf^{\star 2}(x)$ can be interpreted as the $d$-volume of the intersection of a hyper-sphere of radius $R$ at the origin with a hyper-sphere of radius $R$ at the position $x$. An expression for this volume is given by \cite{li2011concise} using incomplete Beta functions:
    \begin{equation}
        \connf^{\star 2}(x) = \frac{\Gamma\left(\frac{d}{2}+1\right)}{\Gamma\left(\frac{1}{2}\right)\Gamma\left(\frac{d}{2}+\frac{1}{2}\right)}B\left(1-\frac{\abs*{x}^2}{4R^2};\frac{d}{2}+\frac{1}{2},\frac{1}{2}\right), \qquad \text{for }\abs*{x}\leq 2R.
    \end{equation}
    Clearly $\connf^{\star 2}(x)=0$ for $\abs*{x}>2R$. It then follows that
    \begin{multline}
        \connf^{\star 3}\left(\orig\right) = \int\connf(x)\connf^{\star 2}(x)\dd x = \frac{\Gamma\left(\frac{d}{2}+1\right)}{\Gamma\left(\frac{1}{2}\right)\Gamma\left(\frac{d}{2}+\frac{1}{2}\right)}\mathfrak{S}_{d-1}\int^R_0 r^{d-1}B\left(1-\frac{r^2}{4R^2};\frac{d}{2}+\frac{1}{2},\frac{1}{2}\right)\dd r\\= \frac{\Gamma\left(\frac{d}{2}+1\right)}{\Gamma\left(\frac{1}{2}\right)\Gamma\left(\frac{d}{2}+\frac{1}{2}\right)}\mathfrak{S}_{d-1}R^d\int^1_0 x^{d-1}B\left(1-\frac{x^2}{4};\frac{d}{2}+\frac{1}{2},\frac{1}{2}\right)\dd x.
    \end{multline}
    Again, noting that $R^d\mathfrak{S}_{d-1} = d$ produces the required first equality in \eqref{eqn:BooleanLoop3}. It was noted in \cite{Tor12} that for the Hyper-Sphere model we have $\connf^{\star 3}\left(\orig\right)=\frac{3}{2}\connf^{\star 2}(\tilde{x})$, where $\abs*{\tilde{x}}=R$. This can be proven by writing out the incomplete Beta function as an integral to get a double integral, partitioning the domain appropriately, and using a suitable trigonometric substitution on each part of the domain. We omit the details here. This relation allows us to get the second equality in \eqref{eqn:BooleanLoop3}.

    For the specific form of $\connf^{\star 4}\left(\orig\right)$, we do a similar calculation to that above:
    \begin{multline}
        \connf^{\star 4}\left(\orig\right) = \int\connf^{\star 2}(x)^2\dd x = \frac{\Gamma\left(\frac{d}{2}+1\right)^2}{\Gamma\left(\frac{1}{2}\right)^2\Gamma\left(\frac{d}{2}+\frac{1}{2}\right)^2}\mathfrak{S}_{d-1}\int^{2R}_0 r^{d-1}B\left(1-\frac{r^2}{4R^2};\frac{d}{2}+\frac{1}{2},\frac{1}{2}\right)^2\dd r\\= \frac{\Gamma\left(\frac{d}{2}+1\right)^2}{\Gamma\left(\frac{1}{2}\right)^2\Gamma\left(\frac{d}{2}+\frac{1}{2}\right)^2}\mathfrak{S}_{d-1}R^d\int^2_0 x^{d-1}B\left(1-\frac{x^2}{4};\frac{d}{2}+\frac{1}{2},\frac{1}{2}\right)^2\dd x.
    \end{multline}
    Using $R^d\mathfrak{S}_{d-1} = d$ gives the result.

    For $\connf^{\star1\star2\cdot2}\left(\orig\right)$ and $\connf^{\star 2\star 2\cdot 1}\left(\orig\right)$ this approach also works. We find
    \begin{align}
        \connf^{\star1\star2\cdot2}\left(\orig\right) = \int\connf(x)\connf^{\star2 }(x)^2\dd x &= \frac{\Gamma\left(\frac{d}{2}+1\right)^2}{\Gamma\left(\frac{1}{2}\right)^2\Gamma\left(\frac{d}{2}+\frac{1}{2}\right)^2}\mathfrak{S}_{d-1}\int^{R}_0 r^{d-1}B\left(1-\frac{r^2}{4R^2};\frac{d}{2}+\frac{1}{2},\frac{1}{2}\right)^2\dd r\nonumber\\
        &= \frac{\Gamma\left(\frac{d}{2}+1\right)^2}{\Gamma\left(\frac{1}{2}\right)^2\Gamma\left(\frac{d}{2}+\frac{1}{2}\right)^2}\mathfrak{S}_{d-1}R^d\int^1_0 x^{d-1}B\left(1-\frac{x^2}{4};\frac{d}{2}+\frac{1}{2},\frac{1}{2}\right)^2\dd x,
    \end{align}
    \begin{align}
        \connf^{\star2\star2\cdot2}\left(\orig\right) = \int\connf^{\star2 }(x)^3\dd x &= \frac{\Gamma\left(\frac{d}{2}+1\right)^3}{\Gamma\left(\frac{1}{2}\right)^3\Gamma\left(\frac{d}{2}+\frac{1}{2}\right)^3}\mathfrak{S}_{d-1}\int^{2R}_0 r^{d-1}B\left(1-\frac{r^2}{4R^2};\frac{d}{2}+\frac{1}{2},\frac{1}{2}\right)^3\dd r\nonumber\\
        &= \frac{\Gamma\left(\frac{d}{2}+1\right)^3}{\Gamma\left(\frac{1}{2}\right)^3\Gamma\left(\frac{d}{2}+\frac{1}{2}\right)^3}\mathfrak{S}_{d-1}R^d\int^2_0 x^{d-1}B\left(1-\frac{x^2}{4};\frac{d}{2}+\frac{1}{2},\frac{1}{2}\right)^3\dd x.
    \end{align}
    As before, using $R^d\mathfrak{S}_{d-1} = d$ gives the result.

    Evaluating $\connf^{\star 5}\left(\orig\right)$, $\connf^{\star 6}\left(\orig\right)$, and  $\connf^{\star1\star2\cdot3}\left(\orig\right)$ is more challenging than the above expressions because we don't have such a nice expression for $\connf^{\star 3}(x)$ as we did for $\connf^{\star 2}(x)$. We can nevertheless use Fourier transforms to get an expression. Using the well-known expression 
    \begin{equation}
        J_{\nu}(x) = \frac{x^\nu}{2^\nu \Gamma\left(\frac{1}{2}\right)\Gamma\left(\nu+\frac{1}{2}\right)}\int^\pi_0\e^{ix\cos \theta}\left(\sin \theta\right)^{2\nu}\dd \theta,\qquad \mathrm{Re}~\nu\geq -\frac{1}{2}
    \end{equation}
    from \cite[p.360,~Eqn.(9.1.20)]{abramowitz1970handbook}, we can write
    \begin{multline}
        \connf^{\star 3}(x) = \frac{\mathfrak{S}_{d-2}}{\left(2\pi\right)^d}\int^\infty_{0} k^{d-1}\fconnf(k)^3\left(\int^\pi_0\e^{ik\abs*{x}\cos \theta}\left(\sin \theta\right)^{d-2}\dd \theta\right)\dd k \\= \left(2\pi\right)^d R^{\frac{3}{2}d}\abs*{x}^{1-\frac{d}{2}}\int^\infty_0k^{-d}\left(J_{\frac{d}{2}}\left(kR\right)\right)^3 J_{\frac{d}{2}-1}\left(k\abs*{x}\right)\dd k.
    \end{multline}
    Using this expression with the expression for $\connf^{\star 2}(x)$ used previously then gives the result:
    \begin{align}
        \connf^{\star 1 \star 2\cdot 3}\left(\orig\right) &= \int\connf(x)\connf^{\star2}(x)\connf^{\star 3}(x)\dd x \nonumber\\&= \frac{\Gamma\left(\frac{d}{2}+1\right)}{\Gamma\left(\frac{1}{2}\right)\Gamma\left(\frac{d}{2}+\frac{1}{2}\right)}\left(2\pi\right)^d R^{\frac{3}{2}d}\mathfrak{S}_{d-1}\int^R_0 r^{d-1}r^{1-\frac{d}{2}}B\left(1-\frac{r^2}{4R^2};\frac{d}{2}+\frac{1}{2},\frac{1}{2}\right)\nonumber\\
        &\hspace{7cm}\times\left(\int^\infty_0k^{-d}\left(J_{\frac{d}{2}}\left(kR\right)\right)^3 J_{\frac{d}{2}-1}\left(kr\right)\dd k\right)\dd r\nonumber\\
        &= d2^d\frac{\Gamma\left(\frac{d}{2}+1\right)^3}{\Gamma\left(\frac{1}{2}\right)\Gamma\left(\frac{d}{2}+\frac{1}{2}\right)}\int^1_0x^{\frac{d}{2}}B\left(1-\frac{x^2}{4};\frac{d}{2}+\frac{1}{2},\frac{1}{2}\right)\left(\int^\infty_0 k^{-d}\left(J_{\frac{d}{2}}(k)\right)^3J_{\frac{d}{2}-1}(kx) \dd k\right)\dd x,
    \end{align}
    where we explicitly use $R^d=\pi^{-\frac{d}{2}}\Gamma\left(\frac{d}{2}+1\right)$. Writing $\connf^{\star5}\left(\orig\right) = \int\connf^{\star2}(x)\connf^{\star 3}(x)\dd x$ and using the same strategy gives its result.

    Getting the expression for $\connf^{\star 6}\left(\orig\right)$ requires an expression for $\connf^{\star 4}\left(x\right)$. Using the same strategy as for $\connf^{\star 3}\left(x\right)$ above, we get
     \begin{multline}
        \connf^{\star 4}(x) = \frac{\mathfrak{S}_{d-2}}{\left(2\pi\right)^d}\int^\infty_{0} k^{d-1}\fconnf(k)^4\left(\int^\pi_0\e^{ik\abs*{x}\cos \theta}\left(\sin \theta\right)^{d-2}\dd \theta\right)\dd k \\= \left(2\pi\right)^{\frac{3}{2}d} R^{2d}\abs*{x}^{1-\frac{d}{2}}\int^\infty_0k^{-\frac{3}{2}d}\left(J_{\frac{d}{2}}\left(kR\right)\right)^4 J_{\frac{d}{2}-1}\left(k\abs*{x}\right)\dd k.
    \end{multline}
    Using this with $\connf^{\star6}\left(\orig\right) = \int\connf^{\star2}(x)\connf^{\star 4}(x)\dd x$ then gives the required expression.
\end{proof}

We now turn towards asymptotic values of the terms appearing in Lemma \ref{lem:BooleanCalcExpressions}. 
For the terms $\connf^{\star 3}\left(\orig\right)$ and $\connf^{\star 4}\left(\orig\right)$, the asymptotics have already been worked out.
\begin{lemma}
    For the Hyper-Sphere RCM,
    \begin{align}
        \connf^{\star 3}\left(\orig\right) &\sim \left(\frac{27}{2\pi d}\right)^{\frac{1}{2}}\left(\frac{3}{4}\right)^\frac{d}{2},\\ 
        \connf^{\star 4}\left(\orig\right) &\sim \left(\frac{32}{3\pi d}\right)^{\frac{1}{2}}\left(\frac{16}{27}\right)^\frac{d}{2}.
    \end{align}
\end{lemma}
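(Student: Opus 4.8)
The plan is to derive both asymptotics directly from the closed forms in Lemma~\ref{lem:BooleanCalcExpressions}, using Stirling's formula for the Gamma factors together with a Laplace/Watson-type analysis of the remaining one- and two-dimensional integrals. (Alternatively one may quote the asymptotics for the Gilbert model from the literature; the self-contained route below is short.) Throughout I will use the ratio asymptotics $\Gamma(\tfrac{d}{2}+1)/\Gamma(\tfrac{d}{2}+\tfrac{1}{2}) \sim \sqrt{d/2}$ and $\Gamma(\tfrac{1}{2}) = \sqrt{\pi}$, together with the endpoint estimate: for fixed $a\in(0,1)$, $B(a;\tfrac{d}{2}+\tfrac{1}{2},\tfrac{1}{2}) = \int_0^{a} t^{d/2-1/2}(1-t)^{-1/2}\,\dd t \sim \tfrac{2}{d}\,(1-a)^{-1/2}\,a^{d/2+1/2}$ as $d\to\infty$, obtained by concentrating the integral at the upper endpoint $t=a$ (substitute $t = a(1-v)$; the factor $(1-v)^{d/2-1/2}$ forces $v\to 0$, where $(1-a+av)^{-1/2}\to(1-a)^{-1/2}$ and $\int_0^1(1-v)^{d/2-1/2}\,\dd v\sim\tfrac2d$).

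For $\connf^{\star 3}(\orig)$: apply the endpoint estimate with $a=\tfrac{3}{4}$, so $(1-a)^{-1/2}=2$ and $B(\tfrac{3}{4};\tfrac{d}{2}+\tfrac{1}{2},\tfrac{1}{2}) \sim \tfrac{4}{d}(\tfrac{3}{4})^{d/2+1/2} = \tfrac{2\sqrt{3}}{d}(\tfrac{3}{4})^{d/2}$. Substituting into the closed form $\connf^{\star 3}(\orig) = \tfrac{3}{2}\,\tfrac{\Gamma(d/2+1)}{\Gamma(1/2)\Gamma(d/2+1/2)}\,B(\tfrac{3}{4};\tfrac{d}{2}+\tfrac{1}{2},\tfrac{1}{2})$ and using the Gamma ratio gives $\connf^{\star 3}(\orig) \sim \tfrac{3}{2}\cdot\tfrac{\sqrt{d/2}}{\sqrt{\pi}}\cdot\tfrac{2\sqrt{3}}{d}(\tfrac{3}{4})^{d/2} = \tfrac{3\sqrt{3}}{\sqrt{2\pi d}}(\tfrac{3}{4})^{d/2}$, which equals $(\tfrac{27}{2\pi d})^{1/2}(\tfrac{3}{4})^{d/2}$ since $(3\sqrt{3})^2=27$. (As a check, the same answer follows from the identity $\connf^{\star 3}(\orig) = \tfrac{3}{2}\connf^{\star 2}(\tilde x)$ with $|\tilde x| = R$ noted after Lemma~\ref{lem:BooleanCalcExpressions}, combined with the closed form for $\connf^{\star 2}$.)

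For $\connf^{\star 4}(\orig)$: by Lemma~\ref{lem:BooleanCalcExpressions} we must evaluate $I_d := \int_0^{2} x^{d-1} B(1-\tfrac{x^2}{4};\tfrac{d}{2}+\tfrac{1}{2},\tfrac{1}{2})^2\,\dd x$ to leading order. Applying the endpoint estimate for each fixed $x\in(0,2)$ with $a = 1-\tfrac{x^2}{4}$ (so $(1-a)^{-1/2} = 2/x$) yields $B(1-\tfrac{x^2}{4};\cdot,\cdot) \sim \tfrac{4}{xd}(1-\tfrac{x^2}{4})^{(d+1)/2}$, hence the integrand behaves like $\tfrac{16}{d^2}\,x^{d-3}(1-\tfrac{x^2}{4})^{d+1} = \tfrac{16}{d^2}\,x^{-3}(1-\tfrac{x^2}{4})\,\e^{d\phi(x)}$ with $\phi(x) := \ln x + \ln(1-\tfrac{x^2}{4})$. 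Laplace's method for the $x$-integral: $\phi$ has a unique interior maximum at $x^{\ast} = 2/\sqrt{3}$, with $\phi(x^{\ast}) = \ln\tfrac{4}{3\sqrt{3}}$ (so $\e^{d\phi(x^{\ast})} = (\tfrac{16}{27})^{d/2}$), $\phi''(x^{\ast}) = -\tfrac{9}{4}$, and $x^{\ast -3}(1-\tfrac{x^{\ast 2}}{4}) = \tfrac{\sqrt{3}}{4}$. This gives $I_d \sim \tfrac{16}{d^2}\cdot\tfrac{\sqrt{3}}{4}\sqrt{\tfrac{2\pi}{(9/4)d}}\,(\tfrac{16}{27})^{d/2} = \tfrac{8\sqrt{6\pi}}{3 d^{5/2}}(\tfrac{16}{27})^{d/2}$, and multiplying by the prefactor $\tfrac{d\,\Gamma(d/2+1)^2}{\Gamma(1/2)^2\Gamma(d/2+1/2)^2} \sim \tfrac{d^2}{2\pi}$ yields $\connf^{\star 4}(\orig) \sim \tfrac{4\sqrt{6\pi}}{3\pi\sqrt{d}}(\tfrac{16}{27})^{d/2} = (\tfrac{32}{3\pi d})^{1/2}(\tfrac{16}{27})^{d/2}$, using $(\tfrac{4\sqrt{6\pi}}{3\pi})^2 = \tfrac{32}{3\pi}$.

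The main obstacle is the rigorous justification of the limit interchanges, which matters only for $\connf^{\star 4}(\orig)$: the pointwise asymptotics for $B(1-\tfrac{x^2}{4};\cdot,\cdot)$ degenerates as $x\to 0$ and as $x\to 2$, so it cannot be substituted naively under the integral sign. One fixes a small $\delta>0$ and handles $[0,\delta]\cup[2-\delta,2]$ by the crude absolute bound $B(a;\tfrac{d}{2}+\tfrac{1}{2},\tfrac{1}{2}) \le C\,a^{d/2-1/2}$ (obtained by splitting the defining integral at $t=a/2$, with $C$ independent of $d$ and $a\in[0,1]$), which gives $x^{d-1}B(1-\tfrac{x^2}{4};\cdot,\cdot)^2 \le C^2[x(1-\tfrac{x^2}{4})]^{d-1}$; since $x(1-\tfrac{x^2}{4})$ attains its maximum $\tfrac{4}{3\sqrt{3}}$ at exactly $x^{\ast}$ and $(\tfrac{4}{3\sqrt{3}})^{d-1} = (\tfrac{16}{27})^{(d-1)/2}$ is of the same exponential order as $(\tfrac{16}{27})^{d/2}$, the two endpoint pieces contribute a strictly smaller exponential and are negligible. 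On the remaining fixed neighbourhood of $x^{\ast}$ the endpoint estimate for $B$ and Laplace's method both apply with uniformly controlled error terms, completing the argument. The analogous step for $\connf^{\star 3}(\orig)$ is routine, involving only the single endpoint $t=\tfrac34$.
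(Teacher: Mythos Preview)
Your proof is correct. The paper's own proof consists of a single sentence citing the references \cite{luban1982third,joslin1982third}, so you have taken a genuinely different route by providing a self-contained derivation from the closed forms in Lemma~\ref{lem:BooleanCalcExpressions}. Your approach---reducing the incomplete Beta function to an endpoint asymptotic via Watson's lemma and then applying Laplace's method to the remaining $x$-integral for $\connf^{\star 4}(\orig)$---is the natural direct argument and yields exactly the constants claimed. The discussion of the rigor step (splitting off neighbourhoods of $x=0$ and $x=2$ via the crude bound $B(a;\tfrac{d}{2}+\tfrac{1}{2},\tfrac{1}{2})\le C\,a^{d/2-1/2}$, and invoking uniformity of the Watson expansion on compact subsets of $(0,2)$) is correct in outline; the only point one might spell out further is that the $O(d^{-1})$ relative error in the Watson expansion is uniform for $a$ in compact subsets of $(0,1)$, but this follows from the bounded-derivative argument you sketch. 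Compared to the paper, your treatment has the advantage of being self-contained and of making transparent why the exponential rates $(3/4)^{d/2}$ and $(16/27)^{d/2}$ arise (as maxima of $a$ and of $x(1-x^2/4)$ respectively); the paper's citation has the advantage of brevity.
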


\begin{proof}
    These follow from the calculations in \cite{luban1982third,joslin1982third}. 
\end{proof}

\begin{figure}
    \centering
    \begin{subfigure}[b]{\textwidth}
    \includegraphics{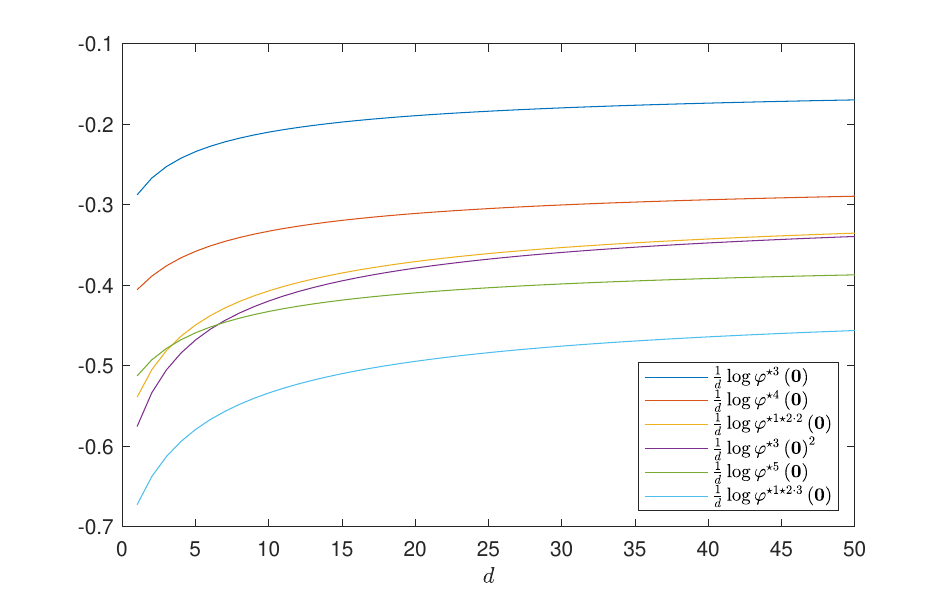}
    \caption{Plot of the larger diagrams}
    \label{fig:TopHalfPlot}
    \end{subfigure}
    \hfill
    \begin{subfigure}[b]{\textwidth}
    \includegraphics{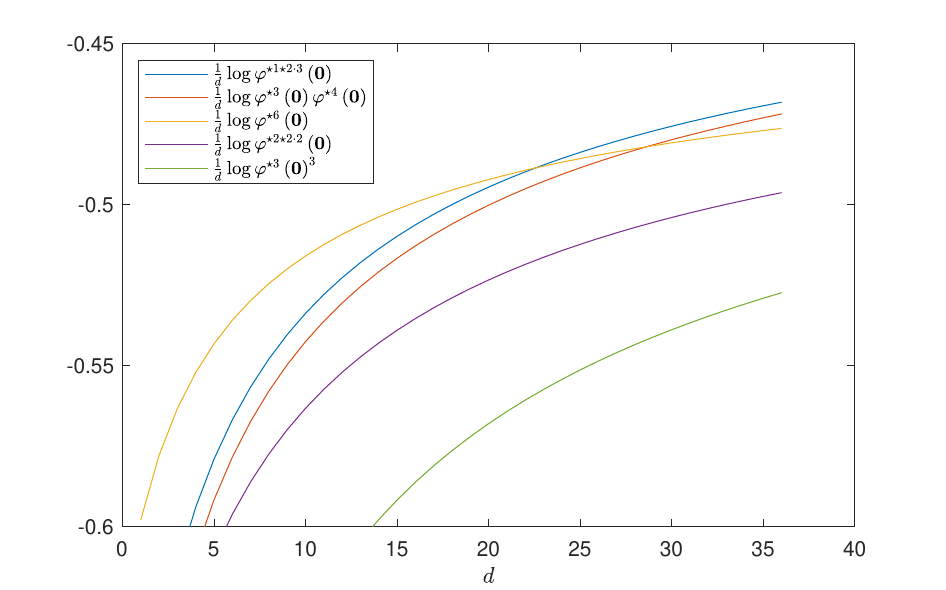}
    \caption{Plot of the smaller diagrams}
    \label{fig:BottomHalfPlot}
    \end{subfigure}
    \caption{Plots of $\frac{1}{d}\log\left(\cdot\right)$ for each of the diagrams for the Hyper-Sphere RCM. For comparison, $\frac{1}{d}\log\connf^{\star1\star2\cdot3}\left(\orig\right)$ is represented in both plots - it is the smallest of the larger diagrams and the largest of the smaller diagrams in the higher dimensions.}
    \label{fig:DiagramSizes}
\end{figure}

\begin{figure}
    \centering
    \begin{subfigure}[b]{\textwidth}
    \centering
    \includegraphics{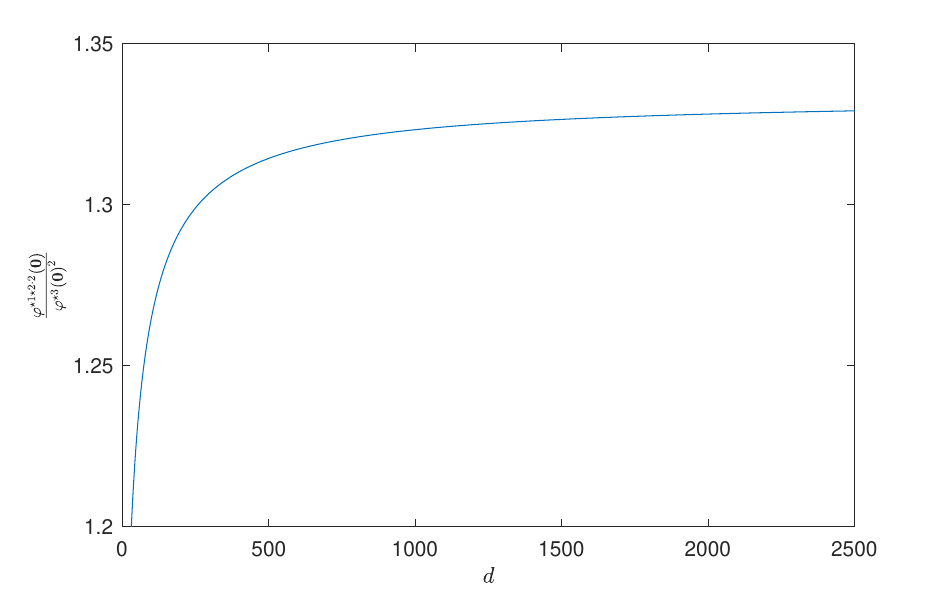}
    \caption{Plot of the ratio $\sfrac{\connf^{\star 1\star2\cdot2}\left(\orig\right)}{\connf^{\star3}\left(\orig\right)^2}$}
    \label{fig:Comparison122vs3x3}
    \end{subfigure}
    \hfill
    \begin{subfigure}[b]{\textwidth}
    \centering
    \includegraphics{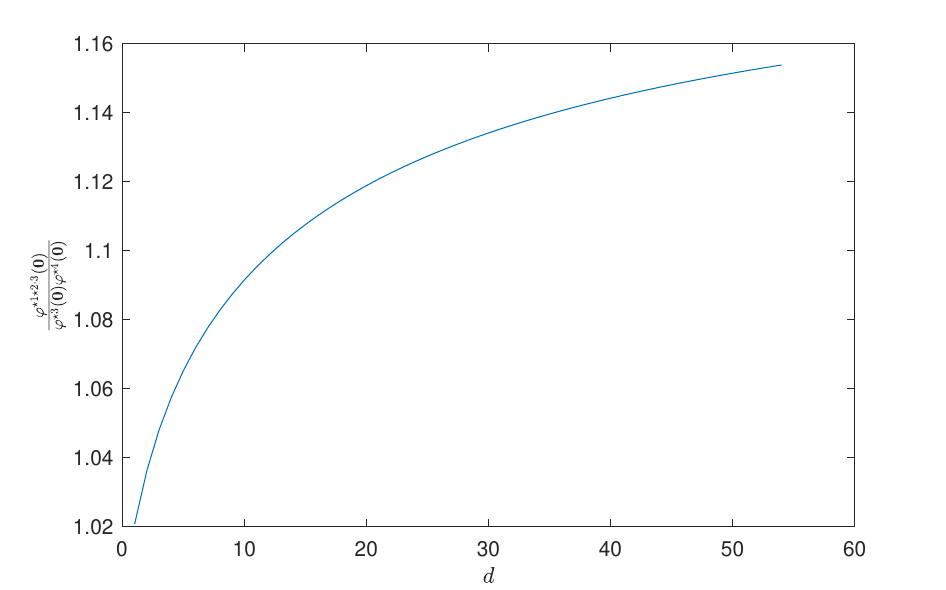}
    \caption{Plot of the ratio $\sfrac{\connf^{\star 1\star2\cdot3}\left(\orig\right)}{\connf^{\star3}\left(\orig\right)\connf^{\star4}\left(\orig\right)}$}
    \label{fig:Comparison123vs3x4}
    \end{subfigure}
    \hfill
    \caption{Plots of the ratio of diagrams of similar sizes for the Hyper-Sphere RCM.}
    \label{fig:Comparison}
\end{figure}

\begin{figure}
    \centering
    \begin{subfigure}[b]{\textwidth}
    \centering
    \includegraphics{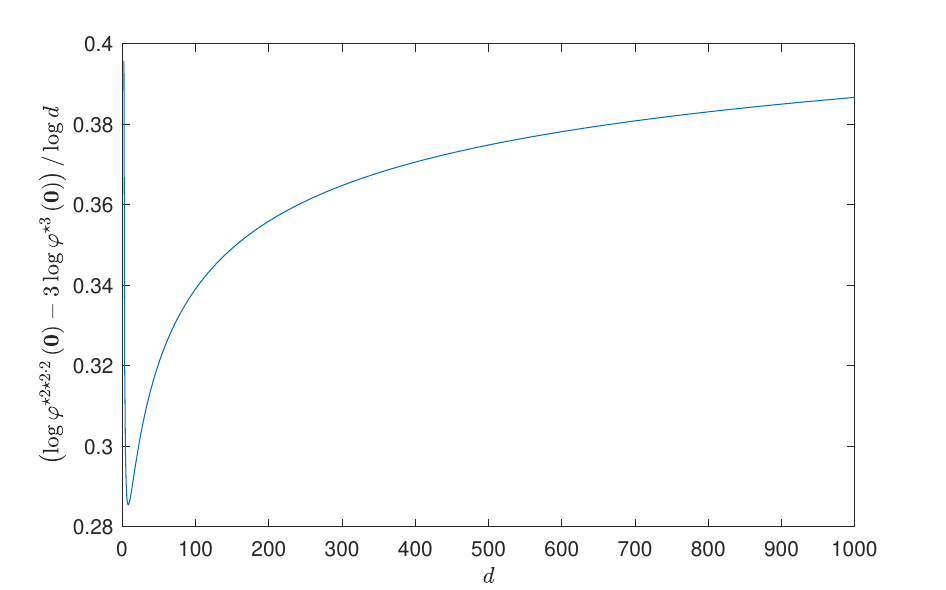}
    \caption{Plot of the ratio $\sfrac{\left(\log\varphi^{\star 2\star 2\cdot 2}\left(\mathbf{0}\right)-3\log\varphi^{\star 3}\left(\mathbf{0}\right)\right)}{\log d}$}
    \label{fig:Comparison222vs3x3x3}
    \end{subfigure}
    \hfill
    \begin{subfigure}[b]{\textwidth}
    \centering
    \includegraphics{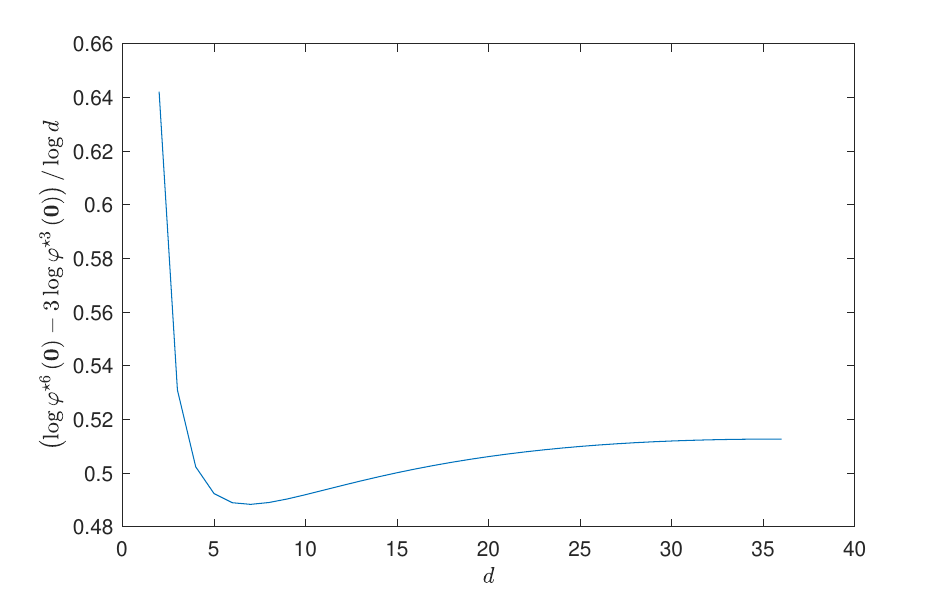}
    \caption{Plot of the ratio $\sfrac{\left(\log\varphi^{\star 6}\left(\mathbf{0}\right)-3\log\varphi^{\star 3}\left(\mathbf{0}\right)\right)}{\log d}$}
    \label{fig:Comparison6vs3x3x3}
    \end{subfigure}
    \hfill
    \caption{Plots relating $\connf^{\star 6}\left(\orig\right)$, $\connf^{\star 2\star 2\cdot 2}\left(\orig\right)$, and $\connf^{\star 3}\left(\orig\right)^3$ for the Hyper-Sphere RCM. These suggest the possibility that these three terms only differ by a polynomial factor in $d$.}
    \label{fig:ComparisonSmallTerms}
\end{figure}

\begin{remark}
\label{rem:BooleanOrderofTerms}
    These asymptotics naturally also give the asymptotics of $\connf^{\star 3}\left(\orig\right)^2$, $\connf^{\star 3}\left(\orig\right)^3$, and $\connf^{\star 3}\left(\orig\right)\connf^{\star 4}\left(\orig\right)$. For $\connf^{\star 5}\left(\orig\right)$, $\connf^{\star 6}\left(\orig\right)$, $\connf^{\star 1\star2\cdot2}\left(\orig\right)$, $\connf^{\star 2\star2\cdot2}\left(\orig\right)$, and $\connf^{\star 1\star2\cdot3}\left(\orig\right)$ we don't have any rigorous description of their asymptotic behaviour. Nevertheless we can use our expressions from Lemma~\ref{lem:BooleanCalcExpressions} and numerical integration to calculate their values for a range of dimensions. Figure~\ref{fig:DiagramSizes} presents the results of these calculations. Here we used MATLAB to plot $\frac{1}{d}\log\left(\cdot\right)$ (where $\log$ is the \emph{natural} logarithm) for each of our diagrams against the dimension $d$. We chose this function of the diagrams because if a diagram was of the form $A(d)\varrho^d$ for some constant $\varrho>0$ and some slowly varying $A(d)$, then our plot should approach $\log\varrho$ as $d\to\infty$. The data in Figure~\ref{fig:TopHalfPlot} are consistent with this behaviour (indeed we know it to be true for $\connf^{\star 3}\left(\orig\right)$, $\connf^{\star 4}\left(\orig\right)$ and $\connf^{\star 3}\left(\orig\right)^2$). We only plot the data up to $d=50$ because the calculations of $\connf^{\star 5}\left(\orig\right)$ and $\connf^{\star 1\star2\star3}\left(\orig\right)$ fail for $d>54$ - we comment on this more later. The data in Figure~\ref{fig:BottomHalfPlot} appear a little less definitive, but the authors argue these are still consistent with the hypothesised behaviour (we know it to be true for $\connf^{\star 3}\left(\orig\right)\connf^{\star 4}\left(\orig\right)$ and $\connf^{\star 3}\left(\orig\right)^3$). Note that the vertical scale is over a much narrower range than in Figure~\ref{fig:TopHalfPlot}, which gives the false impression that the plots are increasing with $d$ faster than they indeed are. We are also further restricting the domain of $d$ to $d\leq 36$. This is because the calculation of $\connf^{\star 6}\left(\orig\right)$ fails for $d>36$. \hfill$\diamond$
\end{remark}

\begin{remark}
\label{rem:BooleanCalculationDifficulties}
    We comment here on our choices of the range of dimensions $d$ presented in the data in Figure~\ref{fig:DiagramSizes}. We found that the limiting factor in our ability to calculate the expressions in Lemma~\ref{lem:BooleanCalcExpressions} were the prefactors of powers of $2$ and gamma functions. If we wanted to use \eqref{eqn:BooleanLoopGeneral} to calculate $\connf^{\star 6}\left(\orig\right)$ for $d=25$ we would have to deal with $d2^{2d}\Gamma\left(\tfrac{d}{2}+1\right)^4 \approx 2.41\times 10^{53}$, while $\connf^{\star 6}\left(\orig\right)\approx 5.34\times10^{-6}$. Fortunately MATLAB has the function \texttt{betainc($x$,$a$,$b$)}, which calculates the (normalised) incomplete beta function
    \begin{equation}
        \frac{\Gamma\left(a+b\right)}{\Gamma(a)\Gamma(b)}\int^x_0 t^{a-1}\left(1-t\right)^{b-1}\dd t = \frac{\Gamma\left(a+b\right)}{\Gamma(a)\Gamma(b)}B\left(x;a,b\right).
    \end{equation}
    This \texttt{betainc} function is more efficient at dealing with the different sizes of the prefactor and integral than our na\"ive attempts, and this is why we put the extra effort in Lemma~\ref{lem:BooleanCalcExpressions} to include factors of $B\left(x;a,b\right)$. In particular, this makes $\connf^{\star 3}\left(\orig\right)$ very easy to calculate: MATLAB got to over $d=5000$ before it produced an error (for $d=5000$, $\connf^{\star 3}\left(\orig\right)\approx 1.32\times 10^{-314}$). We can also calculate our expressions for $\connf^{\star 4}\left(\orig\right)$, $\connf^{\star 1\star2\cdot2}\left(\orig\right)$, and $\connf^{\star 2\star2\cdot 2}\left(\orig\right)$ over dimension $d=1000$. Unfortunately the use of \texttt{betainc} does not deal with the whole prefactor for $\connf^{\star 5}\left(\orig\right)$, $\connf^{\star 6}\left(\orig\right)$, and $\connf^{\star 1\star2\cdot3}\left(\orig\right)$, and this affects the dimension we can run up to. For $\connf^{\star 5}\left(\orig\right)$ and $\connf^{\star 1\star2\cdot3}\left(\orig\right)$ we can run up to $d=54$ (where they are $\approx9.06\times 10^{-10}$ and $\approx2.29\times10^{-11}$ respectively). For $\connf^{\star 6}\left(\orig\right)$ we can only run to $d=36$, where find $\connf^{\star 6}\left(\orig\right)\approx3.58\times10^{-8}$. \hfill $\diamond$
\end{remark}

\begin{remark}
\label{rem:BooleanRatios}
    Upon inspecting Figure~\ref{fig:TopHalfPlot}, the plots of $\connf^{\star1\star2\cdot2}\left(\orig\right)$ and $\connf^{\star3}\left(\orig\right)^2$ appear very close together. The plots of $\connf^{\star1\star2\cdot3}\left(\orig\right)$ and $\connf^{\star3}\left(\orig\right)\connf^{\star4}\left(\orig\right)$ in Figure~\ref{fig:BottomHalfPlot} also appear to be tracking closely together. In Figure~\ref{fig:Comparison} we plot how the ratio of these similar terms vary with dimension.

    Since we are able to evaluate $\connf^{\star1\star2\cdot2}\left(\orig\right)$ and $\connf^{\star3}\left(\orig\right)^2$ to relatively high dimensions, we are able to plot their ratio all the way up to $d=2500$ in Figure~\ref{fig:Comparison122vs3x3}. From this plot it is very tempting to suggest that their ratio is approaching a finite and positive limit. In fact, since $\sfrac{\connf^{\star 1\star2\cdot2}\left(\orig\right)}{\connf^{\star3}\left(\orig\right)^2}\approx1.329$ at $d=2500$, it is tempting to suggest that the ratio approaches $\tfrac{4}{3}$ as $d\to\infty$. Since we rigorously have the asymptotics of $\connf^{\star3}\left(\orig\right)$, this would imply the asymptotics of $\connf^{\star 1\star2\cdot2}\left(\orig\right)$.

    We are not able to evaluate $\connf^{\star1\star2\cdot3}\left(\orig\right)$ to similarly high dimensions - we can only reach $d=54$. Nevertheless, the slope of the plot in Figure~\ref{fig:Comparison123vs3x4} is shallowing and it is tempting to suggest that the ratio $\sfrac{\connf^{\star 1\star2\cdot2}\left(\orig\right)}{\connf^{\star3}\left(\orig\right)\connf^{\star4}\left(\orig\right)}$ approaches a finite and positive limit. While we don't conjecture a value for the limit here, the existence of such a limit would allow us to find the asymptotic scale of $\connf^{\star 1\star2\cdot3}\left(\orig\right)$.

    If we look at the ratio of the other pairs of diagrams it is usually very clear that one is far larger than the other, with the ratio apparently growing at an exponential rate. The only exceptions are the trio of $\connf^{\star 6}\left(\orig\right)$, $\connf^{\star 2\star 2\cdot 2}\left(\orig\right)$, and $\connf^{\star 3}\left(\orig\right)^3$. While the ratios appear to be growing for each pair in this trio, the rate seems to be slowing. If $\connf^{\star 2\star 2\cdot 2}\left(\orig\right)$ and $\connf^{\star 3}\left(\orig\right)^3$ were both decaying at the same exponential rate but had different polynomial corrections, then we would have $\sfrac{\left(\log\varphi^{\star 2\star 2\cdot 2}\left(\mathbf{0}\right)-3\log\varphi^{\star 3}\left(\mathbf{0}\right)\right)}{\log d}$ approaching a non-zero limit as $d\to\infty$. In Figure~\ref{fig:ComparisonSmallTerms} we plot this comparison for the two independent pairs in the trio, and it indeed seems plausible that the plots are approaching a non-zero limit. Nevertheless, these three terms look to be far smaller than the $\connf^{\star1\star2\cdot3}\left(\orig\right)$ and $\connf^{\star3}\left(\orig\right)\connf^{\star 4}\left(\orig\right)$ terms, and so will both be negligible for our discussion. \hfill $\diamond$
\end{remark}

The observations made in Remarks~\ref{rem:BooleanOrderofTerms}-\ref{rem:BooleanRatios} and the plots in Figures~\ref{fig:DiagramSizes} and \ref{fig:Comparison} allow us to make the following conjecture. We use the notation $f\gg g$ to indicate $\frac{f(d)}{g(d)}\to\infty$, and $f\asymp g$ to indicate $\frac{f(d)}{g(d)}$ and $\frac{g(d)}{f(d)}$ are both bounded as $d\to\infty$.

\begin{conjecture}\label{conj:HyperDiscModel}
    For the Hyper-Sphere RCM, as $d\to\infty$,
    \begin{equation}
        \connf^{\star 3}\left(\orig\right) \gg \connf^{\star 4}\left(\orig\right) \gg \connf^{\star 1\star 2\cdot 2}\left(\orig\right) \asymp \left(\connf^{\star 3}\left(\orig\right)\right)^2 \gg \connf^{\star 5}\left(\orig\right) \gg \connf^{\star 1 \star 2\cdot 3}\left(\orig\right) \asymp \connf^{\star 3}\left(\orig\right)\connf^{\star 4}\left(\orig\right),
    \end{equation}
    and
    \begin{equation}
        \connf^{\star 6}\left(\orig\right) + \connf^{\star 2\star 2\cdot 2}\left(\orig\right) + \left(\connf^{\star 3}\left(\orig\right)\right)^3 = \LandauBigO{\connf^{\star 3}\left(\orig\right)\connf^{\star 4}\left(\orig\right)}.
    \end{equation}
    Therefore
    \begin{equation}
        \phiint\lambda_c = 1 + \frac{1}{\phiint^2}\connf^{\star 3}\left(\orig\right) + \frac{3}{2}\frac{1}{\phiint^3}\connf^{\star 4}\left(\orig\right)  - \frac{5}{2}\frac{1}{\phiint^3}\connf^{\star 1\star 2\cdot2}\left(\orig\right) + 2\frac{1}{\phiint^4}\left(\connf^{\star 3}\left(\orig\right)\right)^2 + 2\frac{1}{\phiint^4}\connf^{\star 5}\left(\orig\right) + \LandauBigO{\frac{1}{d}\left(\frac{2}{3}\right)^d}.
    \end{equation}
\end{conjecture}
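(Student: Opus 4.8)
The final display in the conjecture is a formal consequence of Theorem~\ref{thm:CriticalIntensityExpansion} once the two ordering statements preceding it are known, so the plan is to reduce everything to locating the exponential rate -- and, for the two $\asymp$ assertions, also the polynomial prefactor -- of each diagram, working throughout in the scaling of this section ($R(d)$ the radius of the unit-volume ball, $\phiint=1$), in which the stated asymptotics make sense. Granting the chain $\connf^{\star 3}\left(\orig\right)\gg\connf^{\star 4}\left(\orig\right)\gg\connf^{\star 1\star 2\cdot 2}\left(\orig\right)\asymp\left(\connf^{\star 3}\left(\orig\right)\right)^2\gg\connf^{\star 5}\left(\orig\right)\gg\connf^{\star 1\star 2\cdot 3}\left(\orig\right)\asymp\connf^{\star 3}\left(\orig\right)\connf^{\star 4}\left(\orig\right)$ and the bound $\connf^{\star 6}\left(\orig\right)+\connf^{\star 2\star 2\cdot 2}\left(\orig\right)+\left(\connf^{\star 3}\left(\orig\right)\right)^3=\LandauBigO{\connf^{\star 3}\left(\orig\right)\connf^{\star 4}\left(\orig\right)}$, every error term on the right of \eqref{eq:ExpansionMain} becomes $\LandauBigO{\connf^{\star 3}\left(\orig\right)\connf^{\star 4}\left(\orig\right)}$; the term $2\connf^{\star 5}\left(\orig\right)$ survives precisely because $\connf^{\star 5}\left(\orig\right)\gg\connf^{\star 3}\left(\orig\right)\connf^{\star 4}\left(\orig\right)$, while $\connf^{\star 1\star 2\cdot 2}\left(\orig\right)$ and $\left(\connf^{\star 3}\left(\orig\right)\right)^2$ sit strictly between $\connf^{\star 4}\left(\orig\right)$ and $\connf^{\star 5}\left(\orig\right)$; and inserting $\connf^{\star 3}\left(\orig\right)\sim(27/2\pi d)^{1/2}(3/4)^{d/2}$ and $\connf^{\star 4}\left(\orig\right)\sim(32/3\pi d)^{1/2}(16/27)^{d/2}$ gives $\connf^{\star 3}\left(\orig\right)\connf^{\star 4}\left(\orig\right)\sim\tfrac{12}{\pi d}(2/3)^d$, the advertised error. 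Thus the whole statement reduces to the asymptotic analysis of the five diagrams $\connf^{\star 5}\left(\orig\right)$, $\connf^{\star 6}\left(\orig\right)$, $\connf^{\star 1\star 2\cdot 2}\left(\orig\right)$, $\connf^{\star 2\star 2\cdot 2}\left(\orig\right)$ and $\connf^{\star 1\star 2\cdot 3}\left(\orig\right)$ for which no rigorous asymptotics are yet available.

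For this I would start from the integral formulas of Lemma~\ref{lem:BooleanCalcExpressions}. The ``Beta-type'' diagrams $\connf^{\star 1\star 2\cdot 2}\left(\orig\right)$ and $\connf^{\star 2\star 2\cdot 2}\left(\orig\right)$ (with $\connf^{\star 4}\left(\orig\right)$ as a known warm-up) have the shape $d\,c_d^{\,k}\int_0^a x^{d-1}B\!\left(1-\tfrac{x^2}{4};\tfrac d2+\tfrac12,\tfrac12\right)^{k}\dd x$, and the route is a two-stage Laplace analysis: first a uniform-in-$x$ asymptotic for the rescaled lens volume $\connf^{\star 2}(xR)$ from Laplace's method on $\int_0^{1-x^2/4}t^{(d-1)/2}(1-t)^{-1/2}\dd t$, then Laplace's method in $x$ for the outer integral, whose exponent $\tfrac1d\log\!\big(x^d\connf^{\star 2}(xR)^{k}\big)$ is concave. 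Here the endpoint matters: the maximiser of the $\connf^{\star 4}\left(\orig\right)$ integral lies in $(1,2)$, so for $\connf^{\star 1\star 2\cdot 2}\left(\orig\right)$, whose integral only runs to $1$, the maximum is forced to $x=1$, which produces exactly the rate of $\left(\connf^{\star 3}\left(\orig\right)\right)^2$; establishing $\connf^{\star 1\star 2\cdot 2}\left(\orig\right)\asymp\left(\connf^{\star 3}\left(\orig\right)\right)^2$ then becomes an endpoint-Laplace computation matching constant prefactors, with the numerics suggesting the ratio tends to $\tfrac43$. The ``multi-Bessel'' diagrams $\connf^{\star 5}\left(\orig\right)$, $\connf^{\star 1\star 2\cdot 3}\left(\orig\right)$, $\connf^{\star 6}\left(\orig\right)$ instead reduce, via the expressions of Lemma~\ref{lem:BooleanCalcExpressions} for $\connf^{\star 3}(x)$ and $\connf^{\star 4}(x)$, to nested integrals of products of the form $J_{d/2}(k)^{3}J_{d/2-1}(kx)$ (with a fourth Bessel factor for $\connf^{\star 6}$); inserting the uniform Debye asymptotics for Bessel functions of large order -- oscillatory above the turning point, Airy behaviour at it, exponentially small below -- turns each into a finite-dimensional steepest-descent integral whose exponential rate comes from a concave variational problem and whose prefactor is a Gaussian or Airy correction. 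Comparing the resulting rates yields the chain of $\gg$'s (in particular the maximiser for $\connf^{\star 5}\left(\orig\right)$ should give a rate strictly between those of $\connf^{\star 3}\left(\orig\right)\connf^{\star 4}\left(\orig\right)$ and $\left(\connf^{\star 3}\left(\orig\right)\right)^2$), and a coincidence of rates with matched prefactors gives the two $\asymp$'s.

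The crux -- and the reason this remains a conjecture -- is the last step: the $\asymp$ assertions require more than the exponential rate, so one must carry the Laplace and Debye expansions of the competing diagrams out to constant (not merely logarithmic) precision and verify that the polynomial prefactors agree up to a bounded factor; moreover the nested, partly improper Bessel integrals must be controlled uniformly in the spatial variable to justify interchanging $d\to\infty$ with integration and to rule out unexpected contributions from the oscillatory or turning-point regions. The evidence assembled in Remarks~\ref{rem:BooleanOrderofTerms}--\ref{rem:BooleanRatios} and Figures~\ref{fig:DiagramSizes}--\ref{fig:ComparisonSmallTerms} makes all of this very plausible, but converting it into a rigorous multi-Bessel steepest-descent argument is the part I expect to be substantial, and it is precisely what distinguishes this statement from the fully proved Theorem~\ref{thm:CriticalIntensityExpansion} and Corollary~\ref{cor:sphere}.
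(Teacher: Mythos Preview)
The paper does not prove this statement: it is explicitly labelled a \emph{conjecture}, and the surrounding text (Remarks~\ref{rem:BooleanOrderofTerms}--\ref{rem:BooleanRatios}) makes clear that the ordering of terms beyond $\connf^{\star 4}(\orig)$ is inferred solely from numerical integration of the formulas in Lemma~\ref{lem:BooleanCalcExpressions}, with no analytic argument offered. You have correctly identified this, and your proposal is not so much a competing proof as a sketch of a research programme for \emph{establishing} the conjecture, which the paper does not attempt.

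Your outlined strategy---Laplace analysis for the Beta-type integrals, Debye/steepest-descent asymptotics for the nested Bessel integrals, with particular attention to whether the maximiser is interior or at the endpoint $x=1$---is the natural one and goes well beyond anything in the paper. Your observation that the $\connf^{\star 1\star 2\cdot 2}(\orig)$ integral is the $\connf^{\star 4}(\orig)$ integral truncated at $x=1$, forcing an endpoint maximum that should reproduce the rate of $(\connf^{\star 3}(\orig))^2$, is a genuine structural insight not made explicit in the paper. You are also right that the $\asymp$ claims are the hard part, requiring constant-order (not just exponential-rate) control, and that the uniform-in-$x$ Bessel asymptotics near the turning point are where the technical difficulty lies. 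In short: there is no proof in the paper to compare against, and your proposal is a reasonable and honest assessment of what a proof would require.
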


Note that this would be a different order of terms than that we found for the Hyper-Cube RCM in Corollary~\ref{cor:cube}.

\subsection{Hyper-Cube Calculations}

Recall that for side length $L>0$, the Hyper-Cubic RCM is defined by having
    \begin{equation}
        \connf(x) = \prod^d_{j=1}\Id_{\left\{\abs*{x_j}\leq L/2\right\}},
    \end{equation}
where $x=\left(x_1,\ldots,x_d\right)\in\Rd$. Throughout this section we choose a scaling of $\Rd$ such that $L=1$.

\begin{lemma}
\label{lem:HyperCubeCalculations}
    For the Hyper-Cube RCM with side length $L=1$,
    \begin{align}
        \connf^{\star 3}\left(\orig\right) &= \left(\frac{3}{4}\right)^d = \left(0.75\right)^d,\\
        \connf^{\star 4}\left(\orig\right)  &= \left(\frac{2}{3}\right)^d \approx \left(0.66667\right)^d,\\
        \connf^{\star 5}\left(\orig\right) &= \left(\frac{115}{192}\right)^d \approx\left(0.59896\right)^d,\\
        \connf^{\star 1\star 2\cdot 2}\left(\orig\right) &= \left(\frac{7}{12}\right)^d \approx \left(0.58333\right)^d,\\
        \connf^{\star 3}\left(\orig\right)^2 &= \left(\frac{9}{16}\right)^d =\left(0.5625\right)^d,\\ 
        \connf^{\star 6}\left(\orig\right) &= \left(\frac{11}{20}\right)^d = \left(0.55\right)^d,\\
        \connf^{\star 7}\left(\orig\right) & = \left(\frac{5887}{11520}\right)^d \approx \left(0.51102\right)^d,\\
        \connf^{\star 1 \star 2\cdot 3}\left(\orig\right)  &= \left(\frac{49}{96}\right)^d \approx \left(0.51042\right)^d,\\
        \connf^{\star 2\star 2\cdot 2}\left(\orig\right) &= \left(\frac{1}{2}\right)^d = \left(0.5\right)^d,\\
        \connf^{\star 3}\left(\orig\right)\connf^{\star 4}\left(\orig\right) &= \left(\frac{1}{2}\right)^d = \left(0.5\right)^d,\\
        \connf^{\star 8}\left(\orig\right) & = \left(\frac{151}{315}\right)^d \approx \left(0.47937\right)^d,\\
        \connf^{\star 3}\left(\orig\right)^3 &= \left(\frac{27}{64}\right)^d \approx\left(0.42188\right)^d.
    \end{align}
\end{lemma}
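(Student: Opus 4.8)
The plan is to exploit the product structure of the Hyper-Cube kernel. With the scaling $L=1$ we have $\connf(x)=\prod_{j=1}^{d}\connf_1(x_j)$, where $\connf_1\colon\R\to[0,1]$ is the one-dimensional indicator $\connf_1(t)=\Id_{\{\abs*{t}\leq 1/2\}}$. The key observation is that both operations used to build every quantity in the statement — convolution $\star$ and pointwise product $\cdot$ — act coordinate-wise on functions of product form: if $F(x)=\prod_j F_1(x_j)$ and $G(x)=\prod_j G_1(x_j)$ then $F\star G(x)=\prod_j \bigl(F_1\star G_1\bigr)(x_j)$ and $(F\cdot G)(x)=\prod_j\bigl(F_1\cdot G_1\bigr)(x_j)$. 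Hence $\connf^{\star m}(x)=\prod_j \connf_1^{\star m}(x_j)$, and every expression of the form $\connf^{\star n}(\orig)$ or $\connf^{\star n_1\star n_2\cdot n_3}(\orig)$ factorizes as the $d$-th power of the corresponding one-dimensional integral. So the problem reduces to evaluating finitely many explicit one-dimensional integrals, and the listed products such as $\connf^{\star 3}(\orig)^2=(3/4)^{2d}=(9/16)^d$, $\connf^{\star 3}(\orig)^3=(27/64)^d$, and $\connf^{\star 3}(\orig)\connf^{\star 4}(\orig)=(3/4\cdot2/3)^d=(1/2)^d$ then follow immediately from the single-convolution values.

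First I would record the one-dimensional building blocks. The two-fold convolution is the triangular hat function $\connf_1^{\star 2}(t)=(1-\abs*{t})_+$ supported on $[-1,1]$; the three- and four-fold convolutions $\connf_1^{\star 3}$ and $\connf_1^{\star 4}$ are the quadratic and cubic cardinal B-splines, piecewise polynomials supported on $[-3/2,3/2]$ and $[-2,2]$ respectively, which I would write out explicitly on each sub-interval between consecutive half-integers. With these in hand, the easy entries are direct: $\int_{-1/2}^{1/2}(1-\abs*{t})\,\dd t=\tfrac34$ gives $\connf^{\star 3}(\orig)=(3/4)^d$; $\int_{-1}^{1}(1-\abs*{t})^2\,\dd t=\tfrac23$ gives $\connf^{\star 4}(\orig)=(2/3)^d$; $\int_{-1/2}^{1/2}(1-\abs*{t})^2\,\dd t=\tfrac{7}{12}$ gives $\connf^{\star 1\star 2\cdot 2}(\orig)=(7/12)^d$; and $\int_{-1}^{1}(1-\abs*{t})^3\,\dd t=\tfrac12$ gives $\connf^{\star 2\star 2\cdot 2}(\orig)=(1/2)^d$ (note $\connf^{\star 3}(\orig)\connf^{\star 4}(\orig)$ equals this same $(1/2)^d$).

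The remaining entries require the B-splines $\connf_1^{\star 3}$ and $\connf_1^{\star 4}$: one has $\connf_1^{\star 5}(0)=\int\connf_1^{\star 2}\connf_1^{\star 3}$, $\connf_1^{\star 6}(0)=\int(\connf_1^{\star 3})^2$, $\connf_1^{\star 7}(0)=\int\connf_1^{\star 3}\connf_1^{\star 4}$, $\connf_1^{\star 8}(0)=\int(\connf_1^{\star 4})^2$, and $\connf_1^{\star 1\star 2\cdot 3}(0)=\int_{-1/2}^{1/2}\connf_1^{\star 2}\connf_1^{\star 3}$; each is obtained by splitting the line at the breakpoints of the splines involved, integrating the resulting polynomial piecewise, and summing, yielding the claimed fractions $115/192$, $11/20$, $5887/11520$, $151/315$, and $49/96$. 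Since $\connf$ is even, all these convolutions are even, so only the non-negative half-line need be handled, halving the bookkeeping.

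I expect no conceptual obstacle: the whole lemma is elementary once the product factorization is in place. The only genuine risk is an arithmetic slip in tracking the piecewise-polynomial pieces of the higher splines, so the evaluations of $\connf_1^{\star 7}(0)$ and $\connf_1^{\star 8}(0)$ — which involve the cubic spline $\connf_1^{\star 4}$ on its four sub-intervals — warrant the most care. These can be cross-checked against the normalization $\int_{\R}\connf_1^{\star m}=1$ and by computing $\connf_1^{\star(m+n)}(0)=\int\connf_1^{\star m}\connf_1^{\star n}$ via more than one splitting $m+n$, which gives independent consistency checks on all the fractions above.
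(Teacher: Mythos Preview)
Your proposal is correct and follows essentially the same approach as the paper: exploit the coordinate-wise product structure to reduce everything to one-dimensional integrals, write out the low-order B-splines $\connf_1^{\star 2}$, $\connf_1^{\star 3}$, $\connf_1^{\star 4}$ explicitly, and evaluate each quantity as $\int \connf_1^{\star m}\connf_1^{\star n}$ (restricted to $[-1/2,1/2]$ for the $\connf^{\star 1\star 2\cdot n}$ terms), then raise to the $d$-th power. The paper does exactly this, noting that the piecewise integrations ``can be easily verified by Mathematica''; your additional cross-checks via alternative splittings $m+n$ are a nice practical safeguard but not needed for the argument.
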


\begin{proof}
    First note that the hyper-cubic adjacency function factorises into the $d$ dimensions:
    \begin{equation}
        \connf(x) = \prod^d_{i=1}\Id_{\left\{\abs*{x_i}<\frac{1}{2}\right\}},
    \end{equation}
    where $x=\left(x_1,x_2,\ldots,x_d\right)$. Therefore to find the desired expressions, we only need to evaluate them for dimension $1$, and then take the result to the power $d$ to get the result for dimension $d$. Let us denote the $1$-dimensional adjacency function $\connf_1\colon \R \to \left[0,1\right]$,
    \begin{equation}
        \connf_1(x) = \begin{cases}
        1&\colon \abs*{x}<\frac{1}{2}\\
        0&\colon \abs*{x}\geq\frac{1}{2}.
        \end{cases}
    \end{equation}
    By direct calculation (these can be easily verified by Mathematica, for example), one finds
    \begin{align}
        \connf_1^{\star 2}(x) &= \begin{cases}
            1-\abs*{x} &\colon \abs*{x}<1\\
            0 &\colon \abs*{x}\geq 1,
        \end{cases}\\
        \connf_1^{\star 3}(x) &=\begin{cases}
            \frac{1}{4}\left(3-4x^2\right) &\colon \abs*{x}<\frac{1}{2}\\
            \frac{1}{8}\left(3-2\abs*{x}\right)^2 &\colon \frac{1}{2}\leq \abs*{x}<\frac{3}{2}\\
            0 &\colon \abs*{x}\geq \frac{3}{2},
        \end{cases}\\
        \connf_1^{\star 4}(x) &=\begin{cases}
            \frac{1}{6}\left(4-6x^2+3\abs*{x}^3\right) &\colon \abs*{x}<1\\
            \frac{1}{6}\left(2-\abs*{x}\right)^3 &\colon 1\leq \abs*{x}<2\\
            0 &\colon \abs*{x}\geq 2.
        \end{cases}
    \end{align}
    In particular, this means $\connf_1^{\star 3}\left(\orig\right) =\frac{3}{4}$ and $\connf_1^{\star 4}\left(\orig\right) = \frac{2}{3}$. Taking these to the power $d$ returns the required results for $\connf^{\star 3}\left(\orig\right)$ and $\connf^{\star 4}\left(\orig\right)$. These also give the results for $\connf^{\star 3}\left(\orig\right)^2$, $\connf^{\star 3}\left(\orig\right)^3$ and $\connf^{\star 3}\left(\orig\right)\connf^{\star 4}\left(\orig\right)$.

    Then let us observe and calculate
    \begin{align}
        \connf_1^{\star 5}\left(\orig\right) &= \int^1_{-1}\connf_1^{\star 2}(x)\connf_1^{\star 3}(x)\dd x = \frac{115}{192} \approx 0.59896,\\
        \connf_1^{\star 6}\left(\orig\right)&= \int^\frac{3}{2}_{-\frac{3}{2}}\connf_1^{\star 3}(x)\connf_1^{\star 3}(x)\dd x = \frac{11}{20} = 0.55,\\
        \connf_1^{\star 7}\left(\orig\right) &= \int^\frac{3}{2}_{-\frac{3}{2}}\connf_1^{\star 3}(x)\connf_1^{\star 4}(x)\dd x = \frac{5887}{11520}\approx 0.51102,\\
        \connf_1^{\star 8}\left(\orig\right) &= \int^2_{-2}\connf_1^{\star 4}(x)\connf_1^{\star 4}(x)\dd x = \frac{151}{315}\approx 0.47937.
    \end{align}
    Similarly, we find
    \begin{align}
        \connf_1^{\star 1\star 2\cdot 2} \left(\orig\right) &= \int^\frac{1}{2}_{-\frac{1}{2}}\connf_1^{\star 2}(x)^2\dd x = \frac{7}{12}\approx 0.58333\\
        \connf_1^{\star 2\star 2\cdot 2}\left(\orig\right) &= \int^1_{-1}\connf_1^{\star 2}(x)^3\dd x = \frac{1}{2} = 0.5\\
        \connf_1^{\star 1\star 2\cdot 3}\left(\orig\right) &= \int^\frac{1}{2}_{-\frac{1}{2}}\connf_1^{\star 2}(x)\connf_1^{\star 3}(x)\dd x = \frac{49}{96} \approx 0.51042.
    \end{align}
    Finally taking these values to the $d^{\mathrm {th}}$ power gives the required results.
\end{proof}

\begin{lemma}
    The Hyper-Cube RCM satisfies Assumptions~\ref{Assumption} and \ref{AssumptionBeta}.
\end{lemma}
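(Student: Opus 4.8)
The plan is to verify the four conditions \ref{Assump:DecayBound}, \ref{Assump:QuadraticBound}, \ref{Assump:ExponentialDecay} and \ref{Assump:NumberBound} directly, exploiting the product structure $\connf(x)=\prod_{j=1}^d\connf_1(x_j)$ with $\connf_1=\Id_{[-1/2,1/2]}$ (we are in the scaling $\phiint=1$), the explicit one-dimensional convolutions from the proof of Lemma~\ref{lem:HyperCubeCalculations}, and the diagram values recorded there. For \ref{Assump:DecayBound} I would take $g(d):=(9/10)^d$, which tends to $0$. Convolution factorises, so $\connf^{\star m}(x)=\prod_{j}\connf_1^{\star m}(x_j)$, and by Young's inequality $\sup_t\connf_1^{\star m}(t)\le\lVert\connf_1\rVert_1\sup_t\connf_1^{\star(m-1)}(t)\le\cdots\le\sup_t\connf_1^{\star3}(t)=3/4$ for every $m\ge3$; hence $\sup_x\connf^{\star m}(x)\le(3/4)^d\le g(d)$. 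For the volume bound, $\connf^{\star2}(x)=\prod_j(1-\lvert x_j\rvert)_+$ vanishes off $(-1,1)^d$, so on $\{\connf^{\star2}>g(d)\}$ the AM--GM inequality forces $\sum_j\lvert x_j\rvert<d\bigl(1-g(d)^{1/d}\bigr)=d/10$; that region lies in the $\ell^1$-ball of radius $d/10$, whose volume is $(d/5)^d/d!\le(\mathrm e/5)^d\le(9/10)^d=g(d)$, using $d!\ge(d/\mathrm e)^d$. This is exactly \eqref{eqn:2-convolutionAssumption}.

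Next, for \ref{Assump:QuadraticBound} I would use $\fconnf(k)=\prod_{j}\widehat{\connf_1}(k_j)$ with $\widehat{\connf_1}(t)=\sin(t/2)/(t/2)$. The alternating Taylor series gives an absolute $b\in(0,1]$ with $\widehat{\connf_1}(t)\le1-t^2/30$ for $\lvert t\rvert\le b$, so on $\{\lvert k\rvert\le b\}$ one has $\fconnf(k)\le\exp(-\lvert k\rvert^2/30)$ and therefore $1-\fconnf(k)\ge1-\mathrm e^{-\lvert k\rvert^2/30}\ge\lvert k\rvert^2/60$, yielding the first inequality. For $\lvert k\rvert>b$ one uses the uniform estimate $\lvert\widehat{\connf_1}(t)\rvert\le1-c_0(t^2\wedge1)$, valid for all $t$ with an absolute $c_0>0$ (the small-$t$ bound together with $\sup_{\lvert t\rvert\ge b}\lvert\widehat{\connf_1}(t)\rvert<1$); then $\sum_j(k_j^2\wedge1)\ge b^2$ (if all $\lvert k_j\rvert\le1$ this sum equals $\lvert k\rvert^2>b^2$, otherwise it is $\ge1\ge b^2$), whence $\lvert\fconnf(k)\rvert\le\mathrm e^{-c_0b^2}$ and $1-\fconnf(k)\ge1-\mathrm e^{-c_0b^2}=:c_2>0$. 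All constants are independent of $d$.

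For Assumption~\ref{AssumptionBeta} I would simply read off Lemma~\ref{lem:HyperCubeCalculations}. Since $\connf^{\star6}\left(\orig\right)=(11/20)^d$, condition \ref{Assump:ExponentialDecay} holds with $\rho=11/20$, and then the remark following \eqref{eqn:betafromgfunction} gives $\beta(d)=g(d)^{1/4}=(9/10)^{d/4}$. Moreover
\[
    h(d)=\connf^{\star6}\left(\orig\right)+\connf^{\star1\star2\cdot3}\left(\orig\right)+\connf^{\star2\star2\cdot2}\left(\orig\right)=(11/20)^d+(49/96)^d+(1/2)^d=(11/20)^d\bigl(1+o(1)\bigr),
\]
so $\log h(d)=d\log(11/20)+o(1)$ and hence $N(d)=\ceil*{\log h(d)/\log\beta(d)}\to4\log(11/20)/\log(9/10)<\infty$, which gives \ref{Assump:NumberBound}.

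The step I expect to be the main obstacle is the uniform-in-$d$ control in \ref{Assump:DecayBound} and \ref{Assump:QuadraticBound}. The volume condition \eqref{eqn:2-convolutionAssumption} only holds once $g(d)$ is chosen geometrically close enough to $1$ — the $\ell^1$-ball/Stirling computation is what pins down how close, and a careless choice such as $g(d)=(3/4)^d$ makes the superlevel-set volume grow exponentially. Likewise the bound $1-\fconnf(k)\ge c_2$ on $\{\lvert k\rvert>b\}$ would degenerate as $d\to\infty$ under a plain per-coordinate estimate; the truncation $t^2\wedge1$ is needed precisely so that a single coordinate outside a fixed window already produces a $d$-independent spectral gap. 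The remaining verifications are routine bookkeeping on the explicit one-dimensional integrals.
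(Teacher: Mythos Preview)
Your argument is correct, and in one respect sharper than the paper's. The paper simply asserts that \ref{Assump:DecayBound} holds with $g(d)=(3/4)^d$, but as you point out this choice does \emph{not} verify the volume condition \eqref{eqn:2-convolutionAssumption}: writing $\connf^{\star 2}(x)=\prod_j(1-|x_j|)_+$ and substituting $w_j=-\log(1-|x_j|)$ shows that the superlevel set has volume $2^d\,\mathbb{P}(\Gamma_d<d\log(4/3))$ for $\Gamma_d\sim\mathrm{Gamma}(d,1)$, which by Cram\'er's theorem behaves like $e^{d(\log 2 - I(\log(4/3)))}\approx e^{0.16d}\to\infty$. Your choice $g(d)=(9/10)^d$ repairs this via the AM--GM/$\ell^1$-ball estimate, at the harmless cost of a larger (but still bounded) $N(d)$ in \ref{Assump:NumberBound}. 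Your verification of \ref{Assump:QuadraticBound} via the per-coordinate bound $|\widehat{\connf_1}(t)|\le 1-c_0(t^2\wedge 1)$ is cleaner than the paper's explicit expansion of the product $\prod_j(1-\frac{71}{1920}k_j^2)$; the truncation device gives a $d$-independent spectral gap in one line. The treatment of Assumption~\ref{AssumptionBeta} is essentially the same as the paper's: both read off the exact values from Lemma~\ref{lem:HyperCubeCalculations} (your limit $N(d)\to 4\log(11/20)/\log(9/10)$ should strictly be stated as $\limsup N(d)\le\lceil 4\log(11/20)/\log(9/10)\rceil$, but this is cosmetic).
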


\begin{proof}
    For Assumption~\ref{Assump:DecayBound}, recall that
    \begin{equation}
        \connf^{\star 2}(x) = \prod_{i=1}^d \left(1-\abs*{x_i}\right)\Id_{\left\{\abs*{x_i}\leq 1\right\}},
    \end{equation}
    where $x=\left(x_1,x_2,\ldots,x_d\right)$. In conjunction with Lemma~\ref{lem:HyperCubeCalculations}, we see that Assumption~\ref{Assump:DecayBound} is satisfied with $g(d)=\left(\frac{3}{4}\right)^d$. 
    
    For \ref{Assump:QuadraticBound} we note that 
    \begin{equation}
    \label{eqn:H-CubeFourier}
        \fconnf(k) = \prod^d_{i=1}\left(\frac{2}{k_i}\sin\frac{k_i}{2}\right),
    \end{equation}
    where $k=\left(k_1,k_2,\ldots,k_d\right)$. Since $\sin x \leq x - \frac{1}{6}x^3 + \frac{1}{120}x^5$ for all $x\in\R$,
    \begin{equation}
        \fconnf(k) \leq \prod^d_{i=1}\left(1-\frac{1}{24}k_i^2 + \frac{1}{1920}k_i^4\right).
    \end{equation}
    Therefore for $\max_i\abs*{k_i}\leq 3$ we have 
    \begin{align}
        \fconnf(k) &\leq \prod^d_{i=1}\left(1-\frac{71}{1920}k_i^2\right)\nonumber\\
        &= 1 - \frac{71}{1920}\abs*{k}^2 + \left(\frac{71}{1920}\right)^2\sum_{\substack{i,j=1\\ i<j}}^dk_i^2k_j^2 - \left(\frac{71}{1920}\right)^3\sum_{\substack{i,j,l=1\\ i<j<l}}^dk_i^2k_j^2k_l^2 + \ldots \pm \left(\frac{71}{1920}\right)^dk_1^2\ldots k_d^2\nonumber\\
        &\leq 1 - \frac{71}{1920}\abs*{k}^2 + \left(\frac{71}{1920}\right)^2\abs*{k}^4 + 0 + \left(\frac{71}{1920}\right)^4\abs*{k}^8 + 0 + \ldots +\begin{cases}
        \left(\frac{71}{1920}\right)^{d}\abs*{k}^{2d} &\colon d \text{ is even}\\
        \left(\frac{71}{1920}\right)^{d-1}\abs*{k}^{2d-2} &\colon d \text{ is odd.}
        \end{cases}
    \end{align}
    Here we have bounded the later negative terms above by $0$, and bounded the positive terms above by powers of $\abs*{k}^4$. Therefore if $\abs*{k}^2 < \frac{1920}{71}$ then we have
    \begin{equation}
        \fconnf(k) \leq  1 - \frac{71}{1920}\abs*{k}^2 + \sum^\infty_{n=1}\left(\frac{71}{1920}\right)^{2n}\abs*{k}^{4n} = 1 - \frac{71}{1920}\abs*{k}^2 + \left(\frac{71}{1920}\right)^{2}\abs*{k}^{4}\left(1 - \left(\frac{71}{1920}\right)^{2}\abs*{k}^{4}\right)^{-1}.
    \end{equation}
    Note that $\abs*{k}\leq 3 \implies \max_i\abs*{k_i}\leq 3$, and therefore $\abs*{k}\leq 3$ also implies
    \begin{equation}
        \fconnf(k) \leq  1 - \frac{71}{1920}\abs*{k}^2\left(1- \frac{213}{640}\left(1 - \left(\frac{213}{640}\right)^{2}\right)^{-1}\right) \leq 1 - \frac{5}{8}\times\frac{71}{1920}\abs*{k}^2,
    \end{equation}
    where we have used $\frac{213}{640}<\frac{1}{3}$. Therefore we have constants $b,c_1>0$ such that $\abs*{k}\leq b$ implies that $\fconnf(k) \leq 1 - c_1\abs*{k}^2$.

    From \eqref{eqn:H-CubeFourier} it is clear that $\fconnf(k)$ is radially decreasing and non-negative on the set $\left\{k\in\Rd\colon \max_{i}\abs*{k_i}\leq 2\pi\right\}$. On the other hand if there exists $i^*\in\left\{1,2,\ldots,d\right\}$ such that $\abs*{k_{i^*}}>2\pi$, then $\abs*{\fconnf(k)}<\frac{1}{\pi}$. Therefore if $\abs*{k}>3$ we can bound
    \begin{equation}
        \fconnf(k) \leq 1 - \frac{5}{3}\times\frac{71}{1920}\times3^2 < \frac{1}{2}.
    \end{equation}
    We have therefore proven that \ref{Assump:QuadraticBound} holds with $b=3$, $c_1= \frac{5}{3}\times\frac{71}{1920}$, and $c_2=\frac{1}{2}$.

    Lemma~\ref{lem:HyperCubeCalculations} and our above observation that we can have $g(d)=\left(\frac{3}{4}\right)^d$ ensures that Assumption~\ref{AssumptionBeta} holds.
\end{proof}

\subsection{Gaussian Calculations}
Recall that for $\sigma^2>0$ and $0<\Acal\leq \left(2\pi\sigma^2\right)^\frac{d}{2}$, the Gaussian RCM is defined by having
        \begin{equation}
            \connf\left(x\right) = \frac{\Acal}{\left(2\pi \sigma^2\right)^\frac{d}{2}}\exp\left(-\frac{1}{2\sigma^2}\abs*{x}^2\right).
        \end{equation}

\begin{lemma}
\label{lem:GaussianCalculations}
    For the Gaussian RCM,
    \begin{align}
        \connf^{\star n}\left(\orig\right) &= \Acal^n\left(2n\pi\sigma^2\right)^{-\frac{d}{2}}\qquad\forall n\geq 1,\\
        \connf^{\star n_1 \star n_2 \cdot n_3}\left(\orig\right) &= \Acal^{n_1+n_2+n_3}\left(\left(n_1n_2 + n_1n_3 + n_2n_3\right)\left(2\pi\sigma^2\right)^2\right)^{-\frac{d}{2}}\qquad\forall n_1,n_2,n_3\geq 1.
    \end{align}
    In particular,
    \begin{align}
        \connf^{\star 1\star 2\cdot 2}\left(\orig\right) &= \Acal^5\left(32\pi^2\sigma^4\right)^{-\frac{d}{2}} = \Acal^5\left(8\times \left(2\pi\sigma^2\right)^2\right)^{-\frac{d}{2}}\\
        \connf^{\star 1 \star 2\cdot 3}\left(\orig\right) &= \Acal^6\left(44\pi^2\sigma^4\right)^{-\frac{d}{2}} = \Acal^6\left(11\times \left(2\pi\sigma^2\right)^2\right)^{-\frac{d}{2}}\\
        \connf^{\star 2\star 2\cdot 2}\left(\orig\right) &= \Acal^6\left(48\pi^2\sigma^4\right)^{-\frac{d}{2}} = \Acal^6\left(12\times \left(2\pi\sigma^2\right)^2\right)^{-\frac{d}{2}}.
    \end{align}
\end{lemma}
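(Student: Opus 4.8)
The plan is to exploit the fact that, up to the constant $\Acal$, the connection function $\connf$ is the density of a centred isotropic Gaussian vector, and that such densities form a convolution semigroup in the variance parameter. For $v>0$ write $p_v(x) := (2\pi v)^{-d/2}\exp\bigl(-|x|^2/(2v)\bigr)$ for the density of $N(\mathbf 0, v I_d)$, so that $\connf = \Acal\, p_{\sigma^2}$. The first step is the semigroup identity $p_a\star p_b = p_{a+b}$. This is cleanest on the Fourier side: with the paper's convention one has $\widehat{p_v}(k) = \exp(-v|k|^2/2)$, hence $\widehat{p_a\star p_b} = \widehat{p_a}\,\widehat{p_b} = \widehat{p_{a+b}}$ and the claim follows by Fourier inversion (alternatively, complete the square directly). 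Iterating gives $\connf^{\star n} = \Acal^n p_{n\sigma^2}$, and evaluating at the origin yields $\connf^{\star n}(\orig) = \Acal^n(2\pi n\sigma^2)^{-d/2}$, which is the first assertion. One could equivalently argue directly via $\connf^{\star n}(\orig) = \int \fconnf(k)^n (2\pi)^{-d}\dd k$ with $\fconnf(k) = \Acal\exp(-\sigma^2|k|^2/2)$, as is done for other models in the appendix.

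For the second assertion, recall from the paper's definition that
\begin{equation}
    \connf^{\star n_1\star n_2\cdot n_3}(\orig) = \int \connf^{\star n_1}(x)\connf^{\star n_2}(x)\connf^{\star n_3}(x)\,\dd x = \Acal^{n_1+n_2+n_3}\int p_{n_1\sigma^2}(x)p_{n_2\sigma^2}(x)p_{n_3\sigma^2}(x)\,\dd x.
\end{equation}
The integrand factorises over the $d$ coordinates, so by Tonelli the integral equals the $d$-th power of the one-dimensional integral $\int_\R q_{a}(t)q_{b}(t)q_{c}(t)\,\dd t$, where $q_v$ denotes the one-dimensional $N(0,v)$ density and $a=n_1\sigma^2$, $b=n_2\sigma^2$, $c=n_3\sigma^2$. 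Collecting exponents, the one-dimensional integrand is a constant multiple of $\exp\bigl(-\tfrac{t^2}{2}\bigl(\tfrac1a+\tfrac1b+\tfrac1c\bigr)\bigr)$; a single Gaussian integral together with the identity $\tfrac1a+\tfrac1b+\tfrac1c = (ab+bc+ca)/(abc)$ gives $\int_\R q_a q_b q_c\,\dd t = \bigl(2\pi\bigr)^{-1}(ab+bc+ca)^{-1/2}$. Raising to the $d$-th power and using $ab+bc+ca = (n_1n_2+n_2n_3+n_3n_1)\sigma^4$ yields
\begin{equation}
    \int p_{n_1\sigma^2}(x)p_{n_2\sigma^2}(x)p_{n_3\sigma^2}(x)\,\dd x = \bigl((n_1n_2+n_2n_3+n_3n_1)(2\pi\sigma^2)^2\bigr)^{-d/2},
\end{equation}
and multiplying by $\Acal^{n_1+n_2+n_3}$ gives the general formula. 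The three displayed special cases then follow by substituting $(n_1,n_2,n_3)$ equal to $(1,2,2)$, $(1,2,3)$, and $(2,2,2)$, for which $n_1n_2+n_2n_3+n_3n_1$ equals $8$, $11$, and $12$, respectively.

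There is no substantial obstacle here: the entire argument is routine Gaussian calculus. The only points needing a little care are getting the constant in the triple Gaussian integral right and invoking Tonelli to reduce the $d$-dimensional integral to a product of one-dimensional ones; everything else is bookkeeping.
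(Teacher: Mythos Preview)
Your proof is correct and follows essentially the same route as the paper: both arguments use that the Gaussian densities form a convolution semigroup to get $\connf^{\star n}$, and both reduce the triple-product integral to a single Gaussian integral. The only cosmetic difference is that you factorise over coordinates and compute one symmetric one-dimensional integral $\int q_aq_bq_c$, whereas the paper first rewrites the pointwise product $\connf^{\star n_2}\cdot\connf^{\star n_3}$ as a constant times a Gaussian and then convolves with $\connf^{\star n_1}$; the resulting algebra is identical.
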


\begin{proof}
    Without loss of generality, we scale space so that $\phiint = \Acal = 1$.
    
    First we note that the convolution of two unit-mass Gaussian functions is itself a unit-mass Gaussian function whose ``variance'' parameter is the sum of the variance parameters of the two initial Gaussian functions:
    \begin{multline}
        \int_{\Rd}\frac{1}{\left(2\pi\sigma_1^2\right)^\frac{d}{2}}\exp\left(-\frac{1}{2\sigma_1^2}\abs*{x-y}^2\right)\frac{1}{\left(2\pi\sigma_2^2\right)^\frac{d}{2}}\exp\left(-\frac{1}{2\sigma_2^2}\abs*{y}^2\right) \dd y \\= \frac{1}{\left(2\pi\left(\sigma_1^2 + \sigma_2^2\right)\right)^\frac{d}{2}}\exp\left(-\frac{1}{2\left(\sigma_1^2+\sigma_2^2\right)}\abs*{x}^2\right).
    \end{multline}
    It therefore follows that
    \begin{equation}
        \connf^{\star n}(x) = \frac{1}{\left(2\pi n\sigma^2\right)^\frac{d}{2}}\exp\left(-\frac{1}{2n\sigma^2}\abs*{x}^2\right),
    \end{equation}
    and $\connf^{\star n}\left(\orig\right)=\left(2\pi n\sigma^2\right)^{-\frac{d}{2}}$.

    For the remaining expressions we write the pointwise product of two unit-mass Gaussian functions as a constant multiple of a unit-mass Gaussian function:
    \begin{multline}
        \frac{1}{\left(2\pi\sigma_1^2\right)^\frac{d}{2}}\exp\left(-\frac{1}{2\sigma_1^2}\abs*{x}^2\right)\frac{1}{\left(2\pi\sigma_2^2\right)^\frac{d}{2}}\exp\left(-\frac{1}{2\sigma_2^2}\abs*{x}^2\right)=\frac{1}{\left(4\pi^2\sigma_1^2\sigma_2^2\right)^\frac{d}{2}}\exp\left(-\frac{\sigma_1^2+\sigma_2^2}{2\sigma_1^2\sigma_2^2}\abs*{x}^2\right) \\= \frac{1}{\left(2\pi\left(\sigma_1^2+\sigma_2^2\right)\right)^\frac{d}{2}}\left(\frac{\sigma_1^2+\sigma_2^2}{2\pi\sigma_1^2\sigma_2^2}\right)^{\frac{d}{2}}\exp\left(-\frac{\sigma_1^2+\sigma_2^2}{2\sigma_1^2\sigma_2^2}\abs*{x}^2\right).
    \end{multline}
    Using this expression, we find
    \begin{multline}
        \connf^{\star n_1 \star n_2 \cdot n_3}\left(\orig\right) = \left(2\pi\sigma^2\left(n_2+n_3\right)\right)^{-\frac{d}{2}}\left(2\pi\sigma^2\left(n_1 + \frac{n_2n_3}{n_2+n_3}\right)\right)^{-\frac{d}{2}} \\= \left(4\pi^2\sigma^4\left(n_1n_2 + n_1n_3 + n_2n_3\right)\right)^{-\frac{d}{2}}.
    \end{multline}
    This produces the results.
\end{proof}

\begin{lemma}
\label{lem:GaussianAssumptions}
    The Gaussian RCM with $\liminf_{d\to\infty}\connf\left(\orig\right)^{\frac{1}{d}}>0$ satisfies Assumptions~\ref{Assumption} and \ref{AssumptionBeta}.
\end{lemma}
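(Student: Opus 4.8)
The plan is to argue under the rescaling to $\phiint=\Acal=1$ justified in \cite[Section~5.1]{HeyHofLasMat19}, which leaves the one-parameter family $\connf(x)=(2\pi\sigma^2)^{-d/2}\e^{-|x|^2/(2\sigma^2)}$ with $\fconnf(k)=\e^{-\sigma^2|k|^2/2}$, and under which the admissibility constraint $\Acal\le(2\pi\sigma^2)^{d/2}$ reads $\sigma^2\ge\tfrac1{2\pi}$. The key preliminary observation is that the hypothesis becomes $\liminf_{d\to\infty}\connf(\orig)^{1/d}=\liminf_{d\to\infty}(2\pi\sigma^2)^{-1/2}>0$, which forces $\limsup_{d\to\infty}\sigma^2<\infty$; since each $\sigma^2(d)$ is finite, there is a fixed constant $C$ with $\sigma^2\in[\tfrac1{2\pi},C]$ for all $d$. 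This two-sided control of $\sigma^2$ is what makes every constant below independent of $d$, and all the needed convolutions are already evaluated in Lemma~\ref{lem:GaussianCalculations}.

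First I would check Assumption~\ref{Assump:QuadraticBound}. From $\tfrac1\phiint\fconnf(k)=\e^{-\sigma^2|k|^2/2}$ and $1-\e^{-t}\ge t(1-\tfrac t2)$ for $t\ge0$ (with $t=\sigma^2|k|^2/2$) one gets $\tfrac1{|k|^2}\bigl(1-\tfrac1\phiint\fconnf(k)\bigr)\ge\tfrac{\sigma^2}2\bigl(1-\tfrac{\sigma^2|k|^2}4\bigr)$; choosing $b$ small enough (depending only on $C$) that $\tfrac{\sigma^2 b^2}4\le\tfrac12$ and using $\sigma^2\ge\tfrac1{2\pi}$ yields $c_1=\tfrac1{8\pi}$. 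For $|k|>b$, monotonicity of $t\mapsto1-\e^{-t}$ and $\sigma^2\ge\tfrac1{2\pi}$ give $1-\tfrac1\phiint\fconnf(k)\ge1-\e^{-b^2/(4\pi)}=:c_2>0$. For Assumption~\ref{Assump:DecayBound} I would take $g(d):=\connf^{\star2}(\orig)=(4\pi\sigma^2)^{-d/2}$; since $4\pi\sigma^2\ge2$ this tends to $0$. Because each $\connf^{\star m}$ is a centred Gaussian, $\sup_x\connf^{\star m}(x)=\connf^{\star m}(\orig)=(2\pi m\sigma^2)^{-d/2}$ is decreasing in $m$, so for $m\ge3$ it is at most $(6\pi\sigma^2)^{-d/2}\le(4\pi\sigma^2)^{-d/2}=g(d)$, giving the convolution bound. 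The volume bound is immediate: $\{x:\connf^{\star2}(x)>g(d)\}=\{x:\connf^{\star2}(x)>\sup_y\connf^{\star2}(y)\}=\emptyset$, so its Lebesgue measure is $0\le g(d)$.

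Finally I turn to Assumption~\ref{AssumptionBeta}. For \ref{Assump:ExponentialDecay}, Lemma~\ref{lem:GaussianCalculations} gives $\connf^{\star6}(\orig)=(12\pi\sigma^2)^{-d/2}$, and any $\rho\le(12\pi C)^{-1/2}$ makes $\rho^{-d}\connf^{\star6}(\orig)=(\rho^2\cdot12\pi\sigma^2)^{-d/2}\ge1$ for all $d$, so the $\liminf$ is positive; consequently $\beta(d)=g(d)^{1/4}=(4\pi\sigma^2)^{-d/8}$. For \ref{Assump:NumberBound}, Lemma~\ref{lem:GaussianCalculations} gives $h(d)=(12\pi\sigma^2)^{-d/2}+(44\pi^2\sigma^4)^{-d/2}+(48\pi^2\sigma^4)^{-d/2}$, and $\sigma^2\ge\tfrac1{2\pi}>\tfrac3{11\pi}$ forces $12\pi\sigma^2<44\pi^2\sigma^4<48\pi^2\sigma^4$, so $\connf^{\star6}(\orig)\le h(d)\le3\connf^{\star6}(\orig)$ and hence $\log h(d)=-\tfrac d2\log(12\pi\sigma^2)+\LandauBigO{1}$. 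Then $\tfrac{\log h(d)}{\log\beta(d)}=\tfrac{4\log(12\pi\sigma^2)}{\log(4\pi\sigma^2)}+o(1)$, which stays bounded because $\sigma^2$ lies in the fixed compact interval $[\tfrac1{2\pi},C]$; hence $\limsup_dN(d)<\infty$.

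Each step is essentially a one-line computation, so I do not expect a genuine mathematical obstacle. The only thing demanding attention is the bookkeeping that guarantees $b,c_1,c_2,\rho$ and the bound on $N(d)$ are truly independent of $d$, and for this the two-sided bound $\tfrac1{2\pi}\le\sigma^2\le C$ — the admissibility constraint on the low side and the hypothesis $\liminf\connf(\orig)^{1/d}>0$ on the high side — is precisely what is needed; this is the mild point to be careful about.
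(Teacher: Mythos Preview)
Your proof is correct and follows essentially the same approach as the paper: rescale to $\phiint=1$, use the two-sided control $\tfrac{1}{2\pi}\le\sigma^2\le C$ (from $\connf(\orig)\le1$ and the hypothesis respectively), take $g(d)=\connf^{\star2}(\orig)=(4\pi\sigma^2)^{-d/2}$, and read off the Gaussian convolutions from Lemma~\ref{lem:GaussianCalculations}. The only minor differences are cosmetic: you verify the volume condition in \ref{Assump:DecayBound} explicitly via the empty-set observation (the paper is terse here), and for \ref{Assump:NumberBound} you bound $\tfrac{\log h(d)}{\log\beta(d)}$ by a compactness argument on $\sigma^2\in[\tfrac{1}{2\pi},C]$, whereas the paper writes the ratio as $\tfrac{4\log 6-8\log\connf(\orig)^{1/d}}{\log 2-2\log\connf(\orig)^{1/d}}$ and observes this is monotone in $\log\connf(\orig)^{1/d}\le0$, yielding the explicit bound $4\log_2 6$.
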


\begin{proof}
    For this proof we make the scaling choice that the total mass of the adjacency function in each dimension is set to be equal to $1$. Clearly this maps $\Acal \mapsto \widetilde{\Acal}\equiv1$, but since $\connf\left(\orig\right)=\Acal\left(2\pi\sigma^2\right)^{-\frac{d}{2}}$ is left invariant, we also have $\sigma\mapsto\widetilde{\sigma}=\sigma\Acal^{-\frac{1}{d}}$. The condition that $\liminf \connf\left(\orig\right)^{\frac{1}{d}}>0$ now means that $\limsup\widetilde{\sigma}<\infty$, and the trivial condition that $\connf\left(\orig\right)\leq 1$ means that $\widetilde{\sigma}^2\geq \sfrac
    {1}{2\pi}$.
    
    The results of Lemma~\ref{lem:GaussianCalculations} proves that \ref{Assump:DecayBound} holds with the choice $g(d)= \left(4\pi\widetilde{\sigma}^2\right)^{-\frac{d}{2}} = 2^{-\frac{d}{2}}\connf\left(\orig\right)$ and therefore $\beta(d) = 2^{-\frac{d}{8}}\connf\left(\orig\right)^{\frac{1}{4}}$ (here we use $\limsup\widetilde{\sigma}<\infty$ to get the appropriate form of $\beta$ from \eqref{eqn:betafromgfunction}). Now observe that the Fourier transform of $\connf(x)$ is given by
    \begin{equation}
        \fconnf(k) = \exp\left(-\frac{1}{2}\widetilde{\sigma}^2\norm*{k}_2^2\right) \leq \exp\left(-\frac{1}{4\pi}\norm*{k}_2^2\right),
    \end{equation}
    where the inequality follows from $\widetilde{\sigma}^2\geq \sfrac
    {1}{2\pi}$. Therefore \ref{Assump:QuadraticBound} holds.

    For Assumption~\ref{AssumptionBeta}, we first use Lemma~\ref{lem:GaussianCalculations} to see that $\connf^{\star 6}\left(\orig\right) = 6^{-\frac{d}{2}}\phiint^5\connf\left(\orig\right)$. Therefore \ref{Assump:ExponentialDecay} can be seen to hold with $\rho=6^{-\frac{1}{2}}\liminf \connf\left(\orig\right)^{\frac{1}{d}}>0$. This also provides a lower bound on $h(d)$. After noting that $\log \beta(d)<0$, we have
    \begin{equation}
        \frac{\log h(d)}{\log \beta(d)} \leq \frac{-\frac{d}{2}\log 6 + \log \connf\left(\orig\right)}{-\frac{d}{8}\log 2 + \frac{1}{4}\log\connf\left(\orig\right)} \leq \frac{4\log 6  - 8\log \connf\left(\orig\right)^{\frac{1}{d}}}{\log 2 - 2 \log \connf\left(\orig\right)^{\frac{1}{d}}}.
    \end{equation}
    Note that $\log \connf\left(\orig\right)^{\frac{1}{d}}\leq0$. By taking the derivative of the map $x\mapsto \frac{4\log 6 - 8x}{\log 2-2x}$ for $x\leq 0$ we can find that it is maximised at $x=0$. Therefore
    \begin{equation}
        \frac{\log h(d)}{\log \beta(d)} \leq 4\frac{\log 6}{\log 2} = 4\log_2 6.
    \end{equation}
    Since this is finite, we have proven Assumption~\ref{Assump:NumberBound}.
\end{proof}

\subsection{Coordinate-Cauchy Calculations}
Recall that for $\gamma>0$ and $0<\Acal\leq \left(\gamma\pi\right)^d$, the Coordinate-Cauchy RCM is defined by having
        \begin{equation}
            \connf(x) = \frac{\Acal}{\left(\gamma\pi\right)^d}\prod^d_{j=1}\frac{\gamma^2}{\gamma^2+x^2_j},
        \end{equation}
        where $x=\left(x_1,\ldots,x_d\right)\in\Rd$.
        
\begin{lemma}
    For the Coordinate-Cauchy RCM, 
    \begin{align}
        \connf^{\star n}\left(\orig\right) &= \Acal^n\left(\frac{1}{n\gamma\pi}\right)^d, \qquad\forall n\geq 1,\\
        \connf^{\star n_1\star n_2 \cdot n_3}\left(\orig\right) &= \Acal^{n_1+n_2+n_3}\left(\frac{n_1+n_2+n_3}{\left(n_1+n_2\right)\left(n_1+n_3\right)\left(n_2+n_3\right)\gamma^2\pi^2}\right)^d, \qquad\forall n_1,n_2,n_3\geq 1.
    \end{align}
    In particular,
    \begin{equation}
        \connf^{\star 1\star 2\cdot 2}\left(\orig\right) = \Acal^5\left(\frac{5}{36\gamma^2\pi^2}\right)^d,\qquad
        \connf^{\star1\star2\cdot3}\left(\orig\right) = \Acal^6\left(\frac{1}{10\gamma^2\pi^2}\right)^d,\qquad
        \connf^{\star2\star2\cdot2}\left(\orig\right) = \Acal^6\left(\frac{3}{32\gamma^2\pi^2}\right)^d.
    \end{equation}
\end{lemma}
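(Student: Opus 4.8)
The plan is to reduce everything to a one-dimensional computation by exploiting the tensor-product structure of the kernel, exactly as in Lemma~\ref{lem:GaussianCalculations}. By the rescaling argument we may assume $\phiint=\Acal=1$, so that $\connf(x)=\prod_{j=1}^{d}\connf_1(x_j)$ with $\connf_1(t)=\frac{1}{\pi}\frac{\gamma}{\gamma^2+t^2}$ the one-dimensional Cauchy density of scale $\gamma$. Since both the convolution and the pointwise product of functions of this product form again factor coordinate-wise, each quantity in the statement equals the $d$-th power of the corresponding one-dimensional integral; it therefore suffices to carry out the one-dimensional computations and then raise to the power $d$ and reinstate $\Acal$.

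For the first identity, the key fact is that $\connf_1^{\star n}$ is the Cauchy density of scale $n\gamma$. I would deduce this from the Fourier transform: with the convention used in the paper, $\widehat{\connf_1}(k)=\e^{-\gamma\abs{k}}$, hence $\widehat{\connf_1^{\star n}}(k)=\e^{-n\gamma\abs{k}}$, which is the transform of the scale-$n\gamma$ Cauchy density. In particular $\connf_1^{\star n}(0)=(n\gamma\pi)^{-1}$, so raising to the power $d$ gives $\connf^{\star n}(\orig)=\Acal^{n}(n\gamma\pi)^{-d}$.

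For the second identity, set $a_i:=n_i\gamma$. Using the previous step, the one-dimensional integral to evaluate is
\[
\int_{\R}\connf_1^{\star n_1}(t)\,\connf_1^{\star n_2}(t)\,\connf_1^{\star n_3}(t)\,\dd t=\frac{a_1a_2a_3}{\pi^3}\int_{\R}\frac{\dd t}{(a_1^2+t^2)(a_2^2+t^2)(a_3^2+t^2)}.
\]
When the $a_i$ are pairwise distinct, I would evaluate the remaining integral by a partial-fraction decomposition in the variable $u=t^2$ together with $\int_{\R}(a^2+t^2)^{-1}\dd t=\pi/a$; after simplification the whole claim reduces to the symmetric rational identity
\[
a_1a_2a_3\sum_{i=1}^{3}\frac{1}{a_i\prod_{m\ne i}(a_m^2-a_i^2)}=\frac{a_1+a_2+a_3}{(a_1+a_2)(a_1+a_3)(a_2+a_3)},
\]
which I would prove by a Lagrange-interpolation (equivalently, sum-of-residues) argument. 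Both sides of this identity, and also the integral $\int_{\R}\connf_1^{\star n_1}\connf_1^{\star n_2}\connf_1^{\star n_3}\,\dd t$ itself viewed as a function of $(a_1,a_2,a_3)\in(0,\infty)^3$, are continuous, so the cases with repeated scales — which include all three named diagrams — follow by continuity and require no separate degenerate analysis. Substituting $a_i=n_i\gamma$ gives the one-dimensional value $\pi^{-2}\gamma^{-2}(n_1+n_2+n_3)\big((n_1+n_2)(n_1+n_3)(n_2+n_3)\big)^{-1}$; raising to the power $d$ and reinstating $\Acal$ yields the stated formula, and the three displayed special cases come from plugging in $(n_1,n_2,n_3)\in\{(1,2,2),(1,2,3),(2,2,2)\}$.

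Most of this is routine bookkeeping; the main obstacle is simply carrying out the partial-fraction evaluation cleanly and verifying the symmetric rational identity above, together with the (standard) observation that continuity in the $a_i$ lets one bypass the repeated-scale cases rather than dealing with double poles directly.
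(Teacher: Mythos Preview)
Your argument is correct. The reduction to dimension one and the evaluation of $\connf_1^{\star n}(0)$ via the Fourier transform match the paper exactly.

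For the triple integral $\connf^{\star n_1\star n_2\cdot n_3}(\orig)$, however, you take a genuinely different route. The paper stays on the Fourier side: it writes the one-dimensional quantity as the double integral
\[
\frac{1}{(2\gamma\pi)^2}\int_{\R}\int_{\R}\e^{-n_1|k|-n_2|k-l|-n_3|l|}\,\dd k\,\dd l,
\]
and evaluates it by splitting the inner integral into three sign regions and treating the cases $n_1=n_2$ and $n_1\ne n_2$ separately, only at the end observing that the two answers agree under the common formula. You instead work in position space: having already identified $\connf_1^{\star n_i}$ as a Cauchy density of scale $n_i\gamma$, you reduce to a single real integral of a product of three Cauchy densities, apply partial fractions for distinct scales, and invoke continuity in the scale parameters to cover the repeated-scale cases without a separate computation. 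Your approach trades the paper's piecewise integration and case split for one symmetric rational identity plus a continuity argument; both are elementary, but your use of continuity to sidestep the double-pole cases is cleaner than the paper's explicit case analysis, at the cost of leaving the Lagrange-interpolation identity as a small lemma to verify.
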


\begin{proof}
    We begin with the simplification that $\phiint=\Acal$ is set to be equal to $1$ (by a spatial scaling choice).
    
    Like for the Hyper-Cubic model, the factorisable structure of the adjacency function means that we only need to evaluate the answers for the $1$-dimensional model, and then we can take the result to the power $d$ to get the $d$-dimensional answer. Let the $1$-dimensional adjacency function be denoted
    \begin{equation}
        \connf_1(x) = \frac{\gamma}{\pi\left(\gamma^2+x^2\right)}.
    \end{equation}
    By well-known complex analysis techniques, the Fourier transform of this function is given by
    \begin{equation}
        \fconnf_1(k) = \e^{-\gamma\abs*{k}}
    \end{equation}
    for $k\in\R$. Then by the Fourier inversion formula, for $n\geq 1$,
    \begin{equation}
        \connf_1^{\star n}\left(\orig\right) = \frac{1}{2\pi}\int^\infty_{-\infty}\e^{-n\gamma\abs*{k}}\dd k = \frac{1}{\gamma\pi}\int^\infty_{0}\e^{-n k}\dd k = \frac{1}{n\gamma\pi}.
    \end{equation}

    The calculation is a little more complicated for the remaining objects. For $n_1,n_2,n_3\geq 1$,
    \begin{multline}
        \connf_1^{\star n_1 \star n_2 \cdot n_3}\left(\orig\right) =  \frac{1}{\left(2\pi\right)^2}\int^\infty_{-\infty}\int^\infty_{-\infty}\e^{-n_1 \gamma\abs{k} - n_2\gamma\abs{k-l} - n_3\gamma\abs{l}}\dd k\dd l \\= \frac{1}{\left(2\gamma\pi\right)^2}\int^\infty_{-\infty}\int^\infty_{-\infty}\e^{-n_1 \abs{k} - n_2\abs{k-l} - n_3\abs{l}}\dd k\dd l.
    \end{multline}
    For $l\geq 0$, the $k$-integral can then be partitioned as
    \begin{align}
        \int^\infty_{-\infty}\e^{-n_1 \abs{k} - n_2\abs{k-l}}\dd k &= \int^\infty_{l}\e^{-n_1k - n_2k + n_2l}\dd k + \int^{l}_{0}\e^{-n_1k + n_2k-n_2l}\dd k + \int^0_{-\infty}\e^{n_1k + n_2k-n_2l}\dd k\nonumber\\
        &= \frac{1}{n_1+n_2}\left(\e^{-n_1l}+\e^{-n_2l}\right) + \begin{cases}
            l\e^{-n_1l} &\colon n_1=n_2\\
            \frac{1}{n_1-n_2}\left(\e^{-n_2l}-\e^{-n_1l}\right) &\colon n_1\ne n_2
        \end{cases}\nonumber\\
        &= \begin{cases}
            \left(\frac{1}{n_1} + l\right)\e^{-n_1l} &\colon n_1=n_2\\
            \frac{2n_1}{n_1^2- n_2^2}\e^{-n_2l} - \frac{2n_2}{n_1^2-n_2^2}\e^{-n_1l} &\colon n_1\ne n_2.
        \end{cases}
    \end{align}
    The calculation is performed similarly for $l<0$, and we get
    \begin{equation}
        \int^\infty_{-\infty}\e^{-n_1 \abs{k} - n_2\abs{k-l}}\dd k = \begin{cases}
            \left(\frac{1}{n_1} + \abs*{l}\right)\e^{-n_1\abs*{l}} &\colon n_1=n_2\\
            \frac{2n_1}{n_1^2- n_2^2}\e^{-n_2\abs*{l}} - \frac{2n_2}{n_1^2-n_2^2}\e^{-n_1\abs*{l}} &\colon n_1\ne n_2
        \end{cases}
    \end{equation}
    for all $l\in\R$.

    For $n_1=n_2$ we then get
    \begin{align}
        \connf_1^{\star n_1\star n_1\cdot n_3}\left(\orig\right) &= \frac{1}{4\gamma^2\pi^2}\int^\infty_{-\infty}\left(\frac{1}{n_1}+\abs*{l}\right)\e^{-\left(n_1+n_3\right)\abs*{l}}\dd l\nonumber\\
        &= \frac{1}{2\gamma^2\pi^2}\left(\frac{1}{n_1\left(n_1+n_3\right)} + \frac{1}{\left(n_1+n_3\right)^2}\right) \nonumber\\
        &=\frac{2n_1 + n_3}{2n_1\left(n_1+n_3\right)^2}\frac{1}{\gamma^2\pi^2}.\label{eqn:n1=n2Case}
    \end{align}
    Using $n_1=n_2=2$ and $n_3=1$, and $n_1=n_2=2$ and $n_3=2$ gives us two of our desired results. We are only left with $\connf^{\star1\star2\cdot 3}\left(\orig\right)$.

    For $n_1\ne n_2$ we get
    \begin{align}
        \frac{1}{\left(2\gamma\pi\right)^2}\int^\infty_{-\infty}\int^\infty_{-\infty}\e^{-n_1 \abs{k} - n_2\abs{k-l} - n_3\abs{l}}\dd k\dd l &= \frac{1}{2\gamma^2\pi^2}\int^\infty_{0}\left(\frac{2n_1}{n_1^2- n_2^2}\e^{-n_2l} - \frac{2n_2}{n_1^2-n_2^2}\e^{-n_1l}\right)\e^{-n_3l}\dd l\nonumber\\
        &= \frac{1}{\gamma^2\pi^2}\left(\frac{n_1}{\left(n_1^2-n_2^2\right)\left(n_2+n_3\right)} - \frac{n_2}{\left(n_1^2-n_2^2\right)\left(n_1+n_3\right)}\right)\nonumber\\
        &= \frac{n_1+n_2+n_3}{\left(n_1+n_2\right)\left(n_1+n_3\right)\left(n_2+n_3\right)}\frac{1}{\gamma^2\pi^2}.
    \end{align}
    Note that this expression reduces to the case \eqref{eqn:n1=n2Case} if $n_1=n_2$.
\end{proof}

\begin{lemma}
    The Coordinate-Cauchy RCM with $\liminf_{d\to\infty}\connf\left(\orig\right)^{\frac{1}{d}}>0$ satisfies Assumptions~\ref{Assumption} and \ref{AssumptionBeta}.
\end{lemma}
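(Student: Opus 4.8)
The plan is to mirror the structure of the proofs for the Hyper-Cube and Gaussian models, checking each of the conditions in Assumptions~\ref{Assumption} and \ref{AssumptionBeta} in turn. As in Lemma~\ref{lem:GaussianAssumptions}, I would first fix the scaling so that the total mass of the adjacency function in each coordinate is $1$; this sends $\Acal\mapsto 1$ while leaving $\connf(\orig)=\Acal(\gamma\pi)^{-d}$ invariant, so $\gamma\mapsto\widetilde\gamma=\gamma\Acal^{-1/d}$. The hypothesis $\liminf_{d\to\infty}\connf(\orig)^{1/d}>0$ becomes $\limsup_{d\to\infty}\widetilde\gamma<\infty$, and $\connf(\orig)\le 1$ forces $\widetilde\gamma\ge 1/\pi$. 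Since $\connf$ factorises over coordinates and $\widehat{\connf_1}(k)=\e^{-\widetilde\gamma\abs{k}}$, we get $\fconnf(k)=\exp(-\widetilde\gamma\norm{k}_1)$, which is radially decreasing, non-negative, and dominated by the Gaussian bound $\exp(-\widetilde\gamma\norm{k}_2)\le\exp(-\norm{k}_2/\pi)$ near the origin (using $\norm{k}_1\ge\norm{k}_2$ and $\widetilde\gamma\ge 1/\pi$); together with the fact that $\fconnf$ stays bounded away from $1$ outside any fixed ball, this establishes \ref{Assump:QuadraticBound} with constants independent of $d$.

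Next I would verify \ref{Assump:DecayBound}. Using the closed forms just computed, $\connf^{\star m}(\orig)=(m\widetilde\gamma\pi)^{-d}$, so $\connf^{\star m}(\orig)/\connf(\orig)^{m-1}=(\widetilde\gamma\pi)^{d}m^{-d}\le m^{-d}$ (as $\widetilde\gamma\pi\ge1$), and for $m\ge 3$ this is at most $3^{-d}$. For the sup bound one checks $\connf^{\star m}(x)\le\connf^{\star m}(\orig)$ since each one-dimensional factor $\connf_1^{\star m}$ is maximised at $0$ (its Fourier transform $\e^{-m\widetilde\gamma\abs{k}}$ is non-negative). Hence the first part of \ref{Assump:DecayBound} holds with $g(d)$ of order $3^{-d}$; the Lebesgue-volume condition \eqref{eqn:2-convolutionAssumption} is verified by the same kind of tail estimate as in the Gaussian case — bound the set where $\connf\star\connf(x)>g(d)$ using the explicit shape of $\connf^{\star2}$ and the exponential decay of its coordinate factors. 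Taking $g(d)=c\,(3\widetilde\gamma\pi)^{-d}$ or simply $g(d)=3^{-d}$ (whichever is cleaner given the Lebesgue requirement), we obtain $g(d)\to0$; and since $\limsup\widetilde\gamma<\infty$ we are in the second branch of \eqref{eqn:betafromgfunction}, so $\beta(d)=g(d)^{1/4}$.

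For Assumption~\ref{AssumptionBeta}, \ref{Assump:ExponentialDecay} follows from $\connf^{\star6}(\orig)=6^{-d}\phiint^5\connf(\orig)$ (after unscaling), so $\rho^{-d}\phiint^{-5}\connf^{\star6}(\orig)$ has a positive liminf for $\rho=6^{-1}\liminf\connf(\orig)^{1/d}>0$. This gives an exponential lower bound on $h(d)$ of the form $c'\,(6\widetilde\gamma\pi)^{-d}$, hence a linear lower bound on $\log h(d)$ of the form $-c''d$; combined with $\log\beta(d)\asymp -c'''d$ we get $N(d)=\lceil\log h(d)/\log\beta(d)\rceil$ bounded uniformly in $d$, exactly as in Lemma~\ref{lem:GaussianAssumptions}, by estimating $\log h(d)/\log\beta(d)\le 4\log_3 6$ (or the analogous ratio) after a derivative argument on the relevant $x\mapsto(a+bx)/(c+ex)$ map in $x=\log\connf(\orig)^{1/d}\le0$. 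This proves \ref{Assump:NumberBound}.

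The main obstacle I expect is the Lebesgue-volume estimate \eqref{eqn:2-convolutionAssumption}: unlike $\connf^{\star m}(\orig)$, which is available in closed form, the set $\{x:\phiint^{-1}\connf\star\connf(x)>g(d)\}$ requires a genuine estimate on the shape of $\connf^{\star2}$. Because $\connf^{\star2}$ again factorises, with one-dimensional factors of the form $\int\connf_1(x-y)\connf_1(y)\dd y$ — a convolution of two Cauchy densities, which is itself a Cauchy density with parameter $2\widetilde\gamma$ — one has $\connf^{\star2}(x)=\prod_{j}\frac{2\widetilde\gamma}{\pi(4\widetilde\gamma^2+x_j^2)}$, and the sublevel-set volume can be controlled by a product/union bound over coordinates. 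The bookkeeping here (matching the polynomial-tail Cauchy volume against the exponentially small $g(d)$) is the only place that is not essentially a transcription of the Gaussian argument; everything else is routine given the explicit formulas. As noted in the remark following Assumption~\ref{Assumption}, one could alternatively invoke that \eqref{eqn:2-convolutionAssumption} is not expected to be needed, but I would include the direct verification for completeness.
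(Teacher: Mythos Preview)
Your overall structure matches the paper's, and most of the steps are fine. The genuine gap is in \ref{Assump:DecayBound}: your proposed choice $g(d)=3^{-d}$ (or $g(d)=c\,(3\widetilde\gamma\pi)^{-d}$) does \emph{not} satisfy the Lebesgue volume condition \eqref{eqn:2-convolutionAssumption}, and no product or union bound will rescue it. Concretely, $\connf^{\star 2}$ factorises with one-dimensional factor $f(t)=\frac{2\widetilde\gamma}{\pi(4\widetilde\gamma^2+t^2)}$, and the superlevel set $\{x:\prod_j f(x_j)>(3\widetilde\gamma\pi)^{-d}\}$ contains the cube $\{x:\abs{x_j}<\sqrt{2}\,\widetilde\gamma\ \forall j\}$ (since $f(t)>1/(3\widetilde\gamma\pi)\iff t^2<2\widetilde\gamma^2$). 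This cube has volume $(2\sqrt{2}\,\widetilde\gamma)^d$, and the requirement $(2\sqrt{2}\,\widetilde\gamma)^d\le(3\widetilde\gamma\pi)^{-d}$ forces $6\sqrt{2}\,\pi\,\widetilde\gamma^2\le 1$, i.e.\ $\widetilde\gamma\lesssim 0.19$, contradicting $\widetilde\gamma\ge 1/\pi\approx 0.318$. So the set is far too large; the polynomial Cauchy tails are precisely what defeats your plan.

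The paper sidesteps this entirely by choosing $g(d)=(2\widetilde\gamma\pi)^{-d}=\connf^{\star 2}(\orig)$. Since $\fconnf\ge 0$ implies $\connf^{\star 2}(x)\le\connf^{\star 2}(\orig)$ for every $x$, the set $\{x:\connf^{\star 2}(x)>g(d)\}$ is empty and \eqref{eqn:2-convolutionAssumption} holds trivially. The $m\ge 3$ bound is still satisfied because $(m\widetilde\gamma\pi)^{-d}\le(2\widetilde\gamma\pi)^{-d}$, and $g(d)\to 0$ since $2\widetilde\gamma\pi\ge 2$. With this $g$ one gets $\beta(d)=2^{-d/4}\connf(\orig)^{1/4}$ and the same derivative argument gives $N(d)\le 4\log_2 6$. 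The only change you need is this choice of $g$; everything else in your outline goes through.
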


\begin{proof}
    For simplicity we scale space so that $\phiint=\Acal=1$. As argued analogously for the Gaussian RCM in Lemma~\ref{lem:GaussianAssumptions}, the condition $\liminf\connf\left(\orig\right)^{\frac{1}{d}}>0$ then becomes $\limsup \gamma <\infty$, and $\connf\left(\orig\right)\leq 1$ becomes $\gamma\geq \sfrac{1}{\pi}$.

    Since $\fconnf(k) = \e^{-\gamma\norm*{k}_1}\geq 0$, we know that $\esssup_{x\in\Rd}\connf^{\star m}(x) = \connf^{\star m}\left(\orig\right)$ for all $m\geq 1$. Therefore
    \begin{equation}
       \esssup_{x\in\Rd}\connf^{\star m}(x) = \left(m\gamma\pi\right)^{-d}.
    \end{equation}
    Since $\gamma\pi\geq 1$, this approaches zero for all $m\geq 2$. Therefore $\ref{Assump:DecayBound}$ holds with the choice $g(d) = \left(2\gamma\pi\right)^{-d}= 2^{-d}\connf\left(\orig\right)$ and $\beta(d)=2^{-\frac{d}{4}}\connf\left(\orig\right)^{\frac{1}{4}}$ (here we use $\limsup \gamma <\infty$ to get the appropriate form of $\beta$ from \eqref{eqn:betafromgfunction}). Furthermore, $\gamma$ cannot approach $0$ and therefore our expression for $\fconnf(k)$ implies \ref{Assump:QuadraticBound} holds too.

    From our prior calculations we have $\connf^{\star 6}\left(\orig\right) =6^{-d}\phiint^5\connf\left(\orig\right)$ and therefore \ref{Assump:ExponentialDecay} can be seen to hold with $\rho=6^{-1}\liminf \connf\left(\orig\right)^{\frac{1}{d}}>0$. It also allows us to lower bound $h(d)$. Noting that $\log\beta(d)<0$, this implies that
    \begin{equation}
        \frac{\log h(d)}{\log \beta(d)} \leq \frac{-d\log 6 + \log \connf\left(\orig\right)}{-\frac{d}{4}\log 2 + \frac{1}{4}\log\connf\left(\orig\right)} \leq \frac{4\log 6  - 4\log \connf\left(\orig\right)^{\frac{1}{d}}}{\log 2 - \log \connf\left(\orig\right)^{\frac{1}{d}}}.
    \end{equation}
    Note that $\log \connf\left(\orig\right)^{\frac{1}{d}}\leq0$. By taking the derivative of the map $x\mapsto \frac{4\log 6 - 4x}{\log 2-x}$ for $x\leq 0$ we can find that it is maximised at $x=0$. Therefore
    \begin{equation}
        \frac{\log h(d)}{\log \beta(d)} \leq 4\frac{\log 6}{\log 2} = 4\log_2 6.
    \end{equation}
    Since this is finite, we have proven Assumption~\ref{Assump:NumberBound}.
\end{proof}

\end{appendix}

\bibliography{bibliography}{}
\bibliographystyle{alpha}

\paragraph{Acknowledgements.} This work is supported by  
\textit{Deutsche Forschungsgemeinschaft} (project number 443880457) through priority program ``Random Geometric Systems'' (SPP 2265). 
The authors thank Sabine Jansen and Günter Last for their inspiring discussions. 

\end{document}